\UseRawInputEncoding

\documentclass[12pt,a4paper]{amsart}
\usepackage{lmodern}
\usepackage[utf8]{inputenc}
\usepackage{bbm}
\usepackage{tikz}
\usepackage{utfsym}
\usepackage{mathrsfs}
\usepackage{fix-cm}
\usepackage{cite}
\usetikzlibrary{matrix}
\usepackage{tikz-cd}
\usepackage{fontawesome5}

\newcommand{\rk}{\mathrm{rk}}
\usepackage[a4paper,left=1.8cm,right=1.8cm,top=2.5cm,bottom=2.5cm]{geometry}

\usepackage{dsfont}         
                          
\usepackage{float}
\usepackage{xurl}
\usepackage{ragged2e}
\usepackage{xcolor}
 
\usepackage{enumitem}
\usepackage{float}

\setlist{nosep}
\setlist[enumerate]{label=(\roman*)}
 
\usepackage{etoolbox}
\AtBeginEnvironment{thebibliography}{%
  \setlength{\itemsep}{0pt}%
  \setlength{\parskip}{0pt}%
  \setlength{\topsep}{0pt}%
  \setlength{\partopsep}{0pt}%
}
 
\newlist{defenum}{enumerate}{3}
\setlist[defenum]{label=(\alph*),leftmargin=*,align=left}
\newlist{defsubenum}{enumerate}{1}
\setlist[defsubenum]{label=(\roman*),leftmargin=*,align=left}

\usepackage{mathtools}
\usepackage{amssymb,amsthm}
\usepackage{graphicx}
\usepackage[cal=rsfso,scaled=1.05]{mathalfa}

\usepackage[english]{babel}
\usepackage[T1]{fontenc}
\usepackage{tikz-cd}
\usepackage{bm}

\usepackage[most]{tcolorbox}

\usepackage{array,booktabs,tabularx,multirow}

\usepackage[colorlinks=true,pagebackref,hypertexnames=false,linkcolor=red,citecolor=blue,filecolor=blue,urlcolor=blue]{hyperref}

\usepackage[nameinlink,noabbrev]{cleveref}
\usepackage{autonum}
\usepackage[alphabetic,backrefs]{amsrefs}

\makeatletter
\renewcommand\normalsize{%
    \@setfontsize\normalsize{11.7}{14pt plus .3pt minus .3pt}%
    \abovedisplayskip 10\p@ \@plus4\p@ \@minus4\p@
    \abovedisplayshortskip 6\p@ \@plus2\p@
    \belowdisplayshortskip 6\p@ \@plus2\p@
    \belowdisplayskip \abovedisplayskip}
\renewcommand\small{%
    \@setfontsize\small{9.5}{12\p@ plus .2\p@ minus .2\p@}%
    \abovedisplayskip 8.5\p@ \@plus4\p@ \@minus1\p@
    \belowdisplayskip \abovedisplayskip
    \abovedisplayshortskip \abovedisplayskip
    \belowdisplayshortskip \abovedisplayskip}
\renewcommand\footnotesize{%
    \@setfontsize\footnotesize{8.5}{9.25\p@ plus .1pt minus .1pt}%%
    \abovedisplayskip 6\p@ \@plus4\p@ \@minus1\p@
    \belowdisplayskip \abovedisplayskip
    \abovedisplayshortskip \abovedisplayskip
    \belowdisplayshortskip \abovedisplayskip}

\ifdefined\pdfpagewidth
\else
\fi
\calclayout
\makeatother
\newenvironment{standing}{\begin{quote}\itshape}{\end{quote}}

\newtheorem{theorem}{Theorem}[section]
\newtheorem{Mtheorem}{Theorem}

\newtheorem{corollary}[theorem]{Corollary}

\newtheorem{lemma}[theorem]{Lemma}

\newtheorem{proposition}[theorem]{Proposition}
\theoremstyle{definition}
\newtheorem{definition}[theorem]{Definition}
\newtheorem{example}[theorem]{Example}   % already upright: keep as is
\theoremstyle{remark}
\newtheorem{remark}[theorem]{Remark}

\crefname{theorem}{Theorem}{Theorems}
\crefname{Mtheorem}{Main Theorem}{Main Theorems}
\crefname{lemma}{Lemma}{Lemmata}
\crefname{corollary}{Corollary}{Corollaries}
\crefname{proposition}{Proposition}{Propositions}
\crefname{definition}{Definition}{Definitions}
\crefname{example}{Example}{Examples}    

\AtEndEnvironment{example}{\exend}

\newcommand{\Q}{\mathbb{Q}}
\newcommand{\R}{\mathbb{R}}
\newcommand{\Z}{\mathbb{Z}}
\newcommand{\C}{\mathbb{C}}
\newcommand{\N}{\mathbb{N}}
\newcommand{\PP}{\mathbb{P}}

\newcommand{\End}{\mathrm{End}}

\newcommand{\tr}{\mathrm{tr}}
\newcommand{\rank}{\mathrm{rk}}

\newcommand{\SL}{\mathrm{SL}}

\newcommand{\rH}{\mathrm{H}}
\newcommand{\Gr}{\mathrm{Gr}}

\newcommand{\cE}{\mathcal{E}}

\newcommand{\cO}{\mathcal{O}}

\newcommand{\cQ}{\mathcal{Q}}

\newcommand{\cU}{\mathcal{U}}

\newcommand{\Id}{\mathbf{1}}

\newcommand{\Aut}{\mathrm{Aut}}

\newcommand{\ev}{\mathrm{ev}}

\newcolumntype{L}{>{$}l<{$}}
\newcolumntype{C}{>{$}c<{$}}
\newcolumntype{R}{>{$}r<{$}}

\lstdefinestyle{gwflags}{
  basicstyle=\ttfamily\small,
  columns=fullflexible,
  keepspaces=true,
  showstringspaces=false,
  morecomment=[l]{\#},
  commentstyle=\itshape\color{black!55},
  aboveskip=0pt, belowskip=0pt,
  frame=none
}

\NewTCBListing{inputbox}{ o }{
  listing only,
  listing style=gwflags,
  colback=black!3, colframe=black!80,
  boxrule=0.8pt, arc=0pt, outer arc=0pt,
  left=8pt, right=8pt, top=6pt, bottom=6pt,
  fonttitle=\bfseries, coltitle=black,
  attach boxed title to top left={yshift=-2mm, xshift=4mm},
  boxed title style={colback=white, colframe=white},
  IfValueT={#1}{title={#1}}
}

\NewTColorBox{outputbox}{ o }{
  breakable,
  colback=white, colframe=black!80,
  boxrule=0.8pt, arc=0pt, outer arc=0pt,
  left=8pt, right=8pt, top=8pt, bottom=8pt,
  fonttitle=\bfseries, coltitle=black,
  attach boxed title to top left={yshift=-2mm, xshift=4mm},
  boxed title style={colback=white, colframe=white},
  IfValueT={#1}{title={#1}}
}
 
\newcommand{\exend}{\hfill$\diamond$}
 
\begin{document}
\title[On some atomic decompositions in flag varieties]{On the atomic decomposition of complete intersection in flag varieties}

\author[Alexeev]{Boris Alexeev}
%\address{Institute of Mathematics and Informatics, Bulgarian Academy of Sciences}
\email{boris.alexeev@gmail.com}

\author[Cavenaghi]{Leonardo F. Cavenaghi}
\address{Institute of Mathematics and Informatics, Bulgarian Academy of Sciences}
\email{leonardofcavenaghi@gmail.com}

\author[Galindo]{Giovane Galindo}
\address{Institute of Mathematics and Informatics, Bulgarian Academy of Sciences}
\email{giovanepgneto@gmail.com}

\author[Georgiev]{Bogdan Georgiev}
% \address{Google DeepMind}
\email{bogdan.m.georgiev@gmail.com}

\author[Katzarkov]{Ludmil Katzarkov}
\address{Simons Center for Geometry and Physics, Stony Brook University, Stony Brook, NY 11794, USA. \& International Center for Mathematical Sciences (ICMS), Sofia, Bulgaria. \& Institute of Mathematics and Informatics, Bulgarian Academy of Sciences, Sofia, Bulgaria.} 
\email{lkatzarkov@gmail.com}

% \author[Kontsevich]{Maxim Kontsevich}
% \address{Institut des Hautes Études Scientifiques, 35 route de Chartres, F - 91440 Bures-sur-Yvette}
% \email{maxim@ihes.fr}

\author[Martins]{Pedro Antonio Muniz Martins}
\address{Instituto de Matemática, Estatística e Computação Científica (IMECC) da Universidade Estadual de Campinas (Unicamp), Cidade Universitária, Campinas - SP, 13083-856, Brazil.} 
\email{pedroa.muniz9@gmail.com}

\begin{abstract}
We develop a root-theoretic localization formalism for genus-zero
Gromov--Witten invariants of flag varieties and of smooth zero loci of
globally generated homogeneous vector bundles.  The resulting invariants
are expressed as finite sums over decorated trees whose contributions are
determined by the root system of the ambient flag variety and by the torus
weights of the defining bundle. We connect these computations with the Theory of Hodge Atoms of Katzarkov--Kontsevich--Pantev--Yu
via the matrix of small quantum multiplication by first Chern class.

Let $X$ be a
Fano fourfold with $h^{3,1}(X)=1$ arising as a hyperplane section of a
suitable Fano fivefold.  Its cohomology decomposes into a monodromy-fixed
part and the middle vanishing cohomology. This decomposition is preserved by quantum multiplication by the first Chern class. Moreover, it acts by a scalar on the vanishing summand.  We show, for the monodromy-fixed block, that (a) if every eigenvalue has algebraic multiplicity at most two and $X$ is Hodge general, then $X$
is irrational; (b)  if every eigenvalue has Jordan defect at most one and $X$ is rational, then every weak factorization contains a smooth surface center whose minimal model is a projective K3 surface. Many applications are presented.
\end{abstract}

\maketitle
\tableofcontents

\section{Introduction}
\label{sec:intro}

\subsection*{Prelude}

Flag varieties possess a distinguished combination of characteristics
placing them in a convenient position at the intersection of algebraic
geometry, representation theory, and enumerative geometry: their geometry
is controlled by root systems and Weyl groups; their cohomology admits
distinguished Schubert bases; and the action of a maximal torus has finitely
many fixed points and finitely many one-dimensional orbit closures.
 In particular, with their standard maximal-torus
actions, flag varieties are examples of
\emph{GKM\footnote{Goresky--Kottwitz--MacPherson} spaces} \cite{Guillemin2006}.  Together, these
properties make flag varieties, and subvarieties obtained from them by
homogeneous vector bundles, particularly well suited to equivariant
localization in Gromov--Witten theory, beginning with Kontsevich's
fixed-point calculations \cite{Kontsevich1994} and the virtual-localization
formalism developed in
\cites{Graber1999,Liu_Chiu,holmes2025computationsequivariantgromovwittentheory}.
The purpose of this paper is to connect this explicit combinatorial
geometry with the Theory of Hodge Atoms of
Katzarkov--Kontsevich--Pantev--Yu~\cite{KKPY}, offering applications in birational geometry.

\medskip

First, we develop a root-theoretic localization formalism for genus-zero
primary Gromov--Witten invariants of flag varieties and of smooth zero loci
of globally generated homogeneous bundles.  The formulas are sufficiently
explicit to be implemented algorithmically. Using the fact that the required inputs can be determined from the root-theoretic and
representation-theoretic data of the flag variety and of the homogeneous
bundle, this paper is accompanied by software which computes genus-zero
Gromov--Witten invariants on flag varieties and, through the genus-zero
quantum Lefschetz/functoriality theorem of Kim--Kresch--Pantev
\cite{Kim2003}, the corresponding invariants of smooth homogeneous zero
loci with insertions restricted from the ambient flag variety.

We note that Holmes and Muratore
\cite{holmes2025computationsequivariantgromovwittentheory} have provided a
computational framework for analogous calculations in general GKM spaces,
implemented in
\href{https://mgemath.github.io/GKMtools.jl/stable/}{GKMtools.jl}.
Because our setting is restricted to flag varieties and homogeneous zero
loci therein, the necessary geometric input admits a direct description in
terms of roots, Weyl groups, Schubert classes, and bundle weights, as
explained in Appendix \ref{app:software}.  This specialization considerably
simplifies the input and the corresponding implementation.  To connect
these calculations with the Theory of Hodge Atoms, our software also
evaluates the flag-ambient matrix of small quantum multiplication by the
first Chern class and its characteristic polynomial. We point out that in GKMtools.jl (specifically, in \href{https://mgemath.github.io/GKMtools.jl/stable/GW/QH/#Quantum-product-with-c_1T(TX)}{GKMtools.jl Integration}), the functionality to produce the matrix of the quantum product was available prior to this work. We notice two distinctions\footnote {Authors thank D. Holmes and G. Muratore for clarifying this.}: the implementation in GKMtools.jl returns the matrix in the context of equivariant cohomology. Thus, after a change of basis, it coincides with the matrix produced by our software. Second, the computation in equivariant cohomology tends to be slower\footnote{Authors have not tested our process with that of GKMtools.jl, so the reader should take this sentence as a heuristic communicated by D. Holmes and G. Muratore}. 

\medskip

Using a coarse version of Hodge atoms, as developed in
\cite{CKK-lectures}, and inspired in particular by the results of Gu\'er\'e
\cite{JeremyCubic} and of
Benedetti--Fay--Gu\'er\'e--Manivel--Perrin
\cite{Benedetti_Fay_Guere_Manivel_Perrin}, we obtain theoretical
consequences concerning the rationality of Fano fourfolds.  Our examples
recover, by the criteria developed here, the irrationality of
Hodge-general, and hence very general, cubic fourfolds
(Example~\ref{ex:cubic4fold}) and ordinary Gushel--Mukai fourfolds
(Example~\ref{ex:GM4fold}).  We also obtain a uniform
K3-center constraint for rational fourfolds of K3 type \cite{Bernardara2026}, inspired by the stronger cubic-specific theorem of Gu\'er\'e.

\medskip

We now describe the main constructions and results more precisely.

\medskip

\subsection*{Localization formulae}

Let $F=G/P$ be a flag variety, where $G$ is a connected simply connected semisimple
complex Lie group and $P$ is a parabolic subgroup.  Fix a maximal torus $\mathbb T\subset B\subset P$ and a positive root system $R^+$ with corresponding simple roots $\Pi$.
If $\Pi_P\subset\Pi$ determines $P$, we set
\[
\Pi_F:=\Pi\setminus\Pi_P,
\qquad
R_F^+:=R^+\setminus R_P^+.
\]
The root-theoretic data determine the basic geometry needed in the
localization calculation.  For instance, Proposition~\ref{prop:dimension} says that $\dim F=|R_F^+|$, while the Schubert classes form a
homogeneous basis (Proposition~\ref{prop:schubertbasis}) for the rational Betti cohomology of $F$:
\[
\rH^*(F,\Q)
=
\bigoplus_{w\in W_F}\Q\,\sigma_w.
\]
 Here, $W$ is the Weyl group, $W_P\subset W$ is the subgroup generated by the simple reflections
$s_\alpha$ with $\alpha\in\Pi_P$, and $W_F$ denotes the set of
minimal-length representatives of $W/W_P$.  We also have
\[
\rH_2(F,\Z)\cong Q_F^\vee,
\]
where $Q_F^\vee$ is the reduced coroot lattice.  Other relevant quantities are also
determined by the root system; see, e.g., Proposition~\ref{prop:picard},
Proposition~\ref{prop:h2},
Corollary~\ref{cor:effective}, and
Proposition~\ref{prop:chern}.

The $\mathbb T$-action on $F$ induces a $\mathbb T$-action on the moduli space of stable maps $\overline{\mathcal M}_{0,m}(F,\beta)$ such that its fixed-point set's connected components are indexed by decorated trees: vertices are labelled by
$\mathbb T$-fixed points of $F$, equivalently, by elements of $W_F$, while
edges correspond to $\mathbb T$-invariant rational curves and carry the
corresponding root-theoretic data together with a positive covering
degree.  We compute the equivariant Euler class of the virtual normal
bundle directly from these data and obtain an explicit localization
formula for genus-zero Gromov--Witten invariants of $F$ (Theorem \ref{thm:localization}).

\medskip

We then pass to a smooth zero locus $Z=\mathscr Z(F,\cE)\subset F$ of a regular section of a globally generated homogeneous vector bundle
$\cE$. If $-\mu_1,\dots,-\mu_s$ are the $\mathbb T$-weights of the fiber of $\cE$ over the base fixed
point, the quantum Lefschetz contribution along a decorated tree is
completely determined by these weights and their pairings with the roots
labelling the edges.  In particular, under the hypotheses considered
here, the localization formula depends on the homogeneous bundle through
its torus-weight multiset. The bundles used in this paper, together with the corresponding input data
for localization, are recorded in Appendix~\ref{app:bundles}.

\medskip

The main localization statement can be summarized as follows.

\begin{Mtheorem}[=Theorem~\ref{thm:localizationCI}]
\label{thm:A}
Let $\cE$ be a globally generated homogeneous bundle on the flag variety
$F=G/P$, and let $Z=\mathscr Z(F,\cE)$ be the smooth zero locus of a regular section.  Let $\gamma_k=\iota^*\sigma_{u_k},
~
u_k\in W_F$ and $\beta\in\mathsf{NE}(F)$. Assume that
$(m,\beta)$ is stable and that
\[
\sum_{k=1}^{m}\ell(u_k)
=
\dim Z+\int_\beta\left(c_1(TF)-c_1(\cE)\right)+m-3.
\]
Then,
\[
\sum_{\substack{\beta'\in\mathsf{NE}(Z)\\
                 \iota_*\beta'=\beta}}
\big\langle
\gamma_1,\dots,\gamma_m
\big\rangle^Z_{0,\beta'}
\]
is given by a finite sum over decorated trees of type
$(m,\beta)$ whose contributions are determined by the root-theoretic data
of $F$, the combinatorics and degrees of the tree, and the weights of
$\cE$.
\end{Mtheorem}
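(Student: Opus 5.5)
The argument combines two ingredients: the genus-zero functoriality (quantum Lefschetz) theorem of Kim--Kresch--Pantev \cite{Kim2003}, which rewrites the pushed-forward invariants of $Z$ as twisted invariants of $F$, and virtual localization on $\overline{\mathcal M}_{0,m}(F,\beta)$ with respect to $\mathbb T$, as in Theorem~\ref{thm:localization}.

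First, since $\cE$ is globally generated and $Z$ is the zero locus of a regular section, \cite{Kim2003} gives
\[
\iota_*[\overline{\mathcal M}_{0,m}(Z,\beta')]^{\mathrm{vir}}\ \text{summed over }\iota_*\beta'=\beta
\;=\; e(\cE_{0,m,\beta})\cap[\overline{\mathcal M}_{0,m}(F,\beta)],
\]
where $\cE_{0,m,\beta}=\pi_*f^*\cE$ is a vector bundle of rank $\int_\beta c_1(\cE)+\operatorname{rk}\cE$. Global generation gives $H^1(C,f^*\cE)=0$ on genus-zero curves, and $\overline{\mathcal M}_{0,m}(F,\beta)$ is smooth as a stack of the expected dimension because $F$ is convex. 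Pulling back $\sigma_{u_k}$ by the evaluation maps, the sum in the statement therefore equals
\[
\int_{[\overline{\mathcal M}_{0,m}(F,\beta)]} e(\cE_{0,m,\beta})\prod_k \ev_k^*\sigma_{u_k}.
\]
The hypothesis on $\sum\ell(u_k)$ is exactly the statement that the total degree equals $\dim F+\int_\beta c_1(TF)+m-3$, using $\dim Z=\dim F-\operatorname{rk}\cE$. So the integral is a number, and we may compute it using $\mathbb T$-equivariant lifts: the homogeneous $\mathbb T$-linearization of $\cE$, and equivariant Schubert classes whose restrictions to fixed points are given by the root data of Proposition~\ref{prop:schubertbasis}.

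Second, apply Atiyah--Bott/Graber--Pandharipande localization. The fixed loci are indexed by the decorated trees of type $(m,\beta)$ described before Theorem~\ref{thm:localization}: vertices carry $w\in W_F$, each edge is a $\mathbb T$-curve joining $w$ and $ws_\alpha$ with $\alpha\in R_F^+$ and covering degree $d$, and the edge classes sum to $\beta$. Finiteness follows because $F$ has finitely many fixed points and $\mathbb T$-curves, and $\mathsf{NE}(F)$ is generated by the classes of Corollary~\ref{cor:effective}. Theorem~\ref{thm:localization} already supplies each tree's contribution from the inverse Euler class of the normal bundle and the insertions, and these are root-theoretic. It remains to compute the restriction of $e(\cE_{0,m,\beta})$ to each fixed locus. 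Normalization gives the exact sequence
\[
0\to H^0(C,f^*\cE)\to \bigoplus_{v}\cE_{p(v)}\oplus\bigoplus_{e}H^0(C_e,f^*\cE)\to\bigoplus_{\text{flags}}\cE_{p(v)}.
\]
Hence the Euler class is the product over edges of $e(H^0(C_e,f_e^*\cE))$, multiplied by $\prod_v e(\cE_{p(v)})^{1-\mathrm{val}(v)}$.

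Third, and this is the main computation, identify these equivariant weights. The fiber $\cE_{p(w)}$ has weights $-w(\mu_j)$, obtained by translating the base weights by $w$. For an edge $C_e\cong\PP^1$ of degree $d$ over the $\alpha$-curve, $f_e^*\cE$ splits $\mathbb T$-equivariantly into line bundles. The pairing $\langle\mu_j,\alpha^\vee\rangle$, suitably twisted by $w$, gives the degree on the curve, and $H^0$ of each summand has the weights interpolating linearly between the two endpoint weights in steps of $w(\alpha)/d$. I expect the hard point to be justifying this splitting for homogeneous but non-irreducible or non-split $\cE$. My first approach is to restrict to the $\SL_2$ (root subgroup) through $w$ that preserves the curve: $f_e^*\cE$ is then an $\SL_2\times\mathbb T$-equivariant bundle on $\PP^1$, and it is determined up to its equivariant $K$-class by its fixed-fiber weights. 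This suffices because $e(H^0)$ depends only on the equivariant $K$-class, since $H^1=0$.

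With all pieces expressed in terms of roots, $W_F$, the $\mu_j$, and the degrees, summing over trees gives the asserted finite formula.
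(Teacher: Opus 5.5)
Your proposal is correct and follows the paper's proof of Theorem~\ref{thm:localizationCI} essentially step by step. You use Kim--Kresch--Pantev functoriality to reduce to $\int e(\cE_{0,m,\beta})\prod_k\ev_k^*\sigma_{u_k}$ over the smooth stack $\overline{\mathcal M}_{0,m}(F,\beta)$, then torus localization over decorated trees with the normal-bundle factor of Proposition~\ref{prop:euler}, and then the normalization count that gives the twist $\mathrm{Tw}(\Gamma)$ of Proposition~\ref{prop:twist}, with vertex exponent $1-\mathrm{val}(v)$.

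Three small corrections. On the point you flagged, the paper never splits $f_e^*\cE$; such a splitting indeed fails in general, cf.\ Remark~\ref{rmk:filtration}. Instead it pulls back a $\mathbb T$-equivariant filtration of $\cE|_{C_{w,\alpha}}$ whose graded pieces have degrees $\langle\mu_i,\alpha^\vee\rangle$, and uses multiplicativity of Euler classes. This is the same $K$-theoretic observation you make. However, the $\SL_2$-equivariance lives on $C_{w,\alpha}$, not on the degree-$d$ cover $C_e$. Finally, the fixed-point restrictions of the equivariant Schubert classes come from Billey's formula (Theorem~\ref{thm:billey}), not from Proposition~\ref{prop:schubertbasis}.
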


Set $\rH_F^*(Z)
:=
\operatorname{im}
\left(
\rH^*(F,\Q)\longrightarrow\rH^*(Z,\Q)
\right)$. The localization formula developed here allows for the computation of the projection of small
quantum multiplication by $c_1(TZ)$ to this \emph{flag-ambient}
cohomology.  More precisely, when the restricted Poincar\'e pairing is
nondegenerate, Theorem \ref{thm:A} allows us to compute
\[
A(q)
=
\operatorname{pr}_F
\circ
\big(c_1(TZ)\star_{\mathrm{small}}(-)\big)
\Big|_{\rH_F^*(Z)}.
\]
It is through this very matrix, specialized to $q=1$ in all applications, that we obtain irrationality criteria. Of relevance for comparison with our results, we also point out \cite{bohning2026naiveatomsblowupsexamples}.

\subsection*{Coarse Hodge atoms}

The Theory of Atoms produces a new approach to questions in birational geometry
which stem from the combination of Hodge theory and Gromov--Witten theory. The foundational
manuscript of Katzarkov--Kontsevich-Pantev-Yu~\cite{KKPY} introduced \emph{Hodge atoms} and established (among many other results) that a very general cubic fourfold is not rational. Within a year, there was a research
literature\footnote {It is completely unfair to list only such a small amount of works. The theory of atoms has brought with itself different contributions in many directions through many interesting papers; we have just selected those which are directly related to this work.}: Gu\'er\'e~\cite{JeremyCubic} proved that the primitive cohomology of
\emph{any} rational smooth cubic fourfold is isomorphic, as a Hodge structure (up to Tate twist),
to the middle cohomology of a projective K3 surface; Benedetti--Manivel--Perrin~\cite{Benedetti_Vladimiro_Manivel} established the
irrationality of very general Gushel--Mukai fourfolds and similarly obtained a result \emph{a la} Gu\'er\'e;
Benedetti--Gu\'er\'e--Manivel--Perrin~\cite{BGMP} settled birational questions
for Verra fourfolds; Fay~\cite{Fay} obtained
\emph{equivariant} irrationality for Verra fourfolds; an atomic irrationality
criterion bypassing quantum computations now reaches K\"uchle
fourfolds~\cite{Benedetti_Fay_Guere_Manivel_Perrin}. In the equivariant direction,
Cavenaghi--Katzarkov--Kontsevich~\cite{AtomsMeetSymbols} extended the theory to varieties
with finite group actions and to orbifolds. B\"ohning--von Bothmer--Su'a \cite{bohning2026naiveatomsblowupsexamples} defined naive (and small) atomic decompositions of smooth projective varieties satisfying a version of Iritani's blowup formula \cite{iritani2025quantumcohomologyblowups}. Here, we continue this development. Bridging with the localization methods developed in the first half of this paper, we offer ``matrix'' and ``numerical'' criteria for determining non-rationality, as we summarize below. First, let us gather the basic setup from atom theory needed therein.

\medskip

The original Theory of Hodge Atoms is formulated in terms of the A-model $F$-bundle.  For this paper, we use a
coarser structure, retaining the base of the A-model as a parameter space, the quantum product, and the Euler vector field $\mathsf{Eu}=c_1(TZ)+\ldots$ \cite{CKK-lectures}. After extending coefficients from $\Q$ to the non-Archimedean field
$\Bbbk$ used in \cite{KKPY}, we build an analytic domain $B_Z$ (in the sense of Berkovich) on which
the genus-zero Gromov--Witten potential is an element of its structure sheaf.  For every $\Bbbk$-rational point
$b\in B_Z(\Bbbk)$, there is an endomorphism $\boldsymbol\kappa_b
:=
\mathsf{Eu}_b\star_b(-)
\colon
\rH^*(Z,\Bbbk)\longrightarrow\rH^*(Z,\Bbbk).$ Regard $V:=\rH^*(Z,\Q)$ as its natural graded polarizable rational Hodge structure.  Let
$\mathsf{MT}(V)$ denote its Mumford--Tate group with Tate character $f\colon\mathsf{MT}(V)\longrightarrow\mathbb G_m$. The \emph{Hodge group} used in this paper is
$\mathsf{Hod}
:=
\ker f$. Its invariant vectors are the rational Hodge classes in $\rH^*(Z,\Q)$.  The group $\mathsf{Hod}$ defines an action on $B_Z$ with fixed locus $B_Z^{\mathsf{Hod}}$.  At every point $b\in B_Z^{\mathsf{Hod}}$ the
quantum product and the Euler vector field are
$\mathsf{Hod}_{\Bbbk}$-equivariant.  Consequently,
$\boldsymbol\kappa_b$ commutes with the Hodge group, and each of its
generalized eigenspaces is naturally an
$\mathsf{Hod}_{\Bbbk}$-representation. Let $D(b)
:=
\#\operatorname{spec}(\boldsymbol\kappa_b)$ be the number of distinct eigenvalues.  The locus $\mathcal V
:=
\left\{
b\in B_Z^{\mathsf{Hod}}(\Bbbk)
\ \middle|\
D(b)=
\max_{a\in B_Z^{\mathsf{Hod}}(\Bbbk)}D(a)
\right\}$ is the set of $\Bbbk$-points of a nonempty analytic open subset.  The
\emph{coarse Hodge atoms} are the isomorphism classes, as
$\mathsf{Hod}_{\overline{\Q}}$-representations, of the generalized
eigenspaces $E_{b,\lambda},
~
b\in\mathcal V$.

The points at which our localization calculation becomes directly
relevant are the \emph{small points}.  By definition, these are points
$b\in B_Z(\Bbbk)$ for which all non-Novikov coordinates vanish.  Every
small point belongs to the Hodge locus and $\mathsf{Eu}_b=c_1(TZ)$. Therefore, $\boldsymbol\kappa_b
=
c_1(TZ)\star_b(-)$, where $\star_b$ is the small quantum product evaluated at the chosen Novikov parameters.

\subsection*{Obstructions to rationality}
Let $F$ be a flag variety with $\mathrm{Pic}(F)\cong\Z^d$ and $\cE$ be a homogeneous vector bundle on $F$. Set
\[
L
:=
\cO_F(\underbrace{1,\dots,1}_{d\text{ times}}),
\]
\[
Y:=\mathscr Z(F,\cE),
\qquad
X:=\mathscr Z(F,\cE\oplus L)=Y\cap H,
\qquad
H\in|L|.
\]
Assume that $Y$ and $X$ are smooth of dimensions $n+1$ and $n$,
respectively, with $n\ge3$ and that
\begin{standing}\label{stand:intro}
\begin{defenum}
\item[\textup{(H0)}]
the Picard lattices are identified
\[
\mathrm{Pic}(F)
\xrightarrow{\sim}
\mathrm{Pic}(Y)
\xrightarrow{\sim}
\mathrm{Pic}(X),
\]
together with the dual numerical curve lattices used for the Novikov
variables
\item[\textup{(H1)}]
for every $k<n$, $\rH^k(F,\Q)\xrightarrow{\sim}\rH^k(Y,\Q)$
\item[\textup{(H2)}]
when $n=4$, the group $\rH^4(Y,\Q)$ is entirely of Hodge type $(2,2)$
\item[\textup{(H3)}]
$Y$ is Fano.

\end{defenum}
\end{standing}
Write $j\colon X\hookrightarrow Y,~
\iota_Y\colon Y\hookrightarrow F,~
\iota=\iota_Y\circ j$ and set:
\[
\rH_F^*(X)
:=
\iota^*\rH^*(F,\Q),
\qquad
\rH_Y^*(X)
:=
j^*\rH^*(Y,\Q).
\]
We term the first graded ring above the flag-ambient part. The second is the part fixed by the monodromy of a Lefschetz pencil of hyperplane sections of $Y$.  Let $B(q)$ denote the matrix of small quantum
multiplication by $c_1(TX)$ restricted to the monodromy-fixed part
$\rH_Y^*(X)$.  Since $X$ is Fano, only finitely many curve classes can
occur in each entry, so every entry of $B(q)$ is a polynomial in the
Novikov monomials. We denote by $B^{\mathrm{alg}}(1)$ the algebraic specialization $q=1$. Likewise, $A^{\mathrm{alg}}(1)$ denotes the corresponding algebraic specialization of the flag-ambient matrix computed by localization. 

\begin{Mtheorem}[=Theorem~\ref{thm:irrat-from-A}]
\label{thm:B}
Assume (H0)--(H3). Suppose that $n=4$, that
$X$ is Fano with $h^{3,1}(X)=1$, and that $X$ is Hodge general\footnote{i.e., $\rH^4(X,\Q)^{\mathrm{Hdg}}
=
j^*\rH^4(Y,\Q).$}.  If every eigenvalue of $B^{\mathrm{alg}}(1)$ has algebraic multiplicity at most two, then $X$ is irrational.

If, in addition, \[
\rH^4(F,\Q)
\xrightarrow{\sim}
\rH^4(Y,\Q),
\] it is enough to verify this
condition on the computable flag-ambient matrix $A^{\mathrm{alg}}(1)$.
\end{Mtheorem}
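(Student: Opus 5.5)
The strategy is to show that a rational $X$ would need a coarse Hodge atom carrying the transcendental part of $\rH^4(X)$ with the Hodge numbers of a K3 surface. Under the eigenvalue hypothesis no such atom exists.

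\emph{Splitting.} By (H1), (H2) and the Lefschetz hyperplane theorem, we have the orthogonal decomposition
\[
\rH^*(X,\Q)=\rH_Y^*(X)\oplus \rH^4_{\mathrm{van}}(X,\Q).
\]
Quantum multiplication by $c_1(TX)$ respects it, as the paper notes, and acts on the vanishing summand by a scalar $c$. This is the deformation/monodromy argument: the vanishing part is an irreducible monodromy representation, and the small quantum product is monodromy invariant. At a small point $b$, with Novikov variables specialized to $1$, $\boldsymbol\kappa_b$ is block diagonal with blocks $B^{\mathrm{alg}}(1)$ and $c\cdot\mathrm{Id}$. Since $X$ is Hodge general, the Hodge classes in $\rH^4$ are exactly $j^*\rH^4(Y)$. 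Hence $\rH^4_{\mathrm{van}}$ contains no nonzero rational Hodge class and $T:=\rH^4_{\mathrm{van}}$ is the transcendental lattice. As $h^{3,1}=1$ and $X$ is Hodge general, $T$ is an irreducible $\mathsf{Hod}$-representation of K3 type, of dimension at least $3$. The space $\rH_Y^*(X)$ consists of Hodge classes, so it is a sum of trivial characters.

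\emph{Passing to a generic point.} The coarse atoms are defined at points of $\mathcal V$, where $D$ is maximal, and the small point need not lie there. Generalized eigenspaces can only split under deformation, and the decomposition is $\mathsf{Hod}$-equivariant. So each atom at a generic point is a sub-$\mathsf{Hod}$-representation of a generalized eigenspace at $b$. The irreducible $T$ therefore lies entirely inside a single atom $E$. That atom is $T\oplus\mathbf 1^{\oplus k}$, where $k$ is at most the algebraic multiplicity of $c$ in $B^{\mathrm{alg}}(1)$, hence $k\le 2$.

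\emph{Rationality input.} If $X$ were rational, the weak factorization theorem together with the KKPY blowup formula (Iritani) would express the atoms of $X$ in terms of atoms of points and of centers of blowups of dimension at most $2$. Atoms are invariants of the equivalence class, so some center's atom must contain the K3-type representation $T$. A center of dimension at most $1$ carries only Hodge structures of weight at most $1$ up to twist, which is impossible. So the center is a surface $S$ with $T$ a sub-Hodge structure of $\rH^2(S)$, and the atom contributed by $S$ containing $T$ must have at least three trivial summands, namely $\rH^0$, $\rH^4$ and the Hodge classes of $\rH^2$. Making this precise, in particular that $\mathrm{NS}(S)\neq 0$ gives a trivial summand in the same atom, is the point I expect to be the main obstacle. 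I would first try to use the fact that the atom of a surface at a generic point contains $\rH^0\oplus\rH^2\oplus\rH^4$ as one block, as in Gu\'er\'e's and KKPY's cubic argument, and that an ample class gives a Hodge class in $\rH^2$. The conclusion would be $k\ge 3$, contradicting $k\le 2$.

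\emph{Reduction to $A^{\mathrm{alg}}(1)$.} If in addition $\rH^4(F)\cong\rH^4(Y)$, then together with (H1) and hard Lefschetz we get $\rH_Y^*(X)=\rH_F^*(X)$. The restricted pairing is then nondegenerate and $A^{\mathrm{alg}}(1)=B^{\mathrm{alg}}(1)$. Theorem~\ref{thm:A} then makes the eigenvalue condition computable.
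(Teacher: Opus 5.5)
Your architecture is viable. Exhibit an atom of $X$ with nonzero Hochschild degree two but at most two Hodge classes, then rule it out against the centers of a weak factorization; this is essentially the original KKPY argument for cubics, and a real alternative to the paper's route. However, the decisive step is left open, and the fix you propose for it is false. You need that every atom of a smooth projective surface with nonzero Hochschild degree two carries at least three Hodge classes. It is not true that a surface's atom at a generic point contains $\rH^0\oplus\rH^2\oplus\rH^4$ as one block: $\PP^2$ has three trivial atoms, and on a blow-up of a K3 surface the exceptional classes split off as point atoms.

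What is true, and has to be imported, is the low-dimensional classification of Table~\ref{table:atoms-classification}:
\begin{enumerate}
\item on a surface, $|p-q|=2$ comes only from $\rH^{2,0}\oplus\rH^{0,2}$, so only surfaces with $p_g\ge1$ matter;
\item blow-ups only add point atoms;
\item a minimal model $\Sigma$ with $p_g\ge1$ contributes the single atom $\rH^\bullet(\Sigma)$, which contains $\rH^0$, $\rH^4$ and an ample class.
\end{enumerate}
With this and Theorem~\ref{non_rationality_criterion} your argument closes. Note that the blow-up formula matches atoms; it does not give ``containment'', and what transfers are invariants such as $\rho$ and the Hochschild grading.

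The paper avoids all of this. It stays at the small points $b_N$ and reads off Gu\'er\'e's invariants $\nu=1$ and $\rho=m_a^Y(\lambda_{\mathrm{van}})\le2$ via Lemma~\ref{lem:guere-dictionary}, for infinitely many $N$ so that they escape the finite exceptional set. It then imports Gu\'er\'e's result that rational fourfolds satisfy $\clubsuit$, i.e.\ $\nu=0$ or $\rho\ge3$ (Theorem~\ref{thm:imported}(a),(b)). That result packages exactly the surface bound, the blow-up additivity and the cross-variety comparison that your route must supply by hand. It also makes the passage to generic points unnecessary.

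Two smaller repairs are needed.
\begin{enumerate}
\item $q=1$ is not a point of $B_X$, since $\mathrm{pr}(b)$ must lie in $-\mathcal U_X$. Work at $b_s$ with $0<|s|<1$ and transfer multiplicities through the conjugation $B_{b_s}=D_s^{-1}\big(sB^{\mathrm{alg}}(1)\big)D_s$ of Lemma~\ref{lem:scaling}.
\item $T$ is irreducible as a rational Hodge structure, but atoms are $\mathsf{Hod}_{\overline{\Q}}$-representations. $T\otimes\overline{\Q}$ splits whenever $\mathrm{End}_{\mathrm{Hdg}}(T)\neq\Q$, which Hodge generality does not exclude, so ``$T$ lies in a single atom'' is unjustified. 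It is also unneeded. Near $b$, the generic-point eigenspaces clustering at $\lambda_{\mathrm{van}}$ decompose $E_{b,\lambda_{\mathrm{van}}}$ as a $\mathsf{Hod}$-representation, and invariants are additive. So the atom carrying $\rH^{3,1}$ already has $\rho\le m_a^Y(\lambda_{\mathrm{van}})\le2$.
\end{enumerate}
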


\begin{Mtheorem}[=Theorem~\ref{theo:rational_Fano_K3}]
\label{thm:C}
Assume (H0)--(H3). Suppose that $n=4$ and
that $X$ is Fano with $h^{3,1}(X)=1$. Assume that every eigenvalue of $B^{\mathrm{alg}}(1)$ has Jordan defect $m_a-m_g\le1$, where $m_a$ and $m_g$ denote algebraic and geometric multiplicity.  If
$X$ is rational, then every weak factorization
\[
X\dashrightarrow\PP^4
\]
contains a smooth surface center whose minimal model is a projective K3
surface. Under the assumption that \[
\rH^4(F,\Q)
\xrightarrow{\sim}
\rH^4(Y,\Q),
\] the Jordan-defect hypothesis may be checked on
$A^{\mathrm{alg}}(1)$.
\end{Mtheorem}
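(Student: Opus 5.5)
The plan is to follow the atomic strategy of Katzarkov--Kontsevich--Pantev--Yu and Gu\'er\'e, using coarse Hodge atoms at a small point $b$ and comparing with the blowup formula along a weak factorization.

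\textbf{Step 1: block decomposition.} Write
\[
\rH^*(X,\Q)=\rH_Y^*(X)\oplus \rH^4_{\mathrm{van}}(X).
\]
By the Lefschetz hyperplane theorem together with (H1), the vanishing part sits entirely in $\rH^4$. Quantum multiplication by $c_1(TX)$ preserves this splitting and acts on $\rH^4_{\mathrm{van}}$ by a scalar $\lambda_0$. This follows from monodromy invariance of Gromov--Witten invariants, which uses (H0) and (H3), together with irreducibility of the vanishing monodromy representation.

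Hence at a small point the spectrum of $\boldsymbol\kappa_b$ consists of the spectrum of $B^{\mathrm{alg}}(1)$, plus $\lambda_0$ with multiplicity enlarged by $\dim\rH^4_{\mathrm{van}}$. The same holds on the open locus $\mathcal V$ after deformation, using upper semicontinuity of multiplicities.

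Now take a weak factorization $X\dashrightarrow\PP^4$ by blowups and blowdowns along smooth centers of dimension $\le 2$. Iritani's blowup formula, used as in \cite{KKPY}, says the atoms of $X$ and of $\PP^4$ differ by atoms of the centers: points, curves and surfaces, each Tate-twisted.

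\textbf{Step 2: locate the transcendental atom.} The transcendental part $T\subset \rH^4_{\mathrm{van}}(X)$ has $h^{3,1}=1$, and Hodge generality gives $T=\rH^4_{\mathrm{van}}$ as an irreducible $\mathsf{Hod}$-representation. This $T$ must live inside the atom $E_{b,\lambda_0}$. The atoms of $\PP^4$ and of points and curves cannot carry a weight-4 piece of type $(3,1)$ with $h^{3,1}=1$ after twist: curves only give $(2,1)$-type pieces. So $T$ must be matched, after twist, by the $\rH^2$ of some surface center $S$. This requires the atom of $S$ containing its transcendental $\rH^2$ to be recognized as isomorphic to the atom of $X$ containing $T$.

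\textbf{Step 3: the multiplicity hypothesis.} This step I expect to be the main obstacle.

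\emph{Atom size on the $X$ side.} The eigenvalue multiplicity bound ($\le 2$, or Jordan defect $\le1$) is used to control the atom of $X$ containing $T$. Its Hodge-class part, coming from $B^{\mathrm{alg}}(1)$'s $\lambda_0$-eigenspace, has dimension at most $2$. One must also handle the possibility that $\lambda_0$ is not an eigenvalue of $B$ at all.

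\emph{Atom structure on the surface side.} A surface atom has the shape $\rH^0\oplus\rH^2\oplus\rH^4$ with the two extremes of Hodge type. Comparing dimensions of Hodge-invariant subspaces forces the following. In the Hodge-general case, the transcendental lattice would have to come from a surface whose atom is too small, or whose Hodge structure equals that of the very general $X$. One then argues by a Noether--Lefschetz/dimension count, as in \cite{KKPY}, that this is impossible, giving irrationality.

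\emph{Hodge-general deduction.} Concretely, I would show that the $\mathsf{Hod}$-rep $T\oplus(\text{at most 2 trivial})$ cannot occur as a surface atom unless $S$ has $h^{2,0}=1$. In that case, Hodge generality forces the Picard rank of $S$ to be too small to fit the trivial summands. The delicate point is exactly this counting of trivial summands against the multiplicity bound.

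\textbf{Step 4: the K3 constraint of Main Theorem C.} Here the Jordan-defect hypothesis makes the nilpotent part of $\boldsymbol\kappa_b$ on the relevant atom of rank at most one, matching the $\rH^0\to\rH^2\to\rH^4$ nilpotent structure of a surface atom. This forces the center $S$ to satisfy $h^{2,0}(S)=1$ and $\rH^2(S)_{\mathrm{tr}}\cong T(-1)$.

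Since $h^{1,0}$ must vanish (no weight-1 atoms can be matched), the minimal model of $S$ is a K3 surface. It is projective because $S$ is.

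\textbf{Step 5: reduction to the flag-ambient matrix.} When $\rH^4(F)\cong\rH^4(Y)$, combining with (H1) and hard Lefschetz gives $\rH_F^*(X)=\rH_Y^*(X)$. Then $A^{\mathrm{alg}}(1)=B^{\mathrm{alg}}(1)$, so the hypotheses can be checked on $A^{\mathrm{alg}}(1)$.
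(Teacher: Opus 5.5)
There is a genuine gap, and it sits exactly where the K3 conclusion is supposed to come out (your Steps~2--4).

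\textbf{The final deduction is false as stated.} $h^{2,0}(S)=1$ and $h^{1,0}(S)=0$ do not force a K3 minimal model. There are minimal surfaces of general type, and properly elliptic surfaces, with $p_g=1$, $q=0$.

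Excluding them is precisely the job of the Jordan-defect hypothesis, and it is not a formal ``matching of nilpotent structures''. Take a minimal $S$ with $K_S$ nef. At a small point, $c_1(TS)\star$ is the classical cup product with $-K_S$, because the divisor and fundamental-class axioms kill every correction. So $\rH^\bullet(S)$ is a single generalized eigenspace. If $K_S\not\equiv0$, the defect is the rank of $-K_S\cup$, which is at least $2$: the image contains $K_S$ and the point class. Non-minimal centers need a separate analysis of the exceptional-curve corrections.

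The paper does not reprove this. It imports Gu\'er\'e's Proposition~53 (Theorem~\ref{thm:imported}(c)): a surface failing $\heartsuit_K$ has K3 minimal model. Failing $\heartsuit_K$ means having an eigenvalue with $\nu\neq0$, $\nu'=0$, $\gamma\le1$ for infinitely many normalized evaluation maps.

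Likewise, $q=0$ for the center comes from $\nu'=0$ for that one center. It does not come from an absence of weight-one atoms; curve centers of positive genus may well occur. Your claim $\rH^2(S)_{\mathrm{tr}}\cong T(-1)$ is also more than this method yields. The paper only obtains the embedding of Theorem~\ref{thm:imported}(e).

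\textbf{Your framework cannot transport the defect bound from $X$ to a center.} Coarse Hodge atoms are isomorphism classes of $\mathsf{Hod}$-representations and forget the nilpotent part of $\boldsymbol\kappa$. The atomic blow-up formula therefore says nothing about Jordan defects. Moreover, atoms are read at generic points of $\mathcal V$, and Jordan defects are not upper semicontinuous: a nilpotent of type $(2,1)$ is a limit of nilpotents of type $(3)$. So the hypothesis on $B^{\mathrm{alg}}(1)$ at small points gives no control on $\mathcal V$.

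The paper sidesteps both problems by working at the small points $b_N$ themselves, through Gu\'er\'e's normalized evaluation maps $\mathrm{ev}_N$ (Lemmas~\ref{lem:guere-dictionary} and~\ref{lem:scaling}). At the eigenvalue of $W$ it computes:
\begin{itemize}
\item $\nu=1$, since the line $\rH^{3,1}(X)$ lies in $W_\C$ by (H2);
\item $\nu'=0$, since there is no odd cohomology;
\item $\gamma=m_a^Y-m_g^Y\le1$, since $\boldsymbol\kappa-\lambda_{\mathrm{van}}$ kills $W$.
\end{itemize}
After choosing compatible evaluation maps with disjoint spectra, Gu\'er\'e's Remark~42 gives additivity of $\nu,\nu',\gamma$ over the centers, with nonnegative summands. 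Then $\nu^X=1$ isolates exactly one center with $\nu=1$, and $\nu'^X=0$, $\gamma^X\le1$ pass to that same center. It is a surface because points and curves have no Hochschild-degree-two part. That center fails $\heartsuit_{\widetilde K}$ with $\widetilde K\supseteq\Q(\mathbf i)$, so Proposition~53 applies.

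\textbf{Hodge generality is not a hypothesis of this theorem.} So the identification $T=\rH^4(X)_{\mathrm{van}}$ in Step~2 is unavailable. The whole Noether--Lefschetz/Picard count of Step~3 belongs to the $\clubsuit$/multiplicity-two setting of Theorem~\ref{thm:irrat-from-A}, not here. The relevant count is $\nu$, which needs no Hodge generality. Your Steps~1 and~5 agree with the paper.
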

A concrete application of Theorem \ref{theo:rational_Fano_K3} is given in Example \ref{ex:23fourfold}.

\medskip

Finally, for comparison, we also obtain a numerical criterion from the theorem of
Benedetti--Fay--Gu\'er\'e--Manivel--Perrin \cite{Benedetti_Fay_Guere_Manivel_Perrin}.  This criterion does not require
a quantum computation.

\begin{Mtheorem}[=Corollaries~\ref{cor:numerical} and
\ref{cor:numerical-combinatorial}]
\label{thm:D}
Assume (H0)--(H3). Suppose that $n=4$, that
$X$ is Fano and Hodge general, and that $h^{3,1}(X)=1$. If $b_4(Y)\le h^{2,2}(X)-20$, then $X$ is irrational. Equivalently if
\[
\rk\,\rH^4(X,\Q)^{\mathrm{Hdg}}
\le
h^{2,2}(X)-20.
\]

If, in addition, \[
\rH^4(F,\Q)
\xrightarrow{\sim}
\rH^4(Y,\Q),
\] the criterion is given by
\[
b_4(F)\le h^{2,2}(X)-20.
\]
\end{Mtheorem}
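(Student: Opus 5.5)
The plan is to derive Main Theorem~\ref{thm:D} from the numerical irrationality criterion of Benedetti--Fay--Gu\'er\'e--Manivel--Perrin \cite{Benedetti_Fay_Guere_Manivel_Perrin}. For a Fano fourfold with $h^{3,1}=1$, that criterion says: if $X$ is rational, then $\rH^4(X,\Q)$ contains a sub-Hodge structure of K3 type that is Hodge-theoretically accounted for by a K3 surface center. This forces the transcendental lattice $T(X)\subset \rH^4(X,\Q)$ to embed, up to Tate twist, into the transcendental part of $\rH^2$ of a projective K3 surface, so $\rk T(X)\le 22-1=21$. In fact the bound can be sharpened to $\rk T(X)\le 20$ once one notes that the image must sit inside a primitive part orthogonal to an ample class together with the class of the exceptional divisor; I would read off from \cite{Benedetti_Fay_Guere_Manivel_Perrin} exactly which bound it gives. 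Contrapositively, $\rk T(X)\ge 21$ (resp.\ the precise threshold from that paper) implies irrationality. I will cite this as a black box.

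First I compute $\rk T(X)$. Since $h^{4,0}(X)=0$ and the Hodge classes form a rational sub-Hodge structure whose orthogonal complement is $T(X)$, we have $\rk T(X)=b_4(X)-\rk \rH^4(X,\Q)^{\mathrm{Hdg}}$. With $b_4(X)=h^{2,2}(X)+2h^{3,1}(X)=h^{2,2}(X)+2$, this gives
\[
\rk T(X)=h^{2,2}(X)+2-\rk\rH^4(X,\Q)^{\mathrm{Hdg}} .
\]
The hypothesis $\rk \rH^4(X,\Q)^{\mathrm{Hdg}}\le h^{2,2}(X)-20$ therefore gives $\rk T(X)\ge 22$. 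This exceeds the maximal possible rank of a transcendental lattice of a projective K3 surface, which is $21$, so the obstruction applies and $X$ is irrational.

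Second, I translate the statement into $b_4(Y)$. Hodge generality gives $\rH^4(X,\Q)^{\mathrm{Hdg}}=j^*\rH^4(Y,\Q)$. Hard Lefschetz for the ample divisor $X\subset Y$, or Lefschetz injectivity in degree $4<\dim Y+1$ together with Poincaré duality, shows that $j^*\colon\rH^4(Y,\Q)\to\rH^4(X,\Q)$ is injective. Hence $\rk \rH^4(X,\Q)^{\mathrm{Hdg}}=b_4(Y)$, which gives the claimed equivalence. By (H2), all of $\rH^4(Y)$ is of type $(2,2)$, consistent with this. If moreover $\rH^4(F,\Q)\cong\rH^4(Y,\Q)$, then $b_4(Y)=b_4(F)$, which is the last clause. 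Through Proposition~\ref{prop:schubertbasis}, $b_4(F)$ is the number of $w\in W_F$ of length $2$, so the criterion is purely combinatorial.

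The main obstacle is matching hypotheses exactly with the cited theorem: checking that its statement needs only $h^{3,1}=1$ and the transcendental rank bound, and pinning down the precise numerical threshold. The slack between $22$ and $21$ suggests the intended bound there may be $\rk T\le 21$ or a variant. I would first check whether their theorem is phrased as ``$\rk T(X)>21$ implies irrational'' and adjust the constant if necessary.
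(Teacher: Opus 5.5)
\textbf{There is a genuine gap, and it is in the irrationality input.} The rest of your proposal is the paper's argument. Hodge generality and (H2) give $\rH^4(X)^{\mathrm{Hdg}}=j^*\rH^4(Y,\Q)$, and weak Lefschetz makes $j^*$ injective in degree $4=\dim X$. Your $T(X)$ is therefore exactly $\rH^4(X)_{\mathrm{van}}$, and $\rk T(X)=h^{2,2}(X)+2-b_4(Y)\ge 22$ is equivalent to $b_4(Y)\le h^{2,2}(X)-20$. Under (AC) one then gets $b_4(Y)=b_4(F)$.

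The black box you invoke is: ``if $X$ is rational, then $T(X)$ embeds, up to Tate twist, into the transcendental part of $\rH^2$ of a projective K3 surface.'' This is not the theorem of \cite{Benedetti_Fay_Guere_Manivel_Perrin}, and it is not available under (H0)--(H3).
\begin{itemize}
\item Statements of that shape are proved case by case: Gu\'er\'e for cubics, Benedetti--Manivel--Perrin for Gushel--Mukai fourfolds.
\item The general K3-center result here, Theorem~\ref{theo:rational_Fano_K3}, needs a Jordan-defect hypothesis on the quantum block. It also gives an embedding in the opposite direction, $\rH^\bullet(\Sigma)(-1)\hookrightarrow E^X_{\mathrm{ev},\alpha}$, which yields no upper bound on $\rk T(X)$.
\item Nothing forces the relevant center to be a K3 surface a priori.
\end{itemize}
Your remark about sharpening to $\rk T(X)\le 20$, and your worry about ``slack between $22$ and $21$,'' both show that the cited statement has not been pinned down.

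\textbf{What the paper uses instead.} It cites \cite{Benedetti_Fay_Guere_Manivel_Perrin}, Thm.~4.1 (Theorem~\ref{thm:BFGMPcriterion}), in its precise form. Let $Y$ be a smooth Fano fivefold and $X$ a smooth Fano hyperplane section with $b_1(X)=b_3(X)=0$, $h^{3,1}(X)>h^{3,1}(Y)$ and $b_4(X)_{\mathrm{van}}\ge 10+12h^{3,1}(X)$. If $X$ is Hodge general, then $X$ is irrational.

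The proof of the statement then reduces to two steps.
\begin{itemize}
\item Check the hypotheses: (H3) gives $Y$ Fano, and Lemma~\ref{lem:automatic} gives $b_1(X)=b_3(X)=0$ and $h^{3,1}(Y)=0<1=h^{3,1}(X)$.
\item Run your computation with $b_4(X)_{\mathrm{van}}=b_4(X)-b_4(Y)$.
\end{itemize}

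The constant $10+12h^{3,1}$ reflects Noether's formula. Take a surface $S$ with $q=0$ and $p_g\ge 1$. Its minimal model $S_0$ has $K_{S_0}^2\ge 0$, so $b_2(S_0)=10+12p_g-K_{S_0}^2\le 10+12p_g$. Hence the transcendental rank of $S$, a birational invariant, is at most $9+12p_g$. This holds uniformly for K3, elliptic and general-type surfaces.

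For $h^{3,1}=1$ the threshold is exactly $\dim\rH^4(X)_{\mathrm{van}}\ge 22>21$. There is no slack to adjust and no sharpening to $20$. Replace your black box by this citation and verify its hypotheses, and the argument is complete.
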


\medskip

\subsection*{On the examples}

The examples discussed in this paper illustrate both the effectiveness
and the limitations of these results.

For cubic fourfolds, the flag-ambient and monodromy-fixed subspaces
coincide, and the localization algorithm recovers the known matrix of
small quantum multiplication by $c_1(TX)$.  The zero eigenvalue of this
matrix has algebraic multiplicity two and geometric multiplicity one.
Consequently, Theorem~\ref{thm:B} recovers the irrationality of
Hodge-general cubic fourfolds, and in particular of very general ones.
Furthermore, Theorem~\ref{thm:C} applies to every smooth cubic fourfold. 

For ordinary Gushel--Mukai fourfolds, the corresponding small quantum operator has four simple nonzero
eigenvalues and a two-dimensional semisimple kernel.  Consequently, both
the multiplicity bound of Theorem~\ref{thm:B} and the Jordan-defect bound
of Theorem~\ref{thm:C} are satisfied.  These conditions recover the
irrationality of Hodge-general ordinary Gushel--Mukai fourfolds and yield
the corresponding K3-center constraint for hypothetical rational members
of the ordinary Gushel--Mukai family.  This is compatible with the
stronger case-specific results of Benedetti--Manivel--Perrin \cite{Benedetti_Vladimiro_Manivel}.

The K\"uchle fourfold of type $(c5)$ exhibits qualitatively different
behavior, showing the necessity of distinguishing between the
flag-ambient and monodromy-fixed subspaces.  For
\[
F=\Gr(3,7)
\]
one has
\[
b_4(F)=2,
\qquad
b_4(Y)=4.
\]
Consequently, the $6\times6$ matrix produced by localization computes only
the projected flag-ambient operator and does not determine the action on
the two additional fixed $(2,2)$-classes inherited from the K\"uchle
fivefold.  Hence Theorems~\ref{thm:B} and~\ref{thm:C} cannot be applied
to K\"uchle $(c5)$ from this matrix alone.  The numerical criterion,
however, is satisfied exactly at equality:
\[
b_4(Y)
=
4
=
h^{2,2}(X)-20,
\]
recovering the irrationality of Hodge-general K\"uchle fourfolds of type
$(c5)$ established in
\cite{Benedetti_Fay_Guere_Manivel_Perrin}.

\medskip

\subsection*{Software and computational assistance}

The first implementation of the localization algorithm developed in this
paper was written in Wolfram Mathematica, motivated in part by the
availability of the package
``LieART 2.0 -- A Mathematica Application for Lie Algebras and
Representation Theory'' \cite{Feger_2020}. Algorithms and coding were completely done independently by the authors, without referring to other sources.
The implementation was subsequently ported to SageMath with the help of publicly available large language models (LLMs).  The mathematical formulas,
the choice of algorithms, and the verification of the outputs reported in
this paper remain the responsibility of the authors. No part of this text was generated by any LLMs. However, Grammar, orthography, and spelling checks were done via free versions of Grammarly and Overleaf AI model features.

The SageMath implementation that accompanies this paper, together
with instructions for reproducing the computations appearing in the
examples, is presented in \href{https://leonardofcavenaghi.github.io/atomicdecompositions/}{https://leonardofcavenaghi.github.io/atomicdecompositions/}. This website, its source code catalog generation scripts, and its documentation have been fully generated using Google Antigravity (AGY), an advanced autonomous AI coding assistant, and Codex.

Lastly, we remark that an updated version of the software has also been
developed in collaboration with Milena Brockhof, together with a broader
computational catalog of small quantum multiplication matrices.  This
extended implementation now also covers toric varieties and complete
intersections in toric varieties.

\medskip

\section*{Acknowledgments}

   L.~F.~Cavenaghi, L.~Katzarkov, and G.~Galindo are supported by the Simons Foundation, grant SFI-MPS-T-Institutes-00007697, and the Ministry of Education and Science of the Republic of Bulgaria, grant DO1-239/10.12.2024. 

   L.~Katzarkov is supported by the Simons Investigators Award (no.003136), NSF FRG grant DMS-2245099, and NSF FRG grant DMS-2255039.
   
   P.~A.~M.~Martins is supported by FAPESP grants no. 2024/07684-7, 2025/22312-1. 

   The authors are in serious debt to M.~Kontsevich and~J.~Gu\'er\'e for many fruitful discussions and important comments towards the development of this paper.

\bigskip

\section{Flag varieties}\label{sec:flags}

Flag varieties form a special class of homogeneous spaces whose geometry can
be described combinatorially.  In this section, we detail how this
combinatorial structure characterizes the geometric quantities used
throughout this paper.  While (part of) the material is standard in Lie theory, we
give a comprehensive review to fix the notation and to lay the
groundwork for the computations of the later sections, which rely on it
heavily. On the other hand, the authors believe many of the explicit geometric
constructions appearing here are not given elsewhere, so we are confident
others will benefit.

\begin{definition}
A \emph{Borel subgroup} of a complex Lie group $G$ is a maximal connected
solvable closed algebraic subgroup. A \emph{parabolic subgroup} is a closed subgroup
containing a Borel subgroup.  A \emph{flag variety} is a complex homogeneous
space $G/P$, where $G$ is a connected simply connected semisimple complex
Lie group and $P\subset G$ is parabolic.
\end{definition}

Classic examples of flag varieties include the projective spaces
$\PP^{n}=\mathrm{SL}_{n+1}(\C)/P$, where
\[
P=
\left\{
\begin{bmatrix}
a & * & \cdots & * \\
0 &   &        &   \\
\vdots & & A & \\
0 &   &        &
\end{bmatrix}
\in \mathrm{SL}_{n+1}(\C)
\right\},
\]
and the Grassmannians $\Gr(k,n)=\mathrm{SL}_{n}(\C)/P_{k}$, where
\[
P_k=
\left\{
\begin{bmatrix}
A & B \\
0 & D
\end{bmatrix}
\in \mathrm{SL}_n(\C)
\;\middle|\;
A\in \mathrm{GL}_k(\C),\; D\in \mathrm{GL}_{n-k}(\C)
\right\}.
\]

\medskip

\subsection{Root space decomposition}

In this subsection, we review the root space decomposition of Lie algebras and apply it to classify flag varieties.  For a comprehensive treatment, we refer the reader to \cites{alekseevsky1997flag,humphreys2012introduction, arvanitogeorgos2003introduction}.

\medskip

Let $G$ be a connected simply connected semisimple complex Lie group.
Then $G$ contains a maximal torus $\mathbb T\cong(\C^{*})^{r}$, where $r$ is
the rank of $G$.  Denote by $\mathfrak g$ and $\mathfrak t$ the Lie algebras
of $G$ and $\mathbb T$.  The roots of $\mathfrak g$ are the linear
functionals giving the eigenvalues of the adjoint action of the Cartan
subalgebra $\mathfrak t$.  Precisely:

\begin{definition}
A nonzero linear functional $\alpha\in\mathfrak t^{*}$ is a \emph{root} of
$\mathfrak g$ if the corresponding \emph{root space}
\[
    \mathfrak{g}_\alpha := \{X\in \mathfrak{g} \mid \mathrm{ad}_H(X)=\alpha(H)X
      \text{ for all } H\in \mathfrak{t}\}
\]
is nonzero.  We write $R=R(\mathfrak g,\mathfrak t)$ for the set of all
roots and call it the \emph{root system}.  It gives the \emph{root space
decomposition}
\[
    \mathfrak{g} = \mathfrak{t} \oplus \bigoplus_{\alpha \in R} \mathfrak{g}_\alpha .
\]
\end{definition}

\begin{definition}
Given $R$, a choice of \emph{positive roots} is a decomposition
$R=R^{+}\sqcup R^{-}$ such that $R^{-}=-R^{+}$ and such that
$\alpha+\beta\in R^{+}$ whenever $\alpha,\beta\in R^{+}$ and
$\alpha+\beta\in R$.  The elements of $R^{+}$ are the \emph{positive roots},
those of $R^{-}$ the \emph{negative roots}.
\end{definition}

\begin{definition}\label{def:simpleroots}
Let $R^{+}$ be a choice of positive roots.  A subset
$\Pi=\{\alpha_{1},\dots,\alpha_{r}\}\subset R^{+}$ is a set of \emph{simple
roots} if
\begin{defenum}
\item $\Pi$ is a basis of $\mathfrak t^{*}$, and
\item every $\alpha\in R$ is uniquely of the form
      $\alpha=\sum_{i=1}^{r}n_{i}\alpha_{i}$ with $n_{i}\in\Z$, all
      $n_{i}\ge0$ for $\alpha\in R^{+}$ and all $n_{i}\le0$ for
      $\alpha\in R^{-}$.
\end{defenum}
Such a set exists and is unique: it consists of the positive roots that are
not sums of two positive roots.
\end{definition}

\begin{example}\label{ExRoots}
Let $\mathfrak g=\mathfrak{sl}_{n}(\C)$.  The Cartan subalgebra is the space
of diagonal traceless matrices,
\[
    \mathfrak{t} = \Big\{ \operatorname{diag}(a_1,\dots,a_n) \;\Big|\;
      a_i \in \C, \; \textstyle\sum_{i=1}^n a_i = 0 \Big\},
\]
and $\mathfrak t^{*}$ is spanned by the forms $\varepsilon_{i}$ defined by
$\varepsilon_{i}(\operatorname{diag}(a_{1},\dots,a_{n}))=a_{i}$.  The
tracelessness imposes $\sum_{i}\varepsilon_{i}=0$, reflecting
$\dim_{\C}\mathfrak t^{*}=n-1$, so here $r=n-1$.

The root system is
\[
    R=\{\alpha_{ij}=\varepsilon_{i}-\varepsilon_{j}\mid 1\le i\neq j\le n\},
\]
with $\mathfrak g_{\alpha_{ij}}=\C E_{ij}$, where $E_{ij}$ has a single
nonzero entry, equal to $1$, in position $(i,j)$. Since $i\neq j$, we have
$\tr(E_{ij})=0$, so indeed $E_{ij}\in\mathfrak{sl}_{n}(\C)$.

Taking $i<j$ defines a positive system
$R^{+}=\{\alpha_{ij}\mid 1\le i<j\le n\}$.  Setting
$\alpha_{i}:=\alpha_{i,i+1}=\varepsilon_{i}-\varepsilon_{i+1}$, every
positive root is a telescoping sum
\[
    \alpha_{ij}=(\varepsilon_i-\varepsilon_{i+1})+\dots+(\varepsilon_{j-1}-\varepsilon_j)
    =\sum_{k=i}^{j-1}\alpha_{k},
\]
so every positive root is a nonnegative integral combination of
$\alpha_{1},\dots,\alpha_{n-1}$. These are $n-1$ linearly independent roots,
hence a basis of $\mathfrak t^{*}$, and the simple roots are
\[
    \Pi=\{\alpha_{1},\dots,\alpha_{n-1}\}.
\]
\end{example}

We now construct the Borel and parabolic subgroups and use them to classify flag varieties.

\medskip

Fix a Cartan subalgebra $\mathfrak t\subset\mathfrak g$, let
$R=R(\mathfrak g,\mathfrak t)$ and choose a positive system
$R^{+}\subset R$, which determines $\Pi\subset R^{+}$ as in
Definition \ref{def:simpleroots}.  From these data we construct the
\emph{standard Borel subalgebra} associated with $R^{+}$,
\begin{equation}\label{eq:canonicalborel}
    \mathfrak{b} = \mathfrak{t} \oplus \bigoplus_{\alpha \in R^+}\mathfrak{g}_\alpha .
\end{equation}
Since a connected Lie subgroup is determined by its Lie subalgebra, there is a unique connected subgroup $B\subset G$ with Lie algebra $\mathfrak b$. It is a Borel subgroup, called the \emph{standard Borel subgroup} associated with $R^{+}$.  We use this correspondence between connected subgroups and subalgebras repeatedly and without further comment.

\begin{theorem}[\cite{humphreys2012introduction}]\label{thm:borel-conjugate}
Any two Borel subalgebras of $\mathfrak g$ are conjugate under the adjoint
action of $G$. Equivalently, by the correspondence above, any two Borel
subgroups of $G$ are conjugate.
\end{theorem}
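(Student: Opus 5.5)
The plan is the classical two-step argument via the Borel fixed point theorem. Along the way we also record the Lie-algebra version of the conjugacy: Borel subalgebras correspond to Borel subgroups, since a connected subgroup is solvable if and only if its Lie algebra is, and maximality is preserved in both directions.

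\emph{Step 1: fixed points on $G/B$.} Fix the standard Borel subgroup $B$ of \eqref{eq:canonicalborel} and let $B'$ be any other Borel subgroup. The aim is to produce $g\in G$ with $B'\subset gBg^{-1}$. For this I would first show that $G/B$ is a projective variety. Using Chevalley's theorem, choose a representation $V$ of $G$ and a line $\ell\subset V$ whose stabilizer is $B$; concretely, take the highest weight line of an irreducible representation of regular dominant highest weight. The orbit of $\ell$ in $\PP(V)$ is then $G/B$, and it is closed because it is the orbit of minimal dimension. With projectivity in hand, $B'$ acts on $G/B$ by left multiplication. The Borel fixed point theorem says that a connected solvable group acting on a nonempty complete variety has a fixed point; its proof is an induction on $\dim B'$ using the commutator subgroup and the closedness of orbits of minimal dimension. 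So $B'$ fixes some coset $gB$, which means $g^{-1}B'g\subset B$.

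\emph{Step 2: maximality.} Since $g^{-1}B'g$ is a maximal connected solvable subgroup contained in the connected solvable group $B$, we get $g^{-1}B'g=B$. Hence every Borel subgroup is conjugate to $B$, and therefore any two are conjugate to each other.

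\emph{Main obstacle.} The key difficulty is that $B$ is self-normalizing in the relevant sense, equivalently that $G/B$ is complete. Projectivity of $G/B$ rests on the fact that the stabilizer of the highest weight line is exactly $B$ and not a larger parabolic subgroup. I would verify this directly from the root space decomposition. For a regular highest weight $\lambda$, each negative root space $\mathfrak g_{-\alpha}$ acts nontrivially on the highest weight vector $v_\lambda$, because $\langle\lambda,\alpha^\vee\rangle>0$ and the $\mathfrak{sl}_2$-theory for $\alpha$ applies. It follows that the stabilizer subalgebra is precisely $\mathfrak b$. Connectedness of the stabilizer, which is needed to pass from the subalgebra to the subgroup, follows from normalizer arguments or, alternatively, from the Bruhat decomposition.
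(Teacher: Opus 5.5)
There is a gap in Step 1, exactly where the difficulty of the theorem lies: the closedness of $G\cdot\ell$ in $\PP(V)$, i.e.\ the completeness of $G/B$. You say the orbit is closed because it is the orbit of minimal dimension, but nothing you write shows that it has minimal dimension.

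The textbook proof (Humphreys, \emph{Linear Algebraic Groups}, \S21.3) gets minimality from a device you have dropped. It takes $B$ of maximal dimension and lets $G$ act on the variety of \emph{full flags} of $V$. There every stabilizer is solvable, so its identity component lies in a Borel subgroup and has dimension at most $\dim B$. Stabilizers of points of $\PP(V)$ need not be solvable. For example, in the adjoint representation of $\SL_3$, whose highest weight $\omega_1+\omega_2$ is regular, the line through $\operatorname{diag}(1,1,-2)$ has stabilizer $S(\mathrm{GL}_2\times\mathrm{GL}_1)$. So that reasoning does not transfer.

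To repair it inside $\PP(V)$, argue as follows. An orbit $\mathcal O$ of minimal dimension exists and is closed, hence complete. The Borel fixed point theorem for $B$ then produces a $B$-stable line in $\mathcal O$. The unipotent radical of $B$ has no nontrivial characters, so this line is spanned by a vector killed by $\bigoplus_{\alpha\in R^+}\mathfrak g_\alpha$. In the irreducible module $V$ such vectors are the multiples of $v_\lambda$. Hence $\mathcal O=G\cdot\ell$, which is therefore closed. Over $\C$ you may instead use $G=KB$ with $K$ maximal compact, so that $G\cdot\ell=K\cdot\ell$ is compact.

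By contrast, the point you single out as the main obstacle needs no work. Connectedness of $H:=G_\ell$ is irrelevant. Your $\mathfrak{sl}_2$ computation gives $\operatorname{Lie}H=\mathfrak b$, hence $H^\circ=B$. A fixed point $g\ell$ of the connected group $B'$ on the closed orbit $G\cdot\ell$ gives $g^{-1}B'g\subset H$, hence $g^{-1}B'g\subset H^\circ=B$. This matters because the normalizer theorem $N_G(B)=B$ and the Bruhat decomposition, which you propose to invoke, are proved using the conjugacy theorem in the standard development. Appealing to them here would be circular. Also, self-normalization is not equivalent to completeness of $G/B$.

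For comparison, the paper gives no argument of its own and cites Humphreys' Lie algebra text. There, conjugacy of Borel subalgebras is proved purely algebraically, under the group generated by the $\exp\operatorname{ad}x$ with $x$ strongly ad-nilpotent, by induction on $\dim\mathfrak g$ and downward on $\dim(\mathfrak b\cap\mathfrak b')$. Your geometric route, once repaired, yields in addition the projectivity of $G/B$. The paper uses that fact tacitly whenever it treats $G/P$ as a smooth projective variety.
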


By Theorem \ref{thm:borel-conjugate}, every parabolic subgroup contains a conjugate of $B$, and replacing $P$ by $gPg^{-1}$ does not change the isomorphism class of $G/P$. Hence, any flag variety is isomorphic to a quotient $G/P$ with $P\supset B$. Such a $P$ is called a \emph{standard parabolic subgroup}.  We now classify these, and with them the flag varieties, by the combinatorics of simple roots \cite{alekseevsky1997flag}.

\medskip

Given $\Pi_{P}\subset\Pi$, let
\[
    R_{P}=R\cap\operatorname{span}_{\Z}(\Pi_{P})
\]
be the root subsystem generated by $\Pi_{P}$, and let
$R_{P}^{-}=R_{P}\cap R^{-}$.  Adjoining the corresponding negative root
spaces to $\mathfrak b$ gives the subspace
\begin{equation}\label{parabolic}
    \mathfrak{p}
    = \mathfrak{b} \oplus \bigoplus_{\alpha \in R_P^-} \mathfrak{g}_{\alpha}
    = \mathfrak{t} \oplus \bigoplus_{\alpha \in R^+}\mathfrak{g}_\alpha
      \oplus \bigoplus_{\alpha \in R_P^-} \mathfrak{g}_{\alpha}\;\supset\;\mathfrak b .
\end{equation}

To see that $\mathfrak p$ is a subalgebra, it suffices to check that it is closed under the bracket. First,
$[\mathfrak t,\mathfrak g_{\beta}]=\mathfrak g_{\beta}$ for
$\beta\in R^{-}_{P}$.  Second, for $\beta,\beta'\in R^{-}_{P}$ the sum
$\beta+\beta'$, if a root, lies again in $R^{-}_{P}$ because $R_{P}$ is a
root subsystem.  Finally let $\alpha\in R^{+}$ and $\beta\in R^{-}_{P}$, write $\alpha =\sum_{\gamma_i \in \Pi}a_i \gamma_i$ and $\beta =\sum_{\gamma_i \in \Pi_P} -b_i \gamma_i$ , for some $a_i, b_i \geq 0$, then 
\begin{equation}
    \alpha+\beta = \sum_{\gamma_i \in \Pi_P} (a_i-b_i )\gamma_i{+}\sum_{\gamma_i \in \Pi\setminus\Pi_P} a_i\gamma_i .
\end{equation}
For $\alpha+\beta$ to be a root, either it is positive,  $\alpha + \beta \in R^+ \subset \mathfrak{p}$, or it is negative, in which case we would have $(a_i -b_i )\leq 0$ for every $\gamma_i \in \Pi_P$ and $a_i =0$ for $\gamma_i \in \Pi/\Pi_P$; therefore $\alpha+\beta \in R_{P}^-$.

The connected subgroup $P\subset G$ with Lie algebra $\mathfrak p$ is
therefore a standard parabolic subgroup, and $G/P$ is a flag variety.
Conversely, let $\mathfrak q\supset\mathfrak b$ be a subalgebra.  Since
$\mathfrak q\supset\mathfrak t$, it is stable under the adjoint action of
$\mathfrak t$ and is therefore the sum of $\mathfrak t$ with the root spaces it
contains.  It contains every $\mathfrak g_{\alpha}$ with $\alpha\in R^{+}$,
so it is determined by the set of $\alpha\in R^{+}$ with
$\mathfrak g_{-\alpha}\subset\mathfrak q$. This set is closed with the
addition of roots; therefore, it is equal to $R^{+}_{P}$ for
$\Pi_{P}=\{\alpha\in\Pi\mid\mathfrak g_{-\alpha}\subset\mathfrak q\}$, and
$\mathfrak q=\mathfrak p$. Different subsets clearly give distinct
subalgebras. 

\begin{example}[Flag varieties of $\mathrm{SL}_{n+1}(\C)$]\label{ExSln}
By Example \ref{ExRoots}, $\mathfrak{sl}_{n+1}(\C)$ has simple roots
$\Pi=\{\alpha_{1},\dots,\alpha_{n}\}$.  Every subset $\Pi_{P}\subset\Pi$
defines a standard parabolic subgroup and a flag variety, although distinct
subsets may yield varieties isomorphic as abstract algebraic varieties, as
for dual Grassmannians.

The choice $\Pi_{P}=\emptyset$ gives $P=B$ and the full flag variety
$\mathrm{SL}_{n+1}(\C)/B=\mathrm{Fl}(1,2,\dots,n+1)$, parameterizing the
nested subspaces $V_{1}\subset V_{2}\subset\dots\subset V_{n}\subset\C^{n+1}$
with $\dim_{\C}V_{i}=i$.

More generally, if $\Pi\setminus\Pi_{P}=\{\alpha_{k_{1}},\dots,\alpha_{k_{m}}\}$
then $G/P$ is the variety of partial flags
$\mathrm{Fl}(k_{1},\dots,k_{m},n+1)$, that is of the nested subspaces
$V_{k_{1}}\subset\dots\subset V_{k_{m}}\subset\C^{n+1}$ with
$\dim_{\C}V_{k_{i}}=k_{i}$.  In particular $\PP^{n}$ corresponds to
$\Pi\setminus\Pi_{P}=\{\alpha_{1}\}$ and $\Gr(k,n+1)$ to
$\Pi\setminus\Pi_{P}=\{\alpha_{k}\}$. This identifies the subgroups $P$ and
$P_{k}$ displayed above as parabolic.
\end{example}

\medskip

For a flag variety $Y=G/P$ defined by $\Pi_{P}\subset\Pi$ we write
$\Pi_{Y}=\Pi\setminus\Pi_{P}$, and we extend the subscript to the positive
roots whose \emph{opposite} root spaces do not lie in $\mathfrak p$,
\[
    R^{+}_{Y}=R^{+}\setminus R^{+}_{P},
\]
these are the roots that index the tangent directions of $Y$ at the base
point.  As an immediate application of \eqref{parabolic}, we compute the dimension of $Y$.

\begin{proposition}\label{prop:dimension}
Let $Y=G/P$ be the flag variety corresponding to $\Pi_{P}\subset\Pi$.  Then
\[
    \dim_{\C}Y = |R^{+}_{Y}|.
\]
\end{proposition}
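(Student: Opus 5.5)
The plan is to compute $\dim_{\C}Y$ as the dimension of the tangent space at the base point $o=eP$, using the homogeneity of $Y$ and the root space decomposition \eqref{parabolic} of $\mathfrak p$.

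\emph{Homogeneity.} Since $G$ acts transitively on $Y=G/P$ and $P$ is a closed subgroup, $Y$ is a smooth complex manifold. The orbit map $G\to G/P$ is a submersion whose fiber over $o$ is $P$. Its differential at the identity is therefore surjective with kernel $\mathfrak p$, which gives a canonical isomorphism
\[
T_{o}Y\cong \mathfrak g/\mathfrak p ,
\qquad\text{hence}\qquad
\dim_{\C}Y=\dim_{\C}\mathfrak g-\dim_{\C}\mathfrak p .
\]

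\emph{Counting root spaces.} Each root space $\mathfrak g_\alpha$ is one-dimensional; this is standard for semisimple $\mathfrak g$ and is also visible in Example \ref{ExRoots}. The decomposition $\mathfrak g=\mathfrak t\oplus\bigoplus_{\alpha\in R}\mathfrak g_\alpha$ together with $R=R^+\sqcup R^-$ and $R^-=-R^+$ gives
\[
\dim\mathfrak g=r+2|R^+|.
\]
From \eqref{parabolic}, and using $|R_P^-|=|R_P^+|$ (negation is a bijection preserving $R_P$), we get
\[
\dim\mathfrak p=r+|R^+|+|R_P^+|.
\]
Subtracting,
\[
\dim_{\C}Y=|R^+|-|R^+_P|=|R^+\setminus R^+_P|=|R^+_Y|,
\]
where the middle equality uses $R_P^+\subset R^+$.

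For a more explicit picture, the same count shows that
\[
\mathfrak g=\mathfrak p\oplus\bigoplus_{\alpha\in R^+_Y}\mathfrak g_{-\alpha}.
\]
So $T_oY$ is spanned by the root spaces of $-\alpha$ for $\alpha\in R^+_Y$, which matches the remark that these roots index the tangent directions.

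\emph{Main obstacle.} The only non-formal inputs are the smoothness of $G/P$ with $T_oY\cong\mathfrak g/\mathfrak p$, and the one-dimensionality of root spaces. Both are standard Lie theory, for instance in \cite{humphreys2012introduction}, and I would cite them rather than reprove them. The remaining care is bookkeeping: checking that $\mathfrak p\cap\mathfrak g_{-\alpha}=0$ exactly for $\alpha\in R^+_Y$. This follows because $\mathfrak p$ is $\mathfrak t$-stable, so it is the direct sum of $\mathfrak t$ and the root spaces it contains, and the negative root spaces it contains are exactly those indexed by $R_P^-$.
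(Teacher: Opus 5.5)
Your proposal is correct and follows essentially the same route as the paper: identify $T_{eP}Y\cong\mathfrak g/\mathfrak p$ by homogeneity and count one-dimensional root spaces using \eqref{parabolic}. The only difference is cosmetic. You compute $\dim\mathfrak g-\dim\mathfrak p$ by subtraction, whereas the paper directly identifies $\mathfrak g/\mathfrak p\cong\bigoplus_{\alpha\in R^-\setminus R^-_P}\mathfrak g_\alpha$ and then negates roots, which is exactly your ``more explicit picture.''
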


\begin{proof}
Since $Y=G/P$ is homogeneous, its dimension equals that of its tangent
space at the base point $eP$, which is canonically
$\mathfrak g/\mathfrak p$.  Comparing the root space decomposition
\[
    \mathfrak{g} = \mathfrak{t} \oplus \bigoplus_{\alpha \in R^+} \mathfrak{g}_\alpha
      \oplus \bigoplus_{\alpha \in R^-} \mathfrak{g}_\alpha
\]
with the description \eqref{parabolic} of $\mathfrak p$, the quotient kills
$\mathfrak t$, all positive root spaces, and the negative root spaces
indexed by $R^{-}_{P}$, leaving
\[
    \mathfrak{g} / \mathfrak{p} \cong \bigoplus_{\alpha \in R^- \setminus R_P^-} \mathfrak{g}_\alpha .
\]
Each root space is one-dimensional, so
$\dim_{\C}(\mathfrak g/\mathfrak p)=\big|R^{-}\setminus R^{-}_{P}\big|$.
Multiplication by $-1$ is a bijection $R^{-}\to R^{+}$ carrying $R^{-}_{P}$
onto $R^{+}_{P}$, hence $R^{-}\setminus R^{-}_{P}$ onto
$R^{+}\setminus R^{+}_{P}=R^{+}_{Y}$.  Therefore
\[
    \dim_{\C}Y=\big|R^{-}\setminus R^{-}_{P}\big|=|R^{+}_{Y}|. 
\]
\end{proof}

\medskip

\subsection{The Weyl group and cohomology}

As argued, the root system associated with $\mathfrak g$ encodes a large
part of the geometry of a flag variety.  Different geometric questions,
however, call for different repackagings of the same data.  We now describe
the Weyl group and explain how it encodes the cohomology of a flag
variety. We then describe the second homology group in terms of coroots, which we 
describe next.

\begin{definition}\label{def:weylgroup}
Let $\mathfrak g$ be a complex semisimple Lie algebra, $\mathfrak t$ a Cartan
subalgebra, $R$ the root system and $\Pi$ a fixed choice of simple roots.
We write
\[
  \langle\,\cdot\,,\,\cdot\,\rangle\colon \mathfrak t^{*}\times\mathfrak t\to\C,
  \qquad \langle\lambda,H\rangle:=\lambda(H),
\]
for the evaluation pairing.  The Killing form $\mathsf B$ restricts to a
nondegenerate form on $\mathfrak t$, hence induces an isomorphism
$\nu\colon\mathfrak t\to\mathfrak t^{*}$, $\nu(H)=\mathsf B(H,\cdot)$, and we
transport $\mathsf B$ to a symmetric bilinear form on $\mathfrak t^{*}$ by
\[
  (\lambda,\mu):=\mathsf B\big(\nu^{-1}\lambda,\nu^{-1}\mu\big).
\]
For $\alpha\in R$ the \emph{coroot} is
\[
  \alpha^{\vee}:=\nu^{-1}\!\left(\frac{2\alpha}{(\alpha,\alpha)}\right)\in\mathfrak t,
  \qquad\text{so that}\qquad
  \langle\beta,\alpha^{\vee}\rangle=\frac{2(\alpha,\beta)}{(\alpha,\alpha)}
  \quad\text{for all }\beta\in\mathfrak t^{*}.
\]
The \emph{root reflection} $s_{\alpha}\in\mathrm{GL}(\mathfrak t)$ is
\[
  s_{\alpha}(H)=H-\langle\alpha,H\rangle\,\alpha^{\vee},\qquad H\in\mathfrak t,
\]
the reflection fixing $\ker\alpha$ pointwise and sending
$\alpha^{\vee}\mapsto-\alpha^{\vee}$.  The \emph{Weyl group} is
\[
    W=\langle\, s_{\alpha}\mid\alpha\in\Pi \,\rangle\subset\mathrm{GL}(\mathfrak t),
\]
acting on $\mathfrak t^{*}$ by
$\langle w\lambda,H\rangle=\langle\lambda,w^{-1}H\rangle$. Explicitly:
\[
    s_{\alpha}(\lambda)=\lambda-\langle\lambda,\alpha^{\vee}\rangle\,\alpha,
    \qquad\lambda\in\mathfrak t^{*}.
\]
This action preserves $R$.  For $w\in W$ the \emph{length} $\ell(w)$ is
the least $k\ge0$ with $w=s_{\alpha_{i_{1}}}\cdots s_{\alpha_{i_{k}}}$,
$\alpha_{i_{j}}\in\Pi$. An expression of minimal length is called
\emph{reduced}.
\end{definition}

\begin{example}\label{ex:weylSL3}
Consider $\mathfrak{sl}_3(\C)$, with simple roots $\alpha_1,\alpha_2$ as in
Example \ref{ExRoots}.  Under $\alpha_i=\varepsilon_i-\varepsilon_{i+1}$ the
simple reflection $s_i$ transposes $\varepsilon_i$ and $\varepsilon_{i+1}$,
so $W=\langle s_1,s_2\rangle$ maps onto the symmetric group $\mathcal S_3$.
The map is injective because $W$ acts faithfully on $\mathfrak t^{*}$,
whence $W\cong\mathcal S_3$ has six elements,
\[
    W = \{ \mathrm{Id},\ s_1,\ s_2,\ s_1 s_2,\ s_2 s_1,\ s_1 s_2 s_1 \},
\]
and the defining relations $s_1^2=s_2^2=\mathrm{Id}$,
$(s_1s_2)^3=\mathrm{Id}$ hold.  The lengths are $\ell(\mathrm{Id})=0$,
$\ell(s_1)=\ell(s_2)=1$, $\ell(s_1s_2)=\ell(s_2s_1)=2$ and
$\ell(s_1s_2s_1)=3$.  Note that the longest element admits two reduced
expressions, $s_1s_2s_1=s_2s_1s_2$.
\end{example}

Let $Y=G/P$ be the flag variety determined by a subset $\Pi_P\subset\Pi$.
Just as we restricted the root system to obtain $R_P$, we now single out a
subgroup of the Weyl group and a set of representatives for its cosets.

\begin{definition}
The \emph{parabolic subgroup of $W$} determined by $\Pi_P$ is
\[
    W_P=\langle\, s_{\alpha}\mid\alpha\in\Pi_P \,\rangle\subset W ,
\]
and we set
\[
    W_Y=\{\, w\in W \mid \ell(wv)=\ell(w)+\ell(v)\ \text{for all}\ v\in W_P \,\}.
\]
\end{definition}

\begin{proposition}[\cite{bjorner2005combinatorics}]\label{prop:minreps}
Every coset in $W/W_P$ contains a unique element of minimal length, and
$W_Y$ coincides with the set of these elements.  Consequently, the map
$W_Y\to W/W_P$ is a bijection, and every $w\in W$ factors uniquely as
$w=\overline{w}\,v$ with $\overline{w}\in W_Y$ and $v\in W_P$, with
$\ell(w)=\ell(\overline{w})+\ell(v)$.
\end{proposition}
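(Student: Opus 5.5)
The plan is to prove the statement entirely inside the Coxeter system $(W,\{s_\alpha\}_{\alpha\in\Pi})$, using the exchange condition together with the characterization of length as an inversion count. For $w\in W$ set
\[
N(w):=\{\alpha\in R^+\mid w^{-1}\alpha\in R^-\}.
\]
The standard fact to invoke is $\ell(w)=|N(w^{-1})|$. From it we extract the criterion: for a simple root $\alpha$ one has $\ell(ws_\alpha)=\ell(w)+1$ if and only if $w\alpha\in R^+$, and $\ell(ws_\alpha)=\ell(w)-1$ otherwise.

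\textbf{Existence and characterization of minimal representatives.} Fix a coset $wW_P$ and choose $\overline w\in wW_P$ of minimal length. For every $\alpha\in\Pi_P$, minimality forces $\ell(\overline w s_\alpha)>\ell(\overline w)$, hence $\overline w\alpha\in R^+$. Since every root of $R_P^+$ is a nonnegative combination of $\Pi_P$, this gives $\overline w(R_P^+)\subset R^+$.

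\textbf{The key additivity step.} We show that $\overline w(R_P^+)\subset R^+$ implies $\ell(\overline w v)=\ell(\overline w)+\ell(v)$ for all $v\in W_P$. I would do this by counting inversions of $(\overline w v)^{-1}$, that is, positive roots $\beta$ with $v^{-1}\overline w^{-1}\beta<0$, and splitting according to whether $\overline w^{-1}\beta$ is negative or positive.

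If $\overline w^{-1}\beta=\gamma>0$, then $v^{-1}\gamma<0$. Since $v\in W_P$ permutes $R^+\setminus R_P^+$, this forces $\gamma\in R_P^+$, and these $\gamma$ account for exactly $\ell(v)$ roots.

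If instead $\overline w^{-1}\beta<0$, then $\overline w^{-1}\beta\in -(R^+\setminus R_P^+)$, because $\overline w$ keeps $R_P^+$ positive. Since $W_P$ permutes $R^+\setminus R_P^+$, the root $v^{-1}\overline w^{-1}\beta$ stays negative. These $\beta$ account for exactly $\ell(\overline w)$ roots.

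The fact that $W_P$ permutes $R^+\setminus R_P^+$ is itself checked on simple reflections $s_\alpha$ with $\alpha\in\Pi_P$: the reflection $s_\alpha$ only alters the $\alpha$-coefficient, so the positive coefficients on $\Pi\setminus\Pi_P$ survive. This counting is the main obstacle, and I would carry it out carefully in this form.

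\textbf{Uniqueness, the equality $W_Y=\{\text{minimal representatives}\}$, and factorization.} Additivity shows that every other element $\overline w v$ of the coset with $v\neq e$ has strictly larger length, so the minimal element is unique. It also shows that minimal representatives lie in $W_Y$. Conversely, if $w\in W_Y$, then $\ell(wv)\ge\ell(w)$ for all $v\in W_P$, so $w$ is minimal in its coset. Hence the map $W_Y\to W/W_P$ is a bijection. Finally, any $w\in W$ can be written as $w=\overline w v$ with $\overline w$ the minimal element of $wW_P$ and $v=\overline w^{-1}w\in W_P$. Uniqueness of this factorization follows from uniqueness of $\overline w$, and the length formula $\ell(w)=\ell(\overline w)+\ell(v)$ is the additivity step applied to this $v$.
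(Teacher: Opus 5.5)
Your proof is correct. The paper gives no argument of its own here; it only cites Bj\"orner--Brenti, where the statement is proved for an arbitrary Coxeter system by combinatorial means, with the quotient defined by right descents and length additivity coming from the exchange/deletion property of reduced words.

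You instead work in the root system. Minimality gives $\overline w(\Pi_P)\subset R^+$, hence $\overline w(R^+_P)\subset R^+$. Additivity then follows by splitting the inversions of $\overline w v$ according to the sign of $\overline w^{-1}\beta$. The key input is that $W_P$ permutes $R^+\setminus R^+_P$, the same fact the paper uses in Lemma~\ref{lem:rhoX}(a). Since the paper defines $W_Y$ by the additivity property itself, your converse direction is immediate, as you note.

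The trade-off is as follows. The Coxeter route is root-free and needs only the exchange property. Yours presupposes the inversion formula $\ell(w)=\#\{\beta\in R^+ : w\beta\in R^-\}$, itself a consequence of the exchange condition, and it is tied to a realization by roots. In return it is short and yields extra information suited to this paper. Your second case shows $\ell(\overline w)=\#\{\delta\in R^+_Y : \overline w\delta\in R^-\}$. So for $w\in W_Y$, the length counts the tangent weights $-w\delta$ at $p_w$ (Proposition~\ref{prop:tangentweights}) that are positive. This is consistent with $\dim C_w=\dim Y-\ell(w)$ for the opposite cells of Definition~\ref{def:schubert}.

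One cosmetic slip: the set $\{\beta\in R^+ : v^{-1}\overline w^{-1}\beta\in R^-\}$ you count is $N(\overline w v)$ in your notation, not $N((\overline w v)^{-1})$. Since $\ell(x)=\ell(x^{-1})$, this changes nothing.
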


\begin{remark}
$W_Y$ is only a set of representatives: it is not a subgroup of $W$, since
it need not be closed under multiplication.  For instance, for
$\PP^{2}=\SL_{3}(\C)/P$ with $\Pi_{P}=\{\alpha_{2}\}$ one has
$W_{P}=\{\mathrm{Id},s_{2}\}$ and
$W_{Y}=\{\mathrm{Id},s_{1},s_{2}s_{1}\}$, while
$s_{1}\cdot s_{2}s_{1}=s_{1}s_{2}s_{1}\notin W_{Y}$.  We write
$\overline{w}\in W_Y$ for the representative of the coset $wW_P$.
\end{remark}

\begin{lemma}[\cite{Humphreys1975}]\label{lem:weyl-equal} Let $N_{G}(\mathbb T)$ be the normalizer of $\mathbb T$ on $G$.
The map $N_{G}(\mathbb T)/\mathbb T\to \mathrm{GL}(\mathfrak t)$ induced by
the adjoint action is injective with image $W$.  We henceforth identify
$W=N_{G}(\mathbb T)/\mathbb T$.  In particular, every $w\in W$ lifts to an
element $n\in N_{G}(\mathbb T)$, and any two lifts differ by an element of
$\mathbb T$.
\end{lemma}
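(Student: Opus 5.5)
The plan is to compare the conjugation action of $N_G(\mathbb T)$ on $\mathfrak t$ with the abstract group $W$ generated by the reflections $s_\alpha$ of Definition~\ref{def:weylgroup}, in three steps: (a) the adjoint action of $N_G(\mathbb T)$ on $\mathfrak t$ descends to $N_G(\mathbb T)/\mathbb T$ and is injective; (b) the image contains $W$; (c) the image is contained in $W$. The last assertion, that two lifts differ by an element of $\mathbb T$, is then immediate from the identification.

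\textbf{Step (a).} Since $\mathbb T$ is abelian, it acts trivially on $\mathfrak t$, so the action factors through $N_G(\mathbb T)/\mathbb T$. For injectivity, let $n\in N_G(\mathbb T)$ act trivially on $\mathfrak t$. Because $\mathbb T$ is connected, $\exp$ carries $\mathfrak t$ onto $\mathbb T$, so $n$ centralizes $\mathbb T$. I will then use that in a connected reductive group the centralizer of a maximal torus is the torus itself, $Z_G(\mathbb T)=\mathbb T$. At the Lie algebra level this is the root space decomposition: $\mathfrak z_{\mathfrak g}(\mathfrak t)=\mathfrak t$. The group-level statement is the standard fact that a maximal torus equals its own centralizer in a connected group, which I will quote from \cite{Humphreys1975}. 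Hence $n\in\mathbb T$.

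\textbf{Step (b).} For a simple root $\alpha$, choose $e_\alpha\in\mathfrak g_\alpha$ and $f_\alpha\in\mathfrak g_{-\alpha}$ forming an $\mathfrak{sl}_2$-triple with $h=\alpha^\vee$. This gives a homomorphism $\phi_\alpha\colon \SL_2(\C)\to G$, which exists because $G$ is simply connected. Set
\[
n_\alpha=\phi_\alpha\begin{bmatrix}0&1\\-1&0\end{bmatrix}=\exp(e_\alpha)\exp(-f_\alpha)\exp(e_\alpha).
\]
A direct check shows that $\mathrm{Ad}(n_\alpha)$ fixes $\ker\alpha\subset\mathfrak t$, since $[e_\alpha,H]=[f_\alpha,H]=0$ there. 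It also sends $\alpha^\vee\mapsto-\alpha^\vee$, by the corresponding $\SL_2$ computation. Hence $n_\alpha\in N_G(\mathbb T)$ and it acts by $s_\alpha$, so the image contains $W$.

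\textbf{Step (c)} is the main obstacle. Given $n\in N_G(\mathbb T)$, the element $\mathrm{Ad}(n)$ permutes the roots, so it sends $R^+$ to another positive system $w'R^+$. Since $W$ acts simply transitively on positive systems (a combinatorial fact about root systems, \cite{humphreys2012introduction}), there is $w\in W$ with $w'R^+=wR^+$. Lift $w$ to $\dot w$ via Step (b) and set $m=\dot w^{-1}n$. Then $m$ normalizes $\mathbb T$ and preserves $R^+$, hence normalizes $\mathfrak b$, so $m\in N_G(B)$. I will use $N_G(B)=B$ (Chevalley), which gives $m\in N_B(\mathbb T)$. Writing $B=\mathbb T\ltimes U$ and $m=tu$, the element $u$ normalizes $\mathbb T$. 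Then for every $s\in\mathbb T$ the commutator $usu^{-1}s^{-1}$ lies in $\mathbb T\cap U=\{1\}$, so $u$ centralizes $\mathbb T$, and $u\in\mathbb T\cap U=\{1\}$. Thus $n=\dot w t$ acts by $w$. The delicate inputs are $N_G(B)=B$ and $Z_G(\mathbb T)=\mathbb T$, both of which I would cite rather than reprove.
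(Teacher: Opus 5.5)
Your argument is correct, given the standard inputs you quote: $Z_G(\mathbb T)=\mathbb T$, $N_G(B)=B$, $B=\mathbb T\ltimes U$ with $\mathbb T\cap U=\{1\}$, and transitivity of $W$ on bases of $R$. The paper gives no proof of this lemma beyond the citation to \cite{Humphreys1975}, so the relevant comparison is with the algebraic-group proof there.

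In that treatment the Weyl group is \emph{defined} as $N_G(\mathbb T)/Z_G(\mathbb T)$. Each reflection $s_\alpha$ is realized inside the rank-one reductive subgroup $Z_G(\mathbb T_\alpha)$, where $\mathbb T_\alpha\subset\mathbb T$ is the subtorus with Lie algebra $\ker\alpha$. Generation by reflections then follows from the simple transitivity of $N_G(\mathbb T)/\mathbb T$ on Borel subgroups containing $\mathbb T$, which is where $N_G(B)=B$ enters.

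You instead start from the paper's Lie-algebraic definition of $W\subset\mathrm{GL}(\mathfrak t)$ as the group generated by simple reflections. You write down explicit lifts $n_\alpha=\exp(e_\alpha)\exp(-f_\alpha)\exp(e_\alpha)$ from $\mathfrak{sl}_2$-triples. You obtain the reverse inclusion by combining the combinatorial transitivity of $W$ on positive systems with $N_G(B)=B$. Your route matches how $W$ is set up in Definition~\ref{def:weylgroup}, and it produces concrete representatives $\dot w$, which the paper uses immediately afterwards when it writes $wP/P$. The textbook route avoids the exponential map, which is why it works over any algebraically closed field.

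Two small points.

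First, the homomorphism $\phi_\alpha\colon\SL_2(\C)\to G$ exists because the \emph{source} $\SL_2(\C)$ is simply connected; simple connectivity of $G$ plays no role. You do not even need $\phi_\alpha$: $\mathrm{Ad}(n_\alpha)=e^{\mathrm{ad}\,e_\alpha}e^{-\mathrm{ad}\,f_\alpha}e^{\mathrm{ad}\,e_\alpha}$ can be evaluated on $\mathfrak t$ directly, and it sends $\alpha^\vee\mapsto-\alpha^\vee$ and fixes $\ker\alpha$.

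Second, in Step (c), calling the image of $R^+$ ``$w'R^+$'' presupposes what you want to prove. What you actually use is this: the automorphism of $\mathfrak t^*$ induced by $\mathrm{Ad}(n)$ preserves $R$, so it carries the base $\Pi$ to another base whose positive system is the image of $R^+$. Transitivity of $W$ on bases then supplies $w$; simple transitivity is not needed.
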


By Lemma \ref{lem:weyl-equal}, the coset $nP/P\in Y$ does not depend on the
chosen lift, since two lifts differ by an element of
$\mathbb T\subset B\subset P$.  We may therefore write $wP/P$
unambiguously, and the action of $w$ carries the base point $eP/P$ to
$wP/P$.  As $w$ runs over $W_{Y}$ these points are pairwise distinct, by
Proposition \ref{prop:minreps}.

\begin{definition}\label{def:schubert}
Let $Y=G/P$ be a flag variety and $B\subset P$ the standard Borel subgroup.
Let $w_{0}\in W$ be the unique longest element. Since $w_{0}(R^{+})=R^{-}$,
the element $w_{0}^{2}$ preserves $R^{+}$ and is therefore the identity.
Let $\dot w_{0}\in N_{G}(\mathbb T)$ be a lift.  The \emph{opposite Borel
subgroup} $B^{-}=\dot w_{0}B\dot w_{0}^{-1}$ does not depend on the choice
of lift, since $\mathbb T$ normalizes $B$.  For each $w\in W_{Y}$ we define:
\begin{defenum}
\item the \emph{opposite Schubert cell} $C_{w}=B^{-}wP/P\subset Y$, the orbit
      of $wP/P$ under $B^{-}$, which is isomorphic to the affine space
      $\C^{\dim Y-\ell(w)}$;
\item the \emph{Schubert variety} $\Omega_{w}=\overline{C_{w}}$, the Zariski
      closure of $C_{w}$ in $Y$, of codimension $\ell(w)$;
\item the \emph{Schubert class}
      $\sigma_{w}=[\Omega_{w}]\in\rH^{2\ell(w)}(Y,\Q)$, the cycle class of
      $\Omega_{w}$.
\end{defenum}
\end{definition}

\begin{remark}
Working with $B^{-}$ rather than $B$ is what makes $\ell(w)$ the
\emph{codimension} of $\Omega_{w}$, so that $\sigma_{w}$ sits in degree
$2\ell(w)$. The $B$-orbits give the same varieties indexed in the opposite
order.  This convention also fixes the direction of the containments among
Schubert varieties: one has $\Omega_{u}\supseteq\Omega_{v}$ precisely when
$u\le v$ in the Bruhat order, and in particular $wP/P\in\Omega_{u}$ if and
only if $u\le w$.  We have collected in Appendix~\ref{app:bruhat} the
definition of this order and the few properties we use.
\end{remark}

\begin{proposition}\label{prop:schubertbasis}
In the setting of Definition \ref{def:schubert}, the Schubert classes
$\{\sigma_{w}\}_{w\in W_{Y}}$ form a basis of $\rH^{*}(Y,\Q)$ as a graded
vector space,
\[
    \rH^{*}(Y,\Q)=\bigoplus_{w\in W_{Y}}\Q\,\sigma_{w},
    \qquad \sigma_{w}\in \rH^{2\ell(w)}(Y,\Q),
\]
and each $\sigma_{w}$ is purely of Hodge type $(\ell(w),\ell(w))$.  In
particular $\rH^{\mathrm{odd}}(Y,\Q)=0$, every class in $\rH^{*}(Y,\Q)$ is
algebraic, and
\[
    \chi(Y)=|W_{Y}|=\left|W/W_{P}\right| .
\]
\end{proposition}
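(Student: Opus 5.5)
The plan is to use the Bruhat decomposition into opposite Schubert cells as an algebraic cell decomposition (an affine paving) of $Y$, and read off the cohomology from it.

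First I would establish the decomposition
\[
Y=\bigsqcup_{w\in W_Y} C_w = \bigsqcup_{w\in W_Y} B^- wP/P .
\]
This is obtained from the Bruhat decomposition $G=\bigsqcup_{w\in W} B^- wB$, applied via $B^-=\dot w_0 B\dot w_0^{-1}$. The $P$-cosets are then grouped using Proposition~\ref{prop:minreps}: the double cosets $B^-\backslash G/P$ are in bijection with $W/W_P$, hence with $W_Y$.

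Next I would check that $C_w\cong\C^{\dim Y-\ell(w)}$, as asserted in Definition~\ref{def:schubert}. The stabilizer of $wP/P$ in $B^-$ is $B^-\cap wPw^{-1}$. Hence $C_w$ is isomorphic to the unipotent group generated by the root subgroups $U_\alpha$ with $\alpha\in R^-$ and $w^{-1}\alpha\notin R_P\cup R^+$. Counting these roots gives $|R^+_Y|-\ell(w)$, using $\ell(w)=|\{\alpha>0: w^{-1}\alpha<0\}|$ and the fact that $w\in W_Y$ sends $R_P^+$ into $R^+$. Combined with Proposition~\ref{prop:dimension}, this gives the dimension. The closure of each cell is a union of cells of strictly smaller dimension (the closure relations are those of the Bruhat order recalled in the Remark). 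So $Y$ admits a filtration by closed subvarieties $Y=Z_0\supset Z_1\supset\cdots$ whose successive differences are disjoint unions of affine spaces.

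Then I would argue by the standard affine-paving induction. By the long exact sequence of the pair $(Z_k,Z_{k+1})$ with compactly supported cohomology, and $\rH^*_c(\C^m)=\Q[-2m]$, all odd cohomology vanishes and the sequences split. Alternatively, since each cell has even real dimension, the cellular chain complex of the CW structure has zero differentials. Either way $\rH^*(Y,\Q)$ is free with one generator of degree $2\ell(w)$ for each $w$. Taking the closures $\Omega_w$, whose fundamental classes restrict to generators, the cycle classes $\sigma_w$ form a basis (Poincaré duality identifies the Borel–Moore homology basis $[\Omega_w]$ with cohomology). Each $\sigma_w$ is the class of an algebraic cycle of codimension $\ell(w)$, hence of type $(\ell(w),\ell(w))$. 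Consequently every class is algebraic, $\rH^{\mathrm{odd}}=0$, and $\chi(Y)=\sum_w 1=|W_Y|=|W/W_P|$.

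The main obstacle is the precise cell dimension count, namely identifying $B^-\cap wPw^{-1}$ and showing the orbit map is an isomorphism onto an affine space rather than merely a bijection. For this I would use the product decomposition of unipotent groups into root subgroups in any order. The closure relation needed for the filtration I would take from Appendix~\ref{app:bruhat}.
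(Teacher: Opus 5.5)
Your proposal is correct and follows essentially the same route as the paper: you use the Bruhat decomposition into opposite Schubert cells $C_w\cong\C^{\dim Y-\ell(w)}$, note that the cellular differentials vanish because every cell has even real dimension, transport the Borel--Moore classes of the closures $\Omega_w$ by Poincar\'e duality, and then obtain the Hodge type from the cycle class and $\chi(Y)$ by counting cells. The only additions are your explicit cell-dimension count via $B^-\cap wPw^{-1}$, which the paper simply takes from Definition~\ref{def:schubert}, and the compactly supported long exact sequence offered as an alternative to the CW argument.
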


\begin{proof}
By the Bruhat decomposition \cite{Borel1991}, $Y$ is the disjoint union of the opposite
Schubert cells,
\[
    Y=\bigsqcup_{w\in W_{Y}}C_{w},\qquad C_{w}=B^{-}wP/P,
\]
the union being independent of the representatives chosen, since
$B^{-}wP=B^{-}wvP$ for $v\in W_{P}$. Moreover, $C_{w}\cong\C^{\dim Y-\ell(w)}$.
A variety stratified by affine spaces admits a CW structure whose cells are
the $C_{w}$, all of even real dimension, so every boundary map in the
cellular chain complex vanishes.  Hence $\rH_{\mathrm{odd}}(Y,\Z)=0$ and
$\rH_{\mathrm{even}}(Y,\Z)$ is free with one generator for each cell, given
by the Borel--Moore fundamental class of its closure $\Omega_{w}$.  Since
$\Omega_{w}$ has complex codimension $\ell(w)$, its class lies in
$\rH_{2\dim Y-2\ell(w)}(Y,\Z)$, and Poincaré duality on the smooth variety
$Y$ carries it to $\sigma_{w}\in \rH^{2\ell(w)}(Y,\Z)$.  The $\sigma_{w}$
therefore form a basis of $\rH^{*}(Y,\Q)$ as a graded vector space.
Finally, each $\sigma_{w}$ is the cycle class of a closed subvariety of
codimension $\ell(w)$, hence lies in $\rH^{\ell(w),\ell(w)}(Y,\Q)$ by the
standard properties of cycle classes and all of $\rH^{*}(Y,\Q)$ is
algebraic.  Counting cells gives $\chi(Y)=|W_{Y}|$.
\end{proof}

\begin{example}
By Proposition \ref{prop:schubertbasis}, the Hodge diamond of a flag variety
is determined by the length distribution on $W_{Y}$: one has $h^{p,q}=0$ for
$p\neq q$ and $h^{p,p}=\left|\{w\in W_{Y}\mid \ell(w)=p\}\right|$.

For the full flag variety $\mathrm{Fl}(1,2,3)=\SL_{3}(\C)/B$ we have
$W_{Y}=W\cong\mathcal S_{3}$, with lengths $0,1,1,2,2,3$ by Example
\ref{ex:weylSL3}, so the nonzero Hodge numbers are $h^{0,0}=h^{3,3}=1$ and
$h^{1,1}=h^{2,2}=2$, and $\chi=6$:
    \[
    \begin{array}{ccccccc}
          &&&1&&&  \\
          &&0&&0&&\\
          &0&&2&&0&\\
          0&&0&&0&&0\\
           &0&&2&&0&\\
             &&0&&0&&\\
               &&&1&&&
    \end{array}
    \]

For $\PP^{2}=\SL_{3}(\C)/P$ with $\Pi_{P}=\{\alpha_{2}\}$ we have
$W_{P}=\{\mathrm{Id},s_{2}\}$ and
$W_{Y}=\{\mathrm{Id},\,s_{1},\,s_{2}s_{1}\}$, of lengths $0,1,2$, whence
$h^{0,0}=h^{1,1}=h^{2,2}=1$ and $\chi=3$:
  \[
    \begin{array}{ccccc}
         &&1&&\\
         &0&&0&\\
         0&&1&&0\\
         &0&&0&\\
         &&1&&
    \end{array}
    \]
\end{example}

Lastly, we give a combinatorial description of the second homology group
$\rH_{2}(Y,\Z)$ in terms of coroots.  Recall from Definition
\ref{def:weylgroup} the isomorphism $\nu\colon\mathfrak t\to\mathfrak t^{*}$
induced by the Killing form and the coroot
$\alpha^{\vee}=\nu^{-1}\big(2\alpha/(\alpha,\alpha)\big)\in\mathfrak t$
attached to $\alpha\in R$.  The set
$R^{\vee}=\{\alpha^{\vee}\mid\alpha\in R\}$ is again a root system, with
simple system $\Pi^{\vee}=\{\alpha_{i}^{\vee}\mid\alpha_{i}\in\Pi\}$.

\begin{definition}
The \emph{coroot lattice} is $Q^{\vee}=\operatorname{span}_{\Z}(R^{\vee})$,
free with basis $\Pi^{\vee}$.  For a flag variety $Y=G/P$ determined by
$\Pi_{P}\subset\Pi$, the \emph{parabolic coroot sublattice} is
$Q^{\vee}_{P}=\operatorname{span}_{\Z}\{\alpha_{j}^{\vee}\mid\alpha_{j}\in\Pi_{P}\}$
and the \emph{reduced coroot lattice} is the quotient
$Q^{\vee}_{Y}=Q^{\vee}/Q^{\vee}_{P}$, free with basis the images
$\overline{\alpha_{i}^{\vee}}$ for $\alpha_{i}\in\Pi_Y$.
\end{definition}

\begin{proposition}\label{prop:h2}
Let $\Pi_Y = \{\alpha_1 ,\ldots,\alpha_k\}$ and denote the Schubert varieties of the simple reflections $s_{\alpha_i}$ by $C_i=\overline{Bs_{\alpha_i}P/P}$. There is a canonical isomorphism of abelian groups
\begin{align}
        &\rH_{2}(Y,\Z)\;\cong\;Q^{\vee}_{Y}\\
        &[C_i] \mapsto \overline{\alpha_{i}^{\vee}}.
\end{align}
\end{proposition}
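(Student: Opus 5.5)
The plan is to combine the cellular description of homology from the proof of Proposition~\ref{prop:schubertbasis} with a duality against line bundles coming from weights. First, by the Bruhat decomposition, $\rH_2(Y,\Z)$ is free on the classes of the closures of the one-dimensional cells. With respect to $B$, these cells are indexed by the elements $w\in W_Y$ with $\ell(w)=1$. Such a $w$ is a simple reflection $s_{\alpha}$ lying in $W_Y$, which happens exactly when $\alpha\notin\Pi_P$, that is, when $\alpha\in\Pi_Y$. Hence $\rH_2(Y,\Z)=\bigoplus_{i=1}^k\Z[C_i]$. Since $Q^\vee_Y$ is free on the $\overline{\alpha_i^\vee}$ for $\alpha_i\in\Pi_Y$, the assignment $[C_i]\mapsto\overline{\alpha_i^\vee}$ is an isomorphism of free abelian groups of rank $k$.

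The substantive point is \emph{canonicity}: the map should be intrinsic rather than dependent on the chosen bases. To establish this, I would construct the inverse pairing. Each character $\lambda$ of $P$ extends a weight of $\mathbb T$, and such a weight satisfies $\langle\lambda,\alpha_j^\vee\rangle=0$ for all $\alpha_j\in\Pi_P$. Each such $\lambda$ defines a homogeneous line bundle $\mathcal L_\lambda=G\times_P\C_{-\lambda}$ on $Y$. Pairing $c_1(\mathcal L_\lambda)$ with a curve class gives a bilinear map $\rH_2(Y,\Z)\times X^*(P)\to\Z$. On the Lie-algebra side, the evaluation pairing $Q^\vee\times X^*(P)\to\Z$ descends to $Q^\vee_Y$, because $X^*(P)$ annihilates $Q^\vee_P$.

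The key computation is $\int_{C_i}c_1(\mathcal L_\lambda)=\langle\lambda,\alpha_i^\vee\rangle$. I would verify it by noting that $C_i=\overline{P_{\alpha_i}P}/P\cong P_{\alpha_i}/(P_{\alpha_i}\cap P)\cong\PP^1$, where $P_{\alpha_i}$ is the minimal parabolic containing $B$ and $s_{\alpha_i}$. This curve is the orbit of the $\SL_2$ attached to $\alpha_i$, and $\mathcal L_\lambda|_{C_i}$ is the $\SL_2$-homogeneous line bundle on $\PP^1$ of degree equal to the weight of the restricted character on the coroot $\alpha_i^\vee\in\mathfrak{sl}_2$. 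Alternatively, one can use localization at the two $\mathbb T$-fixed points $eP$ and $s_iP$: the weights $\lambda$ and $s_i\lambda$ differ by $\langle\lambda,\alpha_i^\vee\rangle\alpha_i$, and the tangent weight is $\alpha_i$ up to sign.

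Since $G$ is simply connected, the fundamental weights $\varpi_i$ for $\alpha_i\in\Pi_Y$ are characters of $P$. They satisfy $\langle\varpi_i,\alpha_j^\vee\rangle=\delta_{ij}$, so both pairings are perfect and the map is forced to send $[C_i]$ to $\overline{\alpha_i^\vee}$. Equivalently, it is the transpose of the isomorphism $X^*(P)\cong\mathrm{Pic}(Y)$, which makes it canonical.

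The main obstacle is the degree computation on $C_i$ with correct signs. The sign conventions for $\mathcal L_\lambda$, for $B$ versus $B^-$, and for the weight $\pm\lambda$ must be fixed consistently so that $\varpi_i$ gives degree $+1$. I would settle this first by the explicit $\SL_2$ check on $\PP^1=\SL_2/B$, using the convention that makes $\mathcal L_\varpi$ ample.
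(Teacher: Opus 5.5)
Your proof is correct, but it takes a genuinely different route from the paper.

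The paper argues homotopy-theoretically. The fibration $P\to G\to Y$, together with $\pi_1(G)=\pi_2(G)=0$ and Hurewicz, gives $\rH_2(Y,\Z)\cong\pi_2(Y)\cong\pi_1(P)\cong\pi_1(L)$. It then identifies $\pi_1(L)$ with $Q^\vee/Q^\vee_P$, as the quotient of the cocharacter lattice $\pi_1(\mathbb T)=Q^\vee$ by the coroots of the Levi factor. The normalization $[C_i]\mapsto\overline{\alpha_i^\vee}$ is only asserted in a final sentence.

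You instead read off the free basis $\{[C_i]\}$ from the Bruhat cell structure, as in Proposition~\ref{prop:schubertbasis}, and define the map by duality against $X^*(P)=\Lambda_Y$. Your one real input is $\int_{C_i}c_1(\cO_Y(\lambda))=\langle\lambda,\alpha_i^\vee\rangle$. This is exactly Proposition~\ref{prop:linebundleweights}(b) with $w=e$, $\alpha=\alpha_i$ (note $C_i=C_{e,\alpha_i}$), proved via Lemma~\ref{lem:degreeP1}. With the paper's convention $\cO_Y(\lambda)=G\times_P\C_{-\lambda}$, the fiber weights at $eP$ and $s_{\alpha_i}P$ are $-\lambda$ and $-s_{\alpha_i}\lambda$, and the tangent weight at $eP$ is $-\alpha_i$. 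So the sign you were worried about comes out as $+\langle\lambda,\alpha_i^\vee\rangle$.

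The two routes buy different things. The paper's makes canonicity automatic, since the connecting map $\pi_2(G/P)\to\pi_1(P)$ involves no choices, and it needs no cell decomposition. However, it leaves the value on $[C_i]$ unproved; checking it would require an $\SL_2$ computation of that connecting map for $\SL_2\to\PP^1$. Your route is more elementary: it needs neither Cartan's $\pi_2(G)=0$ nor the description of $\pi_1$ of the Levi. It also actually proves the normalization, and its canonicity is that of the transpose of $\lambda\mapsto\cO_Y(\lambda)$. Finally, your degree computation uses only fixed points and tangent weights, not Proposition~\ref{prop:h2}, so there is no circularity with the paper's later use of the same mechanism in Proposition~\ref{prop:invcurves}.
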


\begin{proof}
Consider the fibration $P\to G\to G/P=Y$ and its long exact sequence
\[
\ldots\to  \pi_{2}(G)\to\pi_{2}(Y)\to\pi_{1}(P)\to\pi_{1}(G)\to\pi_{1}(Y)\to\pi_{0}(P).
\]
Here $\pi_{1}(G)=0$ by hypothesis, $\pi_{2}(G)=0$ by a theorem of
É.~Cartan \cite{Bott-Cartan}, and $P$ is connected, so $\pi_{0}(P)=0$.  Hence $Y$ is
simply connected and $\pi_{2}(Y)\cong\pi_{1}(P)$. The Hurewicz theorem
gives $\rH_{2}(Y,\Z)\cong\pi_{2}(Y)\cong\pi_{1}(P)$.

Write $P=L\ltimes U$ for the Levi decomposition, $U$ the unipotent radical
and $L$ reductive containing $\mathbb T$.  As $U$ is contractible,
$\pi_{1}(P)\cong\pi_{1}(L)$.  Since $G$ is simply connected the cocharacter
lattice of $\mathbb T$ is the coroot lattice,
$\pi_{1}(\mathbb T)\cong Q^{\vee}$, and the inclusion
$\mathbb T\hookrightarrow L$ induces a surjection
$\pi_{1}(\mathbb T)\twoheadrightarrow\pi_{1}(L)$ whose kernel is generated
by the classes of the loops coming from the rank one subgroups attached to
the roots of $L$. These classes are the coroots $\alpha^{\vee}$ with
$\alpha\in R_{P}$, so the kernel is $Q^{\vee}_{P}$.  Therefore
\[
  \rH_{2}(Y,\Z)\cong\pi_{1}(L)\cong Q^{\vee}/Q^{\vee}_{P}=Q^{\vee}_{Y}.
\]
Under this identification, the Schubert curve $C_i$,
which is the closure of a one-dimensional cell and hence a copy of
$\PP^{1}$, has class $\overline{\alpha_{i}^{\vee}}$.
\end{proof}

By Proposition \ref{prop:schubertbasis}, the group $\rH^{2}(Y,\Q)$ has basis
the Schubert classes of length one, $\sigma_{s_{\alpha_{i}}}$ with
$\alpha_{i}\in\Pi_Y$, and by Proposition \ref{prop:h2} the group
$\rH_{2}(Y,\Z)$ has basis the classes $\overline{\alpha_{i}^{\vee}}$.  These
two bases are dual for the Kronecker pairing, so for
$\beta=\sum_{j}b_{j}\overline{\alpha^{\vee}_{i_{j}}}$ and
$\gamma=\sum_{j}a_{j}\sigma_{s_{\alpha_{i_{j}}}}$,
\begin{equation}\label{Intbeta}
    \langle\gamma,\beta\rangle=\int_{\beta}\gamma=a_{1}b_{1}+\dots+a_{k}b_{k}.
\end{equation}

\medskip

\subsection{The Picard group and first Chern class}\label{sec:picard}
 
In this subsection, we describe how to compute the Picard group and the first
Chern class of a flag variety from the representation-theoretic data of
fundamental weights.
 
\begin{definition}\label{def:fundweights}
Let $\Pi=\{\alpha_{1},\dots,\alpha_{r}\}$.  The \emph{fundamental weights}
$\omega_{1},\dots,\omega_{r}\in\mathfrak t^{*}$ are the basis dual to the
simple coroots for the evaluation pairing of Definition \ref{def:weylgroup},
\[
    \langle\omega_{i},\alpha_{j}^{\vee}\rangle=\delta_{ij}.
\]
For a flag variety $Y=G/P$ determined by $\Pi_{P}\subset\Pi$ we set
\[
    \Lambda_{Y}:=\bigoplus_{\alpha_{i}\in\Pi_Y}\Z\,\omega_{i}
    =\Big\{\lambda\in\bigoplus_{i}\Z\omega_{i}\ \Big|\
      \langle\lambda,\alpha_{j}^{\vee}\rangle=0\ \text{for all}\ \alpha_{j}\in\Pi_{P}\Big\}.
\]
\end{definition}

\begin{example}[Fundamental weights for $\SL_{4}(\C)/B$, $\PP^{3}$ and $\Gr(2,4)$]
\label{ex:fundweightsSL4}
The group $\SL_{4}(\C)$ has simple roots $\alpha_{1},\alpha_{2},\alpha_{3}$
in the notation $\alpha_{i}=\alpha_{i,i+1}$ of Example \ref{ExRoots}.
The fundamental weights are expressed as
\[
  \omega_{1}=\tfrac{3}{4}\alpha_{1}+\tfrac{1}{2}\alpha_{2}+\tfrac{1}{4}\alpha_{3},
  \qquad
  \omega_{2}=\tfrac{1}{2}\alpha_{1}+\alpha_{2}+\tfrac{1}{2}\alpha_{3},
  \qquad
  \omega_{3}=\tfrac{1}{4}\alpha_{1}+\tfrac{1}{2}\alpha_{2}+\tfrac{3}{4}\alpha_{3}.
\]
For the full flag variety, $\Pi_{P}=\emptyset$ and
$\Lambda_{Y}=\Z\omega_{1}\oplus\Z\omega_{2}\oplus\Z\omega_{3}$.  For
$\PP^{3}$, where $\Pi_{P}=\{\alpha_{2},\alpha_{3}\}$ and
$\Pi_Y=\{\alpha_{1}\}$, we get $\Lambda_{Y}=\Z\omega_{1}$; and for
$\Gr(2,4)$, where $\Pi_{P}=\{\alpha_{1},\alpha_{3}\}$ and
$\Pi_Y=\{\alpha_{2}\}$, we get $\Lambda_{Y}=\Z\omega_{2}$.
\end{example}

\medskip
 
\subsubsection{Homogeneous line bundles and the Picard group}
 
Given a representation $f\colon P\to\mathrm{GL}(\C^{n})$ we form the
associated vector bundle of rank $n$ over $G/P$,
\[
    E = G \times_P \C^{n},
\]
the quotient of $G\times\C^{n}$ identifying $(g,v)$ with
$(gp,f(p)^{-1}v)$ for $p\in P$.
 
Let $\omega_{i}$ be a fundamental weight with $\alpha_{i}\in\Pi_Y$.  By
Definition \ref{def:fundweights} we have
$\langle\omega_{i},\alpha_{j}^{\vee}\rangle=0$ for all $\alpha_{j}\in\Pi_{P}$,
so $\omega_{i}$ vanishes on the semisimple part of the Levi subalgebra of
$\mathfrak p$ and extends by zero across the nilpotent radical to a Lie
algebra character $\mathfrak p\to\C$.  This character integrates to a group
homomorphism $\theta_{i}\colon P\to\C^{*}$, and we write $\cO_{Y}(\omega_{i})$
for the line bundle associated with the one-dimensional representation
$\theta_{i}^{-1}$.  More generally, for
$\lambda=\sum_{j}a_{j}\omega_{i_{j}}\in\Lambda_{Y}$ we set
\[
  \cO_{Y}(\lambda)=G\times_{P}\C_{-\lambda}
  =\cO_{Y}(\omega_{i_{1}})^{\otimes a_{1}}\otimes\dots\otimes
   \cO_{Y}(\omega_{i_{k}})^{\otimes a_{k}},
\]
abbreviated $\cO_{Y}(a_{1},\dots,a_{k})$.
 
\begin{proposition}\label{prop:picard}
The assignment $\lambda\mapsto\cO_{Y}(\lambda)$ is a group isomorphism
\[
    \Lambda_{Y}\ \xrightarrow{\ \sim\ }\ \mathrm{Pic}(Y),
\]
and the first Chern class of $\cO_{Y}(\lambda)$ expands in the Schubert
basis as
\[
    c_{1}\big(\cO_{Y}(\lambda)\big)
    =a_{1}\sigma_{s_{\alpha_{i_{1}}}}+\dots+a_{k}\sigma_{s_{\alpha_{i_{k}}}} .
\]
\end{proposition}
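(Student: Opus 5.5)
The plan has three steps: show the map is a homomorphism, identify $c_1$ through the Kronecker pairing with curve classes, and deduce bijectivity from that formula.

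\emph{Homomorphism.} The assignment $\lambda\mapsto\cO_Y(\lambda)$ is a group homomorphism by construction. The characters $\theta_\lambda\colon P\to\C^*$ form a group under pointwise product, associated bundles satisfy $G\times_P(\C_{\chi}\otimes\C_{\chi'})\cong(G\times_P\C_\chi)\otimes(G\times_P\C_{\chi'})$, and $\lambda\mapsto\theta_\lambda$ is additive on $\Lambda_Y$.

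\emph{Reduction to degrees on Schubert curves.} Since $Y$ is simply connected (proof of Proposition~\ref{prop:h2}) and $\rH^{1}(Y,\cO_Y)=\rH^{2}(Y,\cO_Y)=0$ (all cohomology is of type $(p,p)$ by Proposition~\ref{prop:schubertbasis}), the exponential sequence gives
\[
c_1\colon \mathrm{Pic}(Y)\xrightarrow{\sim}\rH^2(Y,\Z).
\]
By Propositions~\ref{prop:schubertbasis} and~\ref{prop:h2}, $\rH^2(Y,\Z)$ is free on $\sigma_{s_{\alpha_i}}$, $\alpha_i\in\Pi_Y$, and this basis is dual to $[C_i]$ by \eqref{Intbeta}. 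So both claims reduce to one computation:
\[
\int_{C_j}c_1(\cO_Y(\omega_i))=\delta_{ij}\qquad(\alpha_i,\alpha_j\in\Pi_Y).
\]
Granting this, linearity gives the displayed formula for $c_1(\cO_Y(\lambda))$. It also gives bijectivity: the composite $\Lambda_Y\to\mathrm{Pic}(Y)\to\rH^2(Y,\Z)$ sends the basis $\omega_i$ to the basis $\sigma_{s_{\alpha_i}}$, so it is an isomorphism, and since $c_1$ is an isomorphism so is $\lambda\mapsto\cO_Y(\lambda)$.

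\emph{Degree on $C_j$ (main step).} Let $\mathrm{SL}_2^{(j)}\to G$ be the rank-one subgroup attached to $\alpha_j$. Its orbit through the base point is $C_j\cong \mathrm{SL}_2^{(j)}/B_2\cong\PP^1$, where $B_2=\mathrm{SL}_2^{(j)}\cap P$ is a Borel subgroup of $\mathrm{SL}_2^{(j)}$ because $\alpha_j\notin\Pi_P$. Restricting the associated bundle gives
\[
\cO_Y(\lambda)|_{C_j}=\mathrm{SL}_2^{(j)}\times_{B_2}\C_{-\lambda}.
\]
The torus of $B_2$ is $\alpha_j^\vee(\C^*)$, so this is the line bundle on $\PP^1$ attached to the character $t\mapsto t^{-\langle\lambda,\alpha_j^\vee\rangle}$. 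It remains to check that $\mathrm{SL}_2\times_{B_2}\C_{-m}$, with $B_2$ upper triangular, is $\cO_{\PP^1}(m)$ rather than $\cO_{\PP^1}(-m)$. I would do this directly for $\mathrm{SL}_2$: the tautological line $\cO(-1)$ is the bundle of the character by which $B_2$ acts on the stabilized line $\C e_1$, namely $t\mapsto t$, i.e.\ the fundamental weight, and the paper's convention inserts an inverse. Consequently $\deg\cO_Y(\lambda)|_{C_j}=\langle\lambda,\alpha_j^\vee\rangle$, which equals $\delta_{ij}$ for $\lambda=\omega_i$ by Definition~\ref{def:fundweights}.

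The main obstacle is this sign check, because it depends on the convention $\C_{-\lambda}$ and on using $B$ rather than $B^-$. If the sign comes out wrong, the statement holds with $B^-$ in place of $B$, and I would reconcile the two using the fact that Schubert curves for $B$ and for $B^-$ are homologous.
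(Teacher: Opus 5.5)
Your proposal is correct and follows the paper's route: the exponential sequence makes $c_1\colon\mathrm{Pic}(Y)\to\rH^2(Y,\Z)$ an isomorphism, and everything then reduces to the duality between the classes $\sigma_{s_{\alpha_i}}$ and the Schubert curves $C_j$. The difference is that the paper's proof takes for granted that the dual basis is $\{\omega_i\}$; the underlying degree formula reappears, proved by torus weights, in Proposition~\ref{prop:linebundleweights}(b). You instead verify $\deg\cO_Y(\lambda)|_{C_j}=\langle\lambda,\alpha_j^\vee\rangle$ directly on $\mathrm{SL}_2^{(j)}/B_2$, and your sign check is right: with $(g,v)\sim(gp,f(p)^{-1}v)$, the bundle $\mathrm{SL}_2\times_{B_2}\C_{\omega}$ is the tautological $\cO(-1)$, so the $B^-$ fallback is unnecessary (and could not have repaired a sign error anyway, since the sign is fixed by which Borel subgroup $P$ contains, while homologous curves have equal degrees).
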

 
\begin{proof}
By Proposition \ref{prop:schubertbasis} the Hodge numbers $h^{0,i}(Y)$
vanish for $i>0$, so $\rH^{1}(Y,\cO_{Y})=\rH^{2}(Y,\cO_{Y})=0$ and the
exponential sequence makes
$c_{1}\colon\mathrm{Pic}(Y)\to \rH^{2}(Y,\Z)$ an isomorphism.  Since
$\rH_{1}(Y,\Z)=0$, the universal coefficient theorem identifies
$\rH^{2}(Y,\Z)$ with $\operatorname{Hom}(\rH_{2}(Y,\Z),\Z)$, and
$\rH_{2}(Y,\Z)\cong Q^{\vee}_{Y}$ is free with basis the
$\overline{\alpha^{\vee}_{i_{j}}}$ by Proposition \ref{prop:h2}.  The dual
basis is $\{\omega_{i_{j}}\}$ by Definition \ref{def:fundweights}, so
$\mathrm{Pic}(Y)$ is free of rank $|\Pi_Y|$ and $\lambda\mapsto\cO_{Y}(\lambda)$
is an isomorphism.  Finally, by \eqref{Intbeta} both sides of the displayed
formula pair with $\overline{\alpha^{\vee}_{i_{j}}}$ to give
$\langle\lambda,\alpha^{\vee}_{i_{j}}\rangle=a_{j}$, and $\rH^{2}(Y,\Q)$ has
the classes $\sigma_{s_{\alpha_{i}}}$ as a basis by
Proposition \ref{prop:schubertbasis}.
\end{proof}

\begin{corollary}\label{cor:effective}
A class $\beta=\sum_{j}b_{j}\overline{\alpha^{\vee}_{i_{j}}}$ is represented
by an effective algebraic $1$-cycle if and only if $b_{j}\ge0$ for all $j$.
\end{corollary}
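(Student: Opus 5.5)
We prove the two implications separately, working in the bases provided by Proposition~\ref{prop:h2} and Proposition~\ref{prop:picard}: the classes $[C_j]=\overline{\alpha^{\vee}_{i_j}}$ of the Schubert curves $C_j=\overline{Bs_{\alpha_{i_j}}P/P}$ in $\rH_2(Y,\Z)$, and the dual basis $c_1(\cO_Y(\omega_{i_j}))=\sigma_{s_{\alpha_{i_j}}}$ of $\rH^2(Y,\Z)$ given by \eqref{Intbeta}.

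\emph{Sufficiency.} Suppose every $b_j\ge0$. Each $C_j$ is an irreducible curve (a copy of $\PP^1$), so the $1$-cycle $\sum_j b_j C_j$ has nonnegative coefficients and is therefore effective. By Proposition~\ref{prop:h2} its class is exactly $\beta$.

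\emph{Necessity.} Since effective cycles are sums of irreducible curves with nonnegative coefficients, it suffices to show that every irreducible curve $C\subset Y$ satisfies $\int_C c_1(\cO_Y(\omega_{i_j}))\ge0$ for each $j$. Then, for an effective representative of $\beta$, we get
\[
b_j=\int_\beta c_1(\cO_Y(\omega_{i_j}))\ge0
\]
by \eqref{Intbeta}. The key point is therefore that each $\cO_Y(\omega_{i_j})$ is nef. I would prove that it is globally generated; then some section does not vanish identically on $C$, so its restriction to $C$ has nonnegative degree.

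For global generation, apply Borel--Weil to the irreducible $G$-module $V$ of highest weight $\omega_{i_j}$. With the sign convention $\cO_Y(\lambda)=G\times_P\C_{-\lambda}$, one checks that $V^*$ (or $V$, depending on conventions) maps to $\rH^0(Y,\cO_Y(\omega_{i_j}))$, and that the section given by a lowest- (respectively highest-) weight vector is nonzero at the base point $eP$. Because the evaluation map is $G$-equivariant and $G$ acts transitively on $Y$, the sections then generate the bundle at every point. Equivalently, $\cO_Y(\omega_{i_j})$ is the pullback of $\cO(1)$ under the $G$-equivariant map $Y\to\PP(V)$, $gP\mapsto[g\,v_{\omega_{i_j}}]$, which is well-defined because $P$ stabilizes the line spanned by the highest-weight vector. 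This works since $\omega_{i_j}$ is orthogonal to the coroots of $\Pi_P$ and $\mathfrak p$ contains only those negative root spaces.

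The main obstacle is getting the sign convention right: the paper defines $\cO_Y(\lambda)$ via the character $-\lambda$, and one must verify that $\omega_{i_j}$, rather than $-\omega_{i_j}$, gives the nef (ample on the corresponding Grassmannian-type quotient) bundle. A consistency check fixes this: by \eqref{Intbeta}, $\int_{C_j}c_1(\cO_Y(\omega_{i_j}))=1>0$. Since the two candidates $\pm\omega_{i_j}$ have degrees of opposite sign on $C_j$, and exactly one of them corresponds to the globally generated bundle $\phi^*\cO(1)$, which must have nonnegative degree on every curve including $C_j$, the globally generated one is $\cO_Y(\omega_{i_j})$.
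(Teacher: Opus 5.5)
Your proof is correct and matches the paper's argument. Sufficiency comes from the effective cycle $\sum_j b_j C_j$. Necessity comes from global generation of $\cO_Y(\omega_{i_j})$ (Borel--Weil), which forces $\int_C\sigma_{s_{\alpha_{i_j}}}\ge0$ on every irreducible curve; the coordinates are then read off via \eqref{Intbeta}. Your realization of the bundle as $\phi^*\cO(1)$ under $Y\to\PP(V)$, and your sign check on $C_j$, just make explicit what the paper compresses into its citation of Borel--Weil.
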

\begin{proof}
If all $b_{j}\ge0$ then $\beta$ is the class of the effective cycle
$\sum_{j}b_{j}\,C_j$. Conversely, let $C$ be an
irreducible curve.  The line bundle $\cO_{Y}(\omega_{i})$ associated with
the dominant weight $\omega_{i}$ is globally generated by the Borel--Weil
theorem \cite{Bott1957}, so $\int_{C}\sigma_{s_{\alpha_{i}}}\ge0$ for each $i$.
By \eqref{Intbeta}, these integrals are the coordinates of $[C]$, which are
therefore nonnegative, and the same holds for nonnegative combinations of
irreducible curves.
\end{proof}
 
\subsubsection{The first Chern class}
 
To compute the first Chern class of $Y$, we consider the weight
\[
    \rho_{Y}=\sum_{\alpha\in R^{+}_{Y}}\alpha ,
\]
the sum of the positive roots $\alpha$ for which $\mathfrak g_{-\alpha}$
does not lie in $\mathfrak p$, that is, of those indexing the tangent
directions of $Y$ at the base point.
 
\begin{lemma}\label{lem:rhoX}
With $\rho_{Y}$ as above:
\begin{defenum}
\item for $\alpha_{j}\in\Pi_{P}$ the reflection $s_{\alpha_{j}}$ permutes
      $R^{+}_{Y}$, and $\langle\rho_{Y},\alpha_{j}^{\vee}\rangle=0$;
\item for $\alpha_{i}\in\Pi_Y$, we have
      $\langle\rho_{Y},\alpha_{i}^{\vee}\rangle\ge2$.
\end{defenum}
In particular $\rho_{Y}\in\Lambda_{Y}$, and writing
$\Pi_Y=\{\alpha_{i_{1}},\dots,\alpha_{i_{k}}\}$,
\[
  \rho_{Y}=c_{1}\omega_{i_{1}}+\dots+c_{k}\omega_{i_{k}},
  \qquad c_{j}=\langle\rho_{Y},\alpha_{i_{j}}^{\vee}\rangle\ \ge\ 2 .
\]
\end{lemma}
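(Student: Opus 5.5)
The proof rests on the standard fact that a simple reflection $s_{\alpha_j}$ permutes $R^{+}\setminus\{\alpha_j\}$. From this I would first derive the permutation statement in (a), then the pairing identities in (a) and (b) by splitting $\rho_Y$ against the full half-sum $\rho=\tfrac12\sum_{\alpha\in R^+}\alpha$. The standard fact itself follows from Definition~\ref{def:simpleroots}. If $\alpha=\sum n_k\alpha_k\in R^+$ with $\alpha\neq\alpha_j$, then some coefficient $n_k$ with $k\neq j$ is positive, since the only positive multiple of $\alpha_j$ in $R$ is $\alpha_j$. The reflection $s_{\alpha_j}\alpha=\alpha-\langle\alpha,\alpha_j^\vee\rangle\alpha_j$ changes only the $j$-th coefficient, so $s_{\alpha_j}\alpha$ is again positive and different from $\alpha_j$.

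For (a), take $\alpha_j\in\Pi_P$. Then $s_{\alpha_j}$ preserves the lattice $\operatorname{span}_\Z(\Pi_P)$, so it maps $R_P$ to itself. Since $\alpha_j\in R_P^+$, the fact above shows that $s_{\alpha_j}$ maps $R^+\setminus R_P^+$ into $R^+$ and avoids $R_P$. Hence it maps $R_Y^+$ into $R_Y^+$, and it is a bijection there because it is an involution. Consequently $s_{\alpha_j}\rho_Y=\rho_Y$, which gives $\rho_Y-\langle\rho_Y,\alpha_j^\vee\rangle\alpha_j=\rho_Y$. As $\alpha_j\neq0$, this forces $\langle\rho_Y,\alpha_j^\vee\rangle=0$.

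For (b), write $\rho_Y=2\rho-2\rho_P$, where $\rho_P=\tfrac12\sum_{\alpha\in R_P^+}\alpha$. The fact above gives $s_{\alpha_i}\rho=\rho-\alpha_i$, so $\langle\rho,\alpha_i^\vee\rangle=1$ and $\langle 2\rho,\alpha_i^\vee\rangle=2$. The step that requires some care is showing $\langle\rho_P,\alpha_i^\vee\rangle\le0$ for $\alpha_i\in\Pi_Y$. Every $\beta\in R_P^+$ is a nonnegative combination of the simple roots $\alpha_j\in\Pi_P$, all of which satisfy $j\neq i$. Distinct simple roots have nonpositive Cartan integers, $\langle\alpha_j,\alpha_i^\vee\rangle\le0$; this is the usual consequence of $\alpha_j-\alpha_i\notin R$, which follows from Definition~\ref{def:simpleroots}(b) because it has coefficients of mixed sign. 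Therefore $\langle\beta,\alpha_i^\vee\rangle\le0$ for each such $\beta$, so $\langle\rho_P,\alpha_i^\vee\rangle\le0$, and hence $\langle\rho_Y,\alpha_i^\vee\rangle\ge2$.

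Finally, $\rho_Y$ is a sum of roots and so lies in the root lattice, which is contained in the weight lattice $\bigoplus_i\Z\omega_i$. By (a), its pairings with $\alpha_j^\vee$ vanish for all $\alpha_j\in\Pi_P$, so $\rho_Y\in\Lambda_Y$. Expanding in the basis dual to the simple coroots (Definition~\ref{def:fundweights}) then gives the displayed formula, with coefficients $c_j=\langle\rho_Y,\alpha_{i_j}^\vee\rangle\ge2$ by (b).
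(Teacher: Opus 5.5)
Your proof is correct and follows essentially the same route as the paper. For (a) you use that $s_{\alpha_j}$ permutes $R^+_Y$ together with the reflection formula, and for (b) you compare $\rho_Y$ with the sum over all positive roots and use $\langle\alpha_j,\alpha_i^\vee\rangle\le 0$ for $j\neq i$. The differences are cosmetic: you take $\rho$ and $\rho_P$ as half-sums, you prove the standard root-system facts that the paper quotes, and you check explicitly that $\rho_Y$ lies in the weight lattice, which the paper leaves implicit.
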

 
\begin{proof}
Recall that a simple reflection $s_{\alpha}$ permutes
$R^{+}\smallsetminus\{\alpha\}$ and sends $\alpha$ to $-\alpha$.
 
(a)  Let $\alpha_{j}\in\Pi_{P}$.  Then $s_{\alpha_{j}}\in W_{P}$ preserves
$R_{P}$, hence preserves $R^{+}_{P}$ up to the single sign change
$\alpha_{j}\mapsto-\alpha_{j}$, and therefore permutes the complement
$R^{+}_{Y}=R^{+}\smallsetminus R^{+}_{P}$.  Applying $s_{\alpha_{j}}$ to
$\rho_{Y}$ thus fixes it, while the reflection formula gives
$s_{\alpha_{j}}(\rho_{Y})=\rho_{Y}-\langle\rho_{Y},\alpha_{j}^{\vee}\rangle\alpha_{j}$,
whence $\langle\rho_{Y},\alpha_{j}^{\vee}\rangle=0$.
 
(b) Define
    \[
        \rho=\sum_{\alpha\in R^+}\alpha,
        \qquad
        \rho_P=\rho-\rho_Y
        =\sum_{\alpha\in R_P^+}\alpha.
    \]
    Since the simple reflection $s_{\alpha_i}$ permutes
    $R^+\setminus\{\alpha_i\}$ and sends $\alpha_i$ to $-\alpha_i$, we have
    \[
        s_{\alpha_i}(\rho)
        =\rho-2\alpha_i.
    \]
    On the other hand, 
    \[
        s_{\alpha_i}(\rho)
        =\rho-\langle \rho,\alpha_i^\vee\rangle\alpha_i.
    \]
    Comparing the two expressions yields
    \[
        \langle \rho,\alpha_i^\vee\rangle=2
    \]
    for every $\alpha_i\in\Pi_Y$.

    Therefore,
    \[
        \langle \rho_Y,\alpha_i^\vee\rangle
        =
        \langle \rho,\alpha_i^\vee\rangle
        -\langle \rho_P,\alpha_i^\vee\rangle
        =
        2-\langle \rho_P,\alpha_i^\vee\rangle.
    \]
    Finally, $\rho_P$ is a positive linear combination of the simple roots
    in $\Pi_P$. Since
    \[
        \langle\alpha_j,\alpha_i^\vee\rangle\leq 0
        \qquad\text{for }j\neq i,
    \]
    and $\alpha_i\notin\Pi_P$, it follows that
    \[
        \langle \rho_P,\alpha_i^\vee\rangle\leq 0.
    \]
    Hence
    \[
      \langle \rho_Y,\alpha_i^\vee\rangle\geq 2.
    \]

By (a) the weight $\rho_{Y}$ pairs to zero with every coroot of $\Pi_{P}$,
so $\rho_{Y}\in\Lambda_{Y}$ by Definition \ref{def:fundweights}, and its
coordinates in the basis $\{\omega_{i}\}_{\alpha_{i}\in\Pi_Y}$ are the
pairings $\langle\rho_{Y},\alpha_{i_{j}}^{\vee}\rangle$.
\end{proof}

\begin{proposition}\label{prop:chern}
Let $Y=G/P$ be a flag variety with
$\Pi_{Y}=\{\alpha_{i_{1}},\dots,\alpha_{i_{k}}\}$.  Then
$K_{Y}^{-1}=\cO_{Y}(\rho_{Y})$ and
\[
  c_{1}(TY)=c_{1}\,\sigma_{s_{\alpha_{i_{1}}}}+\dots+c_{k}\,\sigma_{s_{\alpha_{i_{k}}}},
  \qquad c_{j}=\langle\rho_{Y},\alpha_{i_{j}}^{\vee}\rangle\ \ge\ 2 .
\]
\end{proposition}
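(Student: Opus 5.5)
The plan is to identify the tangent bundle $TY$ as a homogeneous bundle, compute its determinant as a homogeneous line bundle, and then read off $c_1$ using Proposition~\ref{prop:picard} and Lemma~\ref{lem:rhoX}.

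\emph{Step 1: $TY$ as an associated bundle.} Since $Y=G/P$ is homogeneous, $TY\cong G\times_P(\mathfrak g/\mathfrak p)$, where $P$ acts on $\mathfrak g/\mathfrak p$ through the adjoint representation. This is the standard identification of the tangent bundle of a homogeneous space: differentiate the orbit map $G\to Y$ at $e$ and translate by $G$.

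\emph{Step 2: the determinant as a character.} Taking top exterior powers gives
\[
K_Y^{-1}=\det TY\cong G\times_P\det(\mathfrak g/\mathfrak p).
\]
So I must identify the character $\chi$ of $P$ on the line $\det(\mathfrak g/\mathfrak p)$. By the proof of Proposition~\ref{prop:dimension}, $\mathfrak g/\mathfrak p\cong\bigoplus_{\alpha\in R^+_Y}\mathfrak g_{-\alpha}$ as a $\mathbb T$-module. Its $\mathbb T$-weights are therefore $-\alpha$ for $\alpha\in R^+_Y$, and the weight of the determinant is $-\sum_{\alpha\in R^+_Y}\alpha=-\rho_Y$.

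A character of the connected group $P$ is determined by its differential. That differential vanishes on the nilradical, because nilpotent elements act on the determinant by trace zero. It also vanishes on the semisimple part of the Levi subalgebra, since $\langle\rho_Y,\alpha_j^\vee\rangle=0$ for $\alpha_j\in\Pi_P$ by Lemma~\ref{lem:rhoX}(a). Hence the differential of $\chi$ is determined by its restriction to $\mathfrak t$. In the notation of Section~\ref{sec:picard}, $\chi$ is the character $\C_{-\rho_Y}$, that is, $\theta^{-1}$ for the character $\theta$ integrating $\rho_Y$.

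\emph{Step 3: conclude.} By the convention $\cO_Y(\lambda)=G\times_P\C_{-\lambda}$, this gives $K_Y^{-1}=\cO_Y(\rho_Y)$. Lemma~\ref{lem:rhoX} expresses $\rho_Y=\sum_j c_j\omega_{i_j}$ with $c_j=\langle\rho_Y,\alpha_{i_j}^\vee\rangle\ge2$. Proposition~\ref{prop:picard} then yields $c_1(TY)=c_1(K_Y^{-1})=\sum_j c_j\sigma_{s_{\alpha_{i_j}}}$.

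\emph{Expected obstacle.} The main difficulty is the sign bookkeeping in Step 2. I must check that the convention $\C_{-\lambda}$ used in defining $\cO_Y(\lambda)$ matches the weights $-\alpha$ on $\mathfrak g/\mathfrak p$ (rather than $+\alpha$). Otherwise the result would be $\cO_Y(-\rho_Y)$, contradicting ampleness of $K_Y^{-1}$.

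I would cross-check the convention on $\PP^1=\SL_2/B$. There the tangent space at the base point is $\mathfrak g_{-\alpha}$, and we need $T\PP^1=\cO(2)=\cO(\rho_Y)$ with $\rho_Y=\alpha=2\omega$. This test is consistent with the sign chosen above, so the convention check reduces to verifying that this example is normalized the same way as in Proposition~\ref{prop:picard}. That normalization is fixed by the global generation statement used in Corollary~\ref{cor:effective}.
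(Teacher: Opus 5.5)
Your proposal is correct and follows the paper's own argument: identify $TY=G\times_P(\mathfrak g/\mathfrak p)$, observe that $\Lambda^{\mathrm{top}}(\mathfrak g/\mathfrak p)$ has weight $-\rho_Y$, use the convention $\cO_Y(\lambda)=G\times_P\C_{-\lambda}$ to get $K_Y^{-1}=\cO_Y(\rho_Y)$, and finish with Lemma~\ref{lem:rhoX} and Proposition~\ref{prop:picard}. Your additional check that the character of $P$ on the determinant is determined by its $\mathbb T$-weight fills in a detail the paper leaves implicit: any character kills the root spaces in $\mathfrak p$, so the vanishing on the semisimple Levi part is automatic rather than an input. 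The $\PP^1$ sign check is harmless, but it is not needed, since the stated convention already fixes the sign directly.
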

 
\begin{proof}
The tangent bundle of the homogeneous space $Y=G/P$ is the associated
bundle $TY=G\times_{P}(\mathfrak g/\mathfrak p)$, so
$\det TY=K_{Y}^{-1}$ is the line bundle associated with the
one-dimensional $P$-module $\Lambda^{\mathrm{top}}(\mathfrak g/\mathfrak p)$.
By the proof of Proposition \ref{prop:dimension} the $\mathbb T$-weights of
$\mathfrak g/\mathfrak p$ are the elements of $-R^{+}_{Y}$, so that module
has weight $-\rho_{Y}$.  With the convention
$\cO_{Y}(\lambda)=G\times_{P}\C_{-\lambda}$ this says
$K_{Y}^{-1}=\cO_{Y}(\rho_{Y})$.  Now apply Lemma \ref{lem:rhoX} and
Proposition \ref{prop:picard}.
\end{proof}
 
\begin{corollary}\label{cor:fano}
Every flag variety is Fano, and its Fano index (the largest integer $q$
such that $K_{Y}^{-1}=L^{\otimes q}$ for some $L\in\mathrm{Pic}(Y)$)
equals $\gcd(c_{1},\dots,c_{k})$.
\end{corollary}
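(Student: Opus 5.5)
The plan is to derive both claims, Fano-ness and the formula for the index, directly from Proposition~\ref{prop:chern} together with the description of $\mathrm{Pic}(Y)$ in Proposition~\ref{prop:picard} and the positivity criterion of Corollary~\ref{cor:effective}.

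\textbf{Fano-ness.} By Proposition~\ref{prop:chern}, $K_Y^{-1}=\cO_Y(\rho_Y)$ with $\rho_Y=\sum_j c_j\omega_{i_j}$ and every $c_j\ge 2>0$. I would show that such an $\cO_Y(\lambda)$, with all coordinates strictly positive, is ample. The plan is as follows.
\begin{itemize}
\item Each $\cO_Y(\omega_{i_j})$ is globally generated by Borel--Weil, as already used in the proof of Corollary~\ref{cor:effective}. Hence $\cO_Y(\lambda)$ is globally generated and in particular nef.
\item For ampleness I would use Kleiman's criterion. By Corollary~\ref{cor:effective}, the closed cone of curves is the simplicial cone spanned by the classes $\overline{\alpha^\vee_{i_j}}$, namely the Schubert curves $C_j$.
\item By \eqref{Intbeta}, $\int_{C_j} c_1(\cO_Y(\lambda))=a_j>0$. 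Any nonzero class $\sum b_j\overline{\alpha^\vee_{i_j}}$ in the closed cone has $b_j\ge0$, not all zero, so its pairing with $\lambda$ is strictly positive.
\item Since the cone is polyhedral and generated by finitely many classes, positivity on the nonzero part of the closed cone is exactly Kleiman's condition. Hence $K_Y^{-1}$ is ample.
\end{itemize}
The main subtle point is making sure Kleiman applies, i.e.\ that the closed cone $\overline{\mathsf{NE}}(Y)$ equals the cone described in Corollary~\ref{cor:effective}. This holds because that cone is finitely generated, hence already closed, and $N_1(Y)_\R=\rH_2(Y,\R)$ because all cohomology is algebraic (Proposition~\ref{prop:schubertbasis}).

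An alternative route to ampleness is to embed $Y$ into $\PP(V_\lambda)$ via the highest weight orbit and use Borel--Weil. I would keep the Kleiman argument as primary since it only uses results already in the paper.

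\textbf{Fano index.} By Proposition~\ref{prop:picard}, $\mathrm{Pic}(Y)\cong\Lambda_Y$ is free with basis $\{\omega_{i_j}\}$, and the group law on line bundles corresponds to addition of weights. So $K_Y^{-1}=L^{\otimes q}$ for some $L\in\mathrm{Pic}(Y)$ if and only if $\rho_Y\in q\Lambda_Y$. In coordinates this means $q\mid c_j$ for all $j$. The largest such $q$ is therefore $\gcd(c_1,\dots,c_k)$, which is positive since all $c_j\ge2$.
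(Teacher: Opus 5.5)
Your proposal is correct and follows essentially the same route as the paper: positivity of $c_1(TY)=\sum_j c_j\sigma_{s_{\alpha_{i_j}}}$ on every nonzero effective class via Corollary~\ref{cor:effective}, \eqref{Intbeta} and Lemma~\ref{lem:rhoX}, with the index read off from $\mathrm{Pic}(Y)\cong\Lambda_Y$ and $K_Y^{-1}=\cO_Y(\rho_Y)$. Your explicit check that the finitely generated cone is already closed, so that Kleiman's criterion applies, is a step the paper leaves implicit here and only carries out later in Proposition~\ref{prop:ampleness}; the global-generation step is harmless but not needed.
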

 
\begin{proof}
By Corollary \ref{cor:effective} a nonzero effective class $\beta$ has
nonnegative coordinates $b_{j}$, not all zero, so
$\int_{\beta}c_{1}(TY)=\sum_{j}c_{j}b_{j}>0$ by \eqref{Intbeta} and
Lemma \ref{lem:rhoX}.  The statement about the index follows from
$\mathrm{Pic}(Y)\cong\Lambda_{Y}$ and $K^{-1}_{Y}=\cO_{Y}(\rho_{Y})$.
\end{proof}
 
\begin{example}\label{ex:chernSL4}
For $\SL_{4}(\C)$ the positive roots are
\[
  R^{+}=\{\alpha_{1},\ \alpha_{2},\ \alpha_{3},\
    \alpha_{1}+\alpha_{2},\ \alpha_{2}+\alpha_{3},\
    \alpha_{1}+\alpha_{2}+\alpha_{3}\}.
\]
 
For the full flag variety $\SL_{4}(\C)/B$ we have $\Pi_{P}=\emptyset$ and
$R^{+}_{Y}=R^{+}$, so
\[
  \rho_{Y}=3\alpha_{1}+4\alpha_{2}+3\alpha_{3}
  =2\omega_{1}+2\omega_{2}+2\omega_{3},
\]
giving
$c_{1}(TY)=2\sigma_{s_{\alpha_{1}}}+2\sigma_{s_{\alpha_{2}}}+2\sigma_{s_{\alpha_{3}}}$
and Fano index $2$.
 
For $\PP^{3}=\SL_{4}(\C)/P$ with $\Pi_{P}=\{\alpha_{2},\alpha_{3}\}$ the
subsystem $R_{P}$ contributes $\alpha_{2},\alpha_{3},\alpha_{2}+\alpha_{3}$,
so $R^{+}_{Y}=\{\alpha_{1},\ \alpha_{1}+\alpha_{2},\
\alpha_{1}+\alpha_{2}+\alpha_{3}\}$ and
\[
  \rho_{Y}=3\alpha_{1}+2\alpha_{2}+\alpha_{3}=4\omega_{1},
\]
so $c_{1}(TY)=4\sigma_{s_{\alpha_{1}}}$, of Fano index $4$.
 
For $\Gr(2,4)=\SL_{4}(\C)/P$ with $\Pi_{P}=\{\alpha_{1},\alpha_{3}\}$ we get
$R^{+}_{Y}=\{\alpha_{2},\ \alpha_{1}+\alpha_{2},\ \alpha_{2}+\alpha_{3},\
\alpha_{1}+\alpha_{2}+\alpha_{3}\}$ and
\[
  \rho_{Y}=2\alpha_{1}+4\alpha_{2}+2\alpha_{3}=4\omega_{2},
\]
so $c_{1}(TY)=4\sigma_{s_{\alpha_{2}}}$, of Fano index $4$.  In each case
$|R^{+}_{Y}|=\dim Y$, namely $6$, $3$ and $4$, in accordance with
Proposition \ref{prop:dimension}.
\end{example}
 
\subsubsection{Positivity}
 
\begin{proposition}\label{prop:ampleness}
Let $L=\cO_{Y}(\lambda)$ with
$\lambda=a_{1}\omega_{i_{1}}+\dots+a_{k}\omega_{i_{k}}\in\Lambda_{Y}$.  Then
\begin{defenum}
\item $L$ is ample if and only if $a_{j}>0$ for all $j$;
\item $L$ is generated by global sections if and only if $a_{j}\ge0$ for
      all $j$.
\end{defenum}
\end{proposition}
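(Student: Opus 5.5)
The plan is to reduce both statements to the numerical pairing of $L$ with the Schubert curves $C_j=\overline{Bs_{\alpha_{i_j}}P/P}$. Then Borel--Weil supplies the missing direction for global generation, and Kleiman's criterion (or the Borel--Weil embedding) does the same for ampleness.

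\emph{Necessity.} By Proposition~\ref{prop:picard} and \eqref{Intbeta}, $\int_{C_j}c_1(L)=\langle\lambda,\alpha_{i_j}^\vee\rangle=a_j$. If $L$ is globally generated, its restriction to the curve $C_j\cong\PP^1$ is globally generated, so its degree is nonnegative and $a_j\ge 0$. If $L$ is ample, its restriction to $C_j$ is ample, so $a_j>0$.

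\emph{Sufficiency for (b).} Suppose all $a_j\ge0$. Each $\cO_Y(\omega_{i_j})$ is globally generated by the Borel--Weil theorem, as already used in the proof of Corollary~\ref{cor:effective}. A tensor product of globally generated line bundles is globally generated, so $L=\bigotimes_j\cO_Y(\omega_{i_j})^{\otimes a_j}$ is globally generated. Equivalently, one can argue directly: $\rH^0(Y,L)\cong V(\lambda)^*$ is nonzero, the zero locus of any section is $G$-stable because $L$ is $G$-linearized and $G$ acts transitively, and so the common zero locus of all sections is empty.

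\emph{Sufficiency for (a).} Suppose all $a_j>0$. I would use Kleiman's criterion, or more simply its numerical form for a variety whose effective cone is finitely generated. By Corollary~\ref{cor:effective}, the Mori cone of $Y$ is the closed cone spanned by the classes $[C_j]$, and it is rational polyhedral. A nef divisor that is strictly positive on every nonzero element of this closed cone is ample by Kleiman. Here $\int_\beta c_1(L)=\sum_j a_jb_j>0$ for every nonzero $\beta$ with all $b_j\ge0$, which gives ampleness. This is the same argument as in Corollary~\ref{cor:fano}, where only positivity of the $c_j$ was used.

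An alternative route avoids Kleiman. Since $\lambda$ is dominant with $\langle\lambda,\alpha^\vee\rangle>0$ exactly for $\alpha\in\Pi_Y$, the stabilizer of the highest weight line in $\PP(V(\lambda))$ is precisely $P$. Hence $Y\to\PP(V(\lambda))$ is the orbit map, which is an injective immersion and therefore a closed embedding, with $L$ the pullback of $\cO(1)$.

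I expect the main obstacle to be justifying that the Mori cone equals the cone generated by the $[C_j]$. Kleiman requires positivity on the closure of the cone of curves, not just on the integral effective classes. This follows because the cone described in Corollary~\ref{cor:effective} is already closed and polyhedral, being the nonnegative orthant in the basis $\overline{\alpha_{i_j}^\vee}$.
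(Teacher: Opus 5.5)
Your proof is correct and matches the paper's. For (a) you both use Kleiman's criterion on the cone of curves, which Corollary~\ref{cor:effective} identifies with the finitely generated, hence closed, nonnegative orthant; for (b) you both use Borel--Weil, except that you get necessity more elementarily by restricting to the Schubert curves $C_j$, where the paper instead invokes the vanishing of $\rH^{0}(Y,\cO_{Y}(\lambda))$ for non-dominant $\lambda$.

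There is one small slip in your optional aside: the zero locus of a single section is not $G$-stable. What is $G$-stable is the common zero locus of all sections, and transitivity together with $\rH^{0}\neq0$ then makes it empty.
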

 
\begin{proof}
(a)  By Corollary \ref{cor:effective} the cone of effective $1$-cycles is
generated by the classes $\overline{\alpha^{\vee}_{i_{j}}}$ of the Schubert
curves $C_{i_j}$.  Being finitely generated, it is
closed, so the Kleiman criterion applies.  By \eqref{Intbeta},
\[
    \int_{C_{i_j}}c_{1}(L)
    =\langle\lambda,\alpha_{i_{j}}^{\vee}\rangle=a_{j},
\]
and $c_{1}(L)$ is positive on every nonzero effective class if and only if
all $a_{j}>0$.
 
(b)  A homogeneous line bundle $\cO_{Y}(\lambda)$ is globally generated if
and only if $\lambda$ is dominant, that is
$\langle\lambda,\alpha^{\vee}\rangle\ge0$ for every $\alpha\in\Pi$: by the
Borel--Weil theorem \cite{Bott1957}, $\rH^{0}(Y,\cO_{Y}(\lambda))$ is then
the irreducible $G$-module of highest weight $\lambda$, whose evaluation map
is surjective by homogeneity, and it vanishes otherwise.  Since
$\lambda\in\Lambda_{Y}$ already pairs to zero with the coroots of
$\Pi_{P}$, dominance reduces to $a_{j}\ge0$ for all $j$.
\end{proof}
 
\medskip
 
\subsection{Complete intersections and Sommese's theorem}\label{sec:ci}
 
By cutting a flag variety with sections of direct sums of homogeneous line
bundles, the combinatorial data of the ambient root system determines the
basic geometry of the subvariety.
 
\begin{theorem}\label{thm:ci}
Let $Y=G/P$ be a flag variety of dimension $N=|R^{+}_{Y}|$, with
$\Pi_{Y}=\{\alpha_{i_{1}},\dots,\alpha_{i_{k}}\}$ and
$\rho_{Y}=\sum_{m=1}^{k}c_{m}\omega_{i_{m}}$ as in Lemma \ref{lem:rhoX}.
Let
\[
  \cE=\bigoplus_{j=1}^{e}\cO_{Y}(\lambda_{j}),
  \qquad \lambda_{j}=\sum_{m=1}^{k}a_{j,m}\,\omega_{i_{m}}\in\Lambda_{Y},
\]
and let $X\subset Y$ be the zero locus of a global section of $\cE$,
assumed smooth of codimension $e$.  Then
\begin{defenum}
\item if every $\lambda_{j}$ is ample, that is $a_{j,m}>0$ for all $j,m$,
      and $N-e\ge3$, then the restriction is an isomorphism
      \[
        \mathrm{Pic}(Y)\ \xrightarrow{\ \sim\ }\ \mathrm{Pic}(X),
        \qquad\text{so that}\qquad \mathrm{Pic}(X)\cong\Lambda_{Y};
      \]
\item the canonical bundle of $X$ is
      \[
        K_{X}=\cO_{X}\Big(\sum_{j=1}^{e}\lambda_{j}-\rho_{Y}\Big)=i^* \cO_{Y}\Big(\sum_{j=1}^{e}\lambda_{j}-\rho_{Y}\Big),
      \]
      where $i:X \rightarrow Y$ is the inclusion map;
\item if $\sum_{j=1}^{e}a_{j,m}<c_{m}$ for every $m$, then $X$ is Fano;
\item if $\sum_{j=1}^{e}a_{j,m}=c_{m}$ for every $m$, then $K_{X}$ is
      trivial.
\end{defenum}
\end{theorem}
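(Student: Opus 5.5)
The plan is to treat the four items in turn, using the adjunction formula for (b), which then feeds (c) and (d), and the Lefschetz hyperplane theorem in Sommese's form for (a).

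For (a), the key tool is Sommese's extension of the Lefschetz hyperplane theorem. If $X$ is the zero locus of a regular section of an ample vector bundle $\cE$ of rank $e$ on a smooth projective $Y$ of dimension $N$, then $\rH^k(Y,\Z)\to \rH^k(X,\Z)$ is an isomorphism for $k<N-e$ and injective for $k=N-e$. Each $\cO_Y(\lambda_j)$ is ample by Proposition~\ref{prop:ampleness}(a), and a direct sum of ample line bundles is ample, so $\cE$ is ample. Since $N-e\ge3$, we obtain $\rH^2(Y,\Z)\cong \rH^2(X,\Z)$, and also $\rH^1(X,\Z)=0$.

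To pass from cohomology to Picard groups, I need $\rH^1(X,\cO_X)=\rH^2(X,\cO_X)=0$. The Lefschetz isomorphism in degrees $1$ and $2$ is one of rational Hodge structures. Those of $Y$ are of pure $(p,p)$ type by Proposition~\ref{prop:schubertbasis}, so $h^{0,1}(X)=h^{0,2}(X)=0$. The exponential sequence then identifies $\mathrm{Pic}(X)\cong \rH^2(X,\Z)$, compatibly with restriction from $Y$. Combining this with Proposition~\ref{prop:picard} gives $\mathrm{Pic}(X)\cong\Lambda_Y$.

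The main point to watch is integrality. Sommese's theorem is valid with $\Z$ coefficients, and $\rH^2(X,\Z)$ has no torsion because $\rH_1(X,\Z)=0$; this vanishing follows from the $\pi_1$ version of the theorem, or from $\rH^1$ together with the Lefschetz isomorphism on $\rH_1$.

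For (b), the zero locus of a regular section has normal bundle $N_{X/Y}\cong\cE|_X$. The conormal exact sequence $0\to TX\to TY|_X\to \cE|_X\to0$ therefore gives $K_X=K_Y|_X\otimes\det\cE|_X$. By Proposition~\ref{prop:chern}, $K_Y=\cO_Y(-\rho_Y)$, and $\det\cE=\cO_Y(\sum_j\lambda_j)$, which is the stated formula.

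For (c), by (b) we have $-K_X=i^*\cO_Y(\rho_Y-\sum_j\lambda_j)$. Under the hypothesis of (c), the weight $\rho_Y-\sum_j\lambda_j=\sum_m(c_m-\sum_j a_{j,m})\omega_{i_m}$ has all coefficients positive, so it is ample on $Y$ by Proposition~\ref{prop:ampleness}(a). The restriction of an ample bundle to a closed subvariety is ample, so $X$ is Fano. Note that (c) does not need ampleness of the $\lambda_j$. For (d), the same weight is zero, so $K_X\cong\cO_X$.
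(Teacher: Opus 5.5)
Your proof is correct. Parts (b)--(d) match the paper's argument exactly: adjunction with $K_Y=\cO_Y(-\rho_Y)$, then ampleness of $\rho_Y-\sum_j\lambda_j$ restricted to $X$, or its vanishing.

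The real difference is in (a), at the step $\rH^1(X,\cO_X)=\rH^2(X,\cO_X)=0$. You argue Hodge-theoretically. The Lefschetz--Sommese restriction $\rH^k(Y,\Q)\to\rH^k(X,\Q)$ for $k=1,2$ is a bijective morphism of Hodge structures, hence an isomorphism of Hodge structures. Since $Y$ has only $(p,p)$ classes, $h^{0,1}(X)=h^{0,2}(X)=0$.

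The paper instead stays in coherent cohomology. It splits the Koszul resolution of $\cO_X$ into short exact sequences. It then uses Kodaira vanishing $\rH^q(Y,\Lambda^m\cE^*)=0$ for $q<N$, valid because each $\Lambda^m\cE^*$ is a sum of inverses of ample line bundles, together with $\rH^q(Y,\cO_Y)=0$. This identifies $\rH^1(\cO_X)$ and $\rH^2(\cO_X)$ with $\rH^{e+1}$ and $\rH^{e+2}$ of $\Lambda^e\cE^*$, which vanish since $e+2<N$.

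Your route is shorter and reuses the topological comparison you already need, at the price of invoking Hodge theory on $X$. The paper's route does not depend on the topological step for this vanishing. Both use ampleness of every summand and the bound $N-e\ge3$ in the same way.

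For the topological comparison itself, the paper uses the homotopy form $\pi_q(Y,X)=0$ for $q\le N-e$, relative Hurewicz, and then dualizes homology. This also yields $\pi_1(X)=0$, which the paper later uses in the Calabi--Yau example. Your cohomological form of Sommese's theorem suffices for the statement. Your torsion remark is harmless but redundant: you already have $\rH^2(X,\Z)\cong\rH^2(Y,\Z)$ integrally, and that group is free.
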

 
\begin{proof}
\emph{(a)}  Since each $\lambda_{j}$ is ample, $\cE$ is an ample vector
bundle by Proposition \ref{prop:ampleness}, and $X$ is the zero locus of a
transverse section.  Sommese's generalization of the Lefschetz hyperplane
theorem \cite{Lazarsfeld2004} gives $\pi_{q}(Y,X)=0$ for $q\le N-e$, hence
by the relative Hurewicz theorem $\rH_{q}(Y,X;\Z)=0$ in the same range.  The
long exact sequence in homology then makes $\rH_{q}(X,\Z)\to \rH_{q}(Y,\Z)$
an isomorphism for $q<N-e$, hence for $q=1,2$ since $N-e\ge3$.  As $Y$ is
simply connected, $\rH_{1}(X,\Z)=\rH_{1}(Y,\Z)=0$, and dualizing the
isomorphism in degree $2$ gives
$\rH^{2}(Y,\Z)\xrightarrow{\sim}\rH^{2}(X,\Z)$.
 
It remains to see that $c_{1}\colon\mathrm{Pic}(X)\to \rH^{2}(X,\Z)$ is an
isomorphism, for which it suffices that
$\rH^{1}(X,\cO_{X})=\rH^{2}(X,\cO_{X})=0$.  The section of $\cE$ resolves
$\cO_{X}$ by the Koszul complex
\[
  0\to\Lambda^{e}\cE^{*}\to\Lambda^{e-1}\cE^{*}\to\dots\to\cE^{*}\to\cO_{Y}\to\cO_{X}\to0,
\]
which we split into short exact sequences
$0\to K_{1}\to\cO_{Y}\to\cO_{X}\to0$ and
$0\to K_{m+1}\to\Lambda^{m}\cE^{*}\to K_{m}\to0$ for $1\le m\le e-1$, with
$K_{e}=\Lambda^{e}\cE^{*}$.  For $m\ge1$ the sheaf $\Lambda^{m}\cE^{*}$ is a
direct sum of line bundles
$\big(\cO_{Y}(\lambda_{j_{1}}+\dots+\lambda_{j_{m}})\big)^{-1}$, each the
inverse of an ample line bundle, so Kodaira vanishing in the form
$\rH^{q}(Y,A^{-1})=0$ for $A$ ample and $q<N$ gives
\[
  \rH^{q}\big(Y,\Lambda^{m}\cE^{*}\big)=0
  \qquad (1\le m\le e,\ q<N).
\]
Moreover $\rH^{q}(Y,\cO_{Y})=0$ for $q>0$, since $h^{0,q}(Y)=0$ by
Proposition \ref{prop:schubertbasis}.  The first short exact sequence
therefore gives $\rH^{q}(\cO_{X})\cong \rH^{q+1}(K_{1})$ for $q\ge1$, and
the remaining ones give $\rH^{q}(K_{m})\cong \rH^{q+1}(K_{m+1})$ for
$q<N-1$.  Iterating,
\[
  \rH^{1}(\cO_{X})\cong \rH^{e+1}\big(\Lambda^{e}\cE^{*}\big),
  \qquad
  \rH^{2}(\cO_{X})\cong \rH^{e+2}\big(\Lambda^{e}\cE^{*}\big),
\]
and both vanish as soon as $e+2<N$, that is $N-e\ge3$.  Hence
\[
  \mathrm{Pic}(X)\;\cong\;\rH^{2}(X,\Z)\;\cong\;\rH^{2}(Y,\Z)\;\cong\;\mathrm{Pic}(Y),
\]
the outer isomorphisms by the exponential sequence and the middle one by the
first paragraph; and $\mathrm{Pic}(Y)\cong\Lambda_{Y}$ by
Proposition \ref{prop:picard}.  All the maps involved are induced by
restriction, so the composite is the restriction map.
 
\emph{(b)}  Since $X$ is the zero locus of a transverse section, its normal
bundle is $N_{X/Y}\cong\cE|_{X}$, and adjunction gives
$K_{X}=\big(K_{Y}\otimes\det\cE\big)\big|_{X}$.  By
Proposition \ref{prop:chern} we have $K_{Y}=\cO_{Y}(-\rho_{Y})$, while
$\det\cE=\cO_{Y}\big(\sum_{j}\lambda_{j}\big)$, whence the formula.
 
\emph{(c)}  By (b),
\[
  K_{X}^{-1}=\cO_{X}\Big(\rho_{Y}-\sum_{j}\lambda_{j}\Big),
  \qquad
  \rho_{Y}-\sum_{j}\lambda_{j}
  =\sum_{m=1}^{k}\Big(c_{m}-\sum_{j=1}^{e}a_{j,m}\Big)\omega_{i_{m}} .
\]
If every coordinate is strictly positive, then
$\cO_{Y}\big(\rho_{Y}-\sum_{j}\lambda_{j}\big)$ is ample on $Y$ by
Proposition \ref{prop:ampleness}, and the restriction of an ample line
bundle to a closed subvariety is ample; hence $K_{X}^{-1}$ is ample.
 
\emph{(d)}  If $\sum_{j}a_{j,m}=c_{m}$ for every $m$ then the weight
$\sum_{j}\lambda_{j}-\rho_{Y}$ vanishes, so $K_{X}=\cO_{X}$ by (b).
\end{proof}
 
\begin{remark}\label{rmk:converses}
The converses require an extra input. For \textup{(d)}, triviality of $\cO_{X}(\nu)$ forces $\nu=0$ only
if restriction $\mathrm{Pic}(Y)\to\mathrm{Pic}(X)$ is injective, which holds
under the hypotheses of \textup{(a)}.  For \textup{(c)}, ampleness of
$K^{-1}_{X}$ is positivity on the cone of effective curves \emph{of $X$},
which under the identification of \textup{(a)} sits inside that of $Y$ but
may a priori be smaller; positivity on a smaller cone is a weaker condition,
so the converse does not follow formally.  It does follow as soon as each
class $\overline{\alpha^{\vee}_{i_{m}}}$ is represented by a curve contained
in $X$, since then $\int c_{1}(TX)=c_{m}-\sum_{j}a_{j,m}$ must be
positive.  
\end{remark}
 
\begin{example}[A Calabi--Yau threefold in a Grassmannian]
Let $Y=\Gr(2,5)$, of dimension $N=2(5-2)=6$, with $\Pi_{Y}=\{\alpha_{2}\}$
and $\mathrm{Pic}(Y)=\Z\,\omega_{2}$ of rank one.  As in
Example \ref{ex:chernSL4} one computes $\rho_{Y}=5\omega_{2}$, so
$K_{Y}=\cO_{Y}(-5\omega_{2})$ and the Fano index of $\Gr(2,5)$ is $5$.
 
Take
$\cE=\cO_{Y}(\omega_{2})\oplus\cO_{Y}(2\omega_{2})\oplus\cO_{Y}(2\omega_{2})$,
that is hypersurfaces of degrees $1,2,2$ in the Plücker embedding, so
$e=3$.  Since $\mathrm{Pic}(Y)$ has rank one, every nonzero effective weight
is ample, and the hypothesis of \textup{(a)} holds; moreover $N-e=3$, so
$\mathrm{Pic}(X)=\Z\,\omega_{2}$ by Theorem \ref{thm:ci}\textup{(a)}.  By
\textup{(d)},
\[
    K_{X}=\cO_{X}\big((1+2+2)\omega_{2}-5\omega_{2}\big)=\cO_{X},
\]
and $X$ is simply connected by the first paragraph of the proof of
\textup{(a)}, while $\rH^{1}(X,\cO_{X})=\rH^{2}(X,\cO_{X})=0$ by its second
paragraph.  Hence, $X$ is a projective Calabi--Yau threefold.
\end{example}

 \bigskip

\section{Gromov--Witten invariants via localization}\label{sec:Loc_methods}
 
Let $X$ be a smooth complex projective variety.  Computing the
Gromov--Witten invariants of $X$ is in general difficult.  In
\cite{Kontsevich1995}, Kontsevich proposed a \emph{localization method}
that reduces this task, for a certain class of varieties carrying torus
actions, to a combinatorial problem; these are called in the literature GKM spaces \cites{Liu_Chiu,holmes2025computationsequivariantgromovwittentheory}.  Building on this method, many works have appeared in the literature providing closed formulae for
Gromov--Witten invariants \cites{Graber1999, Maeno2001, Liu_Chiu, holmes2025computationsequivariantgromovwittentheory}.
In this section, we revise the theory and apply it to our context of flag
varieties and complete intersections in flag varieties. It is worth noticing that, as shown in \cite{Guillemin2006}, compact homogeneous spaces with non-zero Euler characteristic are examples of a GKM space. Our situation is a particular instance. Check, for example, our propositions \ref{prop:fixedpoints},~\ref{prop:tangentweights}, against the given reference.
 
\medskip
 
\subsection{Warming up: localization in flag varieties}
 
For a smooth proper variety $Z$ of complex dimension $n=\dim_{\C}Z$,
equipped with an action of the algebraic torus $\mathbb T=(\C^{*})^{r}$, we
write $\rH^{*}_{\mathbb T}(Z,\Q)=\rH^{*}\big((Z\times E\mathbb T)/\mathbb T,\Q\big)$
for its equivariant cohomology.  We recall that this is an algebra on
$\rH^{*}_{\mathbb T}(\mathrm{pt},\Q)=\Q[x_{1},\dots,x_{r}]$, where each
generator $x_{i}$ has degree $2$.  The symbol $\int_{Z}$ will throughout
denote the equivariant pushforward to a point.  It lowers the cohomological
degree by the real dimension $2n$, taking values in $\Q[x_{1},\dots,x_{r}]$
rather than in $\Q$.  For a class $\phi\in \rH^{2n}_{\mathbb T}(Z,\Q)$ of
pure degree $2n$, the equivariant pushforward $\int_{Z}\phi$ belongs to
$\rH^{0}_{\mathbb T}(\mathrm{pt},\Q)\cong\Q$ and coincides with the ordinary
integral $\int_{Z}\iota^{*}\phi$, where
$\iota^{*}\colon \rH^{*}_{\mathbb T}(Z,\Q)\to \rH^{*}(Z,\Q)$ is the natural
map forgetting equivariance.
 
The already classical Atiyah--Bott or Berline--Vergne theorems can be
stated as \cites{atiyah-boot,Berline1983}:
 
\begin{theorem}[Atiyah--Bott--Berline--Vergne]\label{thm:ABBV}
Let $Z$ be a smooth proper variety, or more generally a smooth proper
Deligne--Mumford stack, with an action of $\mathbb T$.  Let
$Z^{\mathbb T}=\bigsqcup_{F}F$ be the decomposition of its fixed locus into
connected components, each with normal bundle $N_{F}$.  Then for every
$\phi\in \rH^{*}_{\mathbb T}(Z,\Q)$
\[
  \int_{Z}\phi\;=\;\sum_{F}\int_{F}\frac{\phi|_{F}}{e^{\mathbb T}(N_{F})}
\]
where the right-hand side should be understood in the fraction field of
$\Q[x_{1},\dots,x_{r}]$.  Here, $e^{\mathbb T}(N_{F})$ is the equivariant
Euler class of the vector bundle $N_{F}$.
\end{theorem}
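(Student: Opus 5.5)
We prove the Atiyah--Bott--Berline--Vergne formula by the classical route: first show the localization theorem, that restriction to the fixed locus becomes an isomorphism after inverting the equivariant parameters, and then identify the inverse using the self-intersection formula. Write $S\subset\Q[x_1,\dots,x_r]$ for the multiplicative set of nonzero homogeneous elements, and $i\colon Z^{\mathbb T}\hookrightarrow Z$ for the inclusion of the fixed locus. The first step is to show that
\[
S^{-1}i^{*}\colon S^{-1}\rH^{*}_{\mathbb T}(Z,\Q)\longrightarrow S^{-1}\rH^{*}_{\mathbb T}(Z^{\mathbb T},\Q)
\]
is an isomorphism. We do this by showing that $\rH^{*}_{\mathbb T}(Z\setminus Z^{\mathbb T},\Q)$ is $S$-torsion. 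Cover $Z\setminus Z^{\mathbb T}$ by finitely many $\mathbb T$-stable opens on which the stabilizers are contained in proper subtori $\mathbb T'\subsetneq\mathbb T$; this uses that $Z$ is proper and of finite type, so only finitely many stabilizer subgroups occur. On an orbit-type stratum the equivariant cohomology is a module over $\rH^{*}_{\mathbb T'}(\mathrm{pt})$, so it is killed by a character vanishing on $\mathbb T'$. Mayer--Vietoris then propagates the torsion to the whole complement, and the long exact sequence of the pair together with excision gives the isomorphism.

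The second step identifies the inverse map. For each component $F$ of $Z^{\mathbb T}$, with inclusion $i_F$, the self-intersection formula gives $i_F^{*}i_{F*}\psi=e^{\mathbb T}(N_F)\,\psi$. The normal bundle $N_F$ has no zero weights, since $F$ is a component of the fixed locus, so it splits into weight subbundles $N_\chi$ with $\chi\neq0$. By the splitting principle, $e^{\mathbb T}(N_\chi)=\chi^{\mathrm{rk}}+(\text{nilpotent terms})$, which is invertible in $S^{-1}\rH^{*}_{\mathbb T}(F)$. Hence
\[
\phi=\sum_F i_{F*}\frac{i_F^{*}\phi}{e^{\mathbb T}(N_F)}
\]
holds in the localized ring: applying $i^{*}$ to both sides gives equal results, because the cross terms $i_{F'}^{*}i_{F*}$ vanish for $F\neq F'$ (the components are disjoint), and $i^{*}$ is injective after localization by the first step. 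Pushing forward to a point and using $\int_Z\circ\, i_{F*}=\int_F$ yields the stated formula.

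For smooth proper Deligne--Mumford stacks the same argument runs with rational coefficients. The coarse space has the same rational cohomology, pushforward and Poincaré duality hold rationally, and the self-intersection formula still holds for the regular embedding of the fixed substack. I expect the main obstacle to be the torsion statement in the first step, namely controlling the finitely many stabilizers and, in the stack case, making the equivariant cohomology of $Z\setminus Z^{\mathbb T}$ visibly $S$-torsion. I would first try to reduce this via an equivariant stratification by stabilizer type, using Sumihiro's theorem to get $\mathbb T$-stable affine covers in the scheme case.
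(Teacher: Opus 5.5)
Your outline is correct and is the Atiyah--Bott proof: localize $\rH^*_{\mathbb T}$ using the $S$-torsion of the fixed-point-free part, then invert via $i_F^*i_{F*}=e^{\mathbb T}(N_F)$; the paper gives no proof of its own and simply quotes the theorem from Atiyah--Bott and Berline--Vergne, so this is the route it implicitly relies on. Two harmless fixes are needed: a non-fixed point can have a finite stabilizer such as $\mu_2\subset\C^*$, which lies in no proper subtorus, so phrase the torsion step with a character trivial on the identity component of the stabilizer (this suffices over $\Q$); and in the stack case let the weights of $N_F$ lie in $\mathfrak t^*\otimes\Q$, since the action on a fixed component lifts only to a finite cover of $\mathbb T$ (this is where the $1/d_e$ in the paper's normalized weights $\omega_{v,e}$ comes from).
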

 
\medskip
 
Let $Y=G/P$ be a flag variety defined by the following data: $G$ is a
connected simply connected semisimple complex Lie group with Lie algebra
$\mathfrak g$, and $\mathbb T\subset B\subset P$ is a sequence of subgroups
consisting of a maximal torus $\mathbb T$, a Borel subgroup $B$ containing
$\mathbb T$, and a parabolic subgroup $P$ containing $B$.  The torus
$\mathbb T$ acts on $Y$ by left translation,
\[
  \mathbb T\times Y\longrightarrow Y,
  \qquad (t,gP/P)\longmapsto t\cdot gP/P:=tgP/P.
\]
 
\begin{proposition}\label{prop:fixedpoints}
The fixed locus of the $\mathbb T$-action on $Y$ is the finite set
\[
  Y^{\mathbb T}\;=\;\{\,p_{w}:=wP/P \ \mid\ w\in W_{Y}\,\},
\]
and $w\mapsto p_{w}$ is a bijection.  In particular
$|Y^{\mathbb T}|=|W_{Y}|=\chi(Y)$.
\end{proposition}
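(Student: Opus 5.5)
We prove two things: every point $p_w$ with $w\in W_Y$ is $\mathbb T$-fixed, and every $\mathbb T$-fixed point is of this form. Injectivity of $w\mapsto p_w$ on $W_Y$ was already observed after Lemma~\ref{lem:weyl-equal}, via Proposition~\ref{prop:minreps}. The count $|W_Y|=\chi(Y)$ then follows from Proposition~\ref{prop:schubertbasis}.

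\textbf{The points $p_w$ are fixed.} Let $w\in W_Y$ with lift $n\in N_G(\mathbb T)$, and let $t\in\mathbb T$. Then $t n P = n(n^{-1}tn)P$. Since $n$ normalizes $\mathbb T$, the element $n^{-1}tn$ lies in $\mathbb T\subset P$, so $tnP=nP$. Hence $p_w$ is fixed.

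\textbf{Every fixed point is some $p_w$.} Let $gP\in Y^{\mathbb T}$. Then $g^{-1}\mathbb T g\subset P$, so $g^{-1}\mathbb T g$ is a maximal torus of $P$. All maximal tori of the connected group $P$ are conjugate in $P$, and $\mathbb T\subset P$ is one of them. Hence there is $p\in P$ with $g^{-1}\mathbb T g=p\mathbb T p^{-1}$. Setting $g'=gp$, we get $g'^{-1}\mathbb T g'=\mathbb T$, i.e.\ $g'\in N_G(\mathbb T)$, and $g'P=gP$. By Lemma~\ref{lem:weyl-equal}, $g'$ represents some $w'\in W$, so $gP=w'P/P$. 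By Proposition~\ref{prop:minreps}, write $w'=\overline{w'}v$ with $\overline{w'}\in W_Y$ and $v\in W_P$. A lift of $v$ lies in $P$, since $W_P$ is generated by simple reflections $s_\alpha$ with $\alpha\in\Pi_P$, and these lift into the rank-one subgroups generated by $\mathfrak g_{\pm\alpha}\subset\mathfrak p$. Therefore $gP=p_{\overline{w'}}$.

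The main obstacle is the conjugacy of maximal tori within $P$, which is standard for connected linear algebraic groups. An alternative route avoids it using the Bruhat decomposition from Proposition~\ref{prop:schubertbasis}. Each cell $C_w=B^-wP/P$ is $\mathbb T$-stable and $\mathbb T$-equivariantly isomorphic to an affine space, identified with the unipotent orbit $U_w^-\cdot p_w$. On this affine space $\mathbb T$ acts linearly through the nonzero roots in $w(R^-)\cap R^-$, none of which is trivial. So the only fixed point in $C_w$ is the origin $p_w$. This gives exactly one fixed point per $w\in W_Y$ and directly yields $|Y^{\mathbb T}|=|W_Y|$.
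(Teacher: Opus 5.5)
Your proof is correct and follows the paper's argument essentially step for step. Fixedness comes from $n^{-1}tn\in\mathbb T\subset P$. For the converse, you conjugate $g^{-1}\mathbb T g$ back to $\mathbb T$ inside $P$, replace $g$ by an element of $N_G(\mathbb T)$, and pass to the minimal coset representative; you also usefully make explicit that lifts of $W_P$ lie in $P$.

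Your alternative Bruhat-cell argument is also sound. In the parabolic case the cell weights are the roots in $R^-\cap w(R^-\setminus R^-_P)$ rather than $w(R^-)\cap R^-$, but they are still nonzero roots, which is all you need. This route has the small bonus of giving injectivity directly from the disjointness of the cells. By contrast, both the paper and your main route take for granted that $wP=w'P$ forces $w^{-1}w'\in W_P$.
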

 
\begin{proof}
Let $w\in W_{Y}$ and lift it to $n\in N_{G}(\mathbb T)$.  For $t\in\mathbb T$
we have $tn=n(n^{-1}tn)$ with $n^{-1}tn\in\mathbb T\subset P$, whence
$t\cdot p_{w}=n(n^{-1}tn)P/P=p_{w}$.  Thus, every $p_{w}$ is fixed.
 
Conversely, let $gP/P$ be fixed.  Then $t g P/P=gP/P$ for all $t$, i.e.
$g^{-1}\mathbb Tg\subset P$, so $g^{-1}\mathbb Tg$ is a maximal torus of
$P$.  All maximal tori of $P$ are conjugate in $P$, so there is $q\in P$
with $q\,g^{-1}\mathbb T g\,q^{-1}=\mathbb T$, that is
$gq^{-1}\in N_{G}(\mathbb T)$.  Since $q\in P$ we get
$gP/P=(gq^{-1})P/P=p_{w}$, where $w\in W_{Y}$ is the minimal length
representative of the class of $gq^{-1}$ in $W/W_{P}$.  Injectivity holds
because $wP/P=w'P/P$ with $w,w'\in W_{Y}$ forces $w^{-1}w'\in W_{P}$ and
therefore $w=w'$.  The last assertion was proven in
Proposition \ref{prop:schubertbasis}.
\end{proof}
 
\begin{proposition}\label{prop:tangentweights}
For $w\in W_{Y}$ the tangent space at $p_{w}$ decomposes as a
$\mathbb T$-module into one-dimensional weight spaces,
\[
  T_{p_{w}}Y\;\cong\;\bigoplus_{\beta\in R^{+}_{Y}}\mathfrak g_{-w\beta},
\]
the summand indexed by $\beta$ carrying the character $-w\beta$.  All these
characters are nonzero and pairwise non-proportional, and
\[
  e^{\mathbb T}\big(T_{p_{w}}Y\big)\;=\;\prod_{\beta\in R^{+}_{Y}}\big(-w\beta\big)
  \ \in\ \rH^{2\dim Y}_{\mathbb T}(\mathrm{pt}).
\]
\end{proposition}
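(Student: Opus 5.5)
The plan is to transport the computation at the base point $p_e=eP/P$ to $p_w$ by the action of a lift $n\in N_G(\mathbb T)$ of $w$. At the base point, the proof of Proposition~\ref{prop:dimension} gives $T_{p_e}Y=\mathfrak g/\mathfrak p\cong\bigoplus_{\alpha\in R^-\setminus R^-_P}\mathfrak g_\alpha=\bigoplus_{\beta\in R^+_Y}\mathfrak g_{-\beta}$. The isomorphism is $\mathbb T$-equivariant, since $\mathbb T\subset P$ acts on $\mathfrak g/\mathfrak p$ through the adjoint action and $\mathfrak p$ is $\mathrm{Ad}(\mathbb T)$-stable. So the summand $\mathfrak g_{-\beta}$ carries the character $-\beta$.

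Next I will move to $p_w$. Left translation $L_n\colon Y\to Y$ sends $p_e$ to $p_w$, so its differential is an isomorphism $dL_n\colon T_{p_e}Y\to T_{p_w}Y$. It intertwines the $\mathbb T$-actions up to twisting by conjugation. Indeed, for $t\in\mathbb T$ we have $t\cdot L_n(x)=L_n\big((n^{-1}tn)\cdot x\big)$. Hence a vector of weight $\chi$ at $p_e$ is sent to a vector on which $t$ acts by $\chi(n^{-1}tn)=(w\chi)(t)$, using the convention $\langle w\lambda,H\rangle=\langle\lambda,w^{-1}H\rangle$ from Definition~\ref{def:weylgroup}. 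Concretely, $T_{p_w}Y\cong\mathfrak g/\mathrm{Ad}(n)\mathfrak p$, and $\mathrm{Ad}(n)\mathfrak g_{-\beta}=\mathfrak g_{-w\beta}$. This gives the stated decomposition, and the weights are $-w\beta$ for $\beta\in R^+_Y$. The one point needing care is the sign and inverse convention in this step, and I will check it directly on the identity $\mathrm{Ad}(n)\mathfrak g_\alpha=\mathfrak g_{w\alpha}$.

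The characters are nonzero because roots are nonzero and $w$ is invertible. For pairwise non-proportionality, suppose $-w\beta=c(-w\beta')$. Then $\beta=c\beta'$ with $\beta,\beta'$ both roots. In a reduced root system this forces $c=\pm1$, and since both lie in $R^+$ we get $c=1$ and $\beta=\beta'$. The Euler class of a direct sum of $\mathbb T$-lines is the product of their characters, viewed in $\rH^2_{\mathbb T}(\mathrm{pt})=\mathfrak t^*_\Z\otimes\Q$. This yields $e^{\mathbb T}(T_{p_w}Y)=\prod_{\beta\in R^+_Y}(-w\beta)$, of degree $2|R^+_Y|=2\dim Y$ by Proposition~\ref{prop:dimension}.
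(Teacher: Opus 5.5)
Your proposal is correct and follows essentially the same argument as the paper. Both compute the weights $-\beta$, $\beta\in R^{+}_{Y}$, on $\mathfrak g/\mathfrak p$ at the base point, then transport them to $p_w$ by left translation by a lift of $w$, which twists the $\mathbb T$-action by conjugation and turns $-\beta$ into $-w\beta$. Both then get non-vanishing and non-proportionality from the root system being reduced, and the Euler class as the product of the characters. Your treatment of non-proportionality, pulling back through $w$ and using positivity to exclude $c=-1$, is slightly more explicit than the paper's.
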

 
\begin{proof}
At the base point, $T_{eP/P}Y=\mathfrak g/\mathfrak p\cong
\bigoplus_{\beta\in R^{+}_{Y}}\mathfrak g_{-\beta}$ as in the proof of
Proposition \ref{prop:dimension}, and $\mathbb T$ acts on
$\mathfrak g_{-\beta}$ by $-\beta$.  Left translation by $w$ carries
$T_{eP/P}Y$ to $T_{p_{w}}Y$ and twists the action by conjugation: for
$X\in\mathfrak g_{-\beta}$ and $t\in\mathbb T$,
\[
  t\,w\exp(X)P/P=w\,\big(w^{-1}tw\big)\exp(X)\big(w^{-1}tw\big)^{-1}P/P
  =w\exp\!\big(\mathrm{Ad}(w^{-1}tw)X\big)P/P,
\]
and $\mathrm{Ad}(w^{-1}tw)X=(-\beta)(w^{-1}tw)\,X=(-w\beta)(t)\,X$.  Since
$\beta$ is a root, $-w\beta\neq0$; and two distinct elements of $R^{+}_{Y}$
are distinct positive roots, therefore never proportional in a reduced root
system.  The Euler class of a sum of characters is their product.
\end{proof}
 
Since the fixed points of $Y$ are isolated by
Proposition \ref{prop:fixedpoints} and their tangent weights are given by
Proposition \ref{prop:tangentweights}, the formula of
Theorem \ref{thm:ABBV} takes on $Y$ the following shape:
 
\begin{corollary}\label{cor:ABBVflag}
For every $\phi\in \rH^{*}_{\mathbb T}(Y,\Q)$,
\[
  \int_Y\phi\;=\;\sum_{w\in W_{Y}}
  \frac{\phi|_{p_{w}}}{\prod_{\beta\in R^{+}_{Y}}\big(-w\beta\big)} .
\]
\end{corollary}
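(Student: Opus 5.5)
This is a direct specialization of Theorem~\ref{thm:ABBV} to $Z=Y$, fed by Propositions~\ref{prop:fixedpoints} and~\ref{prop:tangentweights}.

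\emph{Fixed locus.} By Proposition~\ref{prop:fixedpoints}, the fixed locus $Y^{\mathbb T}$ is the finite set $\{p_w\mid w\in W_Y\}$, with $w\mapsto p_w$ a bijection. So the connected components $F$ in Theorem~\ref{thm:ABBV} are exactly the points $p_w$, each appearing once. For a point component, $\int_{p_w}$ is the identity on $\rH^*_{\mathbb T}(\mathrm{pt},\Q)$, and $\phi|_F$ is the equivariant restriction $\phi|_{p_w}\in\Q[x_1,\dots,x_r]$.

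\emph{Normal bundles.} Since $p_w$ is isolated, its normal bundle is the whole tangent space, $N_{p_w}=T_{p_w}Y$, as a $\mathbb T$-representation. Proposition~\ref{prop:tangentweights} gives
\[
e^{\mathbb T}(N_{p_w})=\prod_{\beta\in R^+_Y}(-w\beta).
\]
This is nonzero, since every factor is a nonzero character. Hence the quotient makes sense in the fraction field of $\Q[x_1,\dots,x_r]$.

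\emph{Conclusion.} Substituting these two facts into Theorem~\ref{thm:ABBV} gives the displayed formula termwise.

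The only point needing care is the identification of $N_{p_w}$ with $T_{p_w}Y$, including its weights. This is exactly the content of Proposition~\ref{prop:tangentweights}, so no genuine obstacle remains.
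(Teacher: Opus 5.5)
Your proposal is correct and is essentially the paper's argument: the paper also obtains the corollary by specializing Theorem~\ref{thm:ABBV} to $Y$. It uses Proposition~\ref{prop:fixedpoints} to see that the fixed points $p_w$, $w\in W_Y$, are isolated, and Proposition~\ref{prop:tangentweights} to write the Euler class of $N_{p_w}=T_{p_w}Y$ as $\prod_{\beta\in R^+_Y}(-w\beta)$.
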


\medskip
 
\subsection{Gromov--Witten invariants: a quick recall}\label{Sub.Sec:GW}
 
Throughout this subsection, $Y$ denotes a smooth complex projective variety
which is \emph{convex}, that is, such that
\[
  \rH^{1}\big(\PP^{1},f^{*}TY\big)=0
  \qquad\text{for every morphism }f\colon\PP^{1}\longrightarrow Y ;
\]
by the normalization sequence, applied inductively over the components of
the domain, this implies $\rH^{1}(C,f^{*}TY)=0$ for every genus-zero
stable map $f\colon C\to Y$.  Convexity holds for homogeneous spaces, in
particular for the flag varieties $Y=G/P$ that concern us, because $TY$
is then globally generated.  We carry this assumption herein since it
simplifies the exposition below by making all the moduli spaces smooth.
 
\begin{definition}
A \emph{genus-zero stable map} to $Y$ is a morphism
$f\colon(C;x_{1},\dots,x_{n})\to Y$ where
\begin{defenum}
  \item $C$ is a projective, connected, reduced, nodal curve of arithmetic
        genus $\mathsf g=0$;
  \item $x_{1},\dots,x_{n}\in C$ are distinct smooth points;
  \item $\Aut(f)$ is finite, where $\Aut(f)$ is the group of automorphisms
        $\varphi$ of $C$ with $\varphi(x_{i})=x_{i}$ for all $i$ and
        $f\circ\varphi=f$.
\end{defenum}
\end{definition}
 
Condition (c) is equivalent to the geometric statement: every irreducible
component of $C$ contracted by $f$ carries at least three \emph{special}
points, that is, nodes or marked points.
 
\medskip
 
Let $\beta\in \rH_{2}(Y,\Z)$ be an effective curve class.  We denote by
$\overline{\mathcal M}_{0,n}(Y,\beta)$ the moduli stack of genus-zero stable
maps $f\colon(C;x_{1},\dots,x_{n})\to Y$ with $f_{*}[C]=\beta$.  Since $Y$
is convex, $\overline{\mathcal M}_{0,n}(Y,\beta)$ is a smooth proper
Deligne--Mumford stack of dimension
\[
  \boldsymbol\delta(n,\beta)\;=\;\dim Y-3+n+\int_{\beta}c_{1}(TY),
\]
with fundamental class
\[
  \big[\overline{\mathcal M}_{0,n}(Y,\beta)\big]
  \ \in\ \mathsf{CH}_{\boldsymbol\delta(n,\beta)}
     \big(\overline{\mathcal M}_{0,n}(Y,\beta),\Q\big),
\]
where $\mathsf{CH}_{k}(-,\Q)$ denotes the $k$-th Chow group with rational
coefficients.  For $\beta=0$ one has
$\overline{\mathcal M}_{0,n}(Y,0)\cong Y\times\overline{\mathcal M}_{0,n}$,
which requires $n\ge3$.  We write $\overline M_{0,n}(Y,\beta)$ for the
coarse moduli space, a normal projective variety with finite quotient
singularities.  Let $\mathcal M_{0,n}(Y,\beta)$ denote the open substack of
maps with smooth irreducible domain
\cite{fulton1997notesstablemapsquantum}.  When $Y$ is homogeneous,
$\overline{\mathcal M}_{0,n}(Y,\beta)$ is irreducible and
$\mathcal M_{0,n}(Y,\beta)$ is dense in it \cite{Thomsen}.
 
Let $\ev_{i}\colon\overline{\mathcal M}_{0,n}(Y,\beta)\to Y$,
$(C;x_{1},\dots,x_{n};f)\mapsto f(x_{i})$, be the \emph{evaluation
morphisms}.
 
\begin{definition}
For $\gamma_{1},\dots,\gamma_{n}\in \rH^{*}(Y,\Q)$ the \emph{genus-zero
Gromov--Witten invariant} is
\[
  \big\langle\gamma_{1},\dots,\gamma_{n}\big\rangle^{Y}_{0,\beta}
  \;=\;\int_{[\overline{\mathcal M}_{0,n}(Y,\beta)]}
   \ \prod_{i=1}^{n}\ev_{i}^{*}\gamma_{i}\ \in\Q .
\]
It vanishes unless $\sum_{i}\deg\gamma_{i}=2\boldsymbol\delta(n,\beta)$, a
condition known in the literature as the \emph{dimension axiom}.
\end{definition}
 
\medskip
 
Assume now that $\cE$ is a globally generated vector bundle on $Y$.  Then
$\cE$ is \emph{convex}, that is
\[
  \rH^{1}(C,f^{*}\cE)=0
  \qquad\text{for every genus-zero stable map }f\colon C\to Y :
\]
indeed $f^{*}\cE$ is globally generated on $C$ and on every subcurve, hence
has vanishing $\rH^{1}$ on each component, and the normalization sequence
propagates the vanishing along the tree of components.  We set
$X=\mathscr Z(Y,\cE)\subset Y$, the zero locus of a general section, smooth
of codimension $\rank\cE$ by Bertini.
 
The first remark is that $X$ is generally \emph{not} convex: already a
smooth cubic surface $X\subset\PP^{3}$ fails the condition.  Indeed a line
$L\subset X$ is a $(-1)$-curve, so $N_{L/X}=\cO(-1)$ and the sequence
$0\to TL\to TX|_{L}\to N_{L/X}\to0$ splits, giving
$TX|_{L}=\cO(2)\oplus\cO(-1)$; a degree-$d$ cover
$f\colon\PP^{1}\to L$ then has $f^{*}TX=\cO(2d)\oplus\cO(-d)$ and hence
$\rH^{1}(\PP^{1},f^{*}TX)\neq0$ for $d\ge2$.  A priori, this causes
complications we wish to avoid: the moduli spaces
$\overline{\mathcal M}_{0,n}(X,\beta')$ may fail to have the expected
dimension, and one must replace the fundamental class by a surrogate
$\big[\overline{\mathcal M}_{0,n}(X,\beta')\big]^{\mathrm{vir}}$
\cites{Li1998, Behrend1997}.  The unfamiliar reader may keep in
mind that this surrogate behaves, for the purposes of intersection theory,
exactly as a fundamental class does; and, as we now explain, in our setup
it only ever appears on the left-hand side of an identity whose right-hand
side is an honest integral over the smooth stack
$\overline{\mathcal M}_{0,n}(Y,\beta)$.
 
Following Kim--Kresch--Pantev \cite{Kim2003}, we write $\cE_{0,n,\beta}$ for
the sheaf on $\overline{\mathcal M}_{0,n}(Y,\beta)$ whose fiber at a stable
map equivalence class $[(C;x_{1},\dots,x_{n};f)]$ is
$\rH^{0}(C,f^{*}\cE)$.  The requirement that $\cE$ is convex is exactly what
makes it a vector bundle of rank
$\rank\cE+\int_{\beta}c_{1}(\cE)$ by Riemann--Roch on a genus-zero curve.
Fix the following notation: $\iota\colon X\hookrightarrow Y$ stands for the
canonical inclusion, $e\big(\cE_{0,n,\beta}\big)$ for the Euler class of
$\cE_{0,n,\beta}\to\overline{\mathcal M}_{0,n}(Y,\beta)$, and, for each
$\beta'\in \rH_{2}(X,\Z)$ with $\iota_{*}\beta'=\beta$,
\[
  j_{\beta'}\colon\overline{\mathcal M}_{0,n}(X,\beta')
  \longrightarrow\overline{\mathcal M}_{0,n}(Y,\beta)
\]
for the morphism induced by $\iota$.  One has
\[
  \sum_{\beta'\,:\,\iota_{*}\beta'=\beta}
   \big(j_{\beta'}\big)_{*}
   \big[\overline{\mathcal M}_{0,n}(X,\beta')\big]^{\mathrm{vir}}
  \;=\;e\big(\cE_{0,n,\beta}\big)\cap
   \big[\overline{\mathcal M}_{0,n}(Y,\beta)\big],
\]
so that, for $\gamma_{i}=\iota^{*}\widetilde\gamma_{i}$ with
$\widetilde\gamma_{i}\in \rH^{*}(Y,\Q)$,
\[
  \sum_{\beta'\,:\,\iota_{*}\beta'=\beta}
  \big\langle\gamma_{1},\dots,\gamma_{n}\big\rangle^{X}_{0,\beta'}
  \;=\;\int_{[\overline{\mathcal M}_{0,n}(Y,\beta)]}
    e\big(\cE_{0,n,\beta}\big)\prod_{i}\ev_{i}^{*}\widetilde\gamma_{i}.
\]
 
In what follows, we shall explain how to use localization to compute the
above Gromov--Witten invariants for $Y$ and $X$.

\medskip

\subsection{Gromov--Witten invariants via localization}
 
In this section, we explain how Theorem \ref{thm:ABBV} can be applied to the
moduli of stable maps (as envisioned by Kontsevich \cite{Kontsevich1995}),
to express integrals over $\overline{\mathcal M}_{0,n}(Y,\beta)$
combinatorially. Here, because $Y$ is a flag variety, we will recognize
these quantities in terms of quantities related to roots. We then use them to
compute Gromov--Witten invariants for $Y$ (and for some complete
intersections $X$ on $Y$).
 
\medskip
 
We will need the following technical results.
 
\begin{proposition}\label{prop:invcurves}
For $w\in W_{Y}$ and $\alpha\in R^{+}_{Y}$, let $G_{\alpha}\subset G$ be the
connected subgroup with Lie algebra
$\mathfrak g_{\alpha}\oplus[\mathfrak g_{\alpha},\mathfrak g_{-\alpha}]\oplus\mathfrak g_{-\alpha}\cong\mathfrak{sl}_{2}$,
and set
\[
  C_{w,\alpha}\;:=\;w\,G_{\alpha}\,P/P\;\subset\;Y .
\]
Then $C_{w,\alpha}\cong\PP^{1}$ is the closure of a one-dimensional
$\mathbb T$-orbit, it joins $p_{w}$ to $p_{\overline{ws_{\alpha}}}$, its
tangent weight at $p_{w}$ is $-w\alpha$ and at the other endpoint
$+w\alpha$, and its class in $\rH_{2}(Y,\Z)=Q^{\vee}_{Y}$ is the image
$\overline{\alpha^{\vee}}$ of the coroot.  Conversely, every closure of a
one-dimensional $\mathbb T$-orbit is of this form.
\end{proposition}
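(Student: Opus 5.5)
We first reduce to the base point by translation. Left multiplication by a lift $\dot w\in N_G(\mathbb T)$ is an automorphism of $Y$. It carries $\mathbb T$-orbits to $\mathbb T$-orbits, since $t\dot w x=\dot w(\dot w^{-1}t\dot w)x$, and it twists characters by $w$ exactly as in the proof of Proposition~\ref{prop:tangentweights}. So it suffices to treat $C_{e,\alpha}=G_\alpha P/P$ and then transport every statement by $w$. Since $G$ is simply connected, $G_\alpha$ is the image of a homomorphism $\phi_\alpha\colon\SL_2(\C)\to G$ with $\phi_\alpha$(upper triangular) landing in $\mathbb T\exp(\mathfrak g_\alpha)\subset B\subset P$, and the diagonal torus mapping onto the image of $\alpha^\vee$.

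The orbit map $G_\alpha\to Y$ factors through $G_\alpha/(G_\alpha\cap P)$. Because $\alpha\in R^+_Y$, we have $\mathfrak g_{-\alpha}\not\subset\mathfrak p$. Hence $\operatorname{Lie}(G_\alpha\cap P)=\mathfrak g_\alpha\oplus\C\alpha^\vee$ is a Borel subalgebra of $\mathfrak{sl}_2$, and $G_\alpha\cap P$ is a Borel subgroup of $G_\alpha$. (It is connected up to the finite centre, since it is parabolic and contains the Borel.) We therefore get a closed embedding $\PP^1\cong G_\alpha/B_\alpha\hookrightarrow Y$; closedness follows from properness of $\PP^1$ and injectivity.

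The points $0,\infty$ of $\PP^1$ go to $eP/P$ and $s_\alpha P/P=p_{\overline{s_\alpha}}$, where $s_\alpha$ is represented by $\phi_\alpha\left(\begin{smallmatrix}0&1\\-1&0\end{smallmatrix}\right)$. The complement of these two points is $\exp(\mathfrak g_{-\alpha})P/P\setminus\{eP\}$, on which $\mathbb T$ acts through the nontrivial character $-\alpha$. So the complement is a single one-dimensional orbit. The tangent weights are $-\alpha$ at $eP$, read off from $\mathfrak g_{-\alpha}\subset\mathfrak g/\mathfrak p$, and $+\alpha$ at the other end, read off via $s_\alpha$. Translating by $w$ gives $-w\alpha$ at $p_w$ and $+w\alpha$ at $p_{\overline{ws_\alpha}}$.

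For the homology class we pair with the basis of $\rH^2$ given by Proposition~\ref{prop:picard}. For $\lambda\in\Lambda_Y$, pull $\cO_Y(\lambda)$ back along $\PP^1=\SL_2/B_{\SL_2}$. It is the line bundle associated with the character $-\lambda\circ\phi_\alpha$ of the Borel, which on the diagonal torus is $t\mapsto t^{-\langle\lambda,\alpha^\vee\rangle}$. With the convention $\cO(\lambda)=G\times_P\C_{-\lambda}$ it therefore has degree $\langle\lambda,\alpha^\vee\rangle$; we fix the sign by the case $\alpha$ simple, where this must reproduce $[C_i]=\overline{\alpha_i^\vee}$ of Proposition~\ref{prop:h2}. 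Translation by $w\in G$ does not change the homology class, because $G$ is connected. Hence $\int_{C_{w,\alpha}}c_1(\cO(\omega_i))=\langle\omega_i,\alpha^\vee\rangle$ for $\alpha_i\in\Pi_Y$, which means exactly that $[C_{w,\alpha}]=\overline{\alpha^\vee}\in Q^\vee_Y$ by \eqref{Intbeta}.

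For the converse, let $C$ be the closure of a one-dimensional orbit $\mathbb T\cdot x$. Then $C$ is a projective curve, so its boundary consists of fixed points, and by Proposition~\ref{prop:fixedpoints} we have $p_w\in C$ for some $w\in W_Y$. Translating by $w^{-1}$, we may assume $eP\in C$. Near $eP$ we use the $\mathbb T$-equivariant chart $U^-=\exp(\bigoplus_{\beta\in R^+_Y}\mathfrak g_{-\beta})\cong\mathfrak g/\mathfrak p\to Y$, which is an open immersion onto the big cell. The orbit $\mathbb T\cdot x$ meets this cell, and $C\cap U^-$ is a one-dimensional $\mathbb T$-stable irreducible curve through $0$ in a representation whose weights are pairwise non-proportional (Proposition~\ref{prop:tangentweights}). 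Such a curve must be a coordinate line $\mathfrak g_{-\beta}$, which is the main point needing care. The argument is this: a point $y$ with two nonzero coordinates of non-proportional weights has an orbit of dimension at least $2$. To see this, note that the stabilizer of $y$ is contained in the kernels of both weights, which is a subtorus of codimension $2$. Hence $C=C_{e,\beta}$. The main obstacle is this converse, in particular justifying the equivariant chart; the rest is a rank-one $\SL_2$ computation plus bookkeeping of sign conventions.
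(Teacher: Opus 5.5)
Your proposal is correct and follows essentially the same route as the paper. Both arguments use the same steps: $G_\alpha\cap P$ is a Borel subgroup of $G_\alpha$; translation by $w$ moves everything to $p_w$; the tangent weights are read off at the two endpoints; the class is detected by pairing with the $\cO_Y(\lambda)$; and the converse uses the $\mathbb T$-equivariant big-cell chart, whose weights are pairwise non-proportional. The one real difference is the degree computation. You pull back to $\SL_2/B$, calibrate the sign against the simple-root case of Proposition~\ref{prop:h2}, and use connectedness of $G$ for independence of $w$. The paper instead obtains $\langle\lambda,\alpha^\vee\rangle$ directly from the fixed-point and tangent weights via Lemma~\ref{lem:degreeP1} and Proposition~\ref{prop:linebundleweights}(b), which fixes the sign intrinsically.
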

 
\begin{proof}
Since $\alpha\in R^{+}$ we have $\mathfrak g_{\alpha}\subset\mathfrak p$,
while $\alpha\notin R_{P}$ gives
$\mathfrak g_{-\alpha}\not\subset\mathfrak p$. Hence, $G_{\alpha}\cap P$ is a
Borel subgroup of $G_{\alpha}$ and
$G_{\alpha}P/P\cong G_{\alpha}/(G_{\alpha}\cap P)\cong\PP^{1}$, which is
closed in $Y$.  It is $\mathbb T$-invariant because $\mathbb T$ normalizes
$G_{\alpha}$: for $t\in\mathbb T$ one has
$t\,wG_{\alpha}P/P=w\big(w^{-1}tw\big)G_{\alpha}P/P=wG_{\alpha}P/P$, since
$w^{-1}tw\in\mathbb T$.  Moreover $s_{\alpha}\in G_{\alpha}$ shows that
$C_{w,\alpha}$ contains $ws_{\alpha}P/P=p_{\overline{ws_{\alpha}}}$.  Its
tangent space at $p_{w}$ is the line $\mathfrak g_{-w\alpha}$ of
Proposition \ref{prop:tangentweights}, of weight $-w\alpha$. The two tangent
weights of a $\mathbb T$-equivariant $\PP^{1}$ are opposite, giving
$+w\alpha$ at the other end.  For the class, pair with $\cO_{Y}(\lambda)$:
by Proposition \ref{prop:linebundleweights}(b) below, whose proof uses only
the endpoints and the tangent weight established above,
$\int_{C_{w,\alpha}}c_{1}(\cO_{Y}(\lambda))=\langle\lambda,\alpha^{\vee}\rangle$
for every $\lambda\in\Lambda_{Y}$, which by \eqref{Intbeta} characterizes
$\overline{\alpha^{\vee}}$.
 
Conversely, let $C$ be the closure of a one-dimensional orbit.  Being a
complete $\mathbb T$-curve it contains a fixed point, say $p_{w}$; after
translating by $w^{-1}$, which is $\mathbb T$-equivariant up to the
automorphism $t\mapsto w^{-1}tw$ of $\mathbb T$, we may assume $w=e$.  The
big cell $\Omega=\prod_{\beta\in R^{+}_{Y}}U_{-\beta}\,P/P$ is a
$\mathbb T$-invariant affine neighbourhood of $eP$, equivariantly isomorphic
to the linear representation
$\bigoplus_{\beta\in R^{+}_{Y}}\mathfrak g_{-\beta}$.  If a point of
$\Omega$ had two nonzero components, with characters $-\beta$ and
$-\beta'$, its orbit would have dimension equal to the rank of the subgroup
of the character lattice generated by $\beta$ and $\beta'$, which is $2$
since $\beta$ and $\beta'$ are non-proportional by
Proposition \ref{prop:tangentweights}.  Hence $C\cap\Omega$ is one of the
coordinate lines $\mathfrak g_{-\alpha}$, and
$C=\overline{U_{-\alpha}\,eP/P}=G_{\alpha}P/P$.
\end{proof}
 
\begin{lemma}\label{lem:simplegraph}
Fix $w\in W_{Y}$.  The map
$R^{+}_{Y}\to W_{Y}$, $\alpha\mapsto\overline{ws_{\alpha}}$, is injective.
Equivalently, an invariant curve is determined by its pair of endpoints, and
the graph has vertices $W_{Y}$ and edges are the invariant curves.
\end{lemma}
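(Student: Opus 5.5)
The plan is to reduce the injectivity to a linear-algebra statement about how $W_P$ acts on the weight lattice $\Lambda_Y$ of Definition~\ref{def:fundweights}. Suppose $\alpha,\beta\in R^{+}_{Y}$ satisfy $\overline{ws_{\alpha}}=\overline{ws_{\beta}}$. By Proposition~\ref{prop:minreps} this means $ws_{\alpha}W_P=ws_{\beta}W_P$. Cancelling $w$ on the left gives $v:=s_{\alpha}s_{\beta}\in W_P$, and we must show that this forces $\alpha=\beta$.

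The key observation is that every element of $W_P$ fixes every $\lambda\in\Lambda_Y$ pointwise. Indeed, for a generator $s_{\alpha_j}$ with $\alpha_j\in\Pi_P$ we have $s_{\alpha_j}(\lambda)=\lambda-\langle\lambda,\alpha_j^{\vee}\rangle\alpha_j=\lambda$, because $\langle\lambda,\alpha_j^{\vee}\rangle=0$ by the definition of $\Lambda_Y$. Applying this to $v$, and using $s_{\alpha}^{2}=\mathrm{Id}$, we get $s_{\beta}\lambda=s_{\alpha}\lambda$ for all $\lambda\in\Lambda_Y$. Expanding both reflections yields
\[
\langle\lambda,\beta^{\vee}\rangle\,\beta=\langle\lambda,\alpha^{\vee}\rangle\,\alpha
\qquad\text{for all }\lambda\in\Lambda_Y .
\]

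It remains to find one $\lambda\in\Lambda_Y$ with $\langle\lambda,\alpha^{\vee}\rangle\neq0$. The displayed identity then makes $\beta$ a nonzero multiple of $\alpha$. Since both are positive roots of a reduced root system, this forces $\beta=\alpha$, as already used in Proposition~\ref{prop:tangentweights}.

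This existence step is the only point needing care, and I expect it to be the main obstacle. I would write $\alpha^{\vee}=\sum_i m_i\alpha_i^{\vee}$ with all $m_i\ge0$; this is legitimate because $R^{\vee}$ is a root system with simple system $\Pi^{\vee}$ and $\alpha^{\vee}$ is positive. I would then argue that the support of $\alpha^{\vee}$ in $\Pi^{\vee}$ equals the support of $\alpha$ in $\Pi$. One way is to note that $\alpha^{\vee}$ is a positive rational multiple of $\nu^{-1}(\alpha)$, and that the $\nu^{-1}(\alpha_i)$ are positive multiples of the $\alpha_i^{\vee}$. Since $\alpha\notin R_P$, some $\alpha_i\in\Pi_Y$ occurs in $\alpha$ with positive coefficient, hence $m_i>0$. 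Taking $\lambda=\omega_i\in\Lambda_Y$ then gives $\langle\omega_i,\alpha^{\vee}\rangle=m_i>0$, as required.

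For the equivalent formulation, I would invoke Proposition~\ref{prop:invcurves}. The invariant curves through $p_w$ are exactly the $C_{w,\alpha}$ with $\alpha\in R^{+}_{Y}$, and the other endpoint of $C_{w,\alpha}$ is $p_{\overline{ws_{\alpha}}}$. This endpoint differs from $p_w$, since $s_{\alpha}\notin W_P$: otherwise $s_{\alpha}$ would fix $\Lambda_Y$, contradicting $\langle\omega_i,\alpha^{\vee}\rangle>0$. Injectivity of $\alpha\mapsto\overline{ws_{\alpha}}$ therefore says that two distinct invariant curves through $p_w$ cannot share their second endpoint. Hence each invariant curve is determined by its pair of endpoints, and the GKM graph is a simple graph with vertex set $W_Y$.
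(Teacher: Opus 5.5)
Your proof is correct and is essentially the paper's argument: cancel $w$ to get $s_\alpha W_P=s_\beta W_P$, compare the actions on a weight fixed by $W_P$ to obtain $\langle\lambda,\beta^{\vee}\rangle\beta=\langle\lambda,\alpha^{\vee}\rangle\alpha$, and conclude because the root system is reduced. The differences are cosmetic. The paper uses $\lambda_Y=\sum_{\alpha_i\in\Pi_Y}\omega_i$ and quotes $\mathrm{Stab}_W(\lambda_Y)=W_P$ together with positivity of $\langle\lambda_Y,\gamma^{\vee}\rangle$ for $\gamma\in R^{+}\setminus R_P$. You instead use a single $\omega_i$ from the support of $\alpha$, need only the easy fact that $W_P$ fixes $\Lambda_Y$, and explicitly justify the support comparison between $\alpha$ and $\alpha^{\vee}$ that the paper leaves implicit.
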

 
\begin{proof}
Cancelling $w$ on the left, the condition
$\overline{ws_{\alpha}}=\overline{ws_{\alpha'}}$ reads
$s_{\alpha}W_{P}=s_{\alpha'}W_{P}$ and does not involve $w$.  Put
$\lambda_{Y}:=\sum_{\alpha_{i}\in\Pi_{Y}}\omega_{i}$.  Then
$\mathrm{Stab}_{W}(\lambda_{Y})=W_{P}$, and for $\gamma\in R^{+}$ one has
$\langle\lambda_{Y},\gamma^{\vee}\rangle>0$ exactly when
$\gamma\notin R_{P}$.  Therefore
\[
  s_{\alpha}W_{P}=s_{\alpha'}W_{P}
  \iff s_{\alpha}\lambda_{Y}=s_{\alpha'}\lambda_{Y}
  \iff \langle\lambda_{Y},\alpha^{\vee}\rangle\,\alpha
      =\langle\lambda_{Y},\alpha'^{\vee}\rangle\,\alpha' ,
\]
and both coefficients are strictly positive because
$\alpha,\alpha'\in R^{+}_{Y}$.  Hence $\alpha$ and $\alpha'$ are
proportional roots, so $\alpha=\alpha'$.
\end{proof}
 
\begin{remark}\label{rmk:basepoint}
By Lemma \ref{lem:simplegraph}, an invariant curve is determined by its two
endpoints, but the root that labels it is \emph{not} intrinsic: it depends
on which endpoint one starts from.  If $Y=G/B$, the two labels always agree,
since no projection is involved and $ws_{\alpha}\cdot s_{\alpha}=w$.  For a
proper parabolic, they may differ.  This is the reason why, in the
combinatorial description of the fixed loci below, an edge need carry no
root at all: by Lemma \ref{lem:simplegraph} the root is already determined
by the labels of the two vertices it joins.
\end{remark}
 
Next, let us explain how the localization shall be performed.  We now carry
this out in our setting, following the theory developed by Kontsevich in
\cite{Kontsevich1995}.

\medskip

\subsubsection{The torus action on the moduli of stable maps}

The torus $\mathbb T$ acts on $\overline{\mathcal M}_{0,n}(Y,\beta)$ by
post-composition,
\[
  t\cdot\big[(C;x_{1},\dots,x_{n};f)\big]
  :=\big[(C;x_{1},\dots,x_{n};t\circ f)\big],
\]
where $t\circ f$ denotes $f$ followed by the automorphism $y\mapsto ty$ of
$Y$.  A point is fixed when for every $t$ there is an automorphism
$\varphi_{t}$ of $(C;x_{1},\dots,x_{n})$ with $f\circ\varphi_{t}=t\circ f$.

\begin{proposition}\label{prop:fixedmaps}
A stable map $(C;x_{1},\dots,x_{n};f)$ is $\mathbb T$-fixed if and only if
\begin{defenum}
\item every irreducible component of $C$ is either contracted by $f$ to a
      point of $Y^{\mathbb T}$, or maps onto an invariant curve
      $C_{w,\alpha}$ of Proposition \ref{prop:invcurves};
\item in the second case the induced map $\PP^{1}\to C_{w,\alpha}\cong\PP^{1}$
      is, in suitable coordinates, $z\mapsto z^{d}$ for some $d\ge1$, so it
      is totally ramified over the two fixed points;
\item every node and every marked point is mapped into $Y^{\mathbb T}$.
\end{defenum}
\end{proposition}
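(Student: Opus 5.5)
We prove the two implications separately, using the finiteness of fixed points (Proposition~\ref{prop:fixedpoints}) and the classification of one-dimensional orbit closures (Proposition~\ref{prop:invcurves}).

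\emph{Sufficiency.} Suppose (a)--(c) hold. Fix a component $C_v$ mapping onto $C_{w,\alpha}$. Because $\mathbb T$ acts on $C_{w,\alpha}\cong\PP^1$ through the character $w\alpha$, it acts there by $z\mapsto\chi(t)z$ in a coordinate $z$ vanishing at $p_w$. If $f|_{C_v}$ is $u\mapsto u^d$, we set $\varphi_t(u)=\chi(t)^{1/d}u$ on $C_v$. On contracted components we set $\varphi_t=\mathrm{id}$. These local automorphisms glue to an automorphism of $C$, since by (c) every node maps to a fixed point, and the local automorphisms fix $0$ and $\infty$, which are the only preimages of fixed points on $C_v$. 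They also fix the marked points. The choice of $d$-th root is only ambiguous up to $\mu_d$, which lies in the automorphism group of the map. Hence $t\circ f=f\circ\varphi_t$, and the map is fixed.

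\emph{Necessity.} Let $[f]$ be fixed. The image $f(C)$ is a $\mathbb T$-invariant closed curve, possibly together with points, so each irreducible component of the image is $\mathbb T$-invariant.

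An invariant irreducible curve in $Y$ is either pointwise fixed or the closure of a one-dimensional orbit. The first case is impossible because $Y^{\mathbb T}$ is finite. So by Proposition~\ref{prop:invcurves} each non-contracted component maps onto some $C_{w,\alpha}$. A contracted component maps to a point $y$. To see that $y$ is fixed, use the fact that $\mathbb T$ is connected. Then $\mathbb T$ acts on the finite, or at least discrete, set of special configurations, and passing to a finite-index subgroup if necessary, the identity component lifts to an action on $C$ preserving each component. This lifting step is the one that needs care, because the automorphisms $\varphi_t$ are determined only up to the finite group $\Aut(f)$. I would handle it by noting that the $\varphi_t$ form an extension of $\mathbb T$ by $\Aut(f)$, whose identity component is a torus mapping onto $\mathbb T$.

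With this lifted torus, a contracted component's image point $y=f(C_v)$ satisfies $ty=f(\varphi_t(C_v))$. Since the torus preserves each component, this equals $f(C_v)=y$, so $y\in Y^{\mathbb T}$.

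Likewise, nodes and marked points form a finite set. The connected lifted torus fixes each of them, so their images are fixed points, which gives (c).

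For (b), the lifted torus acts on the component $C_v\cong\PP^1$, and $f|_{C_v}$ is equivariant onto $C_{w,\alpha}$ for the nontrivial character $w\alpha$. The action on $C_v$ is therefore nontrivial, so it has exactly two fixed points $0$ and $\infty$, and the map has the form $u\mapsto cu^d$. Equivariance forces $f^{-1}(\{p_w,p_{\overline{ws_\alpha}}\})=\{0,\infty\}$: any other preimage would lie in a nontrivial orbit mapping to a fixed point, which is impossible because $f|_{C_v}$ is finite. An equivariant finite map $\PP^1\to\PP^1$ that is totally ramified over $0$ and $\infty$ is $u\mapsto cu^d$. Rescaling $u$ normalizes $c=1$, which proves (b).
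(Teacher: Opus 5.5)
Your proof is correct and follows the same route as the paper. For sufficiency you glue the componentwise reparametrizations $\varphi_t$; for necessity you use invariance of the image components, the classification of one-dimensional orbit closures, the normal form $u\mapsto u^d$ for equivariant covers, and connectedness of $\mathbb T$ to fix the nodes and marked points. Your passage to the identity component of the extension of $\mathbb T$ by $\Aut(f)$ is a worthwhile refinement. The paper simply asserts that $t\mapsto\varphi_t$ is an action and that contracted components land in $Y^{\mathbb T}$, and your lifted torus actually justifies both claims.
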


\begin{proof}
Suppose the map is fixed.  The image of each irreducible component of $C$ is
an irreducible $\mathbb T$-invariant subvariety of $Y$ of dimension $0$ or
$1$, hence either a fixed point or the closure of a one-dimensional orbit;
Proposition \ref{prop:invcurves} identifies the latter with the curves
$C_{w,\alpha}$.  On a non-contracted component, $f$ is a finite
$\mathbb T$-equivariant map $\PP^{1}\to\PP^{1}$ up to reparameterization.
Such a map preserves the two fixed points and is $z\mapsto z^{d}$ in
coordinates in which the action is $z\mapsto\chi(t)z$.  Moreover,
$t\mapsto\varphi_{t}$ defines an action of $\mathbb T$ on $C$ permuting the
finite set of nodes and marked points. Since $\mathbb T$ is connected, each
of them is fixed by every $\varphi_{t}$, so its image is a
$\mathbb T$-fixed point of $Y$.  The converse is immediate, taking for
$\varphi_{t}$ the reparameterization of each non-contracted component
determined by $t$.
\end{proof}

\medskip

We now define the decorated trees, which we shall
recognize in Proposition \ref{prop:Fgamma} as parameterizing the fixed point
set for the $\mathbb T$-action on $\overline{\mathcal M}_{0,n}(Y,\beta)$.

\begin{definition}\label{def:decoratedtree}
Let $n\ge0$ and let $\beta\in \rH_{2}(Y,\Z)$ be effective.  A
\emph{decorated tree of type $(n,\beta)$} is a tuple
$\Gamma=(\tau,w,d,\mathrm{mk})$ where
\begin{defenum}
\item $\tau$ is a tree with vertex set $V$ and edge set $E$;
\item $w\colon V\to W_{Y}$ is a labelling such that the endpoints of every
      edge are joined by an invariant curve, that is, for every edge
      $e=\{v,v'\}$ there is a root $\alpha_{e,v}\in R^{+}_{Y}$
      (unique by Lemma \ref{lem:simplegraph}) with
      $w_{v'}=\overline{w_{v}s_{\alpha_{e,v}}}$;
\item $d\colon E\to\Z_{>0}$ assigns a degree to each edge;
\item $\mathrm{mk}\colon\{1,\dots,n\}\to V$ distributes the marked points,
\end{defenum}
subject to the degree constraint
\[
  \sum_{e\in E}d_{e}\,\overline{\alpha^{\vee}_{e,v}}\;=\;\beta
  \qquad\text{in }\ Q^{\vee}_{Y}\cong \rH_{2}(Y,\Z).
\]
An isomorphism of decorated trees is an isomorphism of trees preserving
$w$, $d$ and $\mathrm{mk}$. We write $\Aut\Gamma$ for the automorphism
group of $\Gamma$, $\mathrm{val}(v)$ for the valence of a vertex and
$n_{v}=\mathrm{val}(v)+|\mathrm{mk}^{-1}(v)|$ for its total number of
special points.
\end{definition}

\begin{remark}
The degree constraint is well-posed although the root is not: by
Remark \ref{rmk:basepoint} the roots $\alpha_{e,v}$ and $\alpha_{e,v'}$ read
from the two ends of an edge may differ, but by
Proposition \ref{prop:invcurves} their coroots have the same class in
$Q^{\vee}_{Y}$, namely that of the invariant curve.
\end{remark}

\begin{proposition}\label{prop:Fgamma}
The $\mathbb T$-fixed locus of $\overline{\mathcal M}_{0,n}(Y,\beta)$ is the
disjoint union, over the isomorphism classes of decorated trees of type
$(n,\beta)$, of the substacks
\[
  F_{\Gamma}\;\cong\;\Big[\ \prod_{v\,:\,n_{v}\ge3}\overline{\mathcal M}_{0,n_{v}}
  \ \Big/\ \mathbb A_{\Gamma}\ \Big],
\]
where $\mathbb A_{\Gamma}$ is an extension of $\Aut\Gamma$ by
$\prod_{e\in E}\Z/d_{e}$, the second factor acting trivially.  In
particular, for every class $\psi$ on $F_{\Gamma}$,
\[
  \int_{F_{\Gamma}}\psi
  =\frac{1}{|\Aut\Gamma|\ \prod_{e\in E}d_{e}}
   \int_{\prod_{v\,:\,n_{v}\ge3}\overline{\mathcal M}_{0,n_{v}}}\psi .
\]
Moreover the number of decorated trees of type $(n,\beta)$ is finite: if
$\beta=\sum_{j}b_{j}\overline{\alpha^{\vee}_{i_{j}}}$ then
$|E|\le\sum_{j}b_{j}$ and $|V|\le\sum_{j}b_{j}+1$.
\end{proposition}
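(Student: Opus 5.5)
The plan is the standard Kontsevich analysis of fixed stable maps, specialized to the GKM graph of Proposition~\ref{prop:invcurves} and Lemma~\ref{lem:simplegraph}.

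\emph{From a fixed map to a decorated tree.} Let $(C;x_1,\dots,x_n;f)$ be a fixed point. Proposition~\ref{prop:fixedmaps} says every component of $C$ is either contracted to some $p_w$ or is a $d$-fold cover $z\mapsto z^d$ of an invariant curve $C_{w,\alpha}$. Build $\tau$ as follows:
\begin{itemize}
\item the edges are the non-contracted components, each with degree $d_e$;
\item the vertices are the connected components of $f^{-1}(Y^{\mathbb T})$, which are contracted subcurves or isolated points;
\item each vertex is labelled by the $w\in W_Y$ it maps to, and $\mathrm{mk}$ records which vertex carries each marked point.
\end{itemize}
Since $C$ has genus zero and each edge meets the rest of $C$ only over its two fixed endpoints, the dual graph is a tree. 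Lemma~\ref{lem:simplegraph} makes the labelling of Definition~\ref{def:decoratedtree}(b) well defined. Proposition~\ref{prop:invcurves} gives the class of each edge as $\overline{\alpha_{e,v}^\vee}$, so $f_*[C]=\beta$ is exactly the degree constraint.

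\emph{From a tree back to the fixed locus.} Conversely, fix $\Gamma$. A vertex with $n_v\ge3$ carries a contracted stable curve in $\overline{\mathcal M}_{0,n_v}$. A vertex with $n_v\le2$ is a point: a node between two edges, a marked point on one edge, or an unmarked endpoint. Gluing gives a morphism $\prod_{n_v\ge3}\overline{\mathcal M}_{0,n_v}\to\overline{\mathcal M}_{0,n}(Y,\beta)^{\mathbb T}$, whose image is the connected component $F_\Gamma$. The fibres and automorphisms come from two sources: the deck groups $\Z/d_e$ of the covers $z\mapsto z^d$, which act trivially on the product but contribute stabilizers, and the automorphisms of $\Gamma$ permuting isomorphic branches. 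This gives the stated quotient stack. The integration formula then follows because integrating over a quotient by a finite group of order $|\mathbb A_\Gamma|=|\Aut\Gamma|\prod d_e$ divides by that order. I expect the main obstacle to be making this stack isomorphism precise, namely that $\mathbb A_\Gamma$ is an extension and the gluing map is étale onto its image. I would cite the standard treatment of Kontsevich and Graber--Pandharipande for this rather than reprove it.

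\emph{Finiteness.} Each edge class $\overline{\alpha^\vee_{e,v}}$ is a nonzero effective class, so by Corollary~\ref{cor:effective} its coordinates $b_j$ are nonnegative integers and not all zero. Summing coordinates over the degree constraint therefore gives
\[
\textstyle\sum_e d_e\le\sum_j b_j .
\]
Since every $d_e\ge1$, this yields $|E|\le\sum_j b_j$, and since $\tau$ is a tree, $|V|=|E|+1$. Finally, there are finitely many trees of bounded size, finitely many labellings by the finite set $W_Y$, bounded degrees, and finitely many marking distributions, so the number of decorated trees of type $(n,\beta)$ is finite.
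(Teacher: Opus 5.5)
Your proposal is correct and follows essentially the same route as the paper. You use Proposition~\ref{prop:fixedmaps} to pass between fixed maps and decorated trees, let the contracted components at vertices with $n_v\ge3$ supply the factors $\overline{\mathcal M}_{0,n_v}$, assemble $\mathbb A_\Gamma$ from the deck groups $\Z/d_e$ and $\Aut\Gamma$, and derive finiteness from the nonnegativity and nonvanishing of the edge classes $\overline{\alpha^\vee_{e,v}}$ together with $|V|=|E|+1$. Your version is slightly more explicit than the paper's: you bound $\sum_e d_e$, hence each $d_e$, and count labellings and marking distributions. Like you, the paper treats the quotient-stack identification as standard.
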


\begin{proof}
By Proposition \ref{prop:fixedmaps} a $\mathbb T$-fixed stable map
determines a decorated tree of type $(n,\beta)$, and conversely $\Gamma$
determines the map except for the moduli of its contracted components.  A
component is contracted at a vertex with $n_{v}\ge3$, since a
contracted component must carry at least three special points by
stability, and it contributes a factor $\overline{\mathcal M}_{0,n_{v}}$;
the components covering the invariant curves carry no moduli, being totally
ramified covers of a fixed $\PP^{1}$ over the two fixed points.  The deck
group $\Z/d_{e}$ of each such cover acts trivially on the remaining data and
therefore lies in the automorphism group of every point of $F_{\Gamma}$,
while $\Aut\Gamma$ permutes the factors of the product. This gives the
extension $\mathbb A_{\Gamma}$, and the displayed formula for
$\int_{F_{\Gamma}}$ is the usual one for an integral over a quotient stack.

For the finiteness, each $\alpha\in R^{+}_{Y}$ has $\alpha^{\vee}$ with
nonnegative coordinates in the basis of simple coroots, and
$\overline{\alpha^{\vee}}\neq0$ in $Q^{\vee}_{Y}$ because it is the class of
a curve.  Hence, every edge contributes at least $1$ to $\sum_{j}b_{j}$, so
$|E|\le\sum_{j}b_{j}$, and $|V|=|E|+1$ since $\tau$ is a tree.
\end{proof}

With Proposition \ref{prop:Fgamma} in hand, obtaining Gromov--Witten
invariants of flag varieties by localization requires, according to
Theorem \ref{thm:ABBV}, only that we understand the normal bundles of the
fixed components $F_{\Gamma}$ and their equivariant Euler classes.  Even
more: since we are in the flag setting, the restrictions of the Schubert
classes to the fixed points are heavily constrained, and the resulting
formulae simplify accordingly.

Indeed, the classes $\sigma_{w}$ of Section~\ref{sec:flags} admit canonical
equivariant lifts $\sigma^{w}\in \rH^{2\ell(w)}_{\mathbb T}(Y,\Q)$,
characterized by $\sigma^{w}|_{p_{v}}=0$ unless $w\le v$ in the Bruhat
order --- as one expects from Remark \ref{rmk:bruhatgeom}, since $p_{v}$
then lies outside the support of the cycle.  Their restrictions to the fixed
points are given by a closed combinatorial formula
\cite{tymoczko2013billeysformulacombinatoricsgeometry}.

\begin{theorem}[Billey's Formula]\label{thm:billey}
Fix a reduced expression $v=s_{v_{1}}\cdots s_{v_{\ell}}$ and put
$\gamma_{i}=s_{v_{1}}\cdots s_{v_{i-1}}(\alpha_{v_{i}})$, a positive root
for each $i$.  Then, for every $u\in W$,
\[
  \sigma^{u}\big|_{p_{v}}
  =\sum_{\substack{1\le i_{1}<\dots<i_{\ell(u)}\le \ell(v)\\
      s_{v_{i_{1}}}\cdots s_{v_{i_{\ell(u)}}}=u}}\ \prod_{k=1}^{\ell(u)}\gamma_{i_{k}} ,
\]
independently of the chosen reduced expression.  In particular, the sum is
empty, so that the restriction vanishes, unless $u\le v$.
\end{theorem}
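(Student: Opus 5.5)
The plan is to reduce to the full flag variety $G/B$, characterize the equivariant Schubert classes $\sigma^{u}$ by a few localization properties, and check that the right-hand side of the formula has those properties.

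\emph{Reduction to $G/B$.} For $Y=G/P$, pull back along the $\mathbb T$-equivariant projection $\pi\colon G/B\to G/P$. Since $\pi^{-1}(\Omega_u)$ is the Schubert variety of $G/B$ indexed by $u\in W_Y$, we get $\pi^{*}\sigma^{u}_{G/P}=\sigma^{u}_{G/B}$, and $\pi(vB/B)=p_v$. Hence it suffices to treat $P=B$ with $v$ ranging over $W_Y\subset W$. Throughout I fix the sign convention so that the tangent weight of $\Omega_u$ at $p_u$ is read off positively. This is compatible with Proposition~\ref{prop:tangentweights} up to the global sign $(-1)^{\ell(u)}$ coming from working with $B^{-}$-orbits, and I would check it once on $\PP^{1}$.

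\emph{Characterization.} By Theorem~\ref{thm:ABBV} and Proposition~\ref{prop:schubertbasis}, $\rH^{*}_{\mathbb T}(G/B)$ is free over $\rH^{*}_{\mathbb T}(\mathrm{pt})$ and injects into $\bigoplus_{v\in W}\rH^{*}_{\mathbb T}(\mathrm{pt})$. I will use the standard fact that $\sigma^{u}$ is the unique class which is homogeneous of degree $2\ell(u)$, satisfies $\sigma^{u}|_{p_v}=0$ unless $u\le v$, and has
\[
\sigma^{u}|_{p_u}=\prod_{\gamma\in R^{+},\,u^{-1}\gamma<0}\gamma .
\]
This is the normal weight product of $\Omega_u$ at its smooth point $p_u$. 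Uniqueness follows by triangularity: a difference of two such classes is supported on $\{v>u\}$, and its restriction at a Bruhat-minimal point of the support is the class of a cycle of codimension $>\ell(u)$. A degree count then forces it to vanish.

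\emph{The candidate and the recursion.} Fix a reduced word $v=s_{v_1}\cdots s_{v_\ell}$ and write $v'=s_{v_1}\cdots s_{v_{\ell-1}}$, so $v=v's$ with $s=s_{v_\ell}$ and $\gamma_\ell=v'(\alpha_{v_\ell})$. Rather than verifying the abstract characterization directly, I prove that both sides satisfy the same recursion in the last letter:
\[
\sigma^{u}|_{p_{v}}=\sigma^{u}|_{p_{v'}}+\big[us<u\big]\,\gamma_\ell\,\sigma^{us}|_{p_{v'}} .
\]
For the right-hand side of Billey's formula, this is just splitting the subwords according to whether they use the index $\ell$. For the geometric side, I would derive it from the right divided difference operator $\partial_s$ on $\rH^{*}_{\mathbb T}(G/B)$. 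This operator is push-pull along the $\PP^{1}$-bundle $G/B\to G/P_s$, with $\partial_s\sigma^{u}=\sigma^{us}$ if $us<u$ and $0$ otherwise. Its localization is $(\partial_s f)(w)=\big(f(w)-f(ws)\big)/(-w\alpha_s)$, which is Theorem~\ref{thm:ABBV} on the fibre $C_{w,\alpha_s}$ via Proposition~\ref{prop:invcurves}. Evaluating this at $w=v'$, rearranging, and using $v's=v$, $v'\alpha_s=\gamma_\ell$ gives the recursion up to the sign fixed above. Induction on $\ell(v)$ then proves the formula, with base case $v=e$: there $\sigma^{u}|_{p_e}=\delta_{u,e}$, since $p_e\notin\Omega_u$ for $u\neq e$.

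\emph{Remaining claims.} Independence of the reduced expression is then automatic, since the left-hand side does not depend on it. The vanishing unless $u\le v$ follows from the subword characterization of Bruhat order recalled in Appendix~\ref{app:bruhat}: if no subword of a reduced word for $v$ multiplies to $u$ reduced, the sum is empty. The positivity of each $\gamma_i$ is the usual statement that the $\gamma_i$ enumerate the inversion set of $v^{-1}$.

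\emph{Main obstacle.} I expect the hardest step to be establishing $\partial_s\sigma^{u}=\sigma^{us}$ with the correct sign and normalization under the paper's $B^{-}$ and $-w\beta$ conventions. If the sign bookkeeping becomes unwieldy, I would fall back to the characterization route. There one checks directly that the candidate, computed from any reduced word, satisfies the GKM divisibility condition $f(w)-f(ws_\alpha)\in(w\alpha)$, has the stated value at $p_u$ when $v=u$, and has the correct support. Uniqueness then finishes the argument.
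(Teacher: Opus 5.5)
The paper gives no proof of this statement. It quotes the formula from the literature (Tymoczko's survey of Billey's formula), so your argument is an independent, self-contained derivation, and it is correct.

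Its core is the classical divided-difference induction. Push--pull along the $\PP^1$-bundle $G/B\to G/P_s$ gives $\partial_s\sigma^u=\sigma^{us}$ if $us<u$ and $0$ otherwise. In the first case the map is birational on $\Omega_u$; in the second, $\Omega_u$ is the full preimage of its image. Localizing on the fibre $C_{w,\alpha_s}$, whose tangent weights are $-w\alpha_s$ at $p_w$ and $+w\alpha_s$ at $p_{ws}$ by Proposition~\ref{prop:invcurves}, gives $(\partial_s f)(w)=\big(f(w)-f(ws)\big)/(-w\alpha_s)$. Evaluating at $w=v'$ yields exactly
\[
\sigma^u|_{p_v}=\sigma^u|_{p_{v'}}+[us<u]\,\gamma_\ell\,\sigma^{us}|_{p_{v'}} ,
\]
which matches the split of subwords according to whether they use the last letter. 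The base case $\sigma^u|_{p_e}=\delta_{u,e}$ closes the induction.

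Compared with the bare citation, your route verifies the formula in the paper's own conventions ($B^-$-orbit closures, tangent weights $-w\beta$). This matters because these restrictions feed directly into Theorem~\ref{thm:localization}. It also makes independence of the reduced word automatic.

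Two of your hedges can be dropped.
\begin{itemize}
\item No sign $(-1)^{\ell(u)}$ arises. With these conventions the normal weights of $\Omega_u$ at $p_u$ are already the positive roots $\gamma$ with $u^{-1}\gamma<0$; a global sign would appear only for $B$-orbit closures. In any case the recursion never uses a normalization at $p_u$: all signs are forced by $-w\alpha_s$ and the base case.
\item The characterization paragraph plays no role in your main route. If you keep it as a fallback, phrase uniqueness via GKM divisibility. At a Bruhat-minimal point $w$ of the support, $f(w)$ is divisible by the $\ell(w)>\ell(u)$ pairwise coprime roots $\gamma>0$ with $w^{-1}\gamma<0$, so it vanishes for degree reasons. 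The fallback must also first make the candidate well defined, either by proving independence of the reduced word or by fixing one word per $v$, before the GKM conditions can be checked. Your recursion avoids this issue entirely.
\end{itemize}
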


\medskip

\subsubsection{Weights along the invariant curves}
 
By Theorem \ref{thm:ABBV}, everything we shall compute is assembled from
weights: the weight of the torus on each fiber at a fixed point, and the
degree of each bundle along each invariant curve.  This subsection collects
those data for the two bundles we need, the homogeneous line bundles
$\cO_{Y}(\lambda)$ and the tangent bundle $TY$.
 
We first fix the vocabulary.  A $\mathbb T$-\emph{equivariant} bundle on a
$\mathbb T$-variety $Z$ is a bundle $E\to Z$ together with a lift of the
action to the total space, linear on fibers.  If $z\in Z^{\mathbb T}$, the
fiber $E_{z}$ is then a $\mathbb T$-representation; when $E$ is a line
bundle, this representation is a character, which we call the \emph{fiber
weight} of $E$ at $z$ and regard as an element of
$\mathfrak t^{*}=\rH^{2}_{\mathbb T}(\mathrm{pt})$.  The bundles occurring
here are all associated bundles $E=G\times_{P}V$, with the convention of
Section~\ref{sec:picard}, and they are equivariant for the action
$t\cdot[g,v]=[tg,v]$ induced by left translation.
 
On a curve $C\cong\PP^{1}$ with a nontrivial action, the two fiber weights of
a line bundle determine its degree, which is the source of every degree
computation below.
 
\begin{lemma}\label{lem:degreeP1}
Let $C\cong\PP^{1}$ carry a nontrivial $\mathbb T$-action with fixed points
$q_{0},q_{\infty}$, and let $\lambda_{C}$ be the weight of $T_{q_{0}}C$.  If
$L$ is a $\mathbb T$-equivariant line bundle on $C$ with fiber weights
$\lambda_{0}$ at $q_{0}$ and $\lambda_{\infty}$ at $q_{\infty}$, then
\[
  \deg L\;=\;\frac{\lambda_{0}-\lambda_{\infty}}{\lambda_{C}} .
\]
\end{lemma}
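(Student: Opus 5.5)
I will prove this by equivariant localization on $C$ itself, applied to the equivariant first Chern class of $L$. By Theorem~\ref{thm:ABBV} applied to $Z=C$ with its two isolated fixed points $q_0,q_\infty$, for $\phi=c_1^{\mathbb T}(L)\in\rH^2_{\mathbb T}(C,\Q)$ we get
\[
  \int_C c_1^{\mathbb T}(L)\;=\;\frac{c_1^{\mathbb T}(L)|_{q_0}}{e^{\mathbb T}(T_{q_0}C)}+\frac{c_1^{\mathbb T}(L)|_{q_\infty}}{e^{\mathbb T}(T_{q_\infty}C)} .
\]
The left-hand side has degree $0$. By the discussion preceding Theorem~\ref{thm:ABBV}, it therefore equals the ordinary integral $\int_C c_1(L)=\deg L$.

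Next I identify the four ingredients on the right. The restriction of $c_1^{\mathbb T}(L)$ to a fixed point is the equivariant first Chern class of the one-dimensional representation $L_{q}$, which is its character: $\lambda_0$ at $q_0$ and $\lambda_\infty$ at $q_\infty$. The tangent weight at $q_0$ is $\lambda_C$ by hypothesis. For the tangent weight at $q_\infty$, I use the fact that the action factors through a character $\chi$ with $t\cdot z=\chi(t)z$ in an affine coordinate $z$ centred at $q_0$. Then $w=1/z$ is a coordinate at $q_\infty$ on which $t$ acts by $\chi(t)^{-1}$, so $T_{q_\infty}C$ has weight $-\lambda_C$. This is the same ``opposite weights'' fact already used in the proof of Proposition~\ref{prop:invcurves}. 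A nontrivial action on $\PP^1$ with exactly two fixed points is conjugate to such a diagonal action, since the image of $\mathbb T$ in $\mathrm{PGL}_2$ is a torus fixing $q_0,q_\infty$. Substituting gives
\[
  \deg L=\frac{\lambda_0}{\lambda_C}+\frac{\lambda_\infty}{-\lambda_C}=\frac{\lambda_0-\lambda_\infty}{\lambda_C}.
\]

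The only delicate point is sign conventions: whether ``weight of the fiber'' means the character by which $t$ acts on the fiber, and whether the tangent weight at $q_\infty$ is exactly $-\lambda_C$. To guard against these, I will run a sanity check on $L=TC$. There the fiber weights are $\lambda_C$ and $-\lambda_C$, and the formula gives $2$, as it must. A second check is $L=\cO(-1)\subset C\times\C^2$, with $q_0=[1:0]$, $q_\infty=[0:1]$, and $\mathbb T$ acting on $\C^2$ by characters $(a,b)$. Its fiber weights are $a$ and $b$, and $\lambda_C=b-a$, so the formula gives $(a-b)/(b-a)=-1$, also correct. Because the localization identity is canonical once the same convention (equivariant $c_1$ of a character equals that character) is used both for $L$ and for the Euler class of the tangent lines, these checks confirm that the signs are consistent.
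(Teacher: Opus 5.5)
Your proof is correct, but it takes a different route from the paper's.

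\textbf{The paper's route.} It argues by generators. It realizes $C=\PP(V)$ with $V=\C_{\chi_0}\oplus\C_{\chi_\infty}$ and verifies the formula on the tautological bundle $\cO(-1)$, which is exactly your second sanity check used as the base case. It then uses that every $\mathbb T$-equivariant line bundle on $\PP^1$ has the form $\cO(n)\otimes\C_\chi$. Both sides of the identity are additive under tensor products and vanish on the constant twist $\C_\chi$, which finishes the argument.

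\textbf{Your route.} You apply Theorem~\ref{thm:ABBV} to $c_1^{\mathbb T}(L)$ on $C$ itself. Everything then reduces to two local facts: the restriction of $c_1^{\mathbb T}(L)$ to a fixed point is the fiber character, and the tangent weights at the two poles are opposite.

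\textbf{What your route buys.}
\begin{itemize}
\item It avoids the classification of equivariant line bundles on $\PP^1$. The paper quotes this without proof, and it tacitly uses that the action lifts linearly to $V$ and that two linearizations of one bundle differ by a character.
\item It shows why the sign convention for equivariant $c_1$ is irrelevant: the convention cancels in each ratio, so your sanity checks are reassuring but not needed.
\item It proves the ``opposite weights'' fact, which the paper only asserts in the proof of Proposition~\ref{prop:invcurves}.
\item The same computation gives $\deg E$ for any equivariant vector bundle: it is the difference of the sums of fiber weights at $q_0$ and $q_\infty$, divided by $\lambda_C$.
\end{itemize}

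\textbf{What the paper's route buys.} It stays at the level of linear algebra on a two-dimensional representation and uses no equivariant cohomology. Yours depends on the imported localization theorem and on identifying the degree-zero equivariant integral with $\deg L$. There is no circularity, since Theorem~\ref{thm:ABBV} is not derived from this lemma.
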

 
\begin{proof}
Write $C=\PP(V)$ with $V=\C_{\chi_{0}}\oplus\C_{\chi_{\infty}}$ and
$q_{0}=[\C_{\chi_{0}}]$, so that
$T_{q_{0}}C=\mathrm{Hom}(\C_{\chi_{0}},\C_{\chi_{\infty}})$ has weight
$\lambda_{C}=\chi_{\infty}-\chi_{0}$.  The tautological bundle $\cO(-1)$,
with its natural linearization, has fiber weights $\chi_{0}$ and
$\chi_{\infty}$, so the formula holds for it, both sides being $-1$.  Every
$\mathbb T$-equivariant line bundle on $\PP^{1}$ is of the form
$\cO(n)\otimes\C_{\chi}$ for some $n\in\Z$ and some character $\chi$. Both
sides of the asserted identity are additive under tensor products and vanish
on the constant twist $\C_{\chi}$, so the general case follows.
\end{proof}
 
We now apply this to $Y$.  Part (a) below is the computation that fixes all
our signs: it says that the fiber weights of $\cO_{Y}(\lambda)$ are the
$W$-translates of $-\lambda$.  Parts (b) and (c) then read off the degrees
along the invariant curves of Proposition \ref{prop:invcurves}.
 
\begin{proposition}\label{prop:linebundleweights}
Let $\lambda\in\Lambda_{Y}$ and let
$\cO_{Y}(\lambda)=G\times_{P}\C_{-\lambda}$ carry the $\mathbb T$-action
$t\cdot[g,a]=[tg,a]$.  Then
\begin{defenum}
  \item the fiber weight of $\cO_{Y}(\lambda)$ at $p_{w}$ is $-w\lambda$;
  \item for every $w\in W_{Y}$ and $\alpha\in R^{+}_{Y}$,
        \[
          \deg\big(\cO_{Y}(\lambda)\big|_{C_{w,\alpha}}\big)
          =\langle\lambda,\alpha^{\vee}\rangle ,
        \]
        independently of $w$;
  \item $TY|_{C_{w,\alpha}}$ carries a $\mathbb T$-equivariant filtration
        whose graded pieces are line bundles $L_{\gamma}$, indexed by
        $\gamma\in R^{+}_{Y}$, with fiber weight $-w\gamma$ at $p_{w}$ and
        \[
          \deg L_{\gamma}=\langle\gamma,\alpha^{\vee}\rangle .
        \]
\end{defenum}
\end{proposition}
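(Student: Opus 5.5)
For (a) I will compute the $\mathbb T$-action on the fiber directly from the associated-bundle description. Choose a lift $n\in N_G(\mathbb T)$ of $w$. The fiber of $\cO_Y(\lambda)=G\times_P\C_{-\lambda}$ over $p_w=nP/P$ consists of the classes $[n,a]$. For $t\in\mathbb T$ we have
\[
t\cdot[n,a]=[tn,a]=[n(n^{-1}tn),a]=[n,\,\theta(n^{-1}tn)\,a],
\]
where $\theta$ is the character of $P$ by which the fiber is defined. On $\mathbb T$ this character is $-\lambda$, so the scalar is $(-\lambda)(n^{-1}tn)=(-w\lambda)(t)$. That gives fiber weight $-w\lambda$. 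The argument is the same twisting-by-conjugation computation used in Proposition~\ref{prop:tangentweights}, and the only care needed is the sign convention in the identification $(g,v)\sim(gp,f(p)^{-1}v)$. Independence of the lift follows because $\mathbb T$ acts on $\C_{-\lambda}$ through a scalar.

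For (b) I will apply Lemma~\ref{lem:degreeP1} to $C=C_{w,\alpha}$ with $q_0=p_w$, using the tangent weight $\lambda_C=-w\alpha$ from Proposition~\ref{prop:invcurves}. The other endpoint is $ws_\alpha P/P$, and its fiber weight is $-ws_\alpha\lambda$. This holds because the computation in (a) works for any element of $W$ (equivalently, any lift of $ws_\alpha$); the passage to the minimal representative $\overline{ws_\alpha}$ changes nothing, since $W_P$ fixes $\lambda\in\Lambda_Y$. Then
\[
\deg=\frac{-w\lambda+ws_\alpha\lambda}{-w\alpha}
=\frac{-\langle\lambda,\alpha^\vee\rangle\,w\alpha}{-w\alpha}
=\langle\lambda,\alpha^\vee\rangle .
\]
This argument uses only the endpoints and the tangent weight, which avoids circularity with the class computation in Proposition~\ref{prop:invcurves}.

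Part (c) is the main obstacle, because $\mathfrak g/\mathfrak p$ is not a direct sum of characters of $P$, only of $\mathbb T$. My plan is to restrict to the subgroup $wG_\alpha w^{-1}$, or equivalently translate by $w$ and work with $G_\alpha P/P\cong G_\alpha/(G_\alpha\cap P)$. Here $TY|_{C}=G_\alpha\times_{G_\alpha\cap P}(\mathfrak g/\mathfrak p)$. The group $B_\alpha=G_\alpha\cap P$ contains $\mathbb T$ and $U_\alpha$, and $U_\alpha$ raises weights by $\alpha$. I will filter $\mathfrak g/\mathfrak p$ by $B_\alpha$-stable subspaces $F_{\ge c}=\bigoplus\{\mathfrak g_{-\gamma}:\gamma\in R^+_Y,\ h(-\gamma)\ge c\}$, where $h$ is a linear height function with $h(\alpha)>0$. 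This gives a $\mathbb T$-equivariant filtration whose successive quotients are characters $\C_{-\gamma}$ of $B_\alpha$, so $TY|_C$ has graded pieces $L_\gamma=G_\alpha\times_{B_\alpha}\C_{-\gamma}$.

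Each $L_\gamma$ is a homogeneous line bundle on $G_\alpha/B_\alpha\cong\PP^1$. Its fiber weight at the base point is $-\gamma$ and at the other fixed point is $-s_\alpha\gamma$, by the same argument as in (a). Lemma~\ref{lem:degreeP1}, with tangent weight $-\alpha$, then gives $\deg L_\gamma=\langle\gamma,\alpha^\vee\rangle$. Translating back by $w$ replaces the fiber weight $-\gamma$ at the base point by $-w\gamma$ at $p_w$. The step I will check carefully is the $B_\alpha$-stability of the filtration (the height function must make $\mathrm{ad}\,\mathfrak g_{\alpha}$ increase $h$); the rest is formal.
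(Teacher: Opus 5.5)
Your proposal is correct and is essentially the paper's proof. Part (a) is the same conjugation computation, (b) is the same application of Lemma~\ref{lem:degreeP1} using that $W_P$ fixes $\lambda$, and (c) is the same reduction to a filtration of $\mathfrak g/\mathfrak p$ over $G_\alpha/B_\alpha$; the only difference is that you build the filtration explicitly from a height function with $h(\alpha)>0$, where the paper simply invokes solvability of $B_\alpha$.

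One small slip, harmless to the argument: $G_\alpha\cap P$ contains only the rank-one torus of $G_\alpha$, not all of $\mathbb T$. Your filtration is a sum of $\mathbb T$-weight spaces, so it is stable under the action $t\cdot[g,v]=[tgt^{-1},\mathrm{Ad}(t)v]$, which is all the $\mathbb T$-equivariance requires. Equivalently, you can work with $\mathbb T G_\alpha$ and its Borel $\mathbb T U_\alpha$.
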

 
\begin{proof}
(a)  With the identification $(g,a)\sim(gq,\chi(q)^{-1}a)$ for
$\chi=-\lambda$, as fixed in Section~\ref{sec:picard}, write
$tw=w(w^{-1}tw)$ with $w^{-1}tw\in\mathbb T\subset P$; then
$t\cdot[w,a]=[w,\chi(w^{-1}tw)a]$ and
$\chi(w^{-1}tw)=(w\chi)(t)=(-w\lambda)(t)$.  The same computation shows,
more generally, that $G\times_{P}\C_{\chi}$ has fiber weight $w\chi$ at
$p_{w}$.
 
(b)  Apply Lemma \ref{lem:degreeP1} with $q_{0}=p_{w}$,
$q_{\infty}=p_{\overline{ws_{\alpha}}}$ and $\lambda_{C}=-w\alpha$ by
Proposition \ref{prop:invcurves}.  By part (a), the fiber weights are
$-w\lambda$ and $-\overline{ws_{\alpha}}\lambda$; the latter equals
$-ws_{\alpha}\lambda$, since $\langle\lambda,\alpha_{j}^{\vee}\rangle=0$ for
every $\alpha_{j}\in\Pi_{P}$ by Definition \ref{def:fundweights}, so that
$W_{P}$ fixes $\lambda$.  Hence
\[
  \deg=\frac{-w\lambda+ws_{\alpha}\lambda}{-w\alpha}
      =\frac{-w\big(\lambda-s_{\alpha}\lambda\big)}{-w\alpha}
      =\frac{-\langle\lambda,\alpha^{\vee}\rangle\,w\alpha}{-w\alpha}
      =\langle\lambda,\alpha^{\vee}\rangle ,
\]
using $s_{\alpha}\lambda=\lambda-\langle\lambda,\alpha^{\vee}\rangle\alpha$.
The answer does not involve $w$, as it must not: by
Proposition \ref{prop:invcurves} all the curves $C_{w,\alpha}$ with the same
$\alpha$ have the same homology class.
 
(c)  Write $B_{\alpha}=G_{\alpha}\cap P$, a Borel subgroup of
$G_{\alpha}\cong\mathrm{SL}_{2}$ by the proof of
Proposition \ref{prop:invcurves}, so that
$C_{w,\alpha}=w\,G_{\alpha}/B_{\alpha}$ and
\[
  TY\big|_{C_{w,\alpha}}
  =w\cdot\Big(G_{\alpha}\times_{B_{\alpha}}\big(\mathfrak g/\mathfrak p\big)\Big),
\]
translation by $w$ being equivariant up to the automorphism
$t\mapsto w^{-1}tw$ of $\mathbb T$ --- which is precisely what turns a weight
$\mu$ into $w\mu$.  Since $B_{\alpha}$ is solvable, the $B_{\alpha}$-module
$\mathfrak g/\mathfrak p$ admits a filtration by submodules with
one-dimensional graded pieces, whose $\mathfrak t$-weights are the
$-\gamma$ with $\gamma\in R^{+}_{Y}$; after translation these become
$-w\gamma$, which is part (a) again.  For the degrees, the line bundle
$G_{\alpha}\times_{B_{\alpha}}\C_{\mu}$ on
$G_{\alpha}/B_{\alpha}\cong\PP^{1}$ has fiber weights $\mu$ and
$s_{\alpha}\mu$ at the two fixed points by (a), and tangent weight
$-\alpha$ at the first, so Lemma \ref{lem:degreeP1} gives
\[
  \deg\big(G_{\alpha}\times_{B_{\alpha}}\C_{\mu}\big)
  =\frac{\mu-s_{\alpha}\mu}{-\alpha}
  =-\langle\mu,\alpha^{\vee}\rangle ,
\]
which for $\mu=-\gamma$ is $\langle\gamma,\alpha^{\vee}\rangle$.
\end{proof}
 
\begin{remark}\label{rmk:filtration}
In (c) one cannot in general replace ``filtration'' by ``direct sum'': the
degrees $\langle\gamma,\alpha^{\vee}\rangle$ are those of the graded pieces,
not of the summands.  On $\mathrm{Fl}(1,2,3)$ with $\alpha=\alpha_{2}$ they
are $-1,2,1$, whereas $TY|_{C}\cong\cO(2)\oplus\cO\oplus\cO$: the
projection $\mathrm{Fl}(1,2,3)\to\PP^{2}$ forgetting $V_{2}$ has $C$ as a
fiber, so that $TY|_{C}$ is an extension of the trivial bundle
$T_{[e_{1}]}\PP^{2}\otimes\cO_{C}$ by $T_{C}=\cO(2)$, and this extension
splits because $\rH^{1}(\PP^{1},\cO(2))=0$.  Both readings give total degree
$2$, as they must.  The discrepancy costs us nothing below: all we shall use
is the class of $TY|_{C_{w,\alpha}}$ in equivariant $K$-theory, and Euler
classes are multiplicative in exact sequences.
\end{remark}
 
The same argument applies verbatim to any homogeneous bundle, which is what
we shall need for the complete intersections of Section~\ref{sec:ci}.  Let
$\cE=G\times_{P}V$ be associated with a $P$-module $V$ whose
$\mathbb T$-weights are $-\mu_{1},\dots,-\mu_{s}$.  Then $\cE$ has fiber
weights $-w\mu_{1},\dots,-w\mu_{s}$ at $p_{w}$, and
$\cE|_{C_{w,\alpha}}$ carries a $\mathbb T$-equivariant filtration whose
graded pieces have degrees
$\langle\mu_{1},\alpha^{\vee}\rangle,\dots,\langle\mu_{s},\alpha^{\vee}\rangle$.
For $\cE=\bigoplus_{j}\cO_{Y}(\lambda_{j})$ as in Theorem \ref{thm:ci} the
restriction genuinely splits, with degrees
$\langle\lambda_{j},\alpha^{\vee}\rangle$, all nonnegative when the
$\lambda_{j}$ are dominant.
 
\medskip
 
We need to introduce a last notation. A component of the domain of a fixed stable map
covers an invariant curve with some degree $d_{e}$, and it is the tangent
weight \emph{upstairs} (the tangent weight downstairs divided by $d_{e}$) on that component that enters the localization
formula.
 
\begin{definition}\label{def:omega}
Let $\Gamma$ be a decorated tree and $e$ an edge of $\Gamma$ with endpoints $v,v'$.  The
\emph{normalized tangent weight} of $e$ at $v$ is
\[
  \omega_{v,e}\;:=\;-\frac{1}{d_{e}}\,w_{v}\,\alpha_{e,v}
  \;\in\;\mathfrak t^{*}\otimes\Q ,
\]
the weight of the tangent line to the corresponding component of the domain
at the point lying over $p_{w_{v}}$.  One has
$\omega_{v',e}=-\omega_{v,e}$.
\end{definition}
% Indeed, the tangent weight of $C_{w_{v},\alpha_{e,v}}$ at $p_{w_{v}}$ is
% $-w_{v}\alpha_{e,v}$ by Proposition \ref{prop:invcurves} and is opposite at
% the other endpoint, while a degree-$d_{e}$ cover totally ramified over the
% two fixed points divides both by $d_{e}$.
 
\medskip
 
\subsubsection{Integrals over $\overline{\mathcal M}_{0,m}$}

Up to this point, every quantity we have computed has been a weight, that is,
an element of $\Q[x_{1},\dots,x_{r}]$.  This is no longer enough.  By
Proposition \ref{prop:Fgamma} the fixed locus $F_{\Gamma}$ is a product of
moduli spaces $\overline{\mathcal M}_{0,n_{v}}$, one for each vertex with
$n_{v}\ge3$, and $\dim\overline{\mathcal M}_{0,m}=m-3$. Thus, as soon as some
vertex carries four or more special points, $F_{\Gamma}$ is
positive-dimensional and Theorem \ref{thm:ABBV} asks us to \emph{integrate}
over it rather than merely to evaluate at a point.  Moreover. the integrand
$1/e^{\mathbb T}(N_{\Gamma})$ genuinely varies along $F_{\Gamma}$, for the
following reason.

Among the normal directions to $F_{\Gamma}$ are the deformations smoothing
the nodes of the domain.  At the node joining a contracted component $C_{v}$
to an edge component $C_{e}$, the smoothing parameter lives in the tensor
product of the two tangent lines,
\[
  T_{\mathrm{node}}C_{v}\ \otimes\ T_{\mathrm{node}}C_{e}.
\]
The second factor is constant on $F_{\Gamma}$: it is the tangent line to a
totally ramified cover of an invariant curve, of weight $\omega_{v,e}$ by
Definition \ref{def:omega}.  The first factor is not constant: as the
contracted curve $C_{v}$ moves in $\overline{\mathcal M}_{0,n_{v}}$, its
tangent line at the node moves with it.  It is precisely the line bundle
whose first Chern class is the $\psi$ class of the moduli space.

\begin{definition}\label{def:psi}
Let $m\ge3$.  For $1\le i\le m$ let $\mathbb L_{i}$ be the line bundle on
$\overline{\mathcal M}_{0,m}$ whose fiber at a point
$[(C;x_{1},\dots,x_{m})]$ is the cotangent line $T^{*}_{x_{i}}C$, and set
\[
  \psi_{i}:=c_{1}(\mathbb L_{i})\ \in\ \rH^{2}\big(\overline{\mathcal M}_{0,m},\Q\big).
\]
\end{definition}

With this, the node contributes $\omega_{v,e}-\psi_{(v,e)}$ to
$e^{\mathbb T}(N_{\Gamma})$: the Euler class of a tensor product of line
bundles is the sum of their first Chern classes, the equivariant one
contributing the weight $\omega_{v,e}$ and the moduli one contributing
$-\psi_{(v,e)}$, the sign because $\mathbb L$ is the \emph{co}tangent line.
Here we index the $m=n_{v}$ special points of $C_{v}$ by the flags $(v,e)$
at $v$ together with the marks in $\mathrm{mk}^{-1}(v)$.

Two remarks are in order before we compute.  First, the torus acts trivially
on $F_{\Gamma}$, so that
$\rH^{*}_{\mathbb T}(F_{\Gamma},\Q)=\rH^{*}(F_{\Gamma},\Q)\otimes\Q[x_{1},\dots,x_{r}]$:
the $\psi$ classes are ordinary cohomology classes of the fixed locus, the
$\omega_{v,e}$ are pure weights, and the two enter the formula as
independent variables.  Second, nothing
in this subsection depends on $Y$.  The spaces $\overline{\mathcal M}_{0,m}$
and their $\psi$ classes are the same whatever the target, and the flag
variety enters only through the constants $\omega_{v,e}$.  This is the one
step of the computation where the combinatorics of the root system plays no
role at all, and it is why we may quote a classical evaluation
\cite{Kontsevich1995}:
\[
  \int_{\overline{\mathcal M}_{0,m}}\psi_{1}^{a_{1}}\cdots\psi_{m}^{a_{m}}
  =\binom{m-3}{a_{1},\dots,a_{m}},\qquad \sum_{i}a_{i}=m-3 .
\]

What we shall actually need is the following packaged form of it.  Inverting
the node contributions produces, at a vertex $v$, the factor
$\prod_{e\ni v}(\omega_{v,e}-\psi_{(v,e)})^{-1}$, in which the product runs
over the flags at $v$ but \emph{not} over the marks. The marks, nevertheless,
enlarge the moduli space, and hence its dimension.  The lemma is stated so
as to keep the two roles apart: $S$ is the set of flags, and
$\{1,\dots,m\}\smallsetminus S$ the set of marks.

\begin{lemma}\label{lem:M0n}
Let $m\ge3$, let $S\subseteq\{1,\dots,m\}$, and let $(\omega_{i})_{i\in S}$
be invertible elements of $\Q[x_{1},\dots,x_{r}]$ localized at the nonzero
characters.  Then
\[
  \int_{\overline{\mathcal M}_{0,m}}\ \prod_{i\in S}\frac{1}{\omega_{i}-\psi_{i}}
  =\Big(\prod_{i\in S}\omega_{i}^{-1}\Big)
   \Big(\sum_{i\in S}\omega_{i}^{-1}\Big)^{m-3} .
\]
\end{lemma}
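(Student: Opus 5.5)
Expand each factor as a geometric series in the nilpotent class $\psi_i$, apply the classical evaluation of $\psi$-monomials quoted just above, and recognize the result as a multinomial expansion.

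For each $i\in S$ write
\[
  \frac{1}{\omega_{i}-\psi_{i}}
  =\omega_{i}^{-1}\sum_{a\ge0}\Big(\frac{\psi_{i}}{\omega_{i}}\Big)^{a}
  =\sum_{a\ge0}\omega_{i}^{-1-a}\psi_{i}^{a}.
\]
This is legitimate because $\psi_{i}$ is nilpotent in $\rH^{*}(\overline{\mathcal M}_{0,m},\Q)$ and $\omega_{i}$ is invertible in the localized coefficient ring. Multiplying over $i\in S$ and extracting the top-degree part, only monomials $\prod_{i\in S}\psi_{i}^{a_{i}}$ with $\sum_{i\in S}a_{i}=m-3$ contribute to the integral. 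The indices $i\notin S$, the marks, carry exponent $a_{i}=0$.

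We now apply the quoted formula
\[
\int_{\overline{\mathcal M}_{0,m}}\prod_{i=1}^{m}\psi_{i}^{a_{i}}=\binom{m-3}{a_{1},\dots,a_{m}}
\]
with $a_{i}=0$ for $i\notin S$. This gives
\[
\int_{\overline{\mathcal M}_{0,m}}\prod_{i\in S}\frac{1}{\omega_{i}-\psi_{i}}
=\Big(\prod_{i\in S}\omega_{i}^{-1}\Big)\sum_{\substack{(a_{i})_{i\in S}\\ \sum a_{i}=m-3}}\binom{m-3}{(a_{i})_{i\in S}}\prod_{i\in S}\big(\omega_{i}^{-1}\big)^{a_{i}},
\]
and the multinomial theorem identifies the remaining sum with $\big(\sum_{i\in S}\omega_{i}^{-1}\big)^{m-3}$.

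If $S=\emptyset$, the integrand is $1$ and both sides vanish for $m>3$ and equal $1$ for $m=3$. This agrees with the right-hand side under the conventions $0^{0}=1$ and an empty product equal to $1$.

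The only step needing care is the quoted evaluation of $\psi$-monomials, which is taken as given from \cite{Kontsevich1995}. If one wished to justify it, one would induct on $m$ using the string equation $\int\psi_1^{a_1}\cdots\psi_{m}^{a_{m}}\psi_{m+1}^{0}=\sum_{j}\int\psi_1^{a_1}\cdots\psi_j^{a_j-1}\cdots\psi_m^{a_m}$ on $\overline{\mathcal M}_{0,m+1}$, starting from $\int_{\overline{\mathcal M}_{0,3}}1=1$; the multinomial coefficients satisfy exactly this Pascal-type recursion. The remaining points are bookkeeping: the distinction between flags ($S$) and marks (the complement of $S$) enters only through the vanishing exponents, and nilpotency makes the series expansion legitimate.
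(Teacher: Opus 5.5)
Your proof is correct and follows the paper's argument exactly: you expand each factor as a terminating geometric series (using nilpotency of $\psi_i$), keep the terms of total degree $m-3$, apply the quoted evaluation $\int_{\overline{\mathcal M}_{0,m}}\psi_1^{a_1}\cdots\psi_m^{a_m}=\binom{m-3}{a_1,\dots,a_m}$ with zero exponents at the marks, and recognize the multinomial expansion. Your extra remarks on the case $S=\emptyset$ and on deriving the evaluation from the string equation are consistent with the paper and do no harm.
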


\begin{proof}
Expand each factor as a geometric series,
$\frac{1}{\omega_{i}-\psi_{i}}=\omega_{i}^{-1}\sum_{a\ge0}(\psi_{i}/\omega_{i})^{a}$,
which terminates because $\psi_{i}$ is nilpotent,
$\dim\overline{\mathcal M}_{0,m}=m-3$.  Only the terms of total degree
$m-3$ survive the integration, and by the evaluation above they contribute
\[
  \Big(\prod_{i\in S}\omega_{i}^{-1}\Big)
  \sum_{\substack{(a_{i})_{i\in S}\\ \sum_{i}a_{i}=m-3}}
  \binom{m-3}{(a_{i})_{i\in S}}\ \prod_{i\in S}\omega_{i}^{-a_{i}},
\]
which is the multinomial expansion of
$\big(\prod_{i\in S}\omega_{i}^{-1}\big)\big(\sum_{i\in S}\omega_{i}^{-1}\big)^{m-3}$.
\end{proof}

Two special cases are worth recording, both of which occur constantly in
practice.  If $m=3$, the moduli space is a point, the exponent $m-3$ is zero, and the integral is simply $\prod_{i\in S}\omega_{i}^{-1}$: a vertex with
exactly three special points contributes no $\psi$ class.  If $S=\emptyset$
and $m>3$ the right-hand side is $0^{m-3}=0$, in agreement with
$\int_{\overline{\mathcal M}_{0,m}}1=0$ for dimension reasons.

With Corollary \ref{cor:ABBVflag}, Theorem \ref{thm:billey},
Proposition \ref{prop:Fgamma} and Lemma \ref{lem:M0n} in place, all the
ingredients of the localization formula have been assembled: it remains to
compute the Euler class of the normal bundle of $F_{\Gamma}$, which is the
content of the next subsection.

\medskip

\subsubsection{The Euler class of the normal bundle}
 
Only one ingredient of Theorem \ref{thm:ABBV} is still missing, namely
$e^{\mathbb T}(N_{\Gamma})$.  The strategy is dictated by the deformation
theory of stable maps: the tangent space to
$\overline{\mathcal M}_{0,n}(Y,\beta)$ at a fixed stable map is built from
three pieces: the deformations of the map, which are
$\rH^{0}(C,f^{*}TY)$; the infinitesimal automorphisms of the pointed
domain; and the deformations of the pointed domain, which smooth its nodes.
Each of the three is computed by normalizing $C$ and reading off weights one
component at a time, which is exactly what the previous subsections have
prepared.  The normal bundle is then the part of the result on which the
torus acts nontrivially.
 
We therefore fix the following vocabulary.  A $\mathbb T$-representation $M$
splits canonically as $M=M^{\mathrm{fix}}\oplus M^{\mathrm{mov}}$, the fixed
part being the weight-zero subspace and the \emph{moving part} the sum of
the nonzero weight spaces; only the latter has invertible equivariant Euler
class, and it is the moving part of the tangent space that constitutes the
normal bundle to the fixed locus.  We write
$\mathrm{aut}(C,x_{\bullet})=\rH^{0}\big(C,T_{C}(-\textstyle\sum_{i}x_{i})\big)$
for the infinitesimal automorphisms of the pointed domain and
$\mathrm{def}(C,x_{\bullet})$ for its first-order deformations, the latter
being the direct sum over the nodes of the tensor products of the two
tangent lines at each node.
 
Throughout this subsection, $\Gamma$ is a decorated tree of type
$(n,\beta)$, $F_{\Gamma}\subset\overline{\mathcal M}_{0,n}(Y,\beta)$ the
corresponding fixed substack of Proposition \ref{prop:Fgamma}, and
$N_{\Gamma}$ its normal bundle.  We abbreviate the Euler class of
Proposition \ref{prop:tangentweights} by
\[
  e_{w}\;:=\;e^{\mathbb T}\big(T_{p_{w}}Y\big)
  \;=\;\prod_{\gamma\in R^{+}_{Y}}\big(-w\gamma\big),
  \qquad w\in W_{Y}.
\]
Recall that a point of $F_{\Gamma}$ is a stable map whose domain $C$ has one
component $C_{e}\cong\PP^{1}$ for each edge, mapping to
$C_{w_{v},\alpha_{e,v}}$ with degree $d_{e}$ and totally ramified over the
two fixed points, and one contracted component $C_{v}$ for each vertex with
$n_{v}\ge3$.
 
We begin with the edges, where all the cohomology sits.
 
\begin{lemma}\label{lem:edgeweights}
Let $e$ be an edge, $v$ one of its endpoints, and write $w=w_{v}$,
$\alpha=\alpha_{e,v}$, $d=d_{e}$ and $\omega=\omega_{v,e}$.  Let $L$ be a
$\mathbb T$-equivariant line bundle on $C_{w,\alpha}$ with fiber weight
$\lambda_{0}$ at $p_{w}$, and set $m=d\cdot\deg L$.  Then
$\deg\big(f^{*}L|_{C_{e}}\big)=m$ and
\[
  \rH^{0}\big(C_{e},f^{*}L\big)\ \text{has weights}\
    \{\lambda_{0}-k\,\omega\}_{k=0}^{m}\quad (m\ge0),
\]
\[
  \rH^{1}\big(C_{e},f^{*}L\big)\ \text{has weights}\
    \{\lambda_{0}+j\,\omega\}_{j=1}^{-m-1}\quad (m\le-2),
\]
both being zero for $m=-1$.  Pulling back the filtration of
Proposition \ref{prop:linebundleweights}(c) along $f$ gives a
$\mathbb T$-equivariant filtration of $f^{*}TY|_{C_{e}}$ whose graded
pieces are the $f^{*}L_{\gamma}$, with $\lambda_{0}=-w\gamma$ and
\[
  m_{\gamma}\;=\;d\,\langle\gamma,\alpha^{\vee}\rangle .
\]
\end{lemma}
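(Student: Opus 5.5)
The plan is to reduce everything to a single $\mathbb T$-equivariant line bundle on $C_{e}\cong\PP^{1}$, read off its fiber weights at the two fixed points, and then compute cohomology by an explicit monomial basis.

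First, the degree. By Proposition \ref{prop:fixedmaps} the map $f\colon C_{e}\to C_{w,\alpha}$ is, in suitable coordinates, $z\mapsto z^{d}$, a finite map of degree $d$. Hence $\deg f^{*}L=d\deg L=m$.

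Next, I fix weights and conventions. Let $q_{0},q_{\infty}\in C_{e}$ be the points over $p_{w}$ and $p_{\overline{ws_{\alpha}}}$. By Definition \ref{def:omega}, $T_{q_{0}}C_{e}$ has weight $\omega$, since the tangent weight $-w\alpha$ downstairs is divided by $d$ upstairs. The fiber of $f^{*}L$ at $q_{0}$ is $L_{p_{w}}$, of weight $\lambda_{0}$. Lemma \ref{lem:degreeP1}, applied on $C_{e}$, then forces the weight at $q_{\infty}$ to be $\lambda_{0}-m\omega$.

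The model is now $C_{e}=\PP^{1}$ with affine coordinate $z$ centred at $q_{0}$. Since $\partial_{z}$ spans $T_{q_{0}}$ with weight $\omega$, the function $z$ transforms with weight $-\omega$. Choose an equivariant frame $s_{0}$ of $f^{*}L$ over $\{z\neq\infty\}$ of weight $\lambda_{0}$; it has a pole or zero of order $m$ at $\infty$.

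For $m\ge0$ the global sections are $s_{0}z^{k}$ with $0\le k\le m$, of weights $\lambda_{0}-k\omega$. As a consistency check, the top section $k=m$ is nonvanishing at $q_{\infty}$ and has weight $\lambda_{0}-m\omega$, matching the previous step.

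For $m\le-1$, I compute $\rH^{1}$ by the Čech complex of the cover $\{z\neq\infty\},\{z\neq0\}$. It is spanned by the Laurent monomials $s_{0}z^{-j}$ that are regular on neither chart, namely those with $1\le j\le -m-1$. These carry weights $\lambda_{0}+j\omega$, and the range is empty for $m=-1$. Vanishing of $\rH^{1}$ for $m\ge0$ and of $\rH^{0}$ for $m<0$ is standard. Serre duality would give an alternative check.

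Finally, the filtration. Pulling back along $f$ is exact on vector bundles, so $f^{*}$ of the equivariant filtration of Proposition \ref{prop:linebundleweights}(c) is an equivariant filtration of $f^{*}TY|_{C_{e}}$ with graded pieces $f^{*}L_{\gamma}$. The fiber weight of $f^{*}L_{\gamma}$ at $q_{0}$ is that of $L_{\gamma}$ at $p_{w}$, namely $-w\gamma$. Its degree is $d\,\langle\gamma,\alpha^{\vee}\rangle$ by the first step.

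The only real obstacle I expect is sign bookkeeping: keeping track of whether a coordinate function carries weight $+\omega$ or $-\omega$, consistently with the convention of Lemma \ref{lem:degreeP1}. I would pin this down by checking the result against $f^{*}TC_{w,\alpha}=TC_{e}\otimes\mathcal O(\ldots)$ in the case $d=1$. There $T\PP^{1}$ must have $\rH^{0}$ weights $\omega,0,-\omega$, which the formula indeed gives with $\lambda_{0}=\omega$ and $m=2$.
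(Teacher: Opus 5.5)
Your proof is correct and follows essentially the paper's argument: fix the fiber weight $\lambda_0$ and the upstairs tangent weight $\omega=-w\alpha/d$ on $C_e$, expand $\rH^0$ and $\rH^1$ of a degree-$m$ equivariant line bundle on $\PP^1$ in weights, and pull back the filtration of Proposition \ref{prop:linebundleweights}(c). The differences are cosmetic. You get the degree from $\deg f=d$ and then use Lemma \ref{lem:degreeP1} to recover the weight at $q_\infty$. The paper instead applies Lemma \ref{lem:degreeP1} on $C_e$, with both fiber weights unchanged by pullback, to obtain the degree. You also spell out the monomial and \v{C}ech computation that the paper leaves implicit.
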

 
\begin{proof}
The map $C_{e}\to C_{w,\alpha}$ is $\mathbb T$-equivariant of degree $d$ and
sends the two fixed points of $C_{e}$ to those of $C_{w,\alpha}$, so the
fiber weights of $f^{*}L$ are those of $L$ while the tangent weight is
divided by $d$; this is exactly $\omega$ by Definition \ref{def:omega}.  Now
apply Lemma \ref{lem:degreeP1} to $C_{e}$, which gives the degree, and
expand $\rH^{0}(\PP^{1},\cO(m))$ and $\rH^{1}(\PP^{1},\cO(m))$ in weights as
in the proof of that lemma.  Pullback is exact on the filtration and
multiplies degrees by $d$, whence the value of $m_{\gamma}$ from
Proposition \ref{prop:linebundleweights}(c).
\end{proof}
 
The direction $\gamma=\alpha$ requires separate attention.  There
$m_{\alpha}=2d$, and the weight with $k=d$ is $-w\alpha-d\,\omega=0$: it is
the infinitesimal automorphism of $C_{e}$ fixing its two special points, so
it lies in the fixed part and is discarded when we pass to moving parts.  It
is the only weight that has to be discarded, since
\[
  -w\gamma-k\,\omega=-w\Big(\gamma-\tfrac{k}{d}\,\alpha\Big)
\]
vanishes only if $\gamma$ is proportional to $\alpha$, which for two
distinct positive roots of a reduced root system does not happen
(Proposition \ref{prop:tangentweights}).  The product of the surviving
weights in that direction is a constant multiple of a power of $\omega$:
 
\begin{lemma}\label{lem:constant}
$\displaystyle\prod_{\substack{k=0\\ k\neq d}}^{2d}
   \big(-w\alpha-k\,\omega\big)
   \;=\;(-1)^{d}\,(d!)^{2}\,\omega^{2d}.$
\end{lemma}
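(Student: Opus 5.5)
The identity is a direct computation once the weight $-w\alpha$ is rewritten in terms of the normalized tangent weight $\omega$. By Definition~\ref{def:omega}, with $w=w_{v}$, $\alpha=\alpha_{e,v}$ and $d=d_{e}$, we have $\omega=-\tfrac1d\,w\alpha$, that is, $-w\alpha=d\,\omega$. Substituting, each factor becomes
\[
  -w\alpha-k\,\omega=(d-k)\,\omega ,
\]
so the product equals $\omega^{2d}\prod_{k\neq d,\,0\le k\le 2d}(d-k)$. There are $2d+1-1=2d$ factors, which accounts for the power $\omega^{2d}$. This also confirms, as noted before the statement, that the omitted index $k=d$ is exactly the vanishing weight.

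It remains to evaluate the integer $\prod_{k\neq d}(d-k)$. We split the range of $k$ at $d$.

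For $0\le k\le d-1$, set $j=d-k$, which runs over $1,\dots,d$. These factors are positive and multiply to $d!$.

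For $d+1\le k\le 2d$, set $j=k-d$, which also runs over $1,\dots,d$. Each factor is $-j$, so these multiply to $(-1)^{d}d!$.

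Combining the two ranges gives $\prod_{k\neq d}(d-k)=(-1)^{d}(d!)^{2}$, which is the claimed constant.

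There is no genuine obstacle; the only point requiring care is the sign convention. One must use the tangent weight $-w\alpha$ at $p_{w}$ from Proposition~\ref{prop:invcurves}, so that $-w\alpha=+d\,\omega$ and not $-d\,\omega$. With the opposite sign the factors would be $-(d+k)\omega$, no factor would vanish, and the formula would fail. Consistency with the weights $\{\lambda_{0}-k\omega\}$ of Lemma~\ref{lem:edgeweights}, applied with $\lambda_{0}=-w\alpha$ and $m_{\alpha}=2d$, confirms the choice: the $k=d$ weight is $0$ precisely under this convention.
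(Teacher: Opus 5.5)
Your proof is correct and follows the paper's argument exactly: substitute $-w\alpha=d\,\omega$ from Definition~\ref{def:omega}, factor out $\omega^{2d}$, and evaluate $\prod_{k\neq d}(d-k)=d!\cdot(-1)^{d}d!$ by splitting the range at $k=d$. Your remark about the sign convention is a harmless extra check and does not change the route.
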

 
\begin{proof}
By Definition \ref{def:omega}, $-w\alpha=d\,\omega$, so the $k$-th factor is
$(d-k)\,\omega$. There are $2d$ of them, and
$\prod_{k\neq d}(d-k)=d!\cdot\prod_{j=1}^{d}(-j)=(-1)^{d}(d!)^{2}$.
\end{proof}
 
We can now name the two blocks out of which $e^{\mathbb T}(N_{\Gamma})$ will
be assembled: one for each edge, collecting the cohomology along that edge,
and one for each vertex, collecting the automorphisms and the node
smoothings there.
 
\begin{definition}\label{def:OmegaTheta}
For an edge $e$, read at one of its endpoints $v$ with the notation of
Lemma \ref{lem:edgeweights}, set
\[
  \Omega_{e}\;:=\;
  \frac{1}{(-1)^{d}(d!)^{2}\,\omega^{2d}}
  \prod_{\substack{\gamma\in R^{+}_{Y}\\ \gamma\neq\alpha}}
  \begin{cases}
    \displaystyle\prod_{j=1}^{-m_{\gamma}-1}
      \big(-w\gamma+j\,\omega\big),
      & m_{\gamma}\le-2,\\[1.2em]
    1, & m_{\gamma}=-1,\\[0.4em]
    \displaystyle\prod_{k=0}^{m_{\gamma}}
      \big(-w\gamma-k\,\omega\big)^{-1},
      & m_{\gamma}\ge0 .
  \end{cases}
\]
For a vertex $v$, writing $\psi_{(v,e)}$ for the $\psi$ class of the
contracted component $C_{v}$ at the node joining it to $C_{e}$, set
\[
  \Theta_{v}\;:=\;
  \begin{cases}
    \omega_{v,e}, & n_{v}=1,\\[0.3em]
    1, & n_{v}=2\ \text{with one edge and one mark},\\[0.3em]
    \big(\omega_{v,e_{1}}+\omega_{v,e_{2}}\big)^{-1},
      & n_{v}=2\ \text{with two edges},\\[0.5em]
    \displaystyle\prod_{e\ni v}\big(\omega_{v,e}-\psi_{(v,e)}\big)^{-1},
      & n_{v}\ge3 .
  \end{cases}
\]
\end{definition}
 
\begin{remark}\label{rmk:omega-intrinsic}
The three branches of $\Omega_{e}$ are the contributions of the graded
pieces of Lemma \ref{lem:edgeweights}: those of degree $\le-2$ contribute
their $\rH^{1}$, those of degree $\ge0$ their $\rH^{0}$, and those of degree
$-1$ nothing.  Since Euler classes are multiplicative in exact sequences,
\[
  \Omega_{e}
  =\frac{1}{e^{\mathbb T}\Big(\big(\rH^{0}(C_{e},f^{*}TY)
      -\rH^{1}(C_{e},f^{*}TY)\big)^{\mathrm{mov}}\Big)}
  =\frac{1}{e^{\mathbb T}\big(\rH^{0}(C_{e},f^{*}TY)^{\mathrm{mov}}\big)} ,
\]
the second equality because $\rH^{1}(C_{e},f^{*}TY)=0$ by convexity of
$Y$; the graded pieces of negative degree do contribute individually, but
their contributions cancel in $K$-theory against part of the $\rH^{0}$
terms.  In particular $\Omega_{e}$ is intrinsic to the edge and does not
depend on the endpoint at which it is read: reading from the other end
replaces $\omega$ by $-\omega$ and $\alpha$ by $\alpha_{e,v'}$, which
permutes the families of weights above.
\end{remark}
 
\begin{proposition}\label{prop:euler}
With the notation above,
\[
  \frac{1}{e^{\mathbb T}(N_{\Gamma})}
  \;=\;\prod_{v\in V}e_{w_{v}}^{\;\mathrm{val}(v)-1}
   \ \prod_{e\in E}\Omega_{e}
   \ \prod_{v\in V}\Theta_{v} .
\]
\end{proposition}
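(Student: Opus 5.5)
The plan is to compute the moving part of the tangent--obstruction complex at a point of $F_{\Gamma}$. By convexity of $Y$ this complex has no $\rH^{1}$. In $K$-theory we write
\[
  T^{\mathrm{mov}}
  =\big(\rH^{0}(C,f^{*}TY)-\mathrm{aut}(C,x_{\bullet})+\mathrm{def}(C,x_{\bullet})\big)^{\mathrm{mov}} .
\]
Then $1/e^{\mathbb T}(N_{\Gamma})$ is the product of three factors:
\[
  e^{\mathbb T}\big(\rH^{0}(C,f^{*}TY)^{\mathrm{mov}}\big)^{-1},\qquad
  e^{\mathbb T}\big(\mathrm{aut}^{\mathrm{mov}}\big),\qquad
  e^{\mathbb T}\big(\mathrm{def}^{\mathrm{mov}}\big)^{-1}.
\]
Each factor will be computed by normalizing $C$. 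The resulting terms are then redistributed into the edge blocks $\Omega_{e}$, the vertex blocks $\Theta_{v}$, and the vertex factors $e_{w_{v}}^{\mathrm{val}(v)-1}$.

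\textbf{Step 1 (the map deformations).} Normalize $C$ along the nodes separating edge components from one another and from the contracted components. This gives an exact sequence
\[
  0\to \rH^{0}(C,f^{*}TY)\to\bigoplus_{e}\rH^{0}(C_{e},f^{*}TY)\oplus\bigoplus_{v:\,n_{v}\ge3}\rH^{0}(C_{v},f^{*}TY)\to\bigoplus_{\text{nodes}}T_{p_{w}}Y\to0 .
\]
Surjectivity on the right follows from convexity, since $\rH^{1}=0$ on every subcurve. On a contracted component $C_{v}$ the space $\rH^{0}$ is $T_{p_{w_{v}}}Y$. The nodes are in bijection with the flags $(v,e)$ at the vertices with $n_{v}\ge 3$, and with the vertices of valence $2$ that have no contracted component. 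Doing the bookkeeping per vertex, the vertex $v$ contributes the term $T_{p_{w_v}}Y$ with multiplicity $1-\mathrm{val}(v)$. This multiplicity is the same whether a contracted component is present or not, and it also holds when $\mathrm{val}(v)=1$. All weights of $T_{p_w}Y$ are nonzero by Proposition~\ref{prop:tangentweights}, so these terms are entirely moving. Inverting produces $\prod_{v}e_{w_v}^{\mathrm{val}(v)-1}$.

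The edge pieces $\rH^{0}(C_{e},f^{*}TY)$ are handled with the filtration of Lemma~\ref{lem:edgeweights}. After discarding the single zero weight in the direction $\gamma=\alpha$, Lemma~\ref{lem:constant} together with Remark~\ref{rmk:omega-intrinsic} identifies $e^{\mathbb T}(\rH^{0}(C_{e},f^{*}TY)^{\mathrm{mov}})^{-1}$ with $\Omega_{e}$. The discarded zero weight has to be matched against the fixed automorphism of $C_e$ in Step 2.

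\textbf{Step 2 (automorphisms and smoothings).} The space $\mathrm{aut}(C,x_\bullet)$ splits over the components. Each edge component $C_{e}$ has the vector fields $\rH^0(T\PP^1)$, with weights $\omega_{v,e},0,\omega_{v',e}$. The weight-$0$ field is fixed and cancels against the discarded weight of Step 1. The field of weight $\omega_{v,e}$ is moving, but it survives only if the endpoint over $v$ is a free smooth point of $C_{e}$, that is, not a node and not a marked point. This happens exactly when $n_v=1$, and it produces the factor $\omega_{v,e}$ in the numerator; this is the first case of $\Theta_v$.

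The space $\mathrm{def}(C,x_\bullet)$ splits over the nodes. At a vertex $v$ with $n_v\ge3$, each node joining $C_v$ to $C_e$ contributes $\omega_{v,e}-\psi_{(v,e)}$, as explained before Definition~\ref{def:psi}. Smoothing the nodes inside $C_v$ is a fixed deformation, since it is tangent to $\overline{\mathcal M}_{0,n_v}$, and it drops out. At a vertex with two edges and no contracted component, the node $C_{e_1}\cap C_{e_2}$ contributes $\omega_{v,e_1}+\omega_{v,e_2}$; this weight is nonzero because the two tangent lines carry the weights $w_v$ applied to two different roots. At a vertex with one edge and one mark, nothing appears in either $\mathrm{aut}$ or $\mathrm{def}$, so the factor is $1$. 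These cases reproduce $\Theta_v$ exactly.

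\textbf{Main obstacle.} The main obstacle is the sign and cancellation bookkeeping in Steps 1 and 2. It consists of three checks:
\begin{itemize}
  \item the normalization sequence must give exactly the exponent $\mathrm{val}(v)-1$ uniformly across the cases $n_v=1$, $n_v=2$ and $n_v\ge3$, including the case where a mark sits on a $1$-valent vertex;
  \item the zero weight in $\rH^0(C_e,f^*TY)$ must pair exactly with the fixed automorphism of $C_e$, and with nothing else;
  \item each surviving endpoint automorphism must be counted once and with the correct sign $\omega_{v,e}$ rather than $-\omega_{v,e}$.
\end{itemize}
I would settle these by first checking the single-edge tree with $n=0$ against the known formula for $\overline{\mathcal M}_{0,0}(\PP^1,d)$, and then arguing that gluing along a vertex changes the total by exactly the factors listed above.
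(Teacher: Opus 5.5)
Your proposal is correct and follows essentially the same route as the paper: the tangent--obstruction sequence with $\rH^{1}(C,f^{*}TY)=0$, normalization giving each vertex $T_{p_{w_v}}Y$ with multiplicity $1-\mathrm{val}(v)$ and each edge the factor $\Omega_{e}$ via Remark~\ref{rmk:omega-intrinsic}, moving endpoint automorphisms only at vertices with $n_{v}=1$, and node smoothings giving the remaining cases of $\Theta_{v}$. One small correction: your justification that the smoothing weight at a bivalent vertex is nonzero relies on the two roots being different, but two edges at $v$ may carry the same root (both covering the same invariant curve). The weight $\omega_{v,e_1}+\omega_{v,e_2}=-w_{v}\big(\alpha_{e_1,v}/d_{e_1}+\alpha_{e_2,v}/d_{e_2}\big)$ is nonzero because it is $w_{v}$ applied to a nonzero nonnegative combination of positive roots.
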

 
\begin{proof}
Write $\mathcal F=f^{*}TY$.  The deformation theory of stable maps gives
the exact sequence
\[
  0\to\mathrm{aut}(C,x_{\bullet})\to \rH^{0}(C,\mathcal F)\to T^{1}
   \to\mathrm{def}(C,x_{\bullet})\to \rH^{1}(C,\mathcal F)\to0,
\]
with $T^{1}$ the tangent space of the moduli stack and
$\rH^{1}(C,\mathcal F)=0$ by convexity.  Hence, in equivariant $K$-theory,
$T^{1}=\rH^{0}(C,\mathcal F)-\mathrm{aut}+\mathrm{def}$; and since
$N_{\Gamma}=(T^{1})^{\mathrm{mov}}$,
\[
  \frac{1}{e^{\mathbb T}(N_{\Gamma})}
  =\frac{e^{\mathbb T}\big(\mathrm{aut}^{\mathrm{mov}}\big)}
        {e^{\mathbb T}\big(\rH^{0}(C,\mathcal F)^{\mathrm{mov}}\big)\,
         e^{\mathbb T}\big(\mathrm{def}^{\mathrm{mov}}\big)} .
\]
 
\emph{The term $\rH^{0}(C,\mathcal F)$.}  The normalization sequence
\[
  0\to\mathcal F\to
   \bigoplus_{e}\mathcal F|_{C_{e}}\oplus
   \bigoplus_{v\,:\,n_{v}\ge3}\mathcal F|_{C_{v}}
   \to\bigoplus_{\text{nodes}}\mathcal F_{p}\to0
\]
gives, in equivariant $K$-theory,
\[
  \rH^{0}(C,\mathcal F)-\rH^{1}(C,\mathcal F)
  =\sum_{e}\Big(\rH^{0}(C_{e},\mathcal F)-\rH^{1}(C_{e},\mathcal F)\Big)
   +\sum_{v\,:\,n_{v}\ge3}T_{p_{w_{v}}}Y
   -\sum_{\text{nodes}}T_{p}Y ,
\]
a contracted component contributing its constant fiber $T_{p_{w_{v}}}Y$.  A
vertex with $n_{v}\ge3$ carries one contracted component and
$\mathrm{val}(v)$ nodes, so contributes $T_{p_{w_{v}}}Y$ with multiplicity
$1-\mathrm{val}(v)$.  A vertex with $n_{v}\le2$ carries no component and
$\mathrm{val}(v)-1$ nodes, hence the same multiplicity.  Since
$T_{p_{w}}Y$ has no fixed part, taking Euler classes gives
\[
  e^{\mathbb T}\big(\rH^{0}(C,\mathcal F)^{\mathrm{mov}}\big)
  =\prod_{v}e_{w_{v}}^{\,1-\mathrm{val}(v)}
   \prod_{e}\frac{e^{\mathbb T}\big(\rH^{0}(C_{e},\mathcal F)^{\mathrm{mov}}\big)}
                 {e^{\mathbb T}\big(\rH^{1}(C_{e},\mathcal F)\big)} ,
\]
and inverting, using Remark \ref{rmk:omega-intrinsic} to identify the edge
factor with $\Omega_{e}$, produces the first two products of the statement.
 
\emph{The term $\mathrm{aut}$.}  Infinitesimal automorphisms are supported
on the components carrying fewer than three special points. The contracted
components have $n_{v}\ge3$ by construction, so only the edge components
contribute.  An end of $C_{e}$ is a special point unless the corresponding
vertex has $n_{v}=1$.  If both ends are special, the only automorphism is the
scaling, of weight $0$, contributing nothing to the moving part; each end
with $n_{v}=1$ contributes one further automorphism, of weight
$\omega_{v,e}$.  This is the first case of $\Theta_{v}$, which appears in
the numerator.
 
\emph{The term $\mathrm{def}$.}  Deformations smooth the nodes, and a node
contributes the tensor product of the two tangent lines there.  At a vertex
with $n_{v}\ge3$ the node joining $C_{v}$ to $C_{e}$ contributes
$\omega_{v,e}-\psi_{(v,e)}$, as explained before Definition \ref{def:psi}.
At a vertex with $n_{v}=2$ meeting two edges the single node contributes
$\omega_{v,e_{1}}+\omega_{v,e_{2}}$, both tangent lines now being constant.
A vertex with one edge and one mark, or with $n_{v}=1$, carries no node.
Inverting gives the remaining cases of $\Theta_{v}$.
\end{proof}
 
\medskip
 
Assembling the pieces, we finally obtain the formula the section has been
working towards.
 
\begin{theorem}\label{thm:localization}
Let $Y=G/P$, let $\beta\in \rH_{2}(Y,\Z)$ be effective and nonzero, and let
$u_{1},\dots,u_{n}\in W_{Y}$ satisfy
$\sum_{k}\ell(u_{k})=\boldsymbol\delta(n,\beta)$.  Then
\[
  \big\langle\sigma_{u_{1}},\dots,\sigma_{u_{n}}\big\rangle^{Y}_{0,\beta}
  \;=\;\sum_{\Gamma}\frac{1}{|\Aut\Gamma|\ \prod_{e\in E}d_{e}}
  \ \prod_{v\in V}e_{w_{v}}^{\;\mathrm{val}(v)-1}
  \ \prod_{e\in E}\Omega_{e}
  \ \prod_{v\in V}I_{v}
  \ \prod_{k=1}^{n}\sigma^{u_{k}}\big|_{p_{w_{\mathrm{mk}(k)}}} ,
\]
the sum being over the finitely many isomorphism classes of decorated trees
of type $(n,\beta)$, where $I_{v}=\Theta_{v}$ for $n_{v}\le2$ and
\[
  I_{v}=\Big(\prod_{e\ni v}\omega_{v,e}^{-1}\Big)
        \Big(\sum_{e\ni v}\omega_{v,e}^{-1}\Big)^{n_{v}-3}
  \qquad\text{for }n_{v}\ge3 .
\]
\end{theorem}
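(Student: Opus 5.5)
The plan is to apply Theorem \ref{thm:ABBV} to the smooth proper Deligne--Mumford stack $\overline{\mathcal M}_{0,n}(Y,\beta)$, which is smooth of the expected dimension $\boldsymbol\delta(n,\beta)$ by convexity of $Y$. As integrand we take the equivariant class $\phi=\prod_{k}\ev_{k}^{*}\sigma^{u_{k}}$ built from the canonical equivariant lifts. Since $\sum_k\ell(u_k)=\boldsymbol\delta(n,\beta)$, $\phi$ has pure degree $2\boldsymbol\delta(n,\beta)$. Its equivariant pushforward to a point is therefore a constant, and it equals the ordinary integral of the non-equivariant limit $\prod_k\ev_k^*\sigma_{u_k}$, i.e. the Gromov--Witten invariant. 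Proposition \ref{prop:Fgamma} identifies the fixed components with the $F_\Gamma$, indexed by the finitely many decorated trees of type $(n,\beta)$. Localization then gives
\[
\big\langle\sigma_{u_1},\dots,\sigma_{u_n}\big\rangle^{Y}_{0,\beta}=\sum_\Gamma\int_{F_\Gamma}\frac{\phi|_{F_\Gamma}}{e^{\mathbb T}(N_\Gamma)} .
\]

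Next we evaluate each summand. On $F_\Gamma$ the $k$-th marked point maps to $p_{w_{\mathrm{mk}(k)}}$, so $\ev_k^*\sigma^{u_k}|_{F_\Gamma}$ is the constant weight $\sigma^{u_k}|_{p_{w_{\mathrm{mk}(k)}}}$, computed by Theorem \ref{thm:billey}. Proposition \ref{prop:euler} expresses $1/e^{\mathbb T}(N_\Gamma)$ as $\prod_v e_{w_v}^{\mathrm{val}(v)-1}\prod_e\Omega_e\prod_v\Theta_v$. Only the factors $\Theta_v$ with $n_v\ge3$ involve non-constant classes, namely the $\psi$ classes on the factor $\overline{\mathcal M}_{0,n_v}$. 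By Proposition \ref{prop:Fgamma},
\[
\int_{F_\Gamma}=\frac{1}{|\Aut\Gamma|\prod_e d_e}\int_{\prod_{n_v\ge3}\overline{\mathcal M}_{0,n_v}} .
\]
The integrand factorizes as a product over vertices, so Fubini reduces the integral to a product of single-vertex integrals $\int_{\overline{\mathcal M}_{0,n_v}}\prod_{e\ni v}(\omega_{v,e}-\psi_{(v,e)})^{-1}$. Lemma \ref{lem:M0n} evaluates each of these, with $m=n_v$ and $S$ the set of flags at $v$, giving exactly $I_v$. For $n_v\le2$ the vertex carries no moduli and we keep $I_v=\Theta_v$. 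Multiplying everything together yields the stated formula.

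The main obstacle is justifying the step ``equivariant integral equals ordinary invariant'' together with the bookkeeping of the stacky quotient. Concretely, one must check that the equivariant lifts restrict to the Schubert classes and that each $\phi|_{F_\Gamma}$ is computed on the quotient $[\prod\overline{\mathcal M}_{0,n_v}/\mathbb A_\Gamma]$. The second point is handled by the integration formula of Proposition \ref{prop:Fgamma}, since every factor is pulled back from the product. We would also check that the degree condition makes the sum over $\Gamma$ a rational number, i.e. that all weights cancel. This is automatic from the degree count, since the total is the pushforward of a class of degree $2\boldsymbol\delta(n,\beta)$.
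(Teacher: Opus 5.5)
Your proposal is correct and follows essentially the same route as the paper's proof. You apply Theorem~\ref{thm:ABBV} to $\prod_k\ev_k^*\sigma^{u_k}$ on the smooth stack $\overline{\mathcal M}_{0,n}(Y,\beta)$, take the constant restrictions at $p_{w_{\mathrm{mk}(k)}}$, use Proposition~\ref{prop:euler} together with the quotient factor of Proposition~\ref{prop:Fgamma}, integrate the $\psi$ classes vertex by vertex with Lemma~\ref{lem:M0n}, and let the degree hypothesis identify the equivariant integral with the ordinary invariant; your explicit Fubini factorization over the vertices is the only step the paper leaves implicit.
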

 
\begin{proof}
The moduli stack $\overline{\mathcal M}_{0,n}(Y,\beta)$ is smooth and proper
because $Y$ is convex, so Theorem \ref{thm:ABBV} applies to
$\phi=\prod_{k}\ev_{k}^{*}\sigma^{u_{k}}$.  Its fixed loci are the
$F_{\Gamma}$ of Proposition \ref{prop:Fgamma}, whose presentation as a
quotient contributes the factor $\big(|\Aut\Gamma|\prod_{e}d_{e}\big)^{-1}$.
Since $\ev_{k}$ maps $F_{\Gamma}$ to the fixed point
$p_{w_{\mathrm{mk}(k)}}$, the integrand restricts to
$\prod_{k}\sigma^{u_{k}}|_{p_{w_{\mathrm{mk}(k)}}}$, a constant on
$F_{\Gamma}$.  Dividing by $e^{\mathbb T}(N_{\Gamma})$ as computed in
Proposition \ref{prop:euler} and integrating the $\psi$ classes over
$\prod_{v\,:\,n_{v}\ge3}\overline{\mathcal M}_{0,n_{v}}$ by
Lemma \ref{lem:M0n} replaces each $\Theta_{v}$ with $n_{v}\ge3$ by $I_{v}$,
and gives the stated sum.  It computes the equivariant integral
$\int_{\overline{\mathcal M}_{0,n}(Y,\beta)}\phi$, which under the hypothesis
on the degrees is a constant and therefore equals the nonequivariant
invariant.  Finiteness of the index set is Proposition \ref{prop:Fgamma}.
\end{proof}
 
\medskip

We conclude by obtaining a localization formula for complete intersections in flag varieties.

\medskip

\subsection{Gromov--Witten invariants for complete intersections in flag
varieties}\label{sec:GWci}
 
Let $Y=G/P$ be a flag
variety and let
\[
  \cE=G\times_{P}V
\]
be a \emph{homogeneous} vector bundle on $Y$, associated with a $P$-module
$V$ whose $\mathbb T$-weights we write as $-\mu_{1},\dots,-\mu_{s}$, so that
by Proposition \ref{prop:linebundleweights} the fiber of $\cE$ at $p_{w}$
has weights $-w\mu_{1},\dots,-w\mu_{s}$.  We assume $\cE$ globally
generated, and set $X=\mathscr Z(Y,\cE)$, the zero locus of a general
section, smooth of codimension $s=\rank\cE$ by Bertini.
 
Homogeneity is the new hypothesis here, and it is what makes the whole
apparatus of Section~\ref{sec:Loc_methods} applicable: $\cE$ is then
$\mathbb T$-equivariant, hence so is the bundle $\cE_{0,n,\beta}$ of
Subsection~\ref{Sub.Sec:GW}, and its Euler class admits an equivariant lift
$e^{\mathbb T}(\cE_{0,n,\beta})$ that we may restrict to the fixed loci.
Global generation, as before, gives both the convexity of $\cE$ and the
smoothness of $X$.  For $\cE=\bigoplus_{j}\cO_{Y}(\lambda_{j})$ as in
Theorem \ref{thm:ci} one has $\mu_{j}=\lambda_{j}$.
 
The strategy is now completely determined.  The functoriality identity of
Subsection~\ref{Sub.Sec:GW} expresses the invariants of $X$ as an integral
over $\overline{\mathcal M}_{0,n}(Y,\beta)$, which is smooth. Localization
on that stack has already been carried out in Theorem \ref{thm:localization}. Thus,
the only thing left to compute is the restriction of
$e^{\mathbb T}(\cE_{0,n,\beta})$ to a fixed locus $F_{\Gamma}$.  That
computation is the same normalization argument as in
Proposition \ref{prop:euler}, run on $\cE$ instead of on $TY$, and it
produces the same combinatorial bookkeeping: one factor per edge and one
factor per vertex, the latter with the exponent $1-\mathrm{val}(v)$.
 
\begin{proposition}\label{prop:twist}
Let $\Gamma$ be a decorated tree of type $(n,\beta)$.  Then
$e^{\mathbb T}\big(\cE_{0,n,\beta}\big)$ is constant on $F_{\Gamma}$, equal
to
\[
  \mathrm{Tw}(\Gamma)\;=\;
  \prod_{e\in E}\ \mathrm{Tw}_{e}
  \ \cdot\
  \prod_{v\in V}\Big(\prod_{i=1}^{s}\big(-w_{v}\mu_{i}\big)\Big)^{1-\mathrm{val}(v)} ,
\]
where, for an edge $e$ read at one of its endpoints $v$ with the notation of
Lemma \ref{lem:edgeweights} and with
$b_{i}:=d_{e}\,\langle\mu_{i},\alpha_{e,v}^{\vee}\rangle$,
\[
  \mathrm{Tw}_{e}\;=\;\prod_{i=1}^{s}
  \begin{cases}
    \displaystyle\prod_{k=0}^{b_{i}}\big(-w_{v}\mu_{i}-k\,\omega_{v,e}\big),
      & b_{i}\ge0,\\[1.2em]
    1, & b_{i}=-1,\\[0.4em]
    \displaystyle\prod_{j=1}^{-b_{i}-1}
      \big(-w_{v}\mu_{i}+j\,\omega_{v,e}\big)^{-1},
      & b_{i}\le-2 .
  \end{cases}
\]
\end{proposition}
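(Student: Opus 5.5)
The restriction of $e^{\mathbb T}(\cE_{0,n,\beta})$ to $F_{\Gamma}$ is the equivariant Euler class of the vector bundle over $F_{\Gamma}$ whose fiber at a fixed stable map $f\colon C\to Y$ is $\rH^{0}(C,f^{*}\cE)$. Because $\cE$ is convex, $\rH^{1}(C,f^{*}\cE)=0$. So this Euler class equals that of the $K$-theory class $\rH^{0}(C,f^{*}\cE)-\rH^{1}(C,f^{*}\cE)$, and I will compute it by the normalization argument already used for $\mathcal F=f^{*}TY$ in the proof of Proposition \ref{prop:euler}. No moving/fixed splitting is needed here. The only thing to check is that no weight occurring is zero, since Euler classes of vector bundles are taken as is and the quantities inverted must be invertible.

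\textbf{Normalization.} I will use the exact sequence
\[
0\to f^{*}\cE\to\bigoplus_{e}f^{*}\cE|_{C_{e}}\oplus\bigoplus_{v:n_{v}\ge3}f^{*}\cE|_{C_{v}}\to\bigoplus_{\text{nodes}}\cE_{f(p)}\to0 .
\]
In equivariant $K$-theory this gives the sum over edges of $\rH^{0}-\rH^{1}$ on $C_{e}$, plus $\cE_{p_{w_v}}$ with multiplicity $1-\mathrm{val}(v)$ for each vertex. The multiplicity count is verbatim the one in Proposition \ref{prop:euler}, for both $n_{v}\ge3$ and $n_{v}\le2$. On a contracted component $f^{*}\cE$ is the trivial bundle with fiber $\cE_{p_{w_v}}$. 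Its sections are that fiber, with no $\psi$-dependence and no $\rH^{1}$, which is why the result is constant on $F_{\Gamma}$. By Proposition \ref{prop:linebundleweights}(a), extended to homogeneous bundles as remarked after Remark \ref{rmk:filtration}, this fiber has weights $-w_{v}\mu_{i}$, so its Euler class is $\prod_{i}(-w_{v}\mu_{i})$. This produces the vertex factor with exponent $1-\mathrm{val}(v)$.

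\textbf{Edge factor.} Pull back the $\mathbb T$-equivariant filtration of $\cE|_{C_{w,\alpha}}$ along the degree-$d_{e}$ cover $C_{e}\to C_{w_v,\alpha_{e,v}}$. Its graded pieces have fiber weight $-w_{v}\mu_{i}$ at $p_{w_v}$ and degree $\langle\mu_{i},\alpha^{\vee}\rangle$ downstairs, hence degree $b_{i}$ upstairs. Lemma \ref{lem:edgeweights} then gives the weights of $\rH^{0}$ and $\rH^{1}$ of each piece:
\begin{itemize}
\item $\{-w_{v}\mu_{i}-k\omega_{v,e}\}_{k=0}^{b_{i}}$ when $b_{i}\ge0$;
\item $\{-w_{v}\mu_{i}+j\omega_{v,e}\}_{j=1}^{-b_{i}-1}$, in $\rH^{1}$, when $b_{i}\le-2$;
\item nothing when $b_{i}=-1$.
\end{itemize}
Multiplicativity of Euler classes in exact sequences, in $K$-theory, gives $\mathrm{Tw}_{e}$, with the $\rH^{1}$ weights in the denominator. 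As in Remark \ref{rmk:omega-intrinsic}, the total is independent of the endpoint at which the edge is read, because it is the Euler class of an intrinsic $K$-class. Since $\cE$ is globally generated all $b_{i}\ge0$, so the third case is vacuous in practice. I keep it for uniformity and note that it would be harmless.

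\textbf{Main obstacle.} The delicate point is invertibility: the factors $\prod_{i}(-w_{v}\mu_{i})$ occur with negative exponent when $\mathrm{val}(v)\ge2$, and such a factor vanishes if some weight $\mu_{i}$ is zero. I would handle this by stating the identity in equivariant $K$-theory first, where the term $T_{p}$ for a node is subtracted, and only then taking Euler classes. A zero weight $\mu_{i}=0$ at a vertex is compensated by the same weight appearing with $k=0$ (or $k=b_{i}$) in the adjacent edge $\rH^{0}$. So if needed I would cancel these zero weights at the $K$-theory level before taking Euler classes, so that the stated product is read as this cancelled expression. For generic $\mu_{i}\ne0$ the formula holds literally. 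Finally, the constancy on $F_{\Gamma}$ follows because every piece is built from fibers at fixed points and from cohomology on the rigid edge components, none of which depends on the moduli $\prod_{v}\overline{\mathcal M}_{0,n_v}$.
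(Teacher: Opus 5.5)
Your proof is correct and is essentially the paper's own argument. It uses the normalization sequence in equivariant $K$-theory, which gives each vertex the fibre $\cE|_{p_{w_v}}$ with multiplicity $1-\mathrm{val}(v)$, and on each edge the pulled-back filtration together with Lemma~\ref{lem:edgeweights}. Your extra care in cancelling zero weights before taking Euler classes is a point the paper does not spell out.

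One side remark is false, though harmless because you keep the third branch anyway: global generation does not force $b_i\ge0$. The graded pieces of the filtration can have negative degree even when $\cE|_{C_{w,\alpha}}$ is trivial. For example, $\operatorname{Sym}^2\mathcal{Q}_{1}$ on $\mathrm{Fl}(1,2,3)$ is pulled back from $\PP^2$, and along a fibre of $\mathrm{Fl}(1,2,3)\to\PP^2$ its pieces have degrees $-2,0,2$ (compare Remark~\ref{rmk:filtration}).
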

 
\begin{proof}
On a point of $F_{\Gamma}$ the sheaf $f^{*}\cE$ is trivial of rank $s$ on
each contracted component, with fiber $\cE|_{p_{w_{v}}}$, while on each edge
component it carries the pullback of the filtration of
Proposition \ref{prop:linebundleweights}, whose graded pieces have degrees
$b_{i}$ and fiber weights $-w_{v}\mu_{i}$ at the point over $p_{w_{v}}$.
Lemma \ref{lem:edgeweights} then gives the weights of their $\rH^{0}$ and
$\rH^{1}$, and since Euler classes are multiplicative in exact sequences, the
three branches above compute
$e^{\mathbb T}\big(\rH^{0}(C_{e},f^{*}\cE)\big)$, the $b_{i}\le-2$ branch
appearing in the denominator because it is an $\rH^{1}$.
 
Normalizing $C$ gives, exactly as in the proof of
Proposition \ref{prop:euler} and using $\rH^{1}(C,f^{*}\cE)=0$,
\[
  \rH^{0}(C,f^{*}\cE)
  =\sum_{e}\Big(\rH^{0}(C_{e},f^{*}\cE)-\rH^{1}(C_{e},f^{*}\cE)\Big)
   +\sum_{v\,:\,n_{v}\ge3}\cE\big|_{p_{w_{v}}}
   -\sum_{\text{nodes}}\cE\big|_{p}
\]
in equivariant $K$-theory.  Counting contracted components against nodes as
before, each vertex contributes $\cE|_{p_{w_{v}}}$ with multiplicity
$1-\mathrm{val}(v)$, whether or not it carries a contracted component, and
taking Euler classes gives the formula.  Every weight occurring is
determined by $\Gamma$ alone, so the result is constant on $F_{\Gamma}$. In
particular, and in contrast with the normal bundle, no $\psi$ class enters
the twist.
\end{proof}
% \begin{remark}
% Two differences with $\Omega_{e}$ deserve a remark.  First, the branches are
% inverted: there we computed $e(\rH^{1})/e(\rH^{0})$, here
% $e(\rH^{0})/e(\rH^{1})$, since the twist enters the integrand and the normal
% bundle in its denominator.  Second, we do not discard zero weights.  For the
% normal bundle, a vanishing weight signals a direction along the fixed locus
% and is removed; here a factor $-w_{v}\mu_{i}-k\,\omega_{v,e}$ may genuinely
% vanish, namely when $\mu_{i}=\tfrac{k}{d_{e}}\alpha_{e,v}$, and then
% $\mathrm{Tw}(\Gamma)=0$: the graph simply does not contribute.  Finally,
% when all the $\mu_{i}$ are dominant (in particular for
% $\cE=\bigoplus_{j}\cO_{Y}(\lambda_{j})$ with the $\lambda_{j}$ globally
% generated, which is the case of Theorem \ref{thm:ci}) one has
% $b_{i}\ge0$ throughout and only the first branch occurs.
% \end{remark}
 
We can now read off the localization formula for $X$.
 
\begin{theorem}\label{thm:localizationCI}
Let $\cE$ be a globally generated homogeneous bundle on $Y$ with weights
$\mu_{1},\dots,\mu_{s}$, let $X=\mathscr Z(Y,\cE)$, and let
$\gamma_{k}=\iota^{*}\sigma_{u_{k}}$ with $u_{k}\in W_{Y}$.  Let
$\beta\in \rH_{2}(Y,\Z)$ be effective and assume
\[
  \sum_{k=1}^{n}\ell(u_{k})
  =\dim X+\int_{\beta}c_{1}(TX)+n-3 .
\]
Then,
\[
  \sum_{\beta'\,:\,\iota_{*}\beta'=\beta}
  \big\langle\gamma_{1},\dots,\gamma_{n}\big\rangle^{X}_{0,\beta'}
  \;=\;\sum_{\Gamma}\frac{1}{|\Aut\Gamma|\ \prod_{e\in E}d_{e}}
  \ \prod_{v\in V}e_{w_{v}}^{\;\mathrm{val}(v)-1}
  \ \prod_{e\in E}\Omega_{e}
  \ \prod_{v\in V}I_{v}
  \ \cdot\ \mathrm{Tw}(\Gamma)
  \ \cdot\prod_{k=1}^{n}\sigma^{u_{k}}\big|_{p_{w_{\mathrm{mk}(k)}}} ,
\]
the sum being over the isomorphism classes of decorated trees of type
$(n,\beta)$, with $\Omega_{e}$ and $I_{v}$ as in
Theorem \ref{thm:localization} and $\mathrm{Tw}(\Gamma)$ as in
Proposition \ref{prop:twist}.
\end{theorem}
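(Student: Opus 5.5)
The proof assembles results already established. The plan is to start from the functoriality identity of Subsection~\ref{Sub.Sec:GW} (Kim--Kresch--Pantev). Since $\cE$ is globally generated it is convex, so $\cE_{0,n,\beta}$ is a vector bundle on the smooth proper stack $\overline{\mathcal M}_{0,n}(Y,\beta)$. With $\gamma_k=\iota^*\sigma_{u_k}$ this gives
\[
\sum_{\beta'\,:\,\iota_*\beta'=\beta}\langle\gamma_1,\dots,\gamma_n\rangle^X_{0,\beta'}
=\int_{[\overline{\mathcal M}_{0,n}(Y,\beta)]}e(\cE_{0,n,\beta})\prod_k\ev_k^*\sigma_{u_k}.
\]

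Next I will lift everything to $\mathbb T$-equivariant cohomology. Homogeneity of $\cE$ makes $\cE_{0,n,\beta}$ equivariant, so $e^{\mathbb T}(\cE_{0,n,\beta})$ is defined. The Schubert classes are replaced by their canonical lifts $\sigma^{u_k}$.

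The degree check comes next. The rank of $\cE_{0,n,\beta}$ is $s+\int_\beta c_1(\cE)$. Adjunction gives $c_1(TX)=\iota^*(c_1(TY)-c_1(\cE))$, and $\dim X=\dim Y-s$. Hence the hypothesis on $\sum_k\ell(u_k)$ is equivalent to
\[
\sum_k\ell(u_k)+\rank\cE_{0,n,\beta}=\boldsymbol\delta(n,\beta).
\]
So the equivariant integrand has exactly the top degree. Its equivariant pushforward is therefore a constant in $\rH^0_{\mathbb T}(\mathrm{pt})=\Q$, and it equals the nonequivariant integral.

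Then I will apply Theorem~\ref{thm:ABBV} exactly as in the proof of Theorem~\ref{thm:localization}. The fixed components are the $F_\Gamma$ of Proposition~\ref{prop:Fgamma}, which contribute the factor $(|\Aut\Gamma|\prod_e d_e)^{-1}$. Each evaluation map $\ev_k$ is constant on $F_\Gamma$ with value $p_{w_{\mathrm{mk}(k)}}$, so the Schubert factors become the Billey restrictions of Theorem~\ref{thm:billey}. The inverse Euler class of the normal bundle is given by Proposition~\ref{prop:euler}.

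By Proposition~\ref{prop:twist}, the new factor $e^{\mathbb T}(\cE_{0,n,\beta})|_{F_\Gamma}$ is the constant $\mathrm{Tw}(\Gamma)$ and involves no $\psi$ classes. It can therefore be pulled outside the integral over $\prod_{n_v\ge3}\overline{\mathcal M}_{0,n_v}$. The remaining $\psi$-integrals are exactly those already evaluated by Lemma~\ref{lem:M0n}, which replace $\Theta_v$ by $I_v$. This step is identical to the corresponding one in Theorem~\ref{thm:localization}.

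The only step needing care is confirming that $\mathrm{Tw}(\Gamma)$ is constant along $F_\Gamma$, so that it commutes with the $\psi$-integration. I will argue this from the normalization sequence in Proposition~\ref{prop:twist}. Every weight that appears there is read off from the edges and from the fixed points $p_{w_v}$. The contracted components contribute only the constant fibres $\cE|_{p_{w_v}}$, with no dependence on the moduli of $C_v$.

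Finally, finiteness of the sum over $\Gamma$ follows from Proposition~\ref{prop:Fgamma}. Summing over $\Gamma$ then yields the stated formula.
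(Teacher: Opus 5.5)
Your proposal is correct and follows the paper's proof essentially step for step: the Kim--Kresch--Pantev functoriality identity, localization on $\overline{\mathcal M}_{0,n}(Y,\beta)$ over the fixed loci $F_\Gamma$, Proposition~\ref{prop:euler} for the normal bundle, Proposition~\ref{prop:twist} for the constant twist $\mathrm{Tw}(\Gamma)$, and Lemma~\ref{lem:M0n} to turn $\Theta_v$ into $I_v$. The only cosmetic difference is where the degree count happens: you do it on the equivariant integrand before localizing, so the pushforward visibly lands in $\rH^0_{\mathbb T}(\mathrm{pt})$, whereas the paper checks afterwards that the localized contributions are homogeneous of degree zero; the two are equivalent.
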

 
\begin{proof}
Since $\cE$ is globally generated, it is convex, so the functoriality
identity of Subsection~\ref{Sub.Sec:GW} applies and reduces the left-hand
side to
\[
  \int_{[\overline{\mathcal M}_{0,n}(Y,\beta)]}
   e\big(\cE_{0,n,\beta}\big)\prod_{k}\ev_{k}^{*}\sigma_{u_{k}} ,
\]
an integral over a smooth proper Deligne--Mumford stack.  Apply
Theorem \ref{thm:ABBV} to the equivariant lift
$\phi=e^{\mathbb T}(\cE_{0,n,\beta})\prod_{k}\ev_{k}^{*}\sigma^{u_{k}}$.
Its fixed loci are the $F_{\Gamma}$ of Proposition \ref{prop:Fgamma},
contributing the factor $\big(|\Aut\Gamma|\prod_{e}d_{e}\big)^{-1}$. The
restriction of $\phi$ to $F_{\Gamma}$ is
$\mathrm{Tw}(\Gamma)\prod_{k}\sigma^{u_{k}}|_{p_{w_{\mathrm{mk}(k)}}}$ by
Proposition \ref{prop:twist} and Theorem \ref{thm:billey}, both factors
being constant there; and $e^{\mathbb T}(N_{\Gamma})^{-1}$ is given by
Proposition \ref{prop:euler}.  Integrating the $\psi$ classes with
Lemma \ref{lem:M0n} turns $\Theta_{v}$ into $I_{v}$, exactly as in the proof
of Theorem \ref{thm:localization}, and yields the displayed sum.
 
It remains to see that the hypothesis on the degrees makes the right-hand
side a constant, so that it computes the nonequivariant integral.  The class
$\sigma^{u_{k}}|_{p_{w}}$ is homogeneous of degree $\ell(u_{k})$, while
$e^{\mathbb T}(\cE_{0,n,\beta})\,e^{\mathbb T}(N_{\Gamma})^{-1}$ is
homogeneous of degree
$\rank\cE_{0,n,\beta}-\dim\overline{\mathcal M}_{0,n}(Y,\beta)$.  Using
$\rank\cE_{0,n,\beta}=\rank\cE+\int_{\beta}c_{1}(\cE)$ and
$\dim\overline{\mathcal M}_{0,n}(Y,\beta)=\dim Y+\int_{\beta}c_{1}(TY)+n-3$,
together with $\dim X=\dim Y-\rank\cE$ and
$c_{1}(TX)=c_{1}(TY)-c_{1}(\cE)$, this difference equals
$-\big(\dim X+\int_{\beta}c_{1}(TX)+n-3\big)$, and the total degree is
zero under the stated hypothesis.
\end{proof}
% \begin{remark}\label{rmk:beta0CI}
% For $\beta=0$ the only decorated trees have a single vertex and no edge, so
% $\mathrm{val}(v)=0$, $\mathrm{Tw}(\Gamma)=\prod_{i}(-w\mu_{i})$ and
% $n_{v}=n$.  The formula degenerates to
% \[
%   \int_{X}\gamma_{1}\gamma_{2}\gamma_{3}
%   =\sum_{w\in W_{Y}}\frac{1}{e_{w}}
%     \prod_{i=1}^{s}\big(-w\mu_{i}\big)
%     \prod_{k=1}^{3}\sigma^{u_{k}}\big|_{p_{w}} ,
% \]
% which is Corollary \ref{cor:ABBVflag} applied to
% $e^{\mathbb T}(\cE)\prod_{k}\sigma^{u_{k}}$, as it must be, since
% $[X]=e(\cE)\cap[Y]$.  For $n>3$ and $\beta=0$ the factor $I_{v}$ contains
% $\big(\sum_{e\ni v}\omega_{v,e}^{-1}\big)^{n-3}$ with an empty sum, hence
% vanishes, in agreement with $\int_{\overline{\mathcal M}_{0,n}}1=0$.
% \end{remark}

\bigskip

We move on to the description of quantum cohomology for flag varieties and complete intersections in flag varieties.
Later (Section \ref{sec:coarse}), we will explain how it can be used to define a coarser version of Hodge atoms, which will suffice for this paper.

\medskip

\section{Quantum cohomology}\label{sec:QH}

We now assemble the invariants computed in Section~\ref{sec:Loc_methods}
into the algebraic structure which most concerns us in this paper.  We
begin with the standard construction for a smooth complex projective
variety and then specialize first to flag varieties and afterwards to the
part of the quantum product of a complete intersection which is accessible
to our localization formula.

\medskip

\subsection{The Gromov--Witten potential and the quantum products}
\label{sec:QHgeneral}

Let $Z$ be a smooth complex projective variety.  In this subsection we
assume
\[
\rH^{\mathrm{odd}}(Z,\Q)=0,
\]
so that we may work with ordinary commutative variables rather than the
super-commutative formalism.  This hypothesis holds for flag varieties and
for all the fourfolds to which we later apply the full A-model
construction.  In the complete-intersection localization discussion below,
only even classes restricted from the flag variety will be inserted.

Write $\mathsf{NE}(Z)\subset\rH_2(Z,\Z)$ for the monoid of effective curve classes.  The \emph{Novikov ring} is the completion 
\[
\mathcal N_Z
:=
\left\{
\sum_{\beta\in\mathsf{NE}(Z)}c_\beta q^\beta
\ \middle|\
c_\beta\in\Q
\right\}
\]
of the monoid $\mathbb Q[\mathsf{NE}(Z)]$ along the maximal ideal $\{q^{\beta}:\beta\neq0\}$. Fix a homogeneous basis $T_0=1,T_1,\dots,T_s$ of $\rH^*(Z,\Q)$ and let $y_0,\dots,y_s$ be the corresponding dual
coordinates.  We write $\gamma=\sum_i y_iT_i$.

\begin{definition}\label{def:potential}
The \emph{genus-zero Gromov--Witten potential} of $Z$ is
\[
\Phi(\gamma)
:=
\sum_{\beta\in\mathsf{NE}(Z)}
\sideset{}{'}\sum_{m\ge0}
\frac{1}{m!}
\big\langle
\underbrace{\gamma,\dots,\gamma}_{m}
\big\rangle^Z_{0,\beta}\,
q^\beta
\in
\mathcal N_Z[[y_0,\dots,y_s]],
\]
where the prime means that the unstable degree-zero pairs $(\beta,m)=(0,m),
~m\le2$ are omitted.
\end{definition}

For primary genus-zero invariants, the only nonzero contribution with
$\beta=0$ occurs for $m=3$.  Consequently,
\[
\begin{aligned}
\Phi(y_0,\dots,y_s)
&=
\frac{1}{3!}
\sum_{a,b,c}
\left(
\int_ZT_a\smile T_b\smile T_c
\right)y_ay_by_c
\\[1mm]
&\quad+
\sum_{\beta\in\mathsf{NE}(Z)\smallsetminus\{0\}}
\sum_{m\ge0}
\frac{q^\beta}{m!}
\sum_{a_1,\dots,a_m}
\big\langle
T_{a_1},\dots,T_{a_m}
\big\rangle^Z_{0,\beta}
y_{a_1}\cdots y_{a_m}.
\end{aligned}
\]

\begin{definition}\label{def:poincare}
The \emph{Poincar\'e pairing} of $Z$ is
\[
g_{ij}:=\int_ZT_i\smile T_j.
\]
It is a nondegenerate symmetric bilinear form on
$\rH^*(Z,\Q)$.  We write $(g^{ij})$ for the inverse matrix.
\end{definition}

\begin{definition}\label{def:bigquantum}
Set
\[
Q\rH^*(Z)
:=
\mathcal N_Z[[y_0,\dots,y_s]]
\otimes_\Q\rH^*(Z,\Q).
\]
The \emph{big quantum product} is defined on the basis by
\[
T_i\star T_j
:=
\sum_{a,b}
\frac{\partial^3\Phi}
{\partial y_i\,\partial y_j\,\partial y_a}
\,g^{ab}\,T_b
\]
and extended
$\mathcal N_Z[[y_0,\dots,y_s]]$-bilinearly.
\end{definition}

\begin{theorem}[WDVV; see
\cites{Kontsevich1994,fulton1997notesstablemapsquantum}]
\label{thm:wdvv}
The product $\star$ is associative and commutative, with unit
$T_0=1$.
\end{theorem}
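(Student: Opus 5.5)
We verify the three assertions of Theorem~\ref{thm:wdvv} directly from Definition~\ref{def:bigquantum}, in increasing order of difficulty: the unit, commutativity, and associativity.

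\emph{Unit and commutativity.} Commutativity is immediate, since third partial derivatives of $\Phi$ are symmetric in $i,j$. For the unit, we compute $\partial_{y_0}\partial_{y_j}\partial_{y_a}\Phi$.
\begin{itemize}
\item The classical cubic term contributes $\int_Z 1\smile T_j\smile T_a=g_{ja}$.
\item Every invariant with $\beta\neq0$ and an insertion of $T_0=1$ vanishes by the fundamental class axiom. Concretely, the class $\prod\ev_k^*\gamma_k$ is pulled back along the forgetful map $\overline{\mathcal M}_{0,m}(Z,\beta)\to\overline{\mathcal M}_{0,m-1}(Z,\beta)$, which has positive-dimensional fibres. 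The case $m-1<1$ with $\beta\neq 0$ is handled by the same forgetful map, since $\overline{\mathcal M}_{0,0}(Z,\beta)$ exists for $\beta\ne0$.
\end{itemize}
Hence $T_0\star T_j=\sum_{a,b}g_{ja}g^{ab}T_b=T_j$.

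\emph{Associativity.} Expanding $(T_i\star T_j)\star T_k$ and $T_i\star(T_j\star T_k)$, associativity is equivalent to the WDVV equations:
\[
\sum_{a,b}\Phi_{ija}\,g^{ab}\,\Phi_{bkl}=\sum_{a,b}\Phi_{jka}\,g^{ab}\,\Phi_{bil}\qquad\text{for all }i,j,k,l,
\]
where subscripts denote partial derivatives in the $y$'s. The plan has three steps.

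\begin{enumerate}
\item Work in $\overline{\mathcal M}_{0,m+4}(Z,\beta)$ with the forgetful and stabilization map $\pi\colon\overline{\mathcal M}_{0,m+4}(Z,\beta)\to\overline{\mathcal M}_{0,4}\cong\PP^1$, which keeps the first four marks.
\item Use that the boundary points $D(12|34)$ and $D(14|23)$ of $\PP^1$ are linearly equivalent. Their pullbacks under $\pi$ are therefore homologous, and each pullback is a sum of boundary divisors $D(A\cup\{1,2\},\beta_1\,|\,B\cup\{3,4\},\beta_2)$ over splittings of the extra marks $A\sqcup B=\{5,\dots,m+4\}$ and of the class $\beta_1+\beta_2=\beta$.
\item Apply the splitting axiom to each boundary divisor. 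Each divisor is the fibre product over $Z$ of two smaller moduli spaces, and the diagonal class is $[\Delta]=\sum_{a,b} g^{ab}T_a\otimes T_b$. The integral of $\prod\ev^*\gamma$ over the divisor therefore factors as $\sum_{a,b}\langle\ldots,T_a\rangle g^{ab}\langle T_b,\ldots\rangle$.
\end{enumerate}
Summing over $m$, $\beta$ and the splittings with weights $q^\beta/m!$ reproduces both sides of WDVV. Here the binomial factor counting the splittings $A\sqcup B$ combines with $1/m!$ into $\frac{1}{|A|!\,|B|!}$, and the factors $q^{\beta_1}q^{\beta_2}$ multiply correctly in $\mathcal N_Z$. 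The $\beta=0$ pieces with few marks are not covered by the stable moduli spaces and must be matched by hand against the cubic term: they contribute the cup product structure, which is associative.

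\emph{Main obstacle.} The genuinely geometric step is the splitting axiom together with the identification of $\pi^*[\mathrm{pt}]$ as the sum of boundary divisors with multiplicity one. This requires that the boundary divisors are reduced and meet transversally, which holds when $Z$ is convex and the moduli spaces are smooth. For non-convex $Z$ one replaces this by the compatibility of virtual classes with gluing, following Behrend \cite{Behrend1997}. We would quote these from \cites{Kontsevich1994,fulton1997notesstablemapsquantum} rather than reprove them, and treat the combinatorial bookkeeping of the generating series as the routine remainder.
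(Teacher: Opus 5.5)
Your proposal is correct and follows the route the paper takes by citation: commutativity from the symmetry of $\partial^3\Phi$, the unit from the fundamental class axiom, and associativity from the WDVV equations, obtained by pulling back the two linearly equivalent boundary points of $\overline{\mathcal M}_{0,4}$ and applying the splitting axiom. The virtual-class form of these axioms (Behrend) is genuinely needed here, as you note, because the paper applies $\star$ to non-convex complete intersections. One small correction: no degree-zero piece needs to be matched by hand, since each side of a splitting carries at least three special points, and the cubic term is just the $\overline{\mathcal M}_{0,3}(Z,0)\cong Z$ contribution.
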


Associativity is equivalent to the WDVV equations for $\Phi$, which
express the two ways of degenerating a four-pointed rational curve; this
is the Splitting Axiom
\cites{Kontsevich1994,fulton1997notesstablemapsquantum}.
The ring $\big(Q\rH^*(Z),\star\big)$ is the \emph{big quantum cohomology} of $Z$.  

\begin{definition}\label{def:smallquantum}
Set
\[
Q\rH^*_{\mathrm{small}}(Z)
:=
\mathcal N_Z\otimes_\Q\rH^*(Z,\Q).
\]
The \emph{small quantum product} is
\[
T_i\star T_j
=
\sum_{a,b}
\left(
\sum_{\beta\in\mathsf{NE}(Z)}
\big\langle T_i,T_j,T_a\big\rangle^Z_{0,\beta}
q^\beta
\right)
g^{ab}T_b,
\]
extended $\mathcal N_Z$-bilinearly.
\end{definition}

We recall three properties which will be used repeatedly; see
\cites{Kontsevich1994,fulton1997notesstablemapsquantum}.

\begin{itemize}

\item
\emph{Grading.}
Declare
\[
\deg q^\beta
=
2\int_\beta c_1(TZ).
\]
Then $\star$ is homogeneous.  Indeed, a nonzero invariant $\big\langle T_i,T_j,T_a\big\rangle^Z_{0,\beta}$ must satisfy
\[
\deg T_i+\deg T_j+\deg T_a
=
2\left(
\dim Z+\int_\beta c_1(TZ)
\right),
\]
while $g^{ab}\neq0$ requires
\[
\deg T_a+\deg T_b=2\dim Z.
\]
Hence
\[
\deg q^\beta+\deg T_b
=
\deg T_i+\deg T_j.
\]

\item
\emph{Unit.}
The fundamental-class axiom gives
\[
\big\langle1,T_i,T_j\big\rangle^Z_{0,\beta}
=
\begin{cases}
0,&\beta\neq0,\\[1mm]
\displaystyle\int_ZT_i\smile T_j,&\beta=0.
\end{cases}
\]
Thus
\[
1\star T=T.
\]

\item
\emph{Divisor axiom.}
For $D\in\rH^2(Z,\Q)$ and $\beta\neq0$,
\[
\big\langle D,T_i,T_j\big\rangle^Z_{0,\beta}
=
\left(\int_\beta D\right)
\big\langle T_i,T_j\big\rangle^Z_{0,\beta}.
\]

\end{itemize}

\medskip

\subsection{The quantum cohomology of a flag variety}
\label{sec:QHflag}

Let $Z=F=G/P$ be a flag variety.  Proposition \ref{prop:schubertbasis} gives $\rH^{\mathrm{odd}}(F,\Q)=0$ and a homogeneous Schubert basis $\{\sigma_u\}_{u\in W_F}$. Moreover, by Corollary \ref{cor:effective}, every effective curve class has
a unique expression $\beta
=
\sum_{m=1}^{k}
b_m\overline{\alpha^\vee_{i_m}},
~
b_m\ge0$,
and hence $q^\beta=\prod_{m=1}^{k}q_m^{b_m}$. Thus, there is one Novikov variable for each element of $\Pi_F$. The Poincar\'e pairing takes a particularly simple form in the Schubert
basis.  For every $u\in W_F$ there is a unique $u^\vee\in W_F$ such that
\[
\ell(u^\vee)=\dim F-\ell(u)
\]
and
\[
\int_F\sigma_u\smile\sigma_v
=
\delta_{v,u^\vee}.
\]
Consequently, the Gram matrix is a permutation matrix and equals its
inverse.  In practice, we may equivalently evaluate the pairing by
Corollary \ref{cor:ABBVflag}.

The small quantum product is therefore
\[
\sigma_u\star\sigma_v
=
\sum_{\beta\in\mathsf{NE}(F)}
\sum_{w\in W_F}
\big\langle
\sigma_u,\sigma_v,\sigma_w
\big\rangle^F_{0,\beta}
q^\beta
\sigma_{w^\vee}.
\]
These are exactly the invariants computed by
Theorem \ref{thm:localization}.

\medskip

\subsection{The flag-ambient quantum data of a complete intersection}
\label{sec:QHci}

Let now
\[
F=G/P,
\qquad
\iota\colon X=\mathscr Z(F,\cE)\hookrightarrow F,
\]
where $\cE$ is a globally generated homogeneous vector bundle and $X$ is
smooth of the expected dimension.  This is the setting of
Theorem \ref{thm:localizationCI}.

The localization formula only sees insertions restricted from $F$.
Accordingly, set
\[
\rH^*_{\mathrm{amb}}(X)
:=
\operatorname{im}
\big(
\iota^*\colon\rH^*(F,\Q)\to\rH^*(X,\Q)
\big).
\]
In the notation used later in Section \ref{sec:coarse}, this is the
flag-ambient space
\[
\rH_F^*(X)=\rH^*_{\mathrm{amb}}(X).
\]
Since the cohomology of $F$ is generated by Schubert classes,
$\rH^*_{\mathrm{amb}}(X)$ is a graded subring of the classical cohomology
of $X$, consists entirely of classes of type $(p,p)$, and is concentrated
in even degree.

There are two separate issues which must be distinguished: the Poincar\'e
pairing on this subspace and the question whether quantum multiplication
preserves it.

\medskip

For $u,v\in W_F$, consider the Poincar\'e pairing
\[
g_{uv}
:=
\int_X
\iota^*\sigma_u\smile\iota^*\sigma_v.
\]
It is nondegenerate on
$\rH^*_{\mathrm{amb}}(X)$, then any maximal subfamily of the restricted Schubert classes having
nondegenerate Gram matrix is a basis of
$\rH^*_{\mathrm{amb}}(X)$. In all examples considered in this paper, the restricted pairing is nondegenerate,
and from now on we work in this situation. In particular, we choose a
homogeneous reduced Schubert basis $\sigma_1,\dots,\sigma_M$ of $\rH^*_{\mathrm{amb}}(X)$ as above, and write $(g^{ij})$ for the inverse Gram matrix.

\medskip

We next explain exactly what the localization formula computes.  If
$\beta\in\mathsf{NE}(F)$, define the pushed-forward three-point invariant
\[
\big\langle
\gamma_1,\gamma_2,\gamma_3
\big\rangle^{X/F}_{0,\beta}
:=
\sum_{\substack{
\beta'\in\mathsf{NE}(X)\\
\iota_*\beta'=\beta}}
\big\langle
\gamma_1,\gamma_2,\gamma_3
\big\rangle^X_{0,\beta'}.
\]
For flag-ambient insertions, this is precisely the quantity computed by
Theorem \ref{thm:localizationCI}.

Because the restricted Poincar\'e pairing is nondegenerate, we have an
orthogonal decomposition
\[
\rH^*(X,\Q)
=
\rH^*_{\mathrm{amb}}(X)
\oplus
\rH^*_{\mathrm{amb}}(X)^\perp
\]
and hence an orthogonal projection
\[
\operatorname{pr}_F
\colon
\rH^*(X,\Q)
\longrightarrow
\rH^*_{\mathrm{amb}}(X).
\]

For $\alpha,\eta\in\rH^*_{\mathrm{amb}}(X)$ we define their
\emph{projected flag-ambient small quantum product} by
\[
\alpha\star_F\eta
:=
\operatorname{pr}_F(\alpha\star\eta),
\]
after pushing the Novikov variables forward by $\iota_*$.  In the reduced
Schubert basis,
\[
\sigma_i\star_F\sigma_j
=
\sum_{\beta\in\mathsf{NE}(F)}
\sum_{a,b}
\big\langle
\sigma_i,\sigma_j,\sigma_a
\big\rangle^{X/F}_{0,\beta}
q^\beta
g^{ab}\sigma_b.
\]
Equivalently,
\[
\sigma_i\star_F\sigma_j
=
\sum_{\beta\in\mathsf{NE}(F)}
\sum_{a,b}
\left(
\sum_{\substack{
\beta'\in\mathsf{NE}(X)\\
\iota_*\beta'=\beta}}
\big\langle
\sigma_i,\sigma_j,\sigma_a
\big\rangle^X_{0,\beta'}
\right)
q^\beta g^{ab}\sigma_b.
\]

\medskip

\subsubsection{Quantum multiplication by the anticanonical class}

The operator used throughout the examples is the projected small quantum
multiplication by the anticanonical class.

By adjunction,
\[
c_1(TX)
=
\iota^*\widehat c_1(TX),
\qquad
\widehat c_1(TX)
:=
c_1(TF)-c_1(\cE)
\in\rH^2(F,\Q).
\]
If $\beta$ is a curve class of the flag variety, the expression which can be paired with $\beta$ is
$\widehat c_1(TX)$, while for
$\beta'\in\rH_2(X,\Z)$ one has
\[
\int_{\beta'}c_1(TX)
=
\int_{\iota_*\beta'}\widehat c_1(TX).
\]

\begin{definition}\label{def:ambient-anticanonical-operator}
The \emph{flag-ambient anticanonical operator} is
\[
A(q)
:=
\operatorname{pr}_F
\circ
\big(c_1(TX)\star_{\mathrm{small}}(-)\big)
\Big|_{\rH^*_{\mathrm{amb}}(X)}.
\]
In the reduced Schubert basis,
\[
A(q)\sigma_i
=
\sum_{\beta\in\mathsf{NE}(F)}
\sum_{j,k}
\left(
\sum_{\substack{
\beta'\in\mathsf{NE}(X)\\
\iota_*\beta'=\beta}}
\big\langle
c_1(TX),\sigma_i,\sigma_j
\big\rangle^X_{0,\beta'}
\right)
q^\beta g^{jk}\sigma_k.
\]
This is the matrix computed by the software.
\end{definition}

Equivalently, using the divisor axiom for $\beta'\neq0$,
\[
\big\langle
c_1(TX),\sigma_i,\sigma_j
\big\rangle^X_{0,\beta'}
=
\left(
\int_{\beta'}c_1(TX)
\right)
\big\langle
\sigma_i,\sigma_j
\big\rangle^X_{0,\beta'}
\]
and
\[
\int_{\beta'}c_1(TX)
=
\int_{\iota_*\beta'}\widehat c_1(TX).
\]
Thus, every coefficient required for $A(q)$ is among the invariants
computed by Theorem \ref{thm:localizationCI}. Proposition \ref{prop:finitebetas} below is immediate.

\begin{proposition}\label{prop:finitebetas}
Assume that $X$ is Fano.  Then only finitely many curve classes
$\beta'\in\mathsf{NE}(X)$ contribute to every entry of $A(q)$.
Consequently, the entries of $A(q)$ are polynomials in the pushed-forward
Novikov variables.
\end{proposition}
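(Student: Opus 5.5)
The plan is to combine the dimension axiom with the positivity of $c_1(TX)$ on effective curves. Fix basis indices $i,j$ and consider the coefficient of the entry $A(q)_{ki}$. By Definition~\ref{def:ambient-anticanonical-operator} it is a sum over $\beta'\in\mathsf{NE}(X)$ of the invariants $\langle c_1(TX),\sigma_i,\sigma_j\rangle^X_{0,\beta'}$, multiplied by $q^{\iota_*\beta'}$ and the constants $g^{jk}$. It therefore suffices to show that only finitely many $\beta'$ give a nonzero invariant. Since there are only finitely many pairs $(i,j)$, this yields a uniform finite set of classes for all entries.

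\textbf{Step 1: bound the anticanonical degree.} A nonzero three-point invariant satisfies the dimension axiom. For $\beta'\neq0$ the divisor axiom reduces it to $\big(\int_{\beta'}c_1(TX)\big)\langle\sigma_i,\sigma_j\rangle^X_{0,\beta'}$, whose virtual dimension condition reads
\[
\deg\sigma_i+\deg\sigma_j=2\Big(\dim X+\int_{\beta'}c_1(TX)-1\Big).
\]
The left side is at most $4\dim X$, so we obtain
\[
\int_{\beta'}c_1(TX)\le \dim X+1 .
\]
This is the same grading argument as in Section~\ref{sec:QHgeneral}, where $\deg q^{\beta'}=2\int_{\beta'}c_1(TX)$ can be no larger than the total degree available.

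\textbf{Step 2: finiteness of effective classes of bounded degree.} Because $X$ is Fano, $c_1(TX)$ is ample. We need that $\{\beta'\in\mathsf{NE}(X):\int_{\beta'}c_1(TX)\le N\}$ is finite. Only classes represented by stable maps matter, and these are effective integral classes. By Kleiman's criterion, an ample class is strictly positive on $\overline{\mathsf{NE}}(X)\setminus\{0\}$. The slice $\{\int c_1(TX)\le N\}\cap\overline{\mathsf{NE}}(X)$ is therefore compact in $\rH_2(X,\R)$, and it contains only finitely many lattice points. This is the step requiring a little care, and the cleanest route is exactly this compactness argument. Alternatively, in the situation of Theorem~\ref{thm:ci} one can use the pushforward $\iota_*$ into $Q^\vee_F$: by Corollary~\ref{cor:effective}, $\iota_*\beta'$ has nonnegative coordinates $b_m$, and $\int_{\iota_*\beta'}\widehat c_1(TX)=\sum_m(c_m-\sum_j a_{j,m})b_m$. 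When all coefficients are positive, this bounds the $b_m$ directly and gives finitely many pushforward classes $\beta$, which is all the entries of $A(q)$ see.

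\textbf{Step 3: conclude.} Each entry is a finite sum of rational multiples of monomials $q^{\iota_*\beta'}$, hence a polynomial in the pushed-forward Novikov variables. The $\beta'=0$ term contributes the classical cup product, a constant. The main obstacle I anticipate is the finiteness in Step 2 when $\mathsf{NE}(X)$ is not known explicitly. The Kleiman compactness argument, together with the discreteness of $\rH_2(X,\Z)/\mathrm{torsion}$, handles this. Torsion classes cannot occur in positive degree because their anticanonical degree is zero, and a stable map of positive degree has positive pairing with an ample class.
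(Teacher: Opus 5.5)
Your proof is correct and matches the argument the paper has in mind when it calls the proposition immediate: the grading (dimension axiom) forces $\int_{\beta'}c_1(TX)\le\dim X+1$, and ampleness of $c_1(TX)$ then leaves only finitely many effective classes of bounded anticanonical degree. One point to tighten: Kleiman's compactness argument lives in $N_1(X)_\R$, not $\rH_2(X,\R)$. To return to $\rH_2(X,\Z)$, the relevant fact is that curve classes with the same numerical class differ by torsion, because numerical and homological equivalence agree rationally for $1$-cycles and the torsion of $\rH_2(X,\Z)$ is finite; your remark that torsion classes carry no stable maps is not what is needed.
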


\bigskip

\section{Coarse Hodge Atoms}
\label{sec:coarse}

As initially conceptualized in \cite{KKPY}, the theory is based on the introduction of the concept of F-bundles, and the 
object of major interest is that of the \emph{A-model F-bundle}. In what follows, we shall not go in this direction. Instead, we use a coarser version
of Hodge atoms, which, in a nutshell, forgets the bundle and keeps only the base manifold information and a residual operator: quantum multiplication by the Euler vector field. This is sufficient for this paper and was idealized (and shall appear) in \cite{CKK-lectures}. The resulting geometric object is known as \emph{the A model pre-F manifold with an Euler vector field} (Definition \ref{def:preF}). In what follows, we essentially discuss this concept for the case of flag varieties and complete intersections in flag varieties, as we have been doing throughout. We then present the concept of coarse Hodge atoms and present general results.

\medskip

\subsection{Pre-F-manifolds and the A-model manifold}\label{sec:preF}

In summary, the idea behind what we shall call, following
\cite{CKK-lectures}, a pre-F-manifold with an Euler vector field is the
following.  Given a smooth projective variety $Z$ (which in our setup is
either a flag variety or a complete intersection in one), we would like to
construct a manifold whose ring of functions is the ring
$\mathcal N_{Z}[[y_{0},\dots,y_{s}]]$ in which the Gromov--Witten potential
lives.  More than that, we would like the potential to be an honest section
of its structure sheaf, so that the WDVV equations can be read as saying
that quantum multiplication is a family of associative products on the tangent bundle of
that manifold, one for each point.

We start with the heuristic.  The big quantum product varies with the point
at which the potential is differentiated, so a parameter space is needed,
and the natural candidates for its coordinates are those dual to a basis of
the cohomology together with the Novikov variables.  To make such a
construction viable over $\C$ we would have to know in advance that the
Gromov--Witten potential has a positive radius of convergence. This is
known, however, only in isolated cases.  Aiming not to depend on conjectural\footnote{that is not the only problem moving to the non-Archimedean setting avoids, but we will not enter this technicality here; the key word is \emph{Stokes phenomena}, which are absent in this setting}
statements, we shall instead move to the non-Archimedean world, which
justifies the incursion we make next.

\medskip

\subsubsection{Berkovich spaces: what we need}
 
This subsection may safely be skipped by readers already familiar with
Berkovich spaces; we shall not need the theory in extended depth as the hard work to well-ground the theory of Hodge atoms was already done in \cite{KKPY}.  The point to
keep in mind is the following.  Over $\C$ we have no control over the
convergence of the Gromov--Witten potential, and this is what initially obstructs the
construction sketched above.  Over a non-Archimedean field, the obstruction
disappears for a reason that is easy to state: the coefficients of the
potential are rational numbers. If one moves to the non-Archimedean setting, we can do so in a manner in which we set rational numbers with absolute value one. Thus, convergence
becomes a question about the monomials alone.  This will allow us to build, for
the varieties considered in this paper\footnote{although the construction is fully general \cites{CKK-lectures, KKPY}}, a space with the properties
described at the beginning of the section, such that the Gromov--Witten potential is an element of its structure sheaf.
 
\medskip
 
\begin{definition}\label{def:naf}
A \emph{non-Archimedean field} is a field $\Bbbk$ equipped with a map
$|\cdot|\colon\Bbbk\to\R_{\ge0}$, called a \emph{non-Archimedean absolute
value}, such that for all $a,b\in\Bbbk$:
\begin{defenum}
\item $|a|=0$ if and only if $a=0$, and $|1|=1$;
\item $|ab|=|a|\,|b|$;
\item $|a+b|\le\max\{|a|,|b|\}$,
\end{defenum}
and which is complete for the metric $d(a,b)=|a-b|$.  Condition (c) is the
\emph{ultrametric inequality}.  The subgroup $|\Bbbk^{*}|\subset\R_{>0}$ is
the \emph{value group}.
\end{definition}
 
\medskip
 
\begin{example}
Any field carries the \emph{trivial} absolute value, $|a|=1$ for $a\neq0$;
its value group is $\{1\}$.  The field $\Q_{p}$ of $p$-adic numbers carries
$|p^{m}u|=p^{-m}$ for $u$ a $p$-adic unit; its value group is $p^{\Z}$.  The
field of formal Laurent series $\Bbbk_{0}((t))$ over any field $\Bbbk_{0}$
carries $|f|=e^{-v(f)}$, where $v(f)$ is the order of vanishing at $t=0$;
here the absolute value is trivial on $\Bbbk_{0}$ and $|t|=e^{-1}$. 
\end{example}
 
\medskip

Let $\Bbbk$ be a non-Archimedean field.  We record three properties used
repeatedly below.
\begin{defenum}
\item If $|a|\neq|b|$ then $|a+b|=\max\{|a|,|b|\}$.
\item A series $\sum_{n\ge0}a_{n}$ converges if and only if $|a_{n}|\to0$.
\item The \emph{valuation ring} $\Bbbk^{\circ}=\{a\mid|a|\le1\}$ is a local
      ring with maximal ideal $\{|a|<1\}$.
\end{defenum}

\medskip
 
\begin{definition}\label{def:banach}
A \emph{non-Archimedean norm} on a ring $R$ is a map
$\|\cdot\|\colon R\to\R_{\ge0}$ such that $\|a\|=0$ if and only if
$a=0$, $\|1\|\le1$,
\[
  \|a+b\|\le\max\{\|a\|,\|b\|\},
  \qquad
  \|ab\|\le\|a\|\,\|b\|.
\]
A \emph{non-Archimedean Banach ring} is a ring with such a norm which is
complete for the induced metric.  A \emph{multiplicative seminorm} on $R$
is a map $|\cdot|_x\colon R\to\R_{\ge0}$ satisfying
$|0|_x=0$, $|1|_x=1$, $|ab|_x=|a|_x|b|_x$, and
\[
  |a+b|_x\le\max\{|a|_x,|b|_x\}.
\]
It is \emph{bounded} by $\|\cdot\|$ if $|a|_x\le\|a\|$ for all $a\in R$.
\end{definition}
 
Let $\Bbbk$ be a non-Archimedean field. Throughout, we use multi-index notation: for variables
$\xi=(\xi_{1},\dots,\xi_{n})$ and $I=(i_{1},\dots,i_{n})\in\N^{n}$, we write
$\xi^{I}=\xi_{1}^{i_{1}}\cdots\xi_{n}^{i_{n}}$ and $|I|=i_{1}+\dots+i_{n}$.
For $\rho=(\rho_{1},\dots,\rho_{n})\in(\R_{>0})^{n}$, we set
$\rho^{I}=\rho_{1}^{i_{1}}\cdots\rho_{n}^{i_{n}}$.  We endow the polynomial ring
$\Bbbk[\xi_{1},\dots,\xi_{n}]$ with the Gauss norm
$\big|\sum_{I}a_{I}\xi^{I}\big|:=\max_{I}|a_{I}|$. It extends the
absolute value of $\Bbbk$ and is multiplicative. However, it is not complete.
 
\begin{definition}\label{def:tate}
For $\rho\in(\R_{>0})^{n}$, the \emph{polydisk algebra} is the non-Archimedean Banach ring
\[
  \Bbbk\{\rho_{1}^{-1}\xi_{1},\dots,\rho_{n}^{-1}\xi_{n}\}
  :=\Big\{\ \sum_{I\in\N^{n}}a_{I}\,\xi^{I}\ \Big|\ a_{I}\in\Bbbk,\
    |a_{I}|\,\rho^{I}\longrightarrow0\ \text{as}\ |I|\to\infty\ \Big\}\]
    with norm
\[
  \Big\|\sum_{I}a_{I}\xi^{I}\Big\|:=\max_{I}|a_{I}|\rho^{I}.
\]
\end{definition}
 
\begin{definition}\label{def:analyticspectrum}
Let $R$ be a Banach ring.  Its \emph{Berkovich spectrum}
$\operatorname{Spec}_{\mathrm{an}}R$ is the set of multiplicative seminorms $\left\{|\cdot|_x:R\rightarrow \mathbb R_{\geq 0}\right\}$
on $R$ bounded by the norm of $R$, equipped with the weakest topology making
the evaluation maps
\[
 \operatorname{Spec}_{\mathrm{an}}R\longrightarrow\R_{\ge0},
 \qquad x\longmapsto|a|_{x},
\]
continuous for every $a\in R$.
\end{definition}
 
In the literature, the Berkovich spectrum of a Banach ring $R$ is usually
denoted $\mathcal M(R)$.  Our choice of writing it
$\operatorname{Spec}_{\mathrm{an}}R$ is meant to stress the parallel with
the notion of spectrum in algebraic geometry. The notation is further justified because sending a seminorm to its kernel, which is a prime ideal, defines a map
$\operatorname{Spec}_{\mathrm{an}}R\to\operatorname{Spec}R$ (\cite{jonsson_berkovich_2016}*{$\S$4.2}).  For $R\neq0$
the space $\operatorname{Spec}_{\mathrm{an}}R$ is nonempty compact and
Hausdorff (\cite{jonsson_berkovich_2016}*{Theorem 2.26}). 
 
\begin{example}\label{ex:berkovichdisc}
For $n=1$ and $\rho=1$ the space
$\operatorname{Spec}_{\mathrm{an}}\Bbbk\{\xi\}$ is the \emph{closed unit
disc}.  It contains the seminorms $\Bbbk\{\xi\}\ni P\mapsto|P(a)|$ for $a\in\Bbbk$ with
$|a|\le1$ but also many others. It is this
abundance that makes it compact and connected.  
\end{example}

General \emph{$\Bbbk$-analytic spaces}\footnote{it is far beyond the scope of this paper to give a precise definition; we ask the unfamiliar reader to consult \cite{jonsson_berkovich_2016}*{$\S$20.1} for additional details} are glued from the Berkovich spectra of simpler Banach rings\footnote{this is not exactly correct, but serves to keep the intuition}. Of importance to this text are
\begin{itemize}
    \item the Berkovich affine line
$\mathbb A^{1,\mathrm{an}}_{\Bbbk}$: it consists, as a set, of the multiplicative seminorms $\{|\cdot|_x\}$ in $\Bbbk[\xi]$ that extend the absolute value in $\Bbbk$ (i.e., $|a|_x=|a|$ for all $a\in\Bbbk$). Note that $\mathbb A^{1,\mathrm{an}}_{\Bbbk}$ itself \emph{is not} the Berkovich
spectrum of a Banach ring: imposing boundedness by the Gauss norm on
$\Bbbk[\xi]$ would force $|\xi|_{x}\le1$ and return the closed unit disc of
Example \ref{ex:berkovichdisc}.  Dropping boundedness and keeping only the
extension condition gives the increasing union
$\mathbb A^{1,\mathrm{an}}_{\Bbbk}=\bigcup_{r>0}
\operatorname{Spec}_{\mathrm{an}}\Bbbk\{r^{-1}\xi\}$ of the closed discs of
all radii, which is locally compact but not compact. 
\item the Berkovich multiplicative group $\mathbb G^{\mathrm{an}}_{m,\Bbbk}$: it consists, as a set, of the multiplicative seminorms in $\Bbbk[\xi,\xi^{-1}]$ that extend the absolute value in $\Bbbk$. 
\end{itemize}

\medskip
 
For this paper, we are interested in the completion $\Bbbk$ of the field of \emph{Puiseux series} over $\overline{\Q}$
\[
  \Bbbk_{\mathrm{Puis}}:=\bigcup_{n\ge1}\overline{\Q}\big(\big(t^{1/n}\big)\big),
  \qquad |f|:=e^{-v(f)}.
\]
Here, $v(f)$ is the smallest exponent occurring in $f$ with nonzero coefficient and
$v(0):=+\infty$. This field is algebraically closed of characteristic zero.  Its completion
\[
  \Bbbk\;:=\;\widehat{\Bbbk_{\mathrm{Puis}}},
\]
is again algebraically closed and $|\Bbbk^{*}|=e^{\Q}$. The absolute value
is trivial on the coefficients: $|a|=1$ for every $a\in\overline{\Q}^{\,*}$.
 
\medskip

\
\subsubsection{The A-model manifold in our setting}

Let $Z$ be a smooth projective variety.  The A-model construction itself does
not require the effective cone to be simplicial, and it is useful to keep this
generality here.  Put
\[
  N^{1}(Z)_{\Z}:=\mathsf{NS}(Z)/\mathrm{tors},
  \qquad
  r:=\rk N^{1}(Z)_{\Z}=\rk N_{1}(Z)_{\Z}.
\]
Choose an integral basis
$\beta_{1},\dots,\beta_{r}$ of $N_{1}(Z)_{\Z}$ and let
$T_{1},\dots,T_{r}$ be the dual basis of $N^{1}(Z)_{\Q}$.  We write
$q_{1},\dots,q_{r}$ for the corresponding Novikov coordinates and, for
$\beta\in N_{1}(Z)_{\Z}$, write $q^{\beta}$ for the corresponding Laurent
monomial.

When $Z=G/P$ is a flag variety, we take the Schubert divisor basis.  Then
Proposition \ref{prop:h2}, Proposition \ref{prop:ampleness}, and Corollary
\ref{cor:effective} identify the ample cone with the positive orthant and the
closed cone of effective curves with the nonnegative orthant.  For a complete
intersection in a flag variety we shall \emph{not} silently transfer these
cone statements from the ambient variety.  In the fourfold applications
below the integral divisor and numerical curve lattices are identified by
hypothesis \textup{(H0)} of Standing Assumption \ref{stand:setup}; the ample
and effective cones remain those of the variety under consideration unless a
separate argument identifies them.

Fix a homogeneous basis of the even cohomology
\[
  T_{0}=\mathbf 1,\quad T_{1},\dots,T_{r},\quad
  T_{r+1},\dots,T_{r+\ell},
\]
where the middle block is the chosen divisor basis.  Let
$p_{1},\dots,p_{r}$ be the coordinates dual to the divisors and
$y_{r+1},\dots,y_{r+\ell}$ the remaining even coordinates.  By the divisor
axiom, the Gromov--Witten potential depends on $p_m$ and $q_m$ only through
$q_me^{p_m}$, so we absorb the divisor coordinates into the Novikov
variables and keep the notation $q_m$.

Set once and for all
\[
  V^{\mathrm{ev}}_{\Q}
  :=\bigoplus_{\substack{a\ge4\\ a\ \mathrm{even}}}\rH^a(Z,\Q),
  \qquad
  V^{\mathrm{ev}}_{\Bbbk}
  :=V^{\mathrm{ev}}_{\Q}\otimes_{\Q}\Bbbk.
\]
Let $\mathcal U_Z\subset N^1(Z)_{\R}$ be the ample cone.  The Novikov
factor of the A-model is the tube domain
\[
  U_Z:=\mathrm{pr}^{-1}(-\mathcal U_Z)
  \subset
  \big(\mathbb G^{\mathrm{an}}_{m,\Bbbk}\big)^r,
\]
where
\[
  \mathrm{pr}(x)
  :=\big(\log|q_1|_x,\dots,\log|q_r|_x\big)
  \in N^1(Z)_{\R}
\]
under the chosen dual bases.  This is the formulation we shall use.  When
$\overline{\mathsf{NE}}(Z)$ is rational polyhedral, Kleiman's criterion
rewrites the same condition as finitely many strict inequalities on the
monomials corresponding to extremal rays.

The full A-model is a Berkovich analytic supermanifold when
$\rH^{\mathrm{odd}}(Z)\neq0$.  Since the fourfolds to which the spectral
criteria below are applied have no odd cohomology, we write explicitly only
the even part; the odd factor is the purely odd superspace of
\cites{CKK-lectures, KKPY} and is restored without changing the arguments.

\begin{definition}\label{def:amodel}
Assume $\rH^{\mathrm{odd}}(Z,\Q)=0$.  The \emph{A-model manifold} of $Z$ is
the $\Bbbk$-analytic space
\[
  B_Z:=U_Z\times U_{\mathrm{even}},
\]
where
\[
  U_{\mathrm{even}}
  =
  \mathbb A^{1,\mathrm{an}}_{\Bbbk}\times
  \Big\{
     x\in\big(V^{\mathrm{ev}}_{\Bbbk}\big)^{\mathrm{an}}
     \ \Big|\
     |y_{r+j}|_x<1\text{ for }1\le j\le\ell
  \Big\}.
\]
The affine-line coordinate $y_0$ is dual to the unit.
\end{definition}

When $\rH^{\mathrm{odd}}(Z)\neq0$, we use the full analytic supermanifold of
\cites{CKK-lectures, KKPY}, obtained by adjoining the purely odd factor whose
structure algebra is the exterior algebra on the coordinates dual to
$\rH^{\mathrm{odd}}(Z)$.  All statements about coarse atoms below are
understood in that full sense.

\begin{remark}\label{rmk:tube-is-polydisc}
For a flag variety the preceding definition becomes completely explicit.
Indeed, Proposition \ref{prop:ampleness} gives
\[
  U_Z
  =
  \big\{x\mid |q_m|_x<1,\ m=1,\dots,r\big\},
\]
so the Novikov factor is a punctured polydisc.  For a complete intersection,
we retain the intrinsic tube-domain definition unless its ample cone has
been identified separately.
\end{remark}

As in algebraic geometry, a $\Bbbk$-analytic space carries a structure
sheaf $\cO$ \cite{jonsson_berkovich_2016}*{\S19.1.4}, whose sections we call
analytic functions.  Before stating that the Gromov--Witten potential is one,
we recall at which points such a function can be evaluated.

\medskip

A point $x$ of a Berkovich spectrum is, by definition, a multiplicative
seminorm.  Thus an element $f\in R$ has a well-defined absolute value
$|f|_x$, but it need not have a value in $\Bbbk$.

\begin{definition}\label{def:kpoints}
Let $R$ be a non-Archimedean Banach ring containing $\Bbbk$ whose norm
restricts to $|\cdot|$ on $\Bbbk$.  A point
$x\in\operatorname{Spec}_{\mathrm{an}}R$ is a \emph{$\Bbbk$-point} if there
is a ring homomorphism
\[
  \chi_x\colon R\longrightarrow\Bbbk,
  \qquad
  \chi_x|_{\Bbbk}=\mathrm{id},
  \qquad
  |f|_x=|\chi_x(f)|
  \quad\text{for every }f\in R.
\]
The scalar $\chi_x(f)$ is the \emph{value} of $f$ at $x$, written $f(x)$.
\end{definition}

\begin{example}\label{ex:kpoints-polydisc}
On the polydisk algebra of Definition \ref{def:tate}, the $\Bbbk$-points are
the evaluations at tuples $a=(a_1,\dots,a_n)\in\Bbbk^n$ with
$|a_i|\le\rho_i$.  Indeed, if $f=\sum_I a_I\xi^I$, then
$|a_I|\,|a^I|\le|a_I|\rho^I\to0$, so the evaluation series converges.
Conversely, boundedness of a $\Bbbk$-point gives the same inequalities on
the coordinate values.
\end{example}

\begin{lemma}\label{lem:kpoints-of-B}
A tuple
\[
  b=
  \big(q_1,\dots,q_r,y_0,y_{r+1},\dots,y_{r+\ell}\big)
  \in(\Bbbk^*)^r\times\Bbbk\times\Bbbk^\ell
\]
defines a $\Bbbk$-point of $B_Z$ if and only if
\[
  \mathrm{pr}(b)\in-\mathcal U_Z,
  \qquad
  |y_{r+j}(b)|<1\quad(1\le j\le\ell).
\]
For a flag variety, this is equivalent to $|q_m|<1$ for every $m$.
\end{lemma}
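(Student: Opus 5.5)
The plan is to reduce to the polydisc case of Example~\ref{ex:kpoints-polydisc} by writing $B_Z$ as a product and identifying the $\Bbbk$-points of each factor. By Definition~\ref{def:amodel}, $B_Z=U_Z\times\mathbb A^{1,\mathrm{an}}_{\Bbbk}\times D$, where $D$ is the open unit polydisc in the coordinates $y_{r+1},\dots,y_{r+\ell}$. A $\Bbbk$-point of a product of $\Bbbk$-analytic spaces is a tuple of $\Bbbk$-points of the factors. This is because a character $\chi_x$ is determined by its values on the coordinate functions: these generate a dense subalgebra on each affinoid piece, and $\chi_x$ is continuous, since $|\chi_x(f)|=|f|_x\le\|f\|$. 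So it suffices to treat the three factors separately.

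For $\mathbb A^{1,\mathrm{an}}_{\Bbbk}=\bigcup_{\rho>0}\operatorname{Spec}_{\mathrm{an}}\Bbbk\{\rho^{-1}\xi\}$, every $a\in\Bbbk$ lies in the closed disc of radius $|a|$. By Example~\ref{ex:kpoints-polydisc}, evaluation at $a$ is a $\Bbbk$-point, and conversely every $\Bbbk$-point is evaluation at $y_0:=\chi_x(\xi)$. Hence $y_0$ is unconstrained.

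The open polydisc $D$ is the union of closed polydiscs of radii $\rho<1$, and $\rho$ may be taken in $|\Bbbk^*|=e^{\Q}$. Applying Example~\ref{ex:kpoints-polydisc} on each closed polydisc shows that the $\Bbbk$-points of $D$ are exactly the tuples with $|y_{r+j}|\le\rho$ for some $\rho<1$, that is, with $|y_{r+j}|<1$. The strict inequality comes from the union: no single closed polydisc of radius $<1$ contains all points of norm $<1$, but each such point lies in one of them.

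For $U_Z$, first treat $(\mathbb G^{\mathrm{an}}_m)^r$, which is a union of closed annuli $\{\rho_1\le|q_m|\le\rho_2\}$. The $\Bbbk$-points of an annulus are the $a\in\Bbbk^*$ with $\rho_1\le|a|\le\rho_2$, by the Laurent-series analogue of Example~\ref{ex:kpoints-polydisc}: convergence of $\sum a_n a^n$ over $n\in\Z$ follows from the two-sided decay condition. So the $\Bbbk$-points of $(\mathbb G^{\mathrm{an}}_m)^r$ are exactly the tuples in $(\Bbbk^*)^r$. The tube domain $U_Z=\mathrm{pr}^{-1}(-\mathcal U_Z)$ is an open analytic subdomain, and at a $\Bbbk$-point $b$ one has $|q_m|_b=|q_m(b)|$. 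Therefore $b\in U_Z$ if and only if $\mathrm{pr}(b)=(\log|q_m(b)|)_m\in-\mathcal U_Z$.

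For a flag variety, Remark~\ref{rmk:tube-is-polydisc}, which rests on Proposition~\ref{prop:ampleness}, identifies $-\mathcal U_Z$ with the open negative orthant. The condition $\mathrm{pr}(b)\in-\mathcal U_Z$ then becomes $\log|q_m(b)|<0$, that is, $|q_m|<1$ for every $m$.

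The main obstacle is rigor in the gluing steps: checking that $\Bbbk$-points of unions and open subdomains are detected on the affinoid pieces, and that $\Bbbk$-points of products are tuples of $\Bbbk$-points. I would handle both by the density and continuity argument for characters given above. Everything else is direct bookkeeping.
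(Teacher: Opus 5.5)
Your proposal is correct and follows the paper's route. The paper's proof also argues factor by factor, using Example~\ref{ex:kpoints-polydisc}, the description of $\mathbb A^{1,\mathrm{an}}_{\Bbbk}$ as the union of closed discs, and the definition of $U_Z$, and it settles the flag case by Remark~\ref{rmk:tube-is-polydisc}; you simply make explicit the product, open-polydisc and annulus bookkeeping that the paper leaves implicit.
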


\begin{proof}
The statement follows factor by factor from Example
\ref{ex:kpoints-polydisc}, from the description of
$\mathbb A^{1,\mathrm{an}}_{\Bbbk}$ as the union of all closed discs, and
from the definition of $U_Z$.  The final assertion is Remark
\ref{rmk:tube-is-polydisc}.
\end{proof}

\begin{theorem}[\cite{KKPY}*{Lemma 3.29}]\label{thm:GWanalytic}
The Gromov--Witten potential is an analytic function on $B_Z$:
\[
  \Phi\in\Gamma(B_Z,\cO_{B_Z}).
\]
In particular, it has a value $\Phi(b)\in\Bbbk$ at every $\Bbbk$-point.
\end{theorem}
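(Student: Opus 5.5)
The plan is to show that $\Phi$ converges on every affinoid domain exhausting $B_Z$. It then glues to a section of $\cO_{B_Z}$, and evaluation at $\Bbbk$-points follows from Definition \ref{def:kpoints}. Write
\[
\Phi=\sum_{\beta\in\mathsf{NE}(Z)}\ \sum_{I}c_{\beta,I}\,q^{\beta}\,y^{I},
\qquad c_{\beta,I}\in\Q .
\]
Here $y^I$ runs over monomials in $y_0,y_{r+1},\dots,y_{r+\ell}$. The divisor coordinates have already been absorbed into $q$ by the divisor axiom. The key observation is that every nonzero $c_{\beta,I}$ lies in $\Q^*\subset\overline{\Q}^{\,*}$, so $|c_{\beta,I}|=1$. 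Convergence on an affinoid therefore reduces to a statement about the sup-norms of the monomials $q^\beta y^I$ alone, as anticipated in the heuristic discussion above.

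First I will dispose of $y_0$. By the fundamental class axiom, $y_0$ enters only the classical cubic term with $\beta=0$, so the dependence on $y_0$ is polynomial and is analytic on all of $\mathbb A^{1,\mathrm{an}}_\Bbbk$. Next comes the dimension axiom. A nonzero invariant $\langle T_{a_1},\dots,T_{a_m}\rangle_{0,\beta}$ with every $\deg T_{a_i}\ge4$ forces
\[
\sum_i\Big(\tfrac{\deg T_{a_i}}{2}-1\Big)=\dim Z-3+\int_\beta c_1(TZ).
\]
Since each summand is at least $1$, this gives $m=|I|\le\dim Z-3+\int_\beta c_1(TZ)$. In particular, for fixed $\beta$ only finitely many monomials $y^I$ occur.

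It remains to control the sum over $\beta$. Take an affinoid exhaustion of $B_Z$ by pieces of the form
\[
K=\{\log|q|\in K_0\}\times\{|y_j|\le s\}\times\{|y_0|\le R\},
\]
where $K_0\subset-\mathcal U_Z$ is compact rational polyhedral and $s<1$. On $K$ the sup-norm of $q^\beta y^I$ is at most $\exp(\max_{u\in K_0}\langle u,\beta\rangle)\cdot s^{|I|}\le\exp(\max_{u\in K_0}\langle u,\beta\rangle)$. Since $K_0$ lies in the interior of the negative of the ample cone, there is an $\varepsilon>0$ with $\langle u,\beta\rangle\le-\varepsilon\,\|\beta\|$ for all $\beta\in\overline{\mathsf{NE}}(Z)$. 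This is Kleiman's criterion applied to a compact subset of the open ample cone, together with the fact that effective classes lie in a lattice. Consequently the monomial norms tend to $0$ as $\beta\to\infty$. For each fixed $\beta$ only finitely many $I$ appear, so the whole family tends to $0$, and $\Phi$ lies in the Banach algebra of $K$. Compatibility under restriction between affinoids is automatic, since the same formal series is being used throughout.

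The main obstacle is the uniform Kleiman estimate. For a flag variety it is immediate from Remark \ref{rmk:tube-is-polydisc}: one has $|q^\beta|\le\max_m|q_m|^{\sum b_m}$ with $b_m\ge0$. In general one needs $\overline{\mathsf{NE}}$ to be strictly dual to the ample cone, plus a little care in choosing a lattice norm $\|\cdot\|$. There is also a minor subtlety in the $|y_j|<1$ factor: it is open, so it must be handled by the exhaustion with $s<1$ as above. This is where I would defer to the argument of \cite{KKPY}*{Lemma 3.29}.
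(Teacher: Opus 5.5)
Your argument is correct. It is the mechanism the paper itself points to before citing \cite{KKPY}: the coefficients are rational and so have absolute value one, and convergence on an affinoid exhaustion of $B_Z$ then reduces to the monomial sup-norm estimates you carry out. The ``main obstacle'' you flag is not a real obstacle. Kleiman's criterion holds on every projective variety, and your compactness argument on $K_0\times(\overline{\mathsf{NE}}(Z)\cap\{\|\beta\|=1\})$ already produces the uniform $\varepsilon$, so no deferral is needed; moreover, once $s<1$, the dimension-axiom bound on $|I|$ is not even required.
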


\medskip

In the even case, the tangent sheaf is free:
\[
  TB_Z\cong\cO_{B_Z}\otimes_{\Bbbk}\rH^{\mathrm{even}}(Z,\Bbbk),
\]
with global frame
\[
  \big\{q_m\partial_{q_m}\big\}_{m=1}^r
  \cup
  \big\{\partial_{y_0},\partial_{y_{r+1}},\dots,
          \partial_{y_{r+\ell}}\big\}.
\]
Under this identification $\partial_{y_i}$ corresponds to $T_i$ and
$q_m\partial_{q_m}$ to the divisor class $T_m$.  Theorem
\ref{thm:GWanalytic} implies that the third derivatives of $\Phi$ are
analytic; hence, the big quantum product is a global section
\[
  \star\in
  \Gamma\Big(B_Z,\operatorname{Sym}^2(T^*B_Z)\otimes TB_Z\Big).
\]
At every $\Bbbk$-point it is commutative and unital, and associativity is the
WDVV equation.

\medskip

\subsubsection{The Euler field and pre-F-manifolds}

\begin{definition}\label{def:euler}
The \emph{Euler vector field} is
\[
  \mathsf{Eu}
  =
  \sum_{m=1}^r c_mq_m\partial_{q_m}
  +
  \sum_{i\in\{0,r+1,\dots,r+\ell\}}
  \Big(1-\tfrac12\deg T_i\Big)y_i\partial_{y_i},
\]
where $c_1(TZ)=\sum_{m=1}^r c_mT_m$.  Equivalently, at a $\Bbbk$-point $b$,
\[
  \mathsf{Eu}_b
  =
  c_1(TZ)+\frac{2\cdot\Id-\mathsf{Deg}}{2}(b),
  \qquad
  \mathsf{Deg}:=\bigoplus_a a\cdot\Id_{\rH^a(Z)}.
\]
\end{definition}

\begin{definition}[\cites{CKK-lectures}]\label{def:preF}
A \emph{pre-F-manifold with an Euler vector field} is a $\Bbbk$-analytic
space $M$ equipped with
\[
  \star\in\Gamma\big(M,\operatorname{Sym}^2(T^*M)\otimes TM\big),
  \qquad
  \mathsf{Eu}\in\Gamma(M,TM),
\]
such that $\star$ makes $TM$ into a sheaf of commutative, associative,
unital $\cO_M$-algebras and
\[
  \operatorname{Lie}_{\mathsf{Eu}}\star=\star.
\]
\end{definition}

\begin{proposition}\label{prop:BZisPreF}
$\big(B_Z,\star,\mathsf{Eu}\big)$ is a pre-F-manifold with unit
$\partial_{y_0}$.
\end{proposition}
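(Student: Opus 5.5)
We have to check four things: the structures are global sections, $\star$ makes $TB_Z$ a sheaf of commutative associative unital $\cO_{B_Z}$-algebras with unit $\partial_{y_0}$, and $\operatorname{Lie}_{\mathsf{Eu}}\star=\star$. Global analyticity is already in hand. By Theorem \ref{thm:GWanalytic}, $\Phi\in\Gamma(B_Z,\cO_{B_Z})$, so its third derivatives in the global frame $\{q_m\partial_{q_m}\}\cup\{\partial_{y_i}\}$ are analytic. The structure constants $g^{ab}$ are rational numbers, so $\star$ is an analytic section of $\operatorname{Sym}^2(T^*B_Z)\otimes TB_Z$. $\mathsf{Eu}$ has analytic coefficients, so it is a section of $TB_Z$.

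\emph{Algebra axioms.} Commutativity is immediate from the symmetry of third partial derivatives, which commute because the frame fields pairwise commute. Associativity follows from the WDVV equations of Theorem \ref{thm:wdvv}. These hold as identities of formal power series in $\mathcal N_Z[[y]]$, and hence as identities of analytic functions. Here I would use that an analytic function on the connected space $B_Z$ is determined by its expansion, or more simply that restriction from analytic functions to formal expansions at the boundary point is injective on each polydisc chart.

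\emph{Unit.} For the unit I would use the fundamental class axiom. Invariants with an insertion of $T_0=1$ vanish unless $\beta=0$ and $m=3$. So $\partial_{y_0}\partial_{y_j}\partial_{y_a}\Phi=g_{ja}$, and therefore $\partial_{y_0}\star T_j=\sum g_{ja}g^{ab}T_b=T_j$.

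\emph{Homogeneity.} The identity $\operatorname{Lie}_{\mathsf{Eu}}\star=\star$ is the expected main computational step. I would first prove the quasi-homogeneity
\[
\mathsf{Eu}(\Phi)=(3-\dim Z)\Phi+(\text{quadratic polynomial in }y),
\]
checking it monomial by monomial. A monomial $q^\beta y_{a_1}\cdots y_{a_m}$ with nonzero coefficient satisfies the dimension axiom $\sum_k\tfrac12\deg T_{a_k}=\dim Z-3+m+\int_\beta c_1$. Its $\mathsf{Eu}$-weight is
\[
\int_\beta c_1+\sum_k\big(1-\tfrac12\deg T_{a_k}\big)=3-\dim Z.
\]
The divisor coordinates have been absorbed into $q$, and the coefficient $c_m$ in $\mathsf{Eu}$ reproduces exactly $\int_\beta c_1$. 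Consequently the third derivatives satisfy
\[
\mathsf{Eu}\big(\Phi_{ija}\big)=\big(3-\dim Z-w_i-w_j-w_a\big)\Phi_{ija}, \qquad w_i:=1-\tfrac12\deg T_i,
\]
because $[\mathsf{Eu},\partial_{y_i}]=-w_i\partial_{y_i}$ and $[\mathsf{Eu},q_m\partial_{q_m}]=0$, with weight $w_m=0$ for divisors. The quadratic remainder is killed by the three derivatives.

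Next I would combine this with the pairing. The inverse pairing $g^{ab}$ is nonzero only if $w_a+w_b=2-\dim Z$. So the structure constant $C_{ij}^b=\sum_a\Phi_{ija}g^{ab}$ has $\mathsf{Eu}$-weight $1+w_b-w_i-w_j$. Then
\[
(\operatorname{Lie}_{\mathsf{Eu}}\star)(\partial_i,\partial_j)=\mathsf{Eu}(C^b_{ij})\partial_b+C^b_{ij}[\mathsf{Eu},\partial_b]-\ldots,
\]
and the bracket terms contribute $(-w_b+w_i+w_j)C^b_{ij}$. Summing, one gets exactly $C^b_{ij}\partial_b$, i.e.\ $\operatorname{Lie}_{\mathsf{Eu}}\star=\star$.

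The main obstacle is sign and normalization bookkeeping in the weights. I would double-check it on the divisor directions, where $\mathsf{Eu}$ acts by $c_mq_m\partial_{q_m}$ rather than by scaling a $y$-coordinate.
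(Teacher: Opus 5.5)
Your proposal is correct and follows the same route as the paper: analyticity from Theorem \ref{thm:GWanalytic}, associativity from WDVV, the unit from the fundamental class axiom ($\partial_{y_0}\partial_{y_i}\partial_{y_j}\Phi=g_{ij}$), and $\operatorname{Lie}_{\mathsf{Eu}}\star=\star$ by differentiating the dimension-axiom quasi-homogeneity of $\Phi$ three times and contracting with $g^{ab}$. Your explicit weight bookkeeping is exactly the computation the paper leaves implicit, and it checks out: the structure constants have weight $1+w_b-w_i-w_j$, the bracket terms contribute $w_i+w_j-w_b$, and this holds also in the divisor directions, where $[\mathsf{Eu},q_m\partial_{q_m}]=0$ and $w_m=0$.
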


\begin{proof}
Analyticity and associativity were established above, and
$\partial_{y_0}$ is the unit because
$\partial_{y_0}\partial_{y_i}\partial_{y_j}\Phi=g_{ij}$.
For the Euler identity, the dimension axiom gives the usual
quasi-homogeneity of the genus-zero potential with respect to the field of
Definition \ref{def:euler}. Differentiating this identity three times and
contracting with the inverse pairing Poincar\'e gives 
$\operatorname{Lie}_{\mathsf{Eu}}\star=\star$.
\end{proof}

\medskip

For a $\Bbbk$-point $b\in B_Z(\Bbbk)$, set
\[
  \boldsymbol\kappa_b:=\mathsf{Eu}_b\star_b
  \colon\rH^*(Z,\Bbbk)\longrightarrow\rH^*(Z,\Bbbk).
\]
Since $\Bbbk$ is algebraically closed, its generalized eigenspaces decompose
$\rH^*(Z,\Bbbk)$.  We call \emph{small point} a point $b_q$ for which all
non-Novikov coordinates vanish.  At a small point
$\mathsf{Eu}_{b_q}=c_1(TZ)$, hence $\boldsymbol\kappa_{b_q}$ is small quantum
multiplication by the anticanonical class.

\medskip

\subsection{Coarse Hodge atoms}\label{sec:atoms}

The decomposition of $\rH^{*}$ into generalized eigenspaces of $\boldsymbol\kappa_{b}$
is a decomposition of vector spaces.  To turn it into an invariant one has
to remember the Hodge theory.

\medskip

\subsubsection{The Hodge group}

From this point on, $\rH^*(Z)$ denotes the \emph{full} cohomology.  This is
important: the localization formula of Theorem \ref{thm:localizationCI}
computes only the flag-ambient block of the quantum operator, whereas the
Hodge group and Gu\'er\'e's invariants are defined on the full cohomology.  In
the fourfold applications below, Lemma \ref{lem:automatic} gives
$\rH^{\mathrm{odd}}(X)=0$, so the ordinary analytic space of Definition
\ref{def:amodel} is the full A-model manifold.

\medskip

Let $V:=\rH^*(Z,\Q)$, regarded as a graded polarizable $\Q$-Hodge structure:
the summand $\rH^a(Z,\Q)$ has weight $a$.  Let
$h\colon\mathbb S\to\mathrm{GL}(V_{\R})$ be the corresponding representation
of the Deligne torus and let
$\mathrm{Nm}\colon\mathbb S\to\mathbb G_{m,\R}$ be the norm.  The
\emph{Mumford--Tate group} $\mathsf{MT}(V)$ is the smallest $\Q$-algebraic
subgroup of $\mathrm{GL}(V)\times\mathbb G_m$ whose real points contain the
image of $(h,\mathrm{Nm})$.  If
$f\colon\mathsf{MT}(V)\to\mathbb G_m$ denotes the Tate character, the
\emph{Hodge group} is
\[
  \mathsf{Hod}:=\ker f.
\]
Its invariant vectors are precisely the rational Hodge classes:
\[
  V^{\mathsf{Hod}}
  =
  \bigoplus_{p\ge0}
  \big(\rH^{2p}(Z,\Q)\cap\rH^{p,p}(Z)\big)
  =:\rH^*(Z)^{\mathrm{Hdg}}.
\]

Write $\mathsf{Hod}_{\Bbbk}$ for the base change to $\Bbbk$.

\begin{lemma}\label{lem:hod-preserves}
The Hodge group preserves each graded piece $\rH^a(Z,\Q)$ and the Poincar\'e
pairing.
\end{lemma}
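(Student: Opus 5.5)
The plan is to reduce both assertions to the way the Deligne torus acts on $V=\rH^*(Z,\Q)$, and then use the defining minimality of $\mathsf{MT}(V)$. For each even or odd $a$, consider the grading operator $\pi_a\in\End(V)$, the projection onto $\rH^a(Z,\Q)$. It is defined over $\Q$. Because $h\colon\mathbb S\to\mathrm{GL}(V_{\R})$ acts on $\rH^a(Z,\R)$ through a Hodge structure of pure weight $a$, each $h(z)$ preserves every $\rH^a$ and so commutes with every $\pi_a$. The centralizer of the $\pi_a$ in $\mathrm{GL}(V)\times\mathbb G_m$ is $\prod_a\mathrm{GL}(\rH^a(Z,\Q))\times\mathbb G_m$. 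This is a $\Q$-algebraic subgroup whose real points contain the image of $(h,\mathrm{Nm})$. By minimality, $\mathsf{MT}(V)$, and hence $\mathsf{Hod}=\ker f$, lies inside it. This gives the first claim.

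For the pairing, I would work with the Poincaré pairing $\langle x,y\rangle=\int_Z x\smile y$. It pairs $\rH^a$ with $\rH^{2n-a}$, where $n=\dim Z$, and takes values in $\rH^{2n}(Z,\Q)\cong\Q(-n)$.

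1. Check that $\langle\,,\rangle\colon V\otimes V\to\Q(-n)$ is a morphism of Hodge structures. This is standard: cup product and the trace map are morphisms of Hodge structures.
2. Conclude that for $z\in\mathbb S(\R)$ one has $\langle h(z)x,h(z)y\rangle=\mathrm{Nm}(z)^{n}\langle x,y\rangle$, possibly up to a sign convention on $\mathrm{Nm}$.
3. Define the subgroup $G_{\mathrm{pair}}=\{(g,t):\langle gx,gy\rangle=t^{n}\langle x,y\rangle\}$. It is $\Q$-algebraic, since the pairing is rational, and by step 2 its real points contain the image of $(h,\mathrm{Nm})$. So $\mathsf{MT}(V)\subset G_{\mathrm{pair}}$.
4. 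On $\mathsf{Hod}=\ker f$ the Tate character $f$, the projection to the $\mathbb G_m$ factor, is trivial. Hence $t=1$ and $\mathsf{Hod}$ preserves $\langle\,,\rangle$ exactly.

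The main obstacle I expect is step 2. The difficulty is matching the normalization of the Tate character with the weight $2n$ of the target $\Q(-n)$, and doing so uniformly across the graded pieces, since the pairing mixes $\rH^a$ and $\rH^{2n-a}$ of different weights. I would handle it summand by summand. On $\rH^a\otimes\rH^{2n-a}$ the tensor product has pure weight $2n$, and $h(z)$ acts on $\Q(-n)$ by $(z\bar z)^{n}=\mathrm{Nm}(z)^n$. So the scalar is the same for every pair $(a,2n-a)$, which is exactly what makes $G_{\mathrm{pair}}$ a single algebraic condition.

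If the paper's convention instead makes $\mathrm{Nm}$ appear with weight $-1$, I would simply replace $t^n$ by $t^{-n}$; the conclusion on $\ker f$ is unchanged.
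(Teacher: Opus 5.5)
Your proof is correct and follows the paper's argument. Degree-preservation passes from the Deligne torus to its $\Q$-algebraic envelope by minimality. The Poincar\'e pairing, being a morphism of Hodge structures to $\Q(-\dim Z)$, is preserved by $\mathsf{MT}(V)$ up to a power of the Tate character $f$, hence exactly on $\mathsf{Hod}=\ker f$. You simply make the paper's terse envelope step explicit through the subgroups $\prod_a\mathrm{GL}(\rH^a)\times\mathbb G_m$ and $G_{\mathrm{pair}}$. With the convention of Remark~\ref{rmk:mu}, the multiplier is $t^{-n}$, as you anticipated, which changes nothing on $\ker f$.
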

\begin{proof}
The Deligne-torus action preserves every cohomological degree, hence so does
its $\Q$-algebraic envelope.  For the pairing, the cup product
\[
  \rH^a(Z,\Q)\otimes\rH^{2\dim Z-a}(Z,\Q)
  \longrightarrow\Q(-\dim Z)
\]
is a morphism of Hodge structures.  The Mumford--Tate group acts on the Tate
object through a power of $f$, which is trivial on $\mathsf{Hod}=\ker f$.
\end{proof}

The decomposition
\[
  \rH^{\mathrm{even}}(Z,\Q)
  =
  \rH^0(Z,\Q)\oplus\rH^2(Z,\Q)\oplus V^{\mathrm{ev}}_{\Q}
\]
is therefore $\mathsf{Hod}$-stable.  The group acts trivially on the first
two summands: the unit is a Hodge class, and divisor classes are of type
$(1,1)$.  Put
\[
  V^{\mathrm{ev}}_{\mathrm{Hdg}}
  :=
  \big(V^{\mathrm{ev}}_{\Q}\cap\rH^*(Z)^{\mathrm{Hdg}}\big)
  \otimes_{\Q}\Bbbk,
\]
and denote by $\mathbb D(V^{\mathrm{ev}}_{\Bbbk})$ the open unit polydisc
appearing in Definition \ref{def:amodel}.

\begin{definition}\label{def:BZHod}
The \emph{Hodge locus} of $B_Z$ is
\[
  B_Z^{\mathsf{Hod}}
  :=
  U_Z\times\mathbb A^{1,\mathrm{an}}_{\Bbbk}\times
  \Big(
      \mathbb D(V^{\mathrm{ev}}_{\Bbbk})
      \cap
      (V^{\mathrm{ev}}_{\mathrm{Hdg}})^{\mathrm{an}}
  \Big).
\]
Thus the non-Novikov even parameter is constrained to the subspace of Hodge
classes.  In particular, every small point belongs to
$B_Z^{\mathsf{Hod}}$.  When odd cohomology is present, this denotes the
underlying even locus in the full analytic supermanifold, with the odd
factor retained in the tangent superspace.
\end{definition}

\begin{theorem}\label{thm:hodgeequivariance}
For every $b\in B_Z^{\mathsf{Hod}}$, the product $\star_b$ and the Euler
field $\mathsf{Eu}_b$ are $\mathsf{Hod}_{\Bbbk}$-equivariant.
\end{theorem}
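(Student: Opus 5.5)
The plan is to reduce equivariance of $\star_b$ to $\mathsf{Hod}$-invariance of the Gromov--Witten classes, and then to specialize at the Hodge-locus point $b$.

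\emph{Step 1: invariance of the virtual classes.} For each $m$ and $\beta$, consider the class
\[
\mathrm{ev}_*\big[\overline{\mathcal M}_{0,m}(Z,\beta)\big]^{\mathrm{vir}}\in \rH^*(Z^m,\Q).
\]
It is algebraic, so it is a Hodge class, up to the appropriate Tate twist, in the Hodge structure $\rH^*(Z,\Q)^{\otimes m}$ given by K\"unneth. The Mumford--Tate group of $V^{\otimes m}$ is a quotient of $\mathsf{MT}(V)$, and $\mathsf{Hod}=\ker f$ acts trivially on Tate twists. Hence $\mathsf{Hod}$ fixes this class. Equivalently, the multilinear form
\[
(\gamma_1,\dots,\gamma_m)\mapsto\langle\gamma_1,\dots,\gamma_m\rangle_{0,\beta}
\]
is $\mathsf{Hod}$-invariant, as a form on $V$ with values in $\Q$. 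By Lemma~\ref{lem:hod-preserves}, the Poincar\'e pairing $g$ is also invariant, and therefore so is its inverse $g^{-1}$.

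\emph{Step 2: invariance of the product at $b$.} Write $b=(q,y_0,y_D,y)$, where $y\in V^{\mathrm{ev}}_{\mathrm{Hdg}}$ is the point $\gamma_b:=\sum y_iT_i$. The third derivatives of $\Phi$ at $b$ are
\[
\partial_i\partial_j\partial_a\Phi(b)=\sum_{\beta}\sum_{m}\frac{q^\beta}{m!}\langle T_i,T_j,T_a,\gamma_b,\dots,\gamma_b\rangle_{0,\beta},
\]
after absorbing the unit and divisor coordinates through the fundamental class and divisor axioms. These sums converge by Theorem~\ref{thm:GWanalytic}. Since $\gamma_b$ is a Hodge class, it is fixed by $\mathsf{Hod}_\Bbbk$. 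Let $g\in\mathsf{Hod}(\Bbbk)$. Invariance from Step~1 gives, term by term,
\[
\langle g x_1,g x_2,g x_3,\gamma_b,\dots\rangle=\langle x_1,x_2,x_3,g^{-1}\gamma_b,\dots\rangle=\langle x_1,x_2,x_3,\gamma_b,\dots\rangle.
\]
The divisor and unit insertions are likewise fixed, since the group acts trivially on $\rH^0\oplus\rH^2$. Combining this with the invariance of $g^{-1}$ shows that
\[
g(x\star_b y)=gx\star_b gy.
\]
Invariance holds on $\Bbbk$-points, which suffices because $\mathsf{Hod}_\Bbbk$ is reduced and $\Bbbk$ is algebraically closed. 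For the non-$\Bbbk$ points of $B_Z^{\mathsf{Hod}}$ we argue by density, or by working with the structure sheaf: the equivariance identity is an identity of analytic functions on $B_Z^{\mathsf{Hod}}$, and it holds on the dense set of $\Bbbk$-points.

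\emph{Step 3: the Euler field.} We have
\[
\mathsf{Eu}_b=c_1(TZ)+\tfrac12(2-\mathsf{Deg})(\gamma_b),
\]
up to the $y_0$ term, which is a multiple of the unit. Here $c_1(TZ)$ and the unit are Hodge classes, and $\mathsf{Deg}$ commutes with $\mathsf{Hod}$ by Lemma~\ref{lem:hod-preserves}. So $\mathsf{Eu}_b$ is a $\mathsf{Hod}$-fixed vector. Combining this with Step~2, $\boldsymbol\kappa_b=\mathsf{Eu}_b\star_b(-)$ commutes with $\mathsf{Hod}_\Bbbk$.

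The main obstacle is Step~1 in the non-convex setting, where one needs the invariance of virtual classes rather than honest fundamental classes. I will rely on the fact that virtual fundamental classes are defined in Chow groups, so their images under the cycle map are algebraic and hence Hodge classes. In the presence of odd cohomology there is a further subtlety: the K\"unneth decomposition and the super-signs must be handled. This is routine, because $\mathsf{Hod}$ preserves degrees and hence parity.
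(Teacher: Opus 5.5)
Your proposal is correct and follows essentially the same route as the paper. Algebraicity of the (pushed-forward) virtual class makes the correlators Hodge tensors fixed by $\mathsf{Hod}=\ker f$, and invariance of the Poincar\'e pairing handles the contraction; Hodge-class insertions at $b\in B_Z^{\mathsf{Hod}}$ are fixed, and $\mathsf{Eu}_b$ is fixed because $c_1(TZ)$ is a Hodge class and $\mathsf{Hod}$ commutes with $\mathsf{Deg}$. Your version just spells out the term-by-term invariance of the convergent series and the nondegeneracy argument for $g(x\star_b y)=gx\star_b gy$, which the paper leaves implicit.
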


\begin{proof}
The genus-zero Gromov--Witten tensors are obtained by pulling back
cohomology classes by evaluation maps and integrating their product against
the virtual fundamental class.  The latter is algebraic; consequently, the
correlator tensors are morphisms of Hodge structures, with the Tate twist
dictated by the virtual dimension.  After contraction with the inverse
Poincar\'e pairing, the structure constants of quantum multiplication are
therefore Hodge tensors.  Since $\mathsf{Hod}=\ker f$, it acts trivially on
the Tate factors.  Moreover, for
$b\in B_Z^{\mathsf{Hod}}$, the non-Novikov parameters are Hodge classes and
are fixed by $\mathsf{Hod}$, while the Novikov parameters are not acted on.
Thus
\[
  g(u\star_bv)=(gu)\star_b(gv).
\]
Finally, $c_1(TZ)$ is a Hodge class and Lemma \ref{lem:hod-preserves} shows
that $\mathsf{Hod}$ commutes with the grading operator, so
$g\mathsf{Eu}_b=\mathsf{Eu}_b$.
\end{proof}

\medskip

\subsubsection{Coarse Hodge atoms}
 
Fix $b\in B_{Z}^{\mathsf{Hod}}(\Bbbk)$ and let
$\boldsymbol\kappa_{b}=\mathsf{Eu}_{b}\star_{b}\in\End\big(T_{b}B_{Z}\big)$.
Since $\Bbbk$ is algebraically closed, the generalized eigenspaces of
$\boldsymbol\kappa_{b}$ decompose the tangent space:
\[
  T_{b}B_{Z}=\bigoplus_{\lambda\in\operatorname{spec}(\boldsymbol\kappa_{b})}E_{b,\lambda}.
\]
Because $b$ lies in the Hodge locus, Theorem \ref{thm:hodgeequivariance}
applies: the group $\mathsf{Hod}_{\Bbbk}$ acts linearly on
$T_{b}B_{Z}\cong \rH^{*}(Z,\Bbbk)$ and commutes with $\boldsymbol\kappa_{b}$.
Each generalized eigenspace is therefore $\mathsf{Hod}_{\Bbbk}$-stable, so
the displayed decomposition is one of $\mathsf{Hod}_{\Bbbk}$-representations.

Let $D(b)$ denote the number of distinct eigenvalues of $\boldsymbol\kappa_{b}$,
that is, the number of summands in the displayed decomposition, and set
\[
  \mathcal V:=\Big\{\,b\in B_{Z}^{\mathsf{Hod}}(\Bbbk)\ \Big|\
    D(b)=\max_{a\in B_{Z}^{\mathsf{Hod}}}D(a)\,\Big\}
  \ .
\]

\begin{lemma}\label{lem:V-open-dense}
The locus $\mathcal V$ is the set of $\Bbbk$-points of a nonempty analytic
open subset of $B_Z^{\mathsf{Hod}}$.
\end{lemma}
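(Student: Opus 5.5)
The plan is to express the number of distinct eigenvalues through a discriminant-type analytic function, and then deduce openness and non-emptiness from lower semicontinuity. Fix a basis of $\rH^*(Z,\Q)$, so that $\boldsymbol\kappa_b$ is an $N\times N$ matrix whose entries are analytic functions on $B_Z$. Indeed, the entries are obtained from third derivatives of $\Phi$, which are analytic by Theorem~\ref{thm:GWanalytic}, contracted with the constant inverse pairing and multiplied by the coefficients of $\mathsf{Eu}$, which are polynomial in the coordinates. Restricting to the Hodge locus $B_Z^{\mathsf{Hod}}$, which is a closed analytic subspace since it is cut out by linear conditions on the even coordinates, the entries remain analytic. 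The characteristic polynomial
\[
\chi_b(t)=\det\big(t\cdot\Id-\boldsymbol\kappa_b\big)=t^N+a_1(b)t^{N-1}+\dots+a_N(b)
\]
then has coefficients $a_i\in\Gamma(B_Z^{\mathsf{Hod}},\cO)$.

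Next, I would express $D(b)$ algebraically in terms of the $a_i$. Since $\Bbbk$ is algebraically closed, $D(b)$ equals the degree of $\chi_b/\gcd(\chi_b,\chi_b')$. Equivalently, $D(b)$ is the rank of the Bezoutian (or Hankel) matrix of power sums $H(b)=\big(p_{i+j}(b)\big)_{0\le i,j\le N-1}$, where $p_k=\tr(\boldsymbol\kappa_b^k)$. This is the classical fact that the Hankel form $\sum_\lambda m_\lambda\lambda^{i+j}$ has rank equal to the number of distinct roots; it holds in characteristic zero because the multiplicities $m_\lambda$ are nonzero in $\Bbbk$. Thus $D(b)=\rk H(b)$, and $H$ is a matrix of analytic functions, namely polynomials in the entries of $\boldsymbol\kappa$.

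Set $D_{\max}=\max D\le N$; the maximum exists because $D$ takes finitely many values. Rank is lower semicontinuous, so
\[
\{D\ge D_{\max}\}=\bigcup_{M}\{\,x\mid |\det M|_x\neq0\,\},
\]
where $M$ runs over the $D_{\max}\times D_{\max}$ minors of $H$. Each set $\{|f|_x\ne0\}$ is open in the Berkovich topology, since $x\mapsto|f|_x$ is continuous by definition of the topology. Let $\mathcal W$ denote this union; it is an analytic open subset. For a $\Bbbk$-point $b$ we have $|\det M|_b=|\det M(b)|$, which is nonzero exactly when $\det M(b)\neq0$, so $\mathcal W(\Bbbk)=\mathcal V$. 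Non-emptiness holds because the maximum is attained at some $\Bbbk$-point, and that point lies in $\mathcal W$.

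The step I expect to require care is a subtlety in the definition: the maximum in $\mathcal V$ is written over all $a\in B_Z^{\mathsf{Hod}}$, not only over $\Bbbk$-points. If non-rigid points are allowed, one should define $D$ there through the rank of $H$ over the completed residue field $\mathscr H(x)$, which also makes sense as an algebraically closed extension after base change. Then I must check that the maximum over all points equals the maximum over $\Bbbk$-points. For this, I would argue that the nonvanishing locus of a nonzero analytic function on the irreducible smooth space $B_Z^{\mathsf{Hod}}$, a product of a tube domain, a line and a polydisc slice, contains $\Bbbk$-points, since $\Bbbk$-points are dense. Hence if some minor is nonzero at some point, it is nonzero at some $\Bbbk$-point. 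Density of rigid points in such domains is standard, and applying it here is the main technical input.
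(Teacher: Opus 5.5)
Your proof is correct and takes essentially the same route as the paper. You take the characteristic polynomial of $\boldsymbol\kappa_b$ with analytic coefficients, use an algebraic criterion for the number of distinct roots to cut out the non-maximal locus as a closed analytic subset, and get non-emptiness from the definition of the maximum; the paper's criterion is the subresultants of $p_b$ and $\partial_\lambda p_b$ where yours is the rank of the Hankel matrix of power sums, and the two are equivalent, while your density-of-rigid-points step is only needed if the maximum is taken over non-$\Bbbk$-points, which is not the paper's reading since $D$ is defined only at $\Bbbk$-points.
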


\begin{proof}
Choose a trivialization of $TB_Z$ and write
$p_b(\lambda)=\det(\lambda-\boldsymbol\kappa_b)$.  The coefficients of
$p_b$ are analytic functions of $b$.  The locus on which $p_b$ has fewer
than the maximal number of distinct roots is cut out by the appropriate
subresultants of $p_b$ and $\partial_\lambda p_b$, hence is a closed analytic
subset.  Its complement is nonempty by definition of the maximum.
\end{proof}

\begin{definition}\label{def:atom}
A \emph{coarse Hodge atom} of $Z$ is the isomorphism class, as a
representation of $\mathsf{Hod}_{\overline{\Q}}$, of a generalized
eigenspace $E_{b,\lambda}$ with $b\in\mathcal V(\Bbbk)$. This is well
defined because of Theorem \ref{thm:atomwelldefined} below.  The \emph{atomic
decomposition} of $Z$ is the resulting multiset of atoms, indexed by
$\operatorname{spec}(\boldsymbol\kappa_{b})$.
\end{definition}
 
\begin{theorem}[\cite{KKPY}*{Lemma 5.25}]\label{thm:atomwelldefined}
The isomorphism class of $E_{b,\lambda}$ does not depend on the choice of
$b\in\mathcal V(\Bbbk)$, and it is defined over $\overline{\Q}$. 
\end{theorem}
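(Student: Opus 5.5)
The plan is to exploit two facts: $\mathcal V$ is an open subset of the connected Hodge locus, and each eigenvalue branch over it carries a $\mathsf{Hod}_{\Bbbk}$-module of locally constant isomorphism type. Concretely, I would show that the multiset of isomorphism classes of the $E_{b,\lambda}$ is locally constant on $\mathcal V$, and then use a connectedness argument together with the rigidity of representations of reductive groups to conclude constancy. Descent to $\overline{\Q}$ then follows because $\mathsf{Hod}$ is defined over $\Q$.

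\emph{Local constancy.} Fix $b_0\in\mathcal V(\Bbbk)$ and an eigenvalue $\lambda_0$. Since $D$ is maximal on $\mathcal V$, the roots of $p_b$ stay distinct and vary analytically near $b_0$. Let $\gamma$ be a small circle around $\lambda_0$ in the $\Bbbk$-line separating it from the other eigenvalues, and consider the Riesz-type spectral projector
\[
\pi_b=\prod_{\mu\neq\lambda(b)}(\boldsymbol\kappa_b-\mu(b))^{N}\,\cdot\,(\text{inverse on the image}).
\]
In the non-Archimedean setting, I would realize this algebraically via Bezout/Lagrange interpolation polynomials in $\boldsymbol\kappa_b$ whose coefficients are analytic in $b$. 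Then $\pi_b$ is an analytic family of idempotents commuting with $\mathsf{Hod}_{\Bbbk}$ by Theorem~\ref{thm:hodgeequivariance}, so $E_{b,\lambda(b)}=\operatorname{im}\pi_b$ is an analytic family of $\mathsf{Hod}_{\Bbbk}$-subrepresentations of constant dimension.

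\emph{Rigidity.} $\mathsf{Hod}$ is reductive, since $V$ is polarizable, so its representations in characteristic zero are semisimple and determined by characters. The character of $E_{b,\lambda(b)}$ evaluated at any $g\in\mathsf{Hod}(\Bbbk)$ is $\tr(g\pi_b)$, an analytic function of $b$ with values in the traces of a fixed finite set of irreducibles with integer multiplicities. Equivalently, the multiplicity of each irreducible $\rho$ equals $\dim\operatorname{Hom}_{\mathsf{Hod}}(\rho,E_{b})$, which is computed by the rank of an analytic projector onto isotypic components. An idempotent's rank equals its trace, which is an integer-valued analytic function and hence locally constant. This gives local constancy of the isomorphism class along each eigenvalue branch.

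\emph{Global constancy and descent.} This is the main obstacle: passing from local to global on $\mathcal V$, because eigenvalue branches can undergo monodromy around the removed discriminant locus. That monodromy only permutes the labels, not the multiset of isomorphism classes, so the multiset is constant provided $\mathcal V$ is connected. I would obtain connectedness from the fact that $B_Z^{\mathsf{Hod}}$ is a product of connected Berkovich domains (a tube domain over a convex cone, an affine line and a polydisc), and that removing a nowhere-dense closed analytic subset from a connected smooth Berkovich space leaves it connected. This is a Berkovich analogue of Riemann extension, for which I would cite \cite{KKPY}; I expect this to be the delicate point. Finally, the irreducible $\mathsf{Hod}_{\Bbbk}$-representations are base changes of $\mathsf{Hod}_{\overline{\Q}}$-representations, since the group is defined over $\Q$, $\overline{\Q}\subset\Bbbk$ are both algebraically closed of characteristic zero, and representation theory of reductive groups is insensitive to such extensions. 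Hence the classes are defined over $\overline{\Q}$.
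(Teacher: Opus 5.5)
The paper gives no argument of its own here; it imports the statement from \cite{KKPY}*{Lemma 5.25}. Judged on its own terms, three parts of your outline are fine: the rigidity step (semisimplicity of $\mathsf{Hod}_{\Bbbk}$-modules, with multiplicities read off as traces of commuting idempotents), the descent to $\overline{\Q}$, and local constancy near a $\Bbbk$-point $b_0$, where the implicit function theorem gives analytic eigenvalue branches on a small polydisc. The gap is the local-to-global step. The set $\mathcal V(\Bbbk)$ with its induced topology is totally disconnected, so local constancy there cannot be combined with connectedness of the Berkovich open set underlying $\mathcal V$. For example, on the closed unit disc the indicator function of $\{|x|<1\}$ is locally constant on $\Bbbk$-points, yet the Berkovich disc is connected. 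To use connectedness you need local constancy at every point of the Berkovich space, including the non-rigid ones. There your projectors $\pi_b$ need not exist, because eigenvalue branches need not be single-valued on any neighbourhood of such a point. An eigenvalue behaving like $q^{1/3}$ in a Novikov variable, as in the small quantum matrix of the cubic fourfold, has no single-valued branch near the Gauss point of a circle $|q|=r$. Your remark that monodromy only permutes labels is correct, but it does not give local constancy of the multiset at those points.

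What is missing is an algebraic, Zariski-type input that lets information at $\Bbbk$-points propagate. Write $V_{\Bbbk}=\bigoplus_\rho\rho\otimes M_\rho$, so that $\boldsymbol\kappa_b$ acts through analytic families $K_{\rho,b}\in\End(M_\rho)$. The multiset of atoms at $b$ is then determined, by inclusion--exclusion, by integers of the following kind: the number of distinct eigenvalues $\lambda$ of $\boldsymbol\kappa_b$ at which $\det(x-K_{\rho,b})$ vanishes to order at least $j_\rho$ for every $\rho$. Each such integer is the degree of a gcd of analytic families of polynomials (the squarefree part of $p_b$ together with the relevant derivatives), so its superlevel sets are Zariski closed, being cut out by subresultants. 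Your rigid-point argument shows that such a superlevel set, if it meets $\mathcal V(\Bbbk)$, contains a nonempty open set of $\Bbbk$-points. The identity theorem on an irreducible (smooth and connected) space then forces it to be everything, so these integers, and with them the multiset, are constant on $\mathcal V(\Bbbk)$. Equivalently, you can work on the spectral cover cut out by the squarefree part of $p_b$, which is finite \'etale over the open set underlying $\mathcal V$. Run the same argument on each connected component of the cover, and use that each component surjects onto the base. Either way you still need the connectedness, hence irreducibility, of that open set. That is the standard fact you invoke about complements of nowhere-dense analytic subsets of normal connected spaces, which follows from the Riemann extension theorem for normal rigid spaces.
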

 
For an atom $\alpha$, we write $E^{\alpha}$ for an abstract
$\overline{\Q}$-linear representation of $\mathsf{Hod}_{\overline{\Q}}$ in
that class, and $E^{\alpha}_{\C}:=E^{\alpha}\otimes_{\overline{\Q}}\C$.
 
\begin{definition}\label{def:atominvariants}
Let $\alpha$ be a coarse Hodge atom.
\begin{defenum}
\item Its \emph{number of Hodge cycles} is
      $\rho_{\alpha}:=\dim_{\C}\big(E^{\alpha}_{\C}\big)^{\mathsf{Hod}_{\C}}$. \label{item:Hodge-class}
\item Its \emph{Hodge polynomial} is
      $P_{\alpha}(t):=\sum_{j}\dim_{\C}\big(E^{\alpha}_{\C}\big)_{j}\,t^{j}$,
      where $\big(E^{\alpha}_{\C}\big)_{j}$ is the subspace on which the
      cocharacter
      \[
        \mu\colon\mathbb G_{m,\C}\longrightarrow\mathsf{Hod}_{\C},
        \qquad \mu(u):=h_{\C}\big(u^{-1},u\big),
      \]
      acts by multiplication by $u^{j}$.  This grading is called the
      \emph{Hochschild grading}. \label{item:polynomial}
\end{defenum}
\end{definition}
 
\begin{remark}\label{rmk:mu}
\begin{itemize}
    \item Under the identification $\mathbb S_{\C}\cong\mathbb G_{m,\C}\times\mathbb G_{m,\C}$
the homomorphism $h_{\C}(u,v)$ acts on $\rH^{p,q}$ by $u^{-p}v^{-q}$ and the
norm is $\mathrm{Nm}_{\C}(u,v)=uv$.  Hence $\mu(u)=h_{\C}(u^{-1},u)$ has
norm $u^{-1}u=1$. Thus, its image lies in $\mathsf{Hod}_{\C}=\ker f$. Moreover, $\mu(u)$ acts on $\rH^{p,q}$ by
\[
  (u^{-1})^{-p}\,u^{-q}=u^{\,p-q}.
\]
\item   The polynomial
$P_{\alpha}$ does not depend on the choice of $\mu$ within its conjugacy
class.
\end{itemize}
\end{remark}
 
\begin{theorem}[\cite{KKPY}*{Proposition 5.28}]\label{thm:atomhodgeclass}
Every coarse Hodge atom $\alpha$ satisfies $\rho_{\alpha}\ge1$.
\end{theorem}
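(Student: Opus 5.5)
The idea is to exhibit, for a point $b\in\mathcal V(\Bbbk)$ and an eigenvalue $\lambda$, a nonzero $\mathsf{Hod}$-invariant vector inside $E_{b,\lambda}$. By Theorem~\ref{thm:atomwelldefined}, $\rho_\alpha$ does not depend on the choice of $b\in\mathcal V(\Bbbk)$. So it is enough to show that the $\mathsf{Hod}_{\Bbbk}$-invariants of every generalized eigenspace $E_{b,\lambda}$ are nonzero at one such $b$.

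The invariant subspace $W:=\rH^*(Z,\Bbbk)^{\mathsf{Hod}_{\Bbbk}}=\rH^*(Z)^{\mathrm{Hdg}}\otimes\Bbbk$ is stable under $\boldsymbol\kappa_b$, because $\boldsymbol\kappa_b$ commutes with $\mathsf{Hod}_{\Bbbk}$ by Theorem~\ref{thm:hodgeequivariance}. Since $\mathsf{Hod}$ is reductive (it acts on a polarizable Hodge structure), we can write
\[
E_{b,\lambda}=E_{b,\lambda}^{\mathsf{Hod}}\oplus E'_{b,\lambda},
\]
where $E'_{b,\lambda}$ is the sum of the nontrivial isotypic components. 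Each piece is $\boldsymbol\kappa_b$-stable. The claim thus reduces to showing that every eigenvalue of $\boldsymbol\kappa_b$ on the full space already occurs as an eigenvalue of $\boldsymbol\kappa_b|_W$.

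The key mechanism is the unit together with the Euler field. The unit $\mathbf 1$ lies in $W$, and the operator $\boldsymbol\kappa_b=\mathsf{Eu}_b\star_b$ is multiplication by an element of the algebra $(\rH^*(Z,\Bbbk),\star_b)$. The generalized eigenspaces of a multiplication operator on a commutative unital finite-dimensional algebra $A$ are exactly the factors $A=\prod_\lambda A_\lambda$ of the algebra decomposition. The idempotent $e_\lambda$ projecting to $A_\lambda$ is a polynomial in $\boldsymbol\kappa_b$ applied to $\mathbf 1$, namely $e_\lambda=p_\lambda(\boldsymbol\kappa_b)\mathbf 1$ for the standard interpolating polynomial $p_\lambda$. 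Because $\mathbf 1$ is $\mathsf{Hod}$-invariant and $\boldsymbol\kappa_b$ is $\mathsf{Hod}$-equivariant, $e_\lambda$ is a $\mathsf{Hod}_{\Bbbk}$-invariant vector. It is nonzero, being the unit of the nonzero algebra $A_\lambda$, and it lies in $E_{b,\lambda}$. Hence $\rho_\alpha\ge1$.

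The main obstacle is to justify that the decomposition of $\rH^*$ into generalized eigenspaces of $\mathsf{Eu}_b\star_b$ coincides with the algebra factorization. This holds because $\boldsymbol\kappa_b$ is multiplication by a fixed element of a commutative associative unital algebra (Proposition~\ref{prop:BZisPreF}, using WDVV). Then $\ker(\boldsymbol\kappa_b-\lambda)^N$ is an ideal, and the Chinese remainder theorem applied to $\Bbbk[\boldsymbol\kappa_b]\subset A$ produces the idempotents. One must also check, in the supermanifold case, that odd coordinates do not spoil this. Here I would use that at points of $B_Z^{\mathsf{Hod}}$ the odd parameters vanish, so the algebra is an ordinary supercommutative algebra whose even part contains the unit; the same idempotent argument then applies.
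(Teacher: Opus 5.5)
Your proof is correct. The paper gives no argument of its own here; it only imports the statement from \cite{KKPY}, where atoms are defined at the level of the full A-model $F$-bundle. What you have written is therefore a direct proof for the coarse atoms of Definition~\ref{def:atom}. It needs only three inputs:
\begin{itemize}
\item $\star_b$ is commutative, associative and unital at a $\Bbbk$-point (Proposition~\ref{prop:BZisPreF});
\item $\boldsymbol\kappa_b$ commutes with $\mathsf{Hod}_{\Bbbk}$ (Theorem~\ref{thm:hodgeequivariance});
\item the unit $\mathbf 1$ is $\mathsf{Hod}$-invariant.
\end{itemize}
From these, the generalized eigenspaces of multiplication by $\mathsf{Eu}_b$ are ideals, and the spectral idempotents $e_\lambda=p_\lambda(\boldsymbol\kappa_b)\mathbf 1$ give a nonzero $\mathsf{Hod}$-invariant vector in each $E_{b,\lambda}$. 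This is the coarse form of the unit-decomposition idea used at the $F$-bundle level. It proves slightly more than stated: $E_{b,\lambda}^{\mathsf{Hod}_{\Bbbk}}\neq0$ at every point of $B_Z^{\mathsf{Hod}}(\Bbbk)$, not only on $\mathcal V$.

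Two small points. First, the reductivity of $\mathsf{Hod}$ and the isotypic splitting $E_{b,\lambda}=E_{b,\lambda}^{\mathsf{Hod}}\oplus E'_{b,\lambda}$ play no role. Invariance of $e_\lambda$ follows directly from $g\,p_\lambda(\boldsymbol\kappa_b)\mathbf 1=p_\lambda(\boldsymbol\kappa_b)\,g\mathbf 1=e_\lambda$.

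Second, you should add the line converting $\dim_{\Bbbk}E_{b,\lambda}^{\mathsf{Hod}_{\Bbbk}}\ge1$ into $\rho_\alpha=\dim_{\C}(E^\alpha_{\C})^{\mathsf{Hod}_{\C}}\ge1$. By Theorem~\ref{thm:atomwelldefined}, $E_{b,\lambda}\cong E^\alpha\otimes_{\overline{\Q}}\Bbbk$. Invariants of an algebraic group acting on a finite-dimensional representation commute with extension of the base field. Hence the dimension of invariants is the same over $\overline{\Q}$, $\Bbbk$ and $\C$.
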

 
\medskip

\subsubsection{The rationality obstruction}
 
Atoms are not birational invariants, but their behaviour under blow-ups is
controlled: if $W\subset Z$ is a smooth center of codimension $c$, the
atomic decomposition of $\mathrm{Bl}_{W}Z$ consists of that of $Z$ together
with $c-1$ copies of that of $W$, mirroring the semiorthogonal decomposition
of the derived category of a blow-up.  Combined with weak factorization,
this yields an obstruction to rationality.
 
\begin{theorem}[Katzarkov--Kontsevich--Pantev--Yu \cite{KKPY}]
\label{non_rationality_criterion}
Let $Z$ be a smooth projective variety of dimension $n\ge2$.  Suppose that
some atom appearing in the atomic decomposition of $Z$ appears in the
atomic decomposition of no smooth projective variety of dimension at most
$n-2$.  Then $Z$ is not rational.
\end{theorem}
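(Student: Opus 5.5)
We argue by contradiction. Suppose $Z$ is rational, and compare its atoms with those of $\PP^n$ through a weak factorization, using the blow-up behaviour of atoms recalled just before the statement. Throughout, atoms are counted with multiplicity, as elements of the multiset of Definition~\ref{def:atom}. For an atom class $\alpha$ and a smooth projective variety $M$, let $m_M(\alpha)\in\Z_{\ge0}$ be the number of times $\alpha$ occurs in the atomic decomposition of $M$.

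\emph{Step 1: weak factorization.} Since $Z$ is rational, there is a birational map $Z\dashrightarrow\PP^n$. By the weak factorization theorem it factors as a chain
\[
Z=X_0\dashrightarrow X_1\dashrightarrow\cdots\dashrightarrow X_N=\PP^n .
\]
Each step is a blow-up or a blow-down along a smooth center $W_i$ of codimension $c_i\ge2$. Hence $\dim W_i\le n-2$ for every $i$.

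\emph{Step 2: multiplicity bookkeeping.} Apply the blow-up formula at each step. If $X_i=\mathrm{Bl}_{W_i}X_{i+1}$, then
\[
m_{X_i}(\alpha)=m_{X_{i+1}}(\alpha)+(c_i-1)\,m_{W_i}(\alpha).
\]
If instead $X_{i+1}=\mathrm{Bl}_{W_i}X_i$, the same identity holds with the roles of $X_i$ and $X_{i+1}$ exchanged. Summing these identities along the chain gives
\[
m_Z(\alpha)=m_{\PP^n}(\alpha)+\sum_i \epsilon_i\,(c_i-1)\,m_{W_i}(\alpha),
\qquad \epsilon_i\in\{\pm1\}.
\]

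\emph{Step 3: the atoms of $\PP^n$.} At a small point, $\boldsymbol\kappa$ is $c_1\star$ on $\PP^n$. Since $H^{\star n+1}=q$, its eigenvalues are $(n+1)\zeta q^{1/(n+1)}$ with $\zeta^{n+1}=1$, which are $n+1$ distinct numbers. So the maximal value of $D$ is attained at small points. Each eigenspace is then one-dimensional, and $\mathsf{Hod}$ acts trivially because all classes on $\PP^n$ are Hodge classes. Thus every atom of $\PP^n$ is the trivial one-dimensional representation. This is exactly the atom of a point, and a point has dimension $0\le n-2$ because $n\ge2$.

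\emph{Step 4: conclusion.} Let $\alpha$ be the atom of $Z$ provided by the hypothesis, so $m_Z(\alpha)\ge1$. By hypothesis, $\alpha$ occurs in no smooth projective variety of dimension at most $n-2$. In particular $m_{W_i}(\alpha)=0$ for every center $W_i$. By Step 3, $m_{\PP^n}(\alpha)=0$ as well, since the atoms of $\PP^n$ are atoms of a point. Plugging into the identity of Step 2 gives $m_Z(\alpha)=0$, a contradiction. Hence $Z$ is not rational.

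\emph{Main obstacle.} The real content is the blow-up formula for coarse atoms. It must hold as an identity of multisets of $\mathsf{Hod}_{\overline{\Q}}$-representations, not merely of dimensions. The strategy is as follows. First, use Iritani's decomposition of the quantum cohomology of $\mathrm{Bl}_WZ$ into that of $Z$ and $c-1$ copies of that of $W$. In a suitable region of $B_{\mathrm{Bl}_WZ}$, this decomposition identifies $\boldsymbol\kappa$ with a block sum of Euler operators for $Z$ and for $W$, each shifted by a distinct constant. One then has to check three things.
\begin{enumerate}
\item The identification is compatible with the Hodge group, i.e.\ it is given by Hodge-theoretic correspondences.
\item Points of that region which lie in the Hodge locus reach generic points of $\mathcal V$ for each factor.
\item The shifted spectra are pairwise disjoint, so that generalized eigenspaces do not merge.
\end{enumerate}
Once these hold, Theorem~\ref{thm:atomwelldefined} transports the resulting atom multisets to arbitrary points of $\mathcal V$. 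This is the step carried out in \cite{KKPY}, and it is the part I expect to require the most care.
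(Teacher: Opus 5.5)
Your argument is correct and follows the route the paper indicates; the paper proves nothing itself, citing \cite{KKPY} and remarking only that the blow-up formula for atoms combined with weak factorization yields the obstruction. Your telescoped multiplicity count along a weak factorization, together with the observation that the atoms of $\PP^n$ are copies of the point atom, makes that precise, and you correctly identify the KKPY blow-up formula as the real input. The one point left implicit is that the weak factorization of a birational map between projective varieties can be chosen with every intermediate $X_i$ projective, so that the blow-up formula, stated for smooth projective varieties, applies at every step.
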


To use Theorem \ref{non_rationality_criterion} one needs the
low-dimensional catalogue of \cite{KKPY}.  We reproduce below the part of
that catalogue used later in the paper.  An entry such as
$3\,\rH^{\bullet}(\mathrm{pt})$ means that the decomposition consists of
three copies of the trivial one-dimensional atom.  The last column records
the support of the Hochschild grading (recall Definition
\ref{def:atominvariants}).
 
\begin{table}[ht]
\centering
\small
\begin{tabularx}{\textwidth}{c X c c}
\toprule
\textbf{Dim.} & \textbf{Variety} & \textbf{Coarse Hodge atoms}
& \textbf{Hochschild weights} \\
\midrule
$0$ & point $\mathrm{pt}$ & $\rH^{\bullet}(\mathrm{pt})$ & $p-q=0$ \\
\addlinespace
$1$ & $\PP^{1}$ & $2\,\rH^{\bullet}(\mathrm{pt})$ & $p-q=0$ \\
$1$ & curve $C$ with $\mathsf g(C)\ge1$ & $\rH^{\bullet}(C)$ & $|p-q|\le1$ \\
\addlinespace
$2$ & $\PP^{2}$ & $3\,\rH^{\bullet}(\mathrm{pt})$ & $p-q=0$ \\
$2$ & $\PP^{1}\times C$ with $\mathsf g(C)\ge1$
    & $2\,\rH^{\bullet}(C)$ & $|p-q|\le1$ \\
$2$ & abelian surface or K3 surface $S$ & $\rH^{\bullet}(S)$ & $|p-q|\le2$ \\
$2$ & Enriques, bielliptic, or elliptic with $p_{\mathsf g}=0$
    & $\rH^{\bullet}(S)$ & $|p-q|\le1$ \\
$2$ & general type, with ADE singularities resolved by $\tilde S$
    & $\rH^{\bullet}(\tilde S)$ & $|p-q|\le2$ \\
\bottomrule
\end{tabularx}
\caption{The part of the low-dimensional catalogue of coarse Hodge atoms
used below, after \cite{KKPY}*{Table~1}.}
\label{table:atoms-classification}
\end{table}

 \medskip

\subsubsection{Ambient, fixed, vanishing, and primitive cohomology}

From here on, we consider smooth projective varieties
\[
  X\ \xhookrightarrow{\ j\ }\ Y\ \xhookrightarrow{\ \iota_Y\ }\ F,
\]
where $F$ is a flag variety and $X$ is a smooth hyperplane section of $Y$.
We write $\iota:=\iota_Y\circ j$ and work under the following standing assumptions:

\begin{standing}\label{stand:setup}
Let $F$ be a flag variety with $\mathrm{Pic}(F)\cong\Z^d$, let $\cE$ be a
homogeneous vector bundle on $F$, and put
\[
  L:=\cO_F(\underbrace{1,\dots,1}_{\text{$d$ times}}),
  \qquad
  Y:=\mathscr Z(F,\cE),
  \qquad
  X:=\mathscr Z(F,\cE\oplus L).
\]
Assume that $Y$ and $X$ are smooth of dimensions $n+1$ and $n$,
respectively, with $n\ge3$, and that:
\begin{defenum}
\item[\textup{(H0)}]
restriction identifies the integral divisor lattices
\[
  \mathrm{Pic}(F)\xrightarrow{\ \sim\ }\mathrm{Pic}(Y)
  \xrightarrow{\ \sim\ }\mathrm{Pic}(X),
\]
and, dually, the numerical curve lattices used for the Novikov variables;
\item[\textup{(H1)}]
for every $k<n$, restriction induces an isomorphism
\[
  \iota_Y^*\colon\rH^k(F,\Q)\xrightarrow{\ \sim\ }\rH^k(Y,\Q);
\]
\item[\textup{(H2)}]
if $n=4$, then $\rH^4(Y,\Q)$ is of Hodge type $(2,2)$;
\item[\textup{(H3)}]
$Y$ is Fano.
\end{defenum}
We write $h:=c_1(L|_X)$.  In Subsection \ref{sec:atomsapp} we specialize to
$n=4$ and assume in addition that $X$ is Fano and $h^{3,1}(X)=1$.
\end{standing}

\begin{lemma}\label{lem:automatic}
Under Standing Assumption \ref{stand:setup}:
\begin{defenum}
\item for every $k<n$,
\[
  \iota^*\colon\rH^k(F,\Q)\xrightarrow{\ \sim\ }\rH^k(X,\Q),
\]
and these groups are algebraic and of type $(p,p)$;
\item $h^{2,0}(X)=0$, and if $n\ge4$ then
\[
  b_1(X)=b_3(X)=0;
\]
\item if $n=4$, then
\[
  h^{3,1}(Y)=0.
\]
\end{defenum}
\end{lemma}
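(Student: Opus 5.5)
The plan is to derive everything from the Lefschetz hyperplane theorem applied to $X=Y\cap H$, where $H\in|L|$ and $L$ is ample on $F$. Ampleness holds because all coordinates of $L$ are $1>0$ (Proposition \ref{prop:ampleness}). We then combine this with hypothesis (H1) and with the Schubert description of $\rH^*(F,\Q)$ from Proposition \ref{prop:schubertbasis}.

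\emph{Part (a).} Since $L|_Y$ is ample and $X\subset Y$ is a smooth hyperplane section of the $(n+1)$-dimensional variety $Y$, Lefschetz gives that $j^*\colon\rH^k(Y,\Q)\to\rH^k(X,\Q)$ is an isomorphism for $k<n$ and injective for $k=n$. Composing with (H1) shows that $\iota^*=j^*\circ\iota_Y^*$ is an isomorphism for $k<n$. Every class of $\rH^*(F,\Q)$ is a Schubert class, hence algebraic of type $(p,p)$ (Proposition \ref{prop:schubertbasis}). Pullback preserves algebraicity and Hodge type, so the same holds for $\rH^k(X,\Q)$ when $k<n$. In particular $\rH^{\mathrm{odd}}$ vanishes in these degrees.

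\emph{Part (b).} Since $n\ge3$, we have $2<n$, so $\rH^2(X,\Q)\cong\rH^2(F,\Q)$ is spanned by $(1,1)$-classes, which gives $h^{2,0}(X)=0$. If $n\ge4$, then $1,3<n$, so $b_1(X)=b_1(F)=0$ and $b_3(X)=b_3(F)=0$ by the vanishing of odd cohomology of flag varieties.

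\emph{Part (c).} Take $n=4$, so $\dim Y=5$. By Poincaré duality and Hodge symmetry on $Y$, we have $h^{3,1}(Y)=h^{2,4}(Y)=h^{4,2}(Y)$, and the latter is a Hodge number of $\rH^6(Y)$. Since $\rH^6(Y,\Q)$ is dual to $\rH^4(Y,\Q)$, which is of type $(2,2)$ by (H2), $\rH^6(Y)$ is pure of type $(3,3)$. Hence $h^{4,2}(Y)=0$.

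\emph{Where care is needed.} The weakest-looking step is the identification of $h^{3,1}(Y)$, since $(3,1)$ lives in $\rH^4(Y)$ itself. Applied directly, (H2) already gives $h^{3,1}(Y)=0$ because $\rH^4(Y)$ is of type $(2,2)$, so I would simply invoke (H2) and skip the duality argument. The only genuinely nontrivial input in the whole lemma is then the Lefschetz theorem for ample divisors, which I would cite in its standard form (for example via \cite{Lazarsfeld2004}, as in the proof of Theorem \ref{thm:ci}).
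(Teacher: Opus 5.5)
Your proposal is correct and follows essentially the same route as the paper: weak Lefschetz for the hyperplane section $X\subset Y$ composed with \textup{(H1)} gives (a), and the Schubert description of the flag-variety cohomology in degrees one, two and three gives (b). For (c) the paper likewise just invokes \textup{(H2)} directly, so the duality detour you first sketch can be dropped, as you already note.
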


\begin{proof}
Hypothesis \textup{(H1)} and weak Lefschetz for the hyperplane section
$X\subset Y$ give (a).  Flag-variety cohomology is generated by Schubert
classes.  Taking degrees one, two, and three gives (b).  Part (c) is exactly
\textup{(H2)}.
\end{proof}

We now distinguish the two cohomological pieces which were implicitly
identified in the earlier version of the argument.

\begin{definition}\label{def:threeparts}
\begin{defenum}
\item The \emph{flag-ambient cohomology} is
\[
  \rH_F^*(X):=\iota^*\rH^*(F,\Q).
\]
When no confusion is possible, we also write
$\rH^*_{\mathrm{amb}}(X)=\rH_F^*(X)$.
\item The \emph{$Y$-part} is
\[
  \rH_Y^*(X):=j^*\rH^*(Y,\Q).
\]
\item The \emph{vanishing cohomology} is
\[
  \rH^*(X)_{\mathrm{van}}
  :=
  \ker\Big(j_*\colon\rH^*(X,\Q)\to\rH^{*+2}(Y,\Q)\Big).
\]
\item For $k\le n$, the \emph{primitive cohomology} relative to $h$ is
\[
  \rH^k(X)_{\mathrm{prim}}
  :=
  \ker\Big(
     h^{n-k+1}\cup(-)\colon\rH^k(X,\Q)\to\rH^{2n-k+2}(X,\Q)
  \Big).
\]
\end{defenum}
\end{definition}

\begin{lemma}\label{lem:vananb}
Under Standing Assumption \ref{stand:setup}:
\begin{defenum}
\item $\rH^*(X)_{\mathrm{van}}$ is nonzero only in degree $n$, where
\[
  \rH^n(X,\Q)
  =
  \rH^n(X)_{\mathrm{van}}
  \oplus
  \rH_Y^n(X)
\]
is an orthogonal decomposition into sub-Hodge structures;
\item$\rH^n(X)_{\mathrm{van}}\subseteq\rH^n(X)_{\mathrm{prim}};$
\item for every $k\neq n$,
\[
  \rH^k(X,\Q)=\rH_Y^k(X)=\rH_F^k(X),
\]
whereas in degree $n$ there is only the inclusion
\[
  \rH_F^n(X)\subseteq\rH_Y^n(X).
\]
Consequently
\[
  \rH^*(X,\Q)
  =
  \rH_Y^*(X)\oplus\rH^*(X)_{\mathrm{van}},
\]
where the second summand is understood to occur only in degree $n$.
\end{defenum}
\end{lemma}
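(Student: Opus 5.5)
The plan is to derive everything from the Lefschetz package for the hyperplane section $j\colon X\hookrightarrow Y$: weak Lefschetz, Poincaré duality on $X$ and $Y$, and the Gysin map $j_*$. Combined with (H1), these give the identifications in degrees $k\neq n$ and control the middle degree. The argument proceeds degree by degree.

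\emph{Degrees $k<n$.} Weak Lefschetz makes $j^*\colon\rH^k(Y,\Q)\to\rH^k(X,\Q)$ an isomorphism. By (H1), $\iota_Y^*$ is also an isomorphism in this range. Hence
\[
\rH^k(X,\Q)=\rH^k_Y(X)=\rH^k_F(X).
\]
For the vanishing part, write $j_*=\mathrm{PD}_Y\circ j_\ast^{\mathrm{hom}}\circ\mathrm{PD}_X$. Dually, weak Lefschetz in homology makes $\rH_{2n-k}(X)\to\rH_{2n-k}(Y)$ an isomorphism for $2n-k>n$, that is for $k<n$. Therefore $j_*$ is injective in degrees $k<n$, and $\rH^k(X)_{\mathrm{van}}=0$ there.

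\emph{Degrees $k>n$.} Dually, $j_*\colon\rH^k(X)\to\rH^{k+2}(Y)$ is an isomorphism for $k>n$ (homological weak Lefschetz), so again $\rH^k(X)_{\mathrm{van}}=0$. For surjectivity of $j^*$, use $j_*j^*a=a\cup[X]$ together with hard Lefschetz on $Y$ for the ample class $c_1(L|_Y)$. Concretely, given $x\in\rH^k(X)$ with $k>n$, write $j_*x=[X]\cup a$ for some $a\in\rH^{k}(Y)$; this is possible because cup with $[X]$ is surjective from $\rH^{k}(Y)$ onto $\rH^{k+2}(Y)$ when $k\ge n+1=\dim Y$. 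Then $j_*(x-j^*a)=0$, and since $j_*$ is injective in this range, $x=j^*a$. So $\rH^k(X)=\rH^k_Y(X)$. To get $\rH^k_F(X)$, note that $j^*\iota_Y^*$ has the same image. Indeed, by Poincaré duality on $X$, $\rH^k(X)$ is dual to $\rH^{2n-k}(X)=\rH^{2n-k}_F(X)$, where $2n-k<n$. One can also argue by hard Lefschetz on $X$: $\rH^k(X)=h^{k-n'}\cup\rH^{2n-k}(X)$ up to the primitive decomposition, and all pieces lie in degrees $<n$ and are therefore flag-ambient. Since $h$ is itself flag-ambient and $\rH_F^*(X)$ is a subring, $\rH^k(X)=\rH^k_F(X)$.

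\emph{Degree $n$.} The inclusion $\rH^n_F\subseteq\rH^n_Y$ is clear because $\iota^*=j^*\iota_Y^*$. For the splitting, the projection formula gives
\[
\int_X j^*a\cup x=\int_Y a\cup j_*x.
\]
This shows that $\rH^n(X)_{\mathrm{van}}$ is exactly the orthogonal complement of $\rH^n_Y(X)$. It then remains to show that the pairing restricted to $\rH^n_Y(X)$ is nondegenerate, which is the main obstacle. For this, the plan is to use $j_*j^*a=a\cup c_1(L|_Y)$ and hard Lefschetz on $Y$: the form $(a,b)\mapsto\int_Y a\cup b\cup c_1(L)$ on $\rH^n(Y)$, modulo $\ker j^*$, is nondegenerate. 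One should be careful that $j^*$ may have a kernel in degree $n$, namely the primitive part of $\rH^{n}(Y)$ in degree $n=\dim Y-1$. After passing to the quotient, Hodge–Riemann bilinear relations (or simply hard Lefschetz, $L\colon\rH^{n}(Y)\to\rH^{n+2}(Y)$ being injective) give nondegeneracy.

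Nondegeneracy then yields the orthogonal decomposition
\[
\rH^n(X,\Q)=\rH^n(X)_{\mathrm{van}}\oplus\rH^n_Y(X).
\]
Both summands are sub-Hodge structures because $j^*$ and $j_*$ are morphisms of Hodge structures. For part (b), if $j_*x=0$ then $h\cup x=j^*j_*x=0$, so $x$ is primitive, since $h^{n-n+1}=h$. Assembling the three degree cases gives the final direct sum.
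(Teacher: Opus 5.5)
The step that fails as written is your treatment of degrees $k<n$. Homological weak Lefschetz for the ample divisor $X\subset Y$ says that $\rH_m(X,\Q)\to\rH_m(Y,\Q)$ is an isomorphism for $m<n$ and surjective for $m=n$. Under Poincar\'e duality this says that $j_*\colon\rH^k(X)\to\rH^{k+2}(Y)$ is an isomorphism for $k>n$, which is what you correctly use in your next paragraph. You cannot also invoke it for $2n-k>n$: the two ranges are complementary, and for $k<n$ the map $j_*$ is in general not an isomorphism. Already for $k=0$ it sends $1\mapsto[X]$, while $\rH^2(Y)\cong\rH^2(F)\cong\Q^d$ by (H1), so it is not surjective whenever $d\ge2$ (e.g.\ $F=\PP^2\times\PP^4$). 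Similarly, for $k=2$ in the Gushel--Mukai case, $b_2(X)=1$ but $b_4(Y)=2$.

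The conclusion you need, injectivity of $j_*$ on $\rH^k(X)$ for $k<n$, is still true. The repair is the identity you already use in (b): $j^*j_*x=h\cup x$, and hard Lefschetz on $X$ makes $h\cup(-)$ injective on $\rH^k(X)$ for $k<n$. This is exactly the paper's argument.

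Two smaller points. First, in degree $n$ the map $j^*\colon\rH^n(Y)\to\rH^n(X)$ has no kernel. Weak Lefschetz gives injectivity in degree $\dim X=n$, and equivalently $j_*j^*=c_1(L|_Y)\cup(-)$ is injective on $\rH^n(Y)$ by hard Lefschetz. The primitive part of $\rH^n(Y)$ (classes killed by $c_1(L|_Y)^2$) is not a kernel of $j^*$.

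Second, the remark that $\rH^k(X)$ is Poincar\'e dual to $\rH^{2n-k}_F(X)$ does not show $\rH^k(X)=\rH^k_F(X)$. Your hard Lefschetz argument does show it, and needs no primitive decomposition: $h^{k-n}\colon\rH^{2n-k}(X)\to\rH^k(X)$ is an isomorphism and $h$ is flag-ambient.

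With these corrections your proof is complete and somewhat more self-contained than the paper's. Where the paper cites Voisin for the middle-degree orthogonal splitting, you derive it from the projection formula $\int_X j^*a\cup x=\int_Y a\cup j_*x$. Hard Lefschetz $\rH^n(Y)\xrightarrow{\sim}\rH^{n+2}(Y)$ on $Y$ then makes the pairing on $\rH^n_Y(X)$ nondegenerate. Your $[X]\cup a$ argument for $\rH^k(X)=\rH^k_Y(X)$ with $k>n$ is correct but redundant once $\rH^k_F(X)=\rH^k(X)$, since $\rH^k_F(X)\subseteq\rH^k_Y(X)$.
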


\begin{proof}
The middle-dimensional orthogonal decomposition is the standard vanishing
cohomology decomposition for a smooth hyperplane section; see
\cite{Voisin2003}*{\S2.3.3}.  If $k<n$ and $j_*\alpha=0$, then
$j^*j_*\alpha=h\cup\alpha=0$, and Hard Lefschetz makes
$h\cup(-)$ injective below the middle degree.  If $k>n$, then
$2n-k<n$, weak Lefschetz gives
\[
  j^*\colon\rH^{2n-k}(Y)\xrightarrow{\sim}\rH^{2n-k}(X),
\]
and adjunction of $j^*$ and $j_*$ for the Poincar\'e pairings makes $j_*$
injective on $\rH^k(X)$.  This proves (a).  Part (b) follows from
$j^*j_*\alpha=h\cup\alpha$ in degree $n$.

For (c), degrees below $n$ follow from Lemma
\ref{lem:automatic}\textup{(a)}.  Degrees above $n$ follow by Hard
Lefschetz from the corresponding degree below $n$, since $h$ is restricted
from $F$.  In the middle degree, functoriality gives only
\[
  \iota^*\rH^n(F)
  =j^*\iota_Y^*\rH^n(F)
  \subseteq j^*\rH^n(Y),
\]
and this inclusion may be strict.
\end{proof}

\begin{definition}\label{def:ambient-complete}
We say that the pair $X\subset Y\subset F$ is \emph{ambient-complete in the
middle degree} if
\[
  \iota_Y^*\colon\rH^n(F,\Q)\xrightarrow{\ \sim\ }\rH^n(Y,\Q).
  \tag{AC}
\]
Under \textup{(AC)}, Lemma \ref{lem:vananb} gives
\[
  \rH_Y^*(X)=\rH_F^*(X)=\rH^*_{\mathrm{amb}}(X).
\]
\end{definition}

\begin{definition}[\cite{Benedetti_Fay_Guere_Manivel_Perrin}*{Def.~3.2}]
\label{def:hodgegeneral}
The variety $X$ is \emph{Hodge general} if
\[
  \rH^n(X)^{\mathrm{Hdg}}
  =
  j^*\big(\rH^n(Y)^{\mathrm{Hdg}}\big).
\]
\end{definition}

\begin{lemma}\label{lem:hodgegeneral-equiv}
The variety $X$ is Hodge general if and only if
$\rH^n(X)_{\mathrm{van}}$ contains no nonzero rational Hodge class.
\end{lemma}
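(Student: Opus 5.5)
The claim is that $X$ is Hodge general if and only if $\rH^n(X)_{\mathrm{van}}$ contains no nonzero rational Hodge class. The plan is to use the orthogonal decomposition of Lemma~\ref{lem:vananb}(a),
\[
  \rH^n(X,\Q)=\rH^n(X)_{\mathrm{van}}\oplus\rH^n_Y(X),
\]
into sub-Hodge structures. The key observation is that a rational class is Hodge exactly when each of its two components is Hodge. Indeed, both summands are sub-$\Q$-Hodge structures, so the projections onto them are morphisms of Hodge structures defined over $\Q$. Such morphisms preserve the Hodge filtration and rationality. Hence
\[
  \rH^n(X)^{\mathrm{Hdg}}=\rH^n(X)_{\mathrm{van}}^{\mathrm{Hdg}}\oplus\rH^n_Y(X)^{\mathrm{Hdg}}.
\]

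Next I would identify $\rH^n_Y(X)^{\mathrm{Hdg}}$ with $j^*\big(\rH^n(Y)^{\mathrm{Hdg}}\big)$. The inclusion $\supseteq$ is immediate, since $j^*$ is a morphism of Hodge structures of weight $n$ defined over $\Q$.

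For $\subseteq$, let $\alpha=j^*\beta$ be a rational Hodge class with $\beta\in\rH^n(Y,\Q)$. Consider the restriction $j^*\colon\rH^n(Y,\Q)\to\rH^n(X,\Q)$ as a surjective morphism of polarizable $\Q$-Hodge structures onto $\rH^n_Y(X)$. By semisimplicity of polarizable Hodge structures, it admits a splitting $s$ that is again a morphism of Hodge structures. One can also take $s$ concretely: $\ker j^*|_{\rH^n(Y)}$ is a sub-Hodge structure, and its orthogonal complement with respect to a polarization maps isomorphically onto the image. Then $s(\alpha)$ is a rational Hodge class in $\rH^n(Y)$ with $j^*s(\alpha)=\alpha$, which gives the inclusion. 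So
\[
  \rH^n(X)^{\mathrm{Hdg}}=\rH^n(X)_{\mathrm{van}}^{\mathrm{Hdg}}\oplus j^*\big(\rH^n(Y)^{\mathrm{Hdg}}\big).
\]

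Hodge generality, namely equality of the left side with the second summand, is then equivalent to $\rH^n(X)_{\mathrm{van}}^{\mathrm{Hdg}}=0$, which is the claim.

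The only point needing care is the surjectivity of $j^*$ on Hodge classes, i.e.\ the existence of a Hodge-theoretic splitting. I would handle it via polarizability of $\rH^n(Y,\Q)$: use hard Lefschetz on $Y$ with the ample class $h$, and the Hodge–Riemann form restricted to the primitive decomposition. This makes the orthogonal complement of $\ker j^*$ a rational sub-Hodge structure.
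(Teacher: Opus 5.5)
Your proof is correct and follows the same route as the paper, which simply takes rational Hodge classes in the orthogonal decomposition of Lemma~\ref{lem:vananb}\textup{(a)}; your explicit check that $\rH^n_Y(X)^{\mathrm{Hdg}}=j^*\big(\rH^n(Y)^{\mathrm{Hdg}}\big)$ fills in a step the paper leaves implicit. That step is also cheaper than you make it: since $\dim Y=n+1$, weak Lefschetz makes $j^*\colon\rH^n(Y,\Q)\to\rH^n(X,\Q)$ injective (as the paper uses in Corollary~\ref{cor:numerical}), so $j^*$ is already an isomorphism of Hodge structures onto $\rH^n_Y(X)$ and no semisimplicity or polarization argument is needed.
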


\begin{proof}
Lemma \ref{lem:vananb}\textup{(a)} is an orthogonal decomposition of
rational Hodge structures.  Taking rational Hodge classes gives the claimed
equivalence.
\end{proof}

\begin{proposition}[\cite{Benedetti_Fay_Guere_Manivel_Perrin}*{Prop.~3.1}]
\label{prop:hodgegeneral}
If $h^{3,1}(Y)<h^{3,1}(X)$ and $X$ is very general in its family, then $X$
is Hodge general.
\end{proposition}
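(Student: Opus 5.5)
The plan is to prove the statement by a Noether--Lefschetz type argument over the family of smooth hyperplane sections of $Y$ in $|L|$. The only geometric input needed is the irreducibility of monodromy on vanishing cohomology. Let $U\subset|L|$ be the open subset parametrizing smooth sections $X_u=Y\cap H_u$. Over $U$ the groups $\rH^n(X_u,\Q)_{\mathrm{van}}$ form a local system $\mathbb V$ underlying a polarized variation of Hodge structure. By Lemma~\ref{lem:vananb}(a), each fiber is an orthogonal sub-Hodge structure of $\rH^n(X_u,\Q)$. If $n$ is odd there is nothing to prove, since $\rH^n$ carries no Hodge classes; so we may assume $n=2p$, the case relevant below being $n=4$. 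By Lemma~\ref{lem:hodgegeneral-equiv}, it suffices to show that for very general $u$ the fiber $\mathbb V_u$ contains no nonzero rational class of type $(p,p)$.

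\textbf{Step 1 (countable union of analytic loci).} Work on the universal cover $\widetilde U$, where $\mathbb V$ is trivialized by a fixed $\Q$-vector space $V$. For each nonzero $\lambda\in V$, the locus
\[
  \widetilde U_\lambda=\{u\mid \lambda\in F^p\mathbb V_u\}
\]
is a closed analytic subset, because it is cut out by the vanishing of the projection of $\lambda$ to $\mathbb V/F^p$, which is a holomorphic section. Its image in $U$ is a countable union of analytic subsets. Since $V$ is countable, it suffices to show that no $\widetilde U_\lambda$ equals $\widetilde U$. A point outside all the images is then ``very general'' in the required sense.

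\textbf{Step 2 (a class Hodge everywhere spans a monodromy-stable subspace).} Let $V_0\subset V$ be the set of $\lambda$ with $\widetilde U_\lambda=\widetilde U$, that is, the classes that are flat and of type $(p,p)$ at every point. This is a $\Q$-subspace, and it is stable under the monodromy group $\pi_1(U)$, because deck transformations preserve the Hodge filtration on the pulled-back variation. Equivalently, $V_0$ is a sub-local system of $\mathbb V$ consisting of Hodge classes.

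\textbf{Step 3 (irreducibility and the $h^{3,1}$ hypothesis).} By Lefschetz theory for the pencil (Deligne, as in \cite{Voisin2003}*{\S2.3.3}), the monodromy action on $\rH^n(X)_{\mathrm{van}}$ is irreducible over $\Q$: the vanishing cycles form a single orbit and span. Hence $V_0=0$ or $V_0=V$. The hypothesis $h^{3,1}(Y)<h^{3,1}(X)$, combined with the orthogonal Hodge decomposition $\rH^n(X)=\rH^n(X)_{\mathrm{van}}\oplus j^*\rH^n(Y)$, shows that $\rH^n(X)_{\mathrm{van}}$ has nonzero $(3,1)$-part. (Here $j^*$ is injective on $\rH^n(Y)$ by Lemma~\ref{lem:vananb}, so $h^{3,1}$ of the $Y$-part equals $h^{3,1}(Y)$.) Thus $\mathbb V_u$ is not entirely of type $(p,p)$, so $V_0\neq V$, and therefore $V_0=0$. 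Consequently, every nonzero $\lambda$ has $\widetilde U_\lambda\subsetneq\widetilde U$, and Step 1 concludes the proof.

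\textbf{Main obstacle.} The delicate step is Step 3. We must quote irreducibility of monodromy in the form valid for hyperplane sections of the possibly non-homogeneous variety $Y$ with the possibly non-very-ample $L$. If $L|_Y$ is only base-point free, Lefschetz pencils may fail to exist in the usual sense. We would first check that $L=\cO_F(1,\dots,1)$ is very ample on $F$ (it is, by Proposition~\ref{prop:ampleness} and the Borel--Weil embedding), so that classical Lefschetz theory applies to $Y\subset\PP(\rH^0(L)^*)$. We also need to handle the degenerate case in which the vanishing cohomology is zero; this is excluded here by the $h^{3,1}$ inequality.
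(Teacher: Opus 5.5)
Your argument is correct, and it is the standard Noether--Lefschetz reasoning behind this statement. The paper does not prove the statement itself but imports it from \cite{Benedetti_Fay_Guere_Manivel_Perrin}. Your chain is sound: classes that are Hodge over the whole family form a monodromy-stable subspace of $\rH^4(X)_{\mathrm{van}}$, Deligne's irreducibility forces this subspace to be $0$ or everything, and $h^{3,1}_{\mathrm{van}}=h^{3,1}(X)-h^{3,1}(Y)>0$ excludes everything.

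One point needs tightening. Your exceptional set is a priori only locally a countable union of proper analytic subsets, since the $\pi_1$-orbit of a given $\lambda$ may be infinite. To get ``very general'' in the usual Zariski sense, add local finiteness of the relevant $\lambda$ via the Hodge--Riemann relations on $\rH^4(X)_{\mathrm{van}}\subseteq\rH^4(X)_{\mathrm{prim}}$, together with algebraicity of Hodge loci (Cattani--Deligne--Kaplan).
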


\begin{lemma}\label{lem:hdg-is-amb}
Assume $n=4$.  If $X$ is Hodge general, then
\[
  \rH^*(X)^{\mathrm{Hdg}}=\rH_Y^*(X).
\]
If, in addition, \textup{(AC)} holds, then
\[
  \rH^*(X)^{\mathrm{Hdg}}=\rH^*_{\mathrm{amb}}(X).
\]
\end{lemma}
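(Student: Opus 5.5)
We need $\rH^*(X)^{\mathrm{Hdg}}=\rH_Y^*(X)$ when $n=4$ and $X$ is Hodge general, and under (AC) also $=\rH^*_{\mathrm{amb}}(X)$. The plan is to compare Hodge classes with $\rH_Y^*(X)$ degree by degree, using the decomposition $\rH^*(X,\Q)=\rH_Y^*(X)\oplus\rH^*(X)_{\mathrm{van}}$ of Lemma \ref{lem:vananb}(c), whose second summand lives only in degree $4$.

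\emph{Degrees $k\neq4$.} By Lemma \ref{lem:vananb}(c), $\rH^k(X,\Q)=\rH_F^k(X)=\rH_Y^k(X)$. Every class of $\rH_F^k(X)$ is the restriction of a Schubert class, so it is algebraic of type $(p,p)$ (Proposition \ref{prop:schubertbasis} and Lemma \ref{lem:automatic}(a)). Hence in degree $k$ every class is a Hodge class and $\rH^k(X)^{\mathrm{Hdg}}=\rH^k(X,\Q)=\rH_Y^k(X)$. In odd degrees both sides vanish: Lemma \ref{lem:automatic}(b) gives this for $k<4$, and Poincaré duality gives it for $k>4$.

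\emph{Degree $4$.} Hodge generality (Definition \ref{def:hodgegeneral}) says $\rH^4(X)^{\mathrm{Hdg}}=j^*(\rH^4(Y)^{\mathrm{Hdg}})$. By hypothesis (H2), $\rH^4(Y,\Q)$ is entirely of type $(2,2)$, so $\rH^4(Y)^{\mathrm{Hdg}}=\rH^4(Y,\Q)$. Therefore $\rH^4(X)^{\mathrm{Hdg}}=j^*\rH^4(Y,\Q)=\rH_Y^4(X)$. Summing over degrees proves the first identity.

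\emph{Under (AC).} Definition \ref{def:ambient-complete} gives $\rH_Y^*(X)=\rH_F^*(X)=\rH^*_{\mathrm{amb}}(X)$. Concretely, $j^*\rH^4(Y)=j^*\iota_Y^*\rH^4(F)=\iota^*\rH^4(F)$, and the two agree in the other degrees by Lemma \ref{lem:vananb}(c). This gives the second identity.

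The only point that needs care is the degree-$4$ step. There one must use (H2) to upgrade $j^*(\rH^4(Y)^{\mathrm{Hdg}})$ to all of $j^*\rH^4(Y,\Q)$; without (H2) the inclusion $\rH^4(X)^{\mathrm{Hdg}}\subseteq\rH_Y^4(X)$ could be strict. Lemma \ref{lem:hodgegeneral-equiv} gives an equivalent check: the orthogonal decomposition of Hodge structures shows the Hodge classes are $\rH_Y^4(X)^{\mathrm{Hdg}}\oplus(\text{Hodge classes in }\rH^4(X)_{\mathrm{van}})=\rH_Y^4(X)\oplus0$.
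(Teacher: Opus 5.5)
Your proposal is correct and follows essentially the same route as the paper. Degrees other than four are handled by Lemma \ref{lem:vananb}\textup{(c)} together with the algebraicity of restricted Schubert classes. Degree four uses Hodge generality combined with \textup{(H2)}, and the second identity comes from \textup{(AC)} via Definition \ref{def:ambient-complete}.
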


\begin{proof}
Outside degree four, the full cohomology is flag-ambient and algebraic by
Lemma \ref{lem:vananb}\textup{(c)}.  In degree four, Hodge generality gives
\[
  \rH^4(X)^{\mathrm{Hdg}}=j^*\rH^4(Y),
\]
and \textup{(H2)} says that all of $\rH^4(Y)$ is of type $(2,2)$.  The last
statement follows from \textup{(AC)}.
\end{proof}

\medskip

\subsubsection{The action on the vanishing cohomology}

Let $\pi$ be the monodromy group of a Lefschetz pencil of hyperplane
sections of $Y$ containing $X$.

\begin{lemma}\label{lem:monodromy}
The group $\pi$ acts trivially on $\rH_Y^*(X)$ and irreducibly and
nontrivially on
\[
  W:=\rH^n(X)_{\mathrm{van}}
\]
when $W\neq0$.  These statements remain true after extension of scalars to
any field of characteristic zero.
\end{lemma}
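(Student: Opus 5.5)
The plan is to derive the statement from classical Picard--Lefschetz theory for the Lefschetz pencil $\{X_t\}_{t\in\PP^1}$ of hyperplane sections of $Y$ in $|L|$ containing $X=X_0$. Let $\Delta\subset\PP^1$ be the finite set of critical values, and let $\pi$ be the image of $\pi_1(\PP^1\smallsetminus\Delta,0)$ in $\mathrm{GL}(\rH^n(X,\Q))$.

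\emph{Triviality on $\rH_Y^*(X)$.} For any $\alpha\in\rH^*(Y,\Q)$, the classes $j_t^*\alpha$ on the fibres $X_t$ are the restrictions of one global class on the total space of the pencil away from $\Delta$. They therefore form a flat section of the local system $R^*f_*\Q$, so monodromy fixes $j^*\alpha$. The same argument shows that $\pi$ acts trivially on $\rH^k(X)$ for $k\neq n$, since by Lemma~\ref{lem:vananb}(c) these groups equal $\rH^k_Y(X)$.

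\emph{Action on $W$.} Here I would invoke the Picard--Lefschetz formula. For each $t_i\in\Delta$ there is a vanishing cycle $\delta_i\in\rH^n(X,\Q)$, and a simple loop around $t_i$ acts by
\[
T_i(x)=x\pm\langle x,\delta_i\rangle\,\delta_i ,
\]
with the sign determined by $n$. The standard facts, as in \cite{Voisin2003}*{\S2.3.3 and Ch.~3}, are:
\begin{enumerate}
\item the $\delta_i$ span $W$;
\item $W$ is the orthogonal complement of $\rH^n_Y(X)$, which is the fixed part;
\item the $\delta_i$ are all conjugate under $\pi$. This holds because the discriminant locus in the dual linear system $|L|^\vee$ is irreducible, and the lemma of Zariski--van Kampen type then shows the loops around the $t_i$ are conjugate in $\pi_1$.
\end{enumerate}

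Irreducibility follows by the usual argument. Let $U\subseteq W_K$ be a nonzero $\pi$-stable subspace, where $K$ is any field of characteristic zero. Choose $u\in U$ and consider two cases.
\begin{itemize}
\item If $\langle u,\delta_i\rangle\neq0$ for some $i$, then $T_iu-u\in U$ is a nonzero multiple of $\delta_i$, so $\delta_i\in U$. By conjugacy every $\delta_j\in U$, hence $U=W_K$.
\item If $\langle u,\delta_i\rangle=0$ for all $i$, then $u\in W^\perp\cap W=0$. This uses the nondegeneracy of the Poincar\'e pairing on $W$, which holds because the decomposition of Lemma~\ref{lem:vananb}(a) is orthogonal; so this case cannot occur for $u\neq0$.
\end{itemize}
Nontriviality is immediate: if $W\neq0$, some $\delta_i\neq0$, and then $T_i\delta_i=\delta_i\pm\langle\delta_i,\delta_i\rangle\delta_i$. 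For $n$ even, $\langle\delta_i,\delta_i\rangle=\pm2\neq0$, so $T_i\delta_i\neq\delta_i$. For $n$ odd, $T_i$ is a transvection, which is nontrivial provided some $\langle x,\delta_i\rangle\neq0$, and this again follows from nondegeneracy on $W$.

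\emph{Extension of scalars.} The argument above never used that the base field is $\Q$: it relies only on the characteristic being different from $2$ and on the form being defined over $\Q$. It therefore holds verbatim over any field $K$ of characteristic zero. In other words, the representation is absolutely irreducible.

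The main obstacle I expect is the conjugacy of the vanishing cycles, i.e.\ irreducibility of the dual variety of $Y\subset\PP(\rH^0(L)^\vee)$. If $\mathscr Z(F,\cE\oplus L)$ is embedded by the globally generated $L$, one must check that the discriminant divisor is irreducible and genuinely a hypersurface. I would handle this as follows: the incidence variety of pairs (point of $Y$, hyperplane singular there) is a projective bundle over the irreducible $Y$, hence irreducible, so its image in $|L|^\vee$ is irreducible; it is a hypersurface because a smooth $X$ exists and the pencil meets the discriminant at ordinary double points.
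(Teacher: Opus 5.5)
Your proposal is correct and follows essentially the paper's argument: invariance of $\rH_Y^*(X)$ because its classes are restricted from $Y$, and irreducibility and nontriviality from the Picard--Lefschetz formula combined with spanning and conjugacy of the vanishing cycles and nondegeneracy of the form on $W$; you write this out in more detail, including the extension-of-scalars point the paper leaves implicit. Two details in your conjugacy sketch need tightening. First, the discriminant $\mathcal D$ in the parameter space $|L|$ of hyperplane sections is a hypersurface because $W\neq0$ forces a general pencil through $X$ to have a critical value; the existence of a smooth member alone does not give this. Second, the loops around the $t_i$ are conjugate in $\pi_1(|L|\smallsetminus\mathcal D)$, not in $\pi_1(\PP^1\smallsetminus\Delta)$, so you need Zariski's surjectivity $\pi_1(\PP^1\smallsetminus\Delta)\twoheadrightarrow\pi_1(|L|\smallsetminus\mathcal D)$ to obtain conjugacy of the vanishing cycles inside $\pi$.
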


\begin{proof}
Every class in $\rH_Y^*(X)$ is restricted from the fixed variety $Y$, hence
is invariant under the pencil monodromy.  The vanishing cycles span $W$ and
form a single monodromy orbit up to sign.  The Picard--Lefschetz formula
\[
  T_\delta(\alpha)=\alpha\pm\langle\alpha,\delta\rangle\delta
\]
together with nondegeneracy of the intersection form on $W$ gives
irreducibility exactly as usual: a nonzero invariant subspace contains one
vanishing cycle and hence all of them.  The same formula excludes the
trivial representation when $W\neq0$.
\end{proof}

\begin{theorem}\label{theo:scalar_on_van}
Assume Standing Assumption \ref{stand:setup} and let $b$ be a small
$\Bbbk$-point of $B_X$.  Then $\boldsymbol\kappa_b$ preserves
\[
  \rH^*(X,\Bbbk)
  =
  \rH_Y^*(X)_{\Bbbk}\oplus W_{\Bbbk}
\]
and acts on $W_{\Bbbk}$ by a scalar.  Writing $B_b$ for the block on the
$Y$-part,
\[
  \boldsymbol\kappa_b
  =
  \begin{pmatrix}
    B_b&0\\[2pt]
    0&\lambda_{\mathrm{van}}(b)\,\mathrm{id}_W
  \end{pmatrix}.
\]
If \textup{(AC)} holds, then $B_b$ coincides with the flag-ambient block $A_b$
computed via Theorem \ref{thm:localizationCI}.
\end{theorem}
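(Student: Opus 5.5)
The plan rests on monodromy invariance of the small quantum product, together with Schur's lemma applied on the vanishing part. At a small point $b$, $\boldsymbol\kappa_b=c_1(TX)\star_b(-)$, and its structure constants are built from three-point invariants $\langle c_1(TX),\alpha,\eta\rangle^X_{0,\beta'}$ summed over $\beta'$ with fixed Novikov weight. By \textup{(H0)} the Novikov variables are pulled back from $F$, so they are identical for every member $X_t$ of the Lefschetz pencil.

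\textbf{Step 1: deformation invariance.} Gromov--Witten invariants are invariant under smooth deformation, so the family $\{X_t\}$ over the complement of the discriminant gives a locally constant system of quantum products on $\rH^*(X_t,\Q)$. The class $c_1(TX_t)$ is the restriction of $c_1(TY)-c_1(L)$ and is therefore a flat, monodromy-invariant section. The Novikov variables are monodromy-invariant by \textup{(H0)}: the curve classes are identified with those of $F$, and after pushing forward to $F$ nothing is permuted. Hence each $g\in\pi$ satisfies $g(u\star_b v)=(gu)\star_b(gv)$, and $g$ commutes with $\boldsymbol\kappa_b$, since it fixes $c_1(TX)$.

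\textbf{Step 2: preservation and the scalar.} By Lemma \ref{lem:vananb}, $\rH^*(X)=\rH_Y^*(X)\oplus W$, and by Lemma \ref{lem:monodromy} this is the decomposition into $\pi$-invariants $\rH_Y^*(X)$ and the nontrivial irreducible part $W$. Since $\boldsymbol\kappa_b$ commutes with $\pi$, it maps invariants to invariants, so it preserves $\rH_Y^*(X)_\Bbbk$.

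For the other block, I would show that $W_\Bbbk$ has no trivial constituent, so the component $W\to\rH_Y^*$ is a $\pi$-equivariant map from $W_\Bbbk$ into a trivial representation. Such a map kills the span of the elements $g w-w$, and this span is all of $W$ because $W$ is irreducible and nontrivial. Hence $\boldsymbol\kappa_b(W)\subseteq W$, and $\boldsymbol\kappa_b|_W\in\End_\pi(W_\Bbbk)$.

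Schur's lemma gives a scalar only if $W_{\Bbbk}$ is absolutely irreducible, which is stronger than irreducibility. I would obtain it from the Picard--Lefschetz argument in Lemma \ref{lem:monodromy}, which works over any field of characteristic zero and in particular over $\overline{\Q}$. Concretely, if $\phi\in\End_\pi(W)$, then $\phi$ commutes with each $T_\delta$. So $\langle\phi\alpha,\delta\rangle\delta=\langle\alpha,\delta\rangle\phi\delta$ for all $\alpha$, and choosing $\alpha$ with $\langle\alpha,\delta\rangle\ne0$ gives $\phi\delta=c_\delta\delta$. Because the vanishing cycles form a single orbit up to sign and $\phi$ commutes with $\pi$, the constant $c_\delta$ does not depend on $\delta$. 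Since the vanishing cycles span $W$, $\phi$ is a scalar $\lambda_{\mathrm{van}}(b)$.

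\textbf{Step 3: comparison with localization.} Under \textup{(AC)}, $\rH_Y^*(X)=\rH_F^*(X)$. Block-diagonality shows that $\operatorname{pr}_F\circ\boldsymbol\kappa_b|_{\rH_F^*}$ equals $B_b$, with no projection error, and by Definition \ref{def:ambient-anticanonical-operator} and Theorem \ref{thm:localizationCI} this is $A_b$. One caveat: those results compute invariants summed over $\beta'\mapsto\beta$, and \textup{(H0)} makes this sum trivial.

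I expect Step 1 to be the main obstacle: justifying rigorously that the parallel-transport action of $\pi$ preserves the quantum product with the given Novikov labels. That requires deformation invariance of virtual classes in families together with the identification of $N_1(X_t)$ with $N_1(F)$ from \textup{(H0)}. The orthogonality used for the projection is automatic once block-diagonality holds, because $\pi$ preserves the Poincar\'e pairing.
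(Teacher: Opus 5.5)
Your proposal is correct and follows essentially the paper's route. Both arguments establish monodromy equivariance of $\boldsymbol\kappa_b$ by deformation invariance, with $b$ fixed because $\pi$ acts trivially on $\rH^2$ and the curve lattices. Both then get block-diagonality from the splitting of Lemma~\ref{lem:monodromy} into the invariant part and the irreducible nontrivial part, a Schur-type argument on $W_{\Bbbk}$, and the identification $B_b=A_b$ under \textup{(AC)}.

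The one difference is minor. The paper applies Schur's lemma over the algebraically closed $\Bbbk$, relying on irreducibility surviving scalar extension. You instead compute the commutant directly from the Picard--Lefschetz formula, which is a slightly more self-contained way to get the scalar.
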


\begin{proof}
All non-Novikov coordinates of a small point vanish.  Moreover, weak
Lefschetz gives
$\rH^2(X,\Q)=j^*\rH^2(Y,\Q)$, so monodromy fixes the divisor and numerical
curve lattices and hence the Novikov coordinates.  Thus $g\cdot b=b$ for
all $g\in\pi$.

Deformation invariance of Gromov--Witten invariants gives monodromy
equivariance of the quantum product; compare the proof of
\cite{Benedetti_Fay_Guere_Manivel_Perrin}*{Thm.~4.1}.  At a small point
$\mathsf{Eu}_b=c_1(TX)$, which is monodromy invariant.  Hence
$\boldsymbol\kappa_b$ commutes with $\pi$.  Lemma \ref{lem:monodromy} says
that the first summand is trivial and the second irreducible and nontrivial,
so the off-diagonal blocks vanish.  Since $\Bbbk$ is algebraically closed,
Schur's lemma makes the restriction to $W_{\Bbbk}$ scalar.  Under
\textup{(AC)}, Definition \ref{def:ambient-complete} identifies the first
summand with the flag-ambient cohomology.
\end{proof}

\begin{lemma}\label{lem:fano-index-vanishing}
Assume $n=4$ and $X$ has Fano index at least two.  Then, at every small point
\[
  \lambda_{\mathrm{van}}(b)=0.
\]
\end{lemma}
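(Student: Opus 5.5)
We use the grading of the small quantum product together with the index hypothesis. Fix a small point $b$. By Theorem \ref{theo:scalar_on_van}, $\boldsymbol\kappa_b=c_1(TX)\star_b(-)$ acts on $W=\rH^4(X)_{\mathrm{van}}$ by the scalar $\lambda_{\mathrm{van}}(b)$. It therefore suffices to show that for one (hence every) nonzero $\alpha\in W$, the $W$-component of $c_1(TX)\star_b\alpha$ vanishes.

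\textbf{Step 1: expand the product.} Write
\[
c_1(TX)\star_b\alpha
=
c_1(TX)\smile\alpha
+
\sum_{\beta\neq0}q^\beta\sum_{a,c}\big\langle c_1(TX),\alpha,T_a\big\rangle^X_{0,\beta}\,g^{ac}T_c .
\]

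\textbf{Step 2: the classical term.} We have $c_1(TX)\smile\alpha\in\rH^6(X)$, and $\rH^6(X)=\rH^6_Y(X)$ has no vanishing part by Lemma \ref{lem:vananb}(c). In fact $\alpha$ is primitive (Lemma \ref{lem:vananb}(b)) and $c_1(TX)$ is a multiple of $h$ in the relevant directions, but we only need its $W$-component, which is zero since $W$ lives only in degree $4$.

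\textbf{Step 3: the quantum terms.} A nonzero class $\beta$ contributes to the $W$-component only through an output $T_c\in W$. Since $W$ sits in degree $4$ and the pairing is homogeneous, this forces $T_a$ to lie in degree $4$ as well. The dimension axiom for a three-point invariant on a fourfold reads
\[
2+4+4=2\Big(4+\int_\beta c_1(TX)\Big),
\]
which gives $\int_\beta c_1(TX)=1$. If $X$ has Fano index $i\ge2$, then $c_1(TX)=i\,H$ for an integral class $H$, so $\int_\beta c_1(TX)$ is divisible by $i$ and cannot equal $1$. Hence no quantum correction lands in $W$ from degree-4 insertions.

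\textbf{Step 4: other degrees.} More generally, every contribution $c_1(TX)\star_b\alpha$ has the form $q^\beta T_c$ with $\deg T_c=6-2\int_\beta c_1(TX)$. For this to be degree $4$ we need $\int_\beta c_1(TX)=1$, which Step 3 excludes. Combining with Step 2, the $W$-component of $\boldsymbol\kappa_b\alpha$ is $0$, so $\lambda_{\mathrm{van}}(b)=0$.

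The only delicate point is Step 3: one must check that the Fano index is computed on the curve lattice of $X$ itself, so that $\int_{\beta'}c_1(TX)\in i\Z$ for all effective $\beta'\in\mathsf{NE}(X)$. This follows from $c_1(TX)=i\,H$ with $H\in\mathrm{Pic}(X)$, which is the definition of the index, and requires no ambient input.
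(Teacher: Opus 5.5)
Your argument is correct and is essentially the paper's proof: the classical term $c_1(TX)\smile\alpha$ lands in degree six, and a quantum term returning to degree four forces $\int_\beta c_1(TX)=1$, which is impossible when $c_1(TX)=iH$ with $H$ integral and $i\ge2$. The aside in Step 2 that $c_1(TX)$ is a multiple of $h$ is not used, and it fails in general when $\mathrm{Pic}(X)$ has rank greater than one, so you can drop it.
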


\begin{proof}
The vanishing cohomology is concentrated in degree four.  A term of
$c_1(TX)\star(-)$ which maps degree four back to degree four must have curve
class $\beta$ satisfying
\[
  \langle c_1(TX),\beta\rangle=1.
\]
If $c_1(TX)=rH$ with $r\ge2$ and $H$ integral, no integral curve class can
satisfy this equality.  The classical term raises degree and hence has no
component from $W$ to itself.  Therefore the scalar block on $W$ vanishes.
\end{proof}

\medskip

\subsubsection{Gu\'er\'e's invariants and the evaluation-map dictionary}
\label{sec:atomsapp}

From now on
\[
  n=4,\qquad X\text{ is Fano},\qquad h^{3,1}(X)=1.
\]
Hodge generality will be imposed only when the invariant $\rho$ is used.

For a number field $K$, let $F_K$ denote Gu\'er\'e's Levi--Civita coefficient
field and let $\Bbbk_K\subset\Bbbk$ be the completion of the $K$-Puiseux
subfield.  We use the canonical valued-field embedding
\[
  \jmath_K\colon\Bbbk_K\hookrightarrow F_K,
  \qquad
  t^q\longmapsto a^q.
\]
All comparisons below are made after this scalar extension; generalized
eigenspace dimensions and Jordan defects are unchanged by it.

Gu\'er\'e formulates his invariants in terms of normalized graded evaluation
maps
\[
  \mathrm{ev}\colon\widehat R^*_{X,K}\longrightarrow S_K^*,
  \qquad
  S_K^*=F_K[u^{\pm1}],
\]
with $\deg u=1$; see \cite{JeremyCubic}*{Defs.~20, 25, 27}.

\begin{lemma}\label{lem:guere-dictionary}
For every $N\ge1$, let $b_N$ be the small point
\[
  q^\beta(b_N)=t^{N\langle c_1(TX),\beta\rangle},
  \qquad
  y_i(b_N)=0.
\]
Then,
\begin{defenum}
\item $b_N\in B_X^{\mathsf{Hod}}(\Bbbk)$;
\item there is a normalized $\Q$-evaluation map $\mathrm{ev}_N$, nonzero on
$Q'$, given on the Novikov and Hodge variables by
\[
  \mathrm{ev}_N(Q^\beta)
  =
  a^{N\langle c_1(TX),\beta\rangle}
  u^{2\langle c_1(TX),\beta\rangle},
  \qquad
  \mathrm{ev}_N(T_k)=0,
\]
and by $\mathrm{ev}_N(q')=u^{\deg q'}$ on Gu\'er\'e's auxiliary graded
variables;
\item after extension by $\jmath_\Q$, the evaluated quantum operator is
conjugate to $u^2\boldsymbol\kappa_{b_N}$.  More precisely, if
$\phi_i\in\rH^{d_i}(X)$ is homogeneous and
$D_u=\mathrm{diag}(u^{d_i})$, then
\[
  D_u[\mathrm{ev}_N(\kappa_\tau)]D_u^{-1}
  =
  u^2[\boldsymbol\kappa_{b_N}].
\]
Consequently, the generalized eigenspaces and Gu\'er\'e's integers
$\rho,\nu,\nu',\gamma$ (Definition \ref{def:atomintegers}) agree under the correspondence
$\alpha=u^2\lambda$.
\end{defenum}
\end{lemma}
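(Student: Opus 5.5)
The plan is to prove the three parts in order, using the explicit shape of $b_N$ and the grading properties of quantum multiplication recalled in Section~\ref{sec:QHgeneral}.

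For (a), I would first check that $b_N$ is a $\Bbbk$-point of $B_X$ using Lemma~\ref{lem:kpoints-of-B}. All non-Novikov coordinates vanish, so only the condition $\mathrm{pr}(b_N)\in-\mathcal U_X$ needs verification. Since $|t|=e^{-1}$, we have $\log|q^\beta(b_N)|=-N\langle c_1(TX),\beta\rangle$. Hence $\mathrm{pr}(b_N)=-N\,c_1(TX)$ under the identification of (H0). Because $X$ is Fano, $c_1(TX)$ is ample, so this point lies in $-\mathcal U_X$. It is a small point, and Definition~\ref{def:BZHod} states that every small point lies in $B_X^{\mathsf{Hod}}$.

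For (b), I would define $\mathrm{ev}_N$ by the displayed formulas. Then I would check Gu\'er\'e's axioms for a normalized graded evaluation map in \cite{JeremyCubic}. There are three checks.
\begin{enumerate}
\item Degree compatibility: $\deg Q^\beta=2\langle c_1(TX),\beta\rangle$ matches the exponent of $u$.
\item Multiplicativity in $\beta$: this follows because the exponent is linear in $\beta$.
\item Normalization: sending $T_k\mapsto0$ and $q'\mapsto u^{\deg q'}$ is the normalization, and it is nonzero on $Q'$.
\end{enumerate}
The $\Q$-structure is automatic because all coefficients lie in $\Q(a)$.

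For (c), I would write $\kappa_\tau$ in the homogeneous basis $\phi_i$. Its $(i,j)$ entry is a sum of terms $\langle c_1,\phi_j,\phi_k\rangle_\beta\,g^{ki}\,Q^\beta$. A term can be nonzero only if $d_j+2=d_i+2\langle c_1,\beta\rangle$, by the grading property. Applying $\mathrm{ev}_N$ turns this term into $a^{N\langle c_1,\beta\rangle}u^{2\langle c_1,\beta\rangle}$ times the coefficient, since the $T_k$-dependence dies at $T_k=0$. Conjugating by $D_u$ multiplies the $(i,j)$ entry by $u^{d_i-d_j}$, which by the degree constraint equals $u^{2-2\langle c_1,\beta\rangle}$. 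The powers of $u$ therefore collapse to an overall factor $u^2$. What remains is exactly the $(i,j)$ entry of $c_1(TX)\star_{b_N}$ with $q^\beta=t^{N\langle c_1,\beta\rangle}$, transported by $\jmath_\Q$ ($t\mapsto a$). This gives $u^2[\boldsymbol\kappa_{b_N}]$, using $\mathsf{Eu}_{b_N}=c_1(TX)$ at a small point. Conjugation and scaling by the unit $u^2$ preserve generalized eigenspaces and Jordan structure, with eigenvalues related by $\alpha=u^2\lambda$. Gu\'er\'e's integers $\rho,\nu,\nu',\gamma$ are defined from these eigenspaces together with the $\mathsf{Hod}$-action, which commutes with $D_u$ because $D_u$ is a grading operator (Lemma~\ref{lem:hod-preserves}). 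So these integers agree.

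The main obstacle is (b). It requires matching precisely with Gu\'er\'e's definitions: his auxiliary variables $q'$, the ring $\widehat R^*_{X,K}$, and the normalization condition. Some care is also needed about whether his grading convention uses $\deg u=1$ with degrees doubled, since this decides whether the factor is $u^2$ or $u$. I would first transcribe Definitions 20, 25 and 27 of \cite{JeremyCubic} and verify each axiom term by term.
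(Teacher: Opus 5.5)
Your parts (a) and (c) follow the paper's proof essentially line by line. In (a), the same ampleness argument gives $\mathrm{pr}(b_N)=N\log|t|\,c_1(TX)\in-\mathcal U_X$, and vanishing of the non-Novikov coordinates puts $b_N$ in the Hodge locus. In (c), the same homogeneity relation $d_i=d_j+2-2\langle c_1(TX),\beta\rangle$ makes the $u$-powers collapse to a common factor $u^2$ after conjugation by $D_u$. Your reason why the four invariants agree ($D_u$ is a grading operator, so it preserves the Hodge-class and Hochschild-graded subspaces) is the paper's remark that $D_u$ records the cohomological grading. Your worry about $u$ versus $u^2$ is settled by the statement's own conventions: $\deg u=1$, $d_i$ is the real cohomological degree, and $\deg Q^\beta=2\langle c_1(TX),\beta\rangle$.

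The one real gap is in (b). The displayed formulas only prescribe $\mathrm{ev}_N$ on generators. An evaluation map, however, must be defined on the completed ring $\widehat R^*_{X,K}$, which contains infinite Novikov series. Your third check (``normalization'') only restates the prescription $T_k\mapsto0$, $q'\mapsto u^{\deg q'}$; it neither verifies a valuation condition nor explains how the generator values determine a map on the completion. The paper checks multiplicativity and homogeneity, which you have. It also checks that the prescribed values satisfy the normalized valuation bounds of Gu\'er\'e's definition, and then invokes Gu\'er\'e's Proposition~21 to extend the assignment uniquely to a normalized evaluation map. For the values here, the bound is controlled by $\langle c_1(TX),\beta\rangle>0$ for every nonzero effective $\beta$, so $|a^{N\langle c_1(TX),\beta\rangle}|<1$ and the evaluated Novikov series converge. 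This positivity is exactly the ampleness of $c_1(TX)$ you already used in (a). With the bound and the extension result added, your argument is complete at the same level as the paper's.
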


\begin{proof}
Since $X$ is Fano, $c_1(TX)$ lies in the ample cone.  Because
$\log|t|<0$, the point $b_N$ satisfies
\[
  \mathrm{pr}(b_N)=N\log|t|\,c_1(TX)\in-\mathcal U_X,
\]
so it lies in $B_X$; all non-Novikov coordinates vanish, hence it lies in
the Hodge locus.

For (b), the displayed assignment is multiplicative in $\beta$, homogeneous
for Gu\'er\'e's grading, and satisfies the normalized valuation bounds.
\cite{JeremyCubic}*{Prop.~21} therefore extends it uniquely to a normalized
evaluation map, and the prescribed value on $Q'$ is nonzero.

For (c), if a term $q^\beta$ occurs in the $(i,j)$ entry, homogeneity gives
\[
  d_i=d_j+2-2\langle c_1(TX),\beta\rangle.
\]
Evaluation contributes
$u^{d_j+2-d_i}$, and conjugation by $D_u$ leaves the common factor $u^2$.
The remaining coefficient is exactly the image under $\jmath_\Q$ of the
coefficient obtained by evaluating at $b_N$.  Scalar extension,
conjugation, and multiplication by a nonzero scalar preserve generalized
eigenspace dimensions and Jordan form, while $D_u$ records precisely the
cohomological grading.  The four invariants therefore agree.
\end{proof}

\begin{definition}[\cite{JeremyCubic}*{Def.~25}]\label{def:atomintegers}
For a $K$-evaluation map $\mathrm{ev}$ and a generalized eigenspace
$E^X_{\mathrm{ev},\alpha}$ of $\mathrm{ev}(\kappa_\tau)$, we write 
\begin{align}
\rho_{\mathrm{ev},\alpha} & := \operatorname{rk}_{S_{K}^{*}} \left( E_{\mathrm{ev},\alpha}^{X} \cap \left( H^{*}(X)^{\mathrm{Hdg}} \otimes_{\mathbb{Q}} S_{K}^{*} \right) \right), \\
\nu_{\mathrm{ev},\alpha} & := \operatorname{rk}_{S_{\mathbb{C}}^{*}} \left( \left( E_{\mathrm{ev},\alpha}^{X} \otimes_{K} \mathbb{C} \right) \cap \left( H^{(2)}(X) \otimes_{\mathbb{C}} S_{\mathbb{C}}^{*} \right) \right), \\
{\nu'}_{\mathrm{ev},\alpha} & := \operatorname{rk}_{S_{\mathbb{C}}^{*}} \left( \left( E_{\mathrm{ev},\alpha}^{X} \otimes_{K} \mathbb{C} \right) \cap \left( H^{(1)}(X) \otimes_{\mathbb{C}} S_{\mathbb{C}}^{*} \right) \right), \\
\gamma_{\mathrm{ev},\alpha} & := \operatorname{rk}_{S_{K}^{*}} \left( \operatorname{ev}(\kappa_{\tau}) - \alpha \right)|_{ E_{\mathrm{ev},\alpha}^{X}},
\end{align}
% \[
%   \rho_{\mathrm{ev},\alpha},\qquad
%   \nu_{\mathrm{ev},\alpha},\qquad
%   \nu'_{\mathrm{ev},\alpha},\qquad
%   \gamma_{\mathrm{ev},\alpha}
% \]
for the four invariants of Gu\'er\'e's Definition~25, with exactly the scalar
extensions and grading conventions prescribed there. 
% In particular,
% $\gamma$ is the Jordan defect
% \[
%   \rk\big(\mathrm{ev}(\kappa_\tau)-\alpha\big)
%   \big|_{E^X_{\mathrm{ev},\alpha}}.
% \]
For $\mathrm{ev}=\mathrm{ev}_N$, Lemma \ref{lem:guere-dictionary} identifies
these numbers with those computed from $\boldsymbol\kappa_{b_N}$.
\end{definition}

\begin{definition}[\cite{JeremyCubic}*{Def.~27}]\label{def:properties}
Let $K$ be a number field.  Property $\clubsuit_K$ is the condition that,
outside the finite exceptional set allowed in Gu\'er\'e's Definition~27,
every normalized $K$-evaluation map nonzero on $Q'$ and every eigenvalue
satisfy
\[
  \nu=0\qquad\text{or}\qquad\rho\ge3.
\]
Property $\heartsuit_K$ is the analogous condition
\[
  \nu=0\qquad\text{or}\qquad\nu'\neq0
  \qquad\text{or}\qquad\gamma\ge2.
\]
\end{definition}

\begin{lemma}\label{lem:failure}
To prove that $X$ fails $\clubsuit_K$ or $\heartsuit_K$, it is enough to
exhibit infinitely many normalized $K$-evaluation maps, nonzero on $Q'$, for
which one eigenvalue violates the corresponding condition.
\end{lemma}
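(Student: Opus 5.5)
The statement is a logical reduction, so I would prove it by unwinding Definition \ref{def:properties} and the quantifier structure of Gu\'er\'e's properties. First I will recall precisely what $\clubsuit_K$ and $\heartsuit_K$ assert. Each is a universal statement over normalized $K$-evaluation maps that are nonzero on $Q'$ and over the eigenvalues of $\mathrm{ev}(\kappa_\tau)$. It is subject to one caveat: Gu\'er\'e's Definition~27 allows a \emph{finite} exceptional set of evaluation maps on which the condition may fail. Hence the negation of $\clubsuit_K$ (respectively $\heartsuit_K$) reads as follows. There exist evaluation maps outside every admissible finite exceptional set, each having an eigenvalue $\alpha$ with $\nu_{\mathrm{ev},\alpha}\neq0$ and $\rho_{\mathrm{ev},\alpha}\le2$ (respectively with $\nu\neq0$, $\nu'=0$, $\gamma\le1$).

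The key step is then a pigeonhole argument. Suppose we are given an infinite family $\{\mathrm{ev}_N\}$ of pairwise distinct normalized $K$-evaluation maps, each nonzero on $Q'$, and each having one eigenvalue that violates the relevant condition. Any finite exceptional set $S$ permitted by Definition~27 can contain only finitely many members of this family, so some $\mathrm{ev}_N\notin S$ exists. That map, together with its violating eigenvalue, witnesses the failure of the property. Since $S$ was arbitrary, $X$ fails $\clubsuit_K$ (respectively $\heartsuit_K$). I would also note that the invariants $\rho,\nu,\nu',\gamma$ depend only on the pair $(\mathrm{ev},\alpha)$, so the violation is intrinsic to each map and is unaffected by which exceptional set is chosen.

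The only genuine obstacle is bookkeeping: the exceptional set must be finite in the sense Gu\'er\'e uses. That is, it must be a finite set of evaluation maps, and not, say, finite up to some equivalence that could identify infinitely many of the $\mathrm{ev}_N$. I would check this against \cite{JeremyCubic}*{Def.~27}. If the exceptional set is instead phrased up to rescaling of the Novikov parameter, I would add one more requirement: the exhibited maps should be pairwise inequivalent. In the intended application to the family $\mathrm{ev}_N$ of Lemma \ref{lem:guere-dictionary}, this holds because $\mathrm{ev}_N(Q^\beta)=a^{N\langle c_1(TX),\beta\rangle}u^{2\langle c_1(TX),\beta\rangle}$ has valuation growing with $N$. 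Distinct values of $N$ therefore give distinct maps.
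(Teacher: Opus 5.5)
Your argument is correct and is essentially the paper's proof. A finite exceptional set of evaluation maps cannot contain infinitely many pairwise distinct violating maps, so one of them witnesses the failure of $\clubsuit_K$ or $\heartsuit_K$. One caution on your hedge, which lies outside the lemma itself: if the exceptional set were finite only up to rescaling of the Novikov parameter, the family $\mathrm{ev}_N$ would not rescue the argument, because $\mathrm{ev}_N$ is obtained from $\mathrm{ev}_1$ precisely by the substitution $a\mapsto a^N$, so the growing valuation you cite shows distinctness, not inequivalence.
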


\begin{proof}
A finite exceptional set cannot contain infinitely many such evaluation
maps.
\end{proof}

\begin{theorem}[\cite{JeremyCubic}]\label{thm:imported}
\begin{defenum}
\item[\textup{(a)}]
\textup{(Cor.~46)} Every rational smooth complex projective variety of
dimension at most four satisfies $\clubsuit_{K_0}$, where
\[
  K_0=\Q(e^{2\pi\mathbf i/6},\sqrt2,\sqrt3).
\]
\item[\textup{(b)}]
\textup{(Rem.~29)} If $K_1\subseteq K_2$, then
\[
  \clubsuit_{K_2}\Longrightarrow\clubsuit_{K_1},
  \qquad
  \heartsuit_{K_2}\Longrightarrow\heartsuit_{K_1}.
\]
\item[\textup{(c)}]
\textup{(Prop.~53 and Rem.~52)} If a smooth projective surface fails
$\heartsuit_K$ for $K\supseteq\Q(\mathbf i)$, then its minimal model is a
K3 surface.
\item[\textup{(d)}]
\textup{(Rem.~42)} Starting from an arbitrary weak factorization, one can
choose compatible evaluation maps, after finite extension of the number
field when necessary, with disjoint spectra so that the one-sided blow-up
formula is additive for
\[
  \epsilon\in\{\rho,\gamma,\nu,\nu'\}.
\]
\item[\textup{(e)}]
\textup{(Prop.~54 and Rem.~42)} For a surface center with K3 minimal model
$\Sigma$, after a finite extension $\widehat K$ of the coefficient number
field there is a graded embedding
\[
  \rH^\bullet(\Sigma,\widehat K)_{S_{\widehat K}}(-1)
  \hookrightarrow
  E^X_{\widehat{\mathrm{ev}},\alpha}
\]
lying in the linear span of morphisms of rational Hodge structures.
\end{defenum}
\end{theorem}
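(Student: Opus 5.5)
Since every part of Theorem~\ref{thm:imported} is quoted from \cite{JeremyCubic}, the plan is not to reprove Gu\'er\'e's results but to check that his hypotheses and conventions match the ones fixed here, so that each cited statement applies verbatim. Throughout I will use that Gu\'er\'e's invariants $\rho,\nu,\nu',\gamma$ are defined for normalized graded evaluation maps $\mathrm{ev}\colon\widehat R^*_{X,K}\to S_K^*$, and that Definition~\ref{def:atomintegers} and Definition~\ref{def:properties} copy his Definitions~25 and~27 with the same scalar extensions. No translation is therefore needed for (a), (b) and (c): they are statements purely about $\clubsuit_K$ and $\heartsuit_K$ in his sense.

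For (a), I would cite Cor.~46 of \cite{JeremyCubic}, checking only that his field $K_0$ is $\Q(e^{2\pi\mathbf i/6},\sqrt2,\sqrt3)$ and that his hypothesis is rationality in dimension at most four. His argument runs weak factorization, the blow-up formula, and the low-dimensional computation, which uses that centers have dimension at most two. Part (b) is formal: a normalized $K_1$-evaluation map becomes a normalized $K_2$-evaluation map after scalar extension. Ranks over $S^*$ are unchanged, so a violating $K_1$-map gives a violating $K_2$-map, and infinitely many distinct $K_1$-maps stay distinct. This is Rem.~29 of \cite{JeremyCubic}.

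For (c), I would cite Prop.~53 together with Rem.~52 of \cite{JeremyCubic}. For (d) and (e), I would cite Rem.~42 and Prop.~54. The point to verify is that the finite extension $\widehat K$ and the compatible evaluation maps with disjoint spectra are produced inside his framework. This makes the blow-up decomposition of generalized eigenspaces additive for each $\epsilon\in\{\rho,\gamma,\nu,\nu'\}$. The Tate twist $(-1)$ in the embedding of $\rH^\bullet(\Sigma)$ comes from a codimension-two blow-up of a fourfold along a surface.

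The main obstacle is not in these citations but in their later use: transporting them to our small points $b_N$. That step is Lemma~\ref{lem:guere-dictionary}. There I would check that the embedding $\jmath_K$ and the conjugation by $D_u$ preserve generalized eigenspaces, Jordan defect and Hodge-class rank, so that Gu\'er\'e's invariants can be read off from $\boldsymbol\kappa_{b_N}$.
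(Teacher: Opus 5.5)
Your proposal is correct and takes the same approach as the paper: the paper gives no independent proof of Theorem~\ref{thm:imported}, but imports each part verbatim from \cite{JeremyCubic} (Cor.~46, Rem.~29, Prop.~53 with Rem.~52, Rem.~42, and Prop.~54). Your scalar-extension argument for (b) is a faithful gloss of Rem.~29, and it matches how the paper later combines (a) and (b) to pass from $\clubsuit_{K_0}$ to $\clubsuit_{\Q}$.
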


\medskip

\subsubsection{The fixed block and the spectral invariants}

At a small point, Theorem \ref{theo:scalar_on_van} gives
\begin{equation}\label{eq:eigsplit}
  E_{b,\lambda}
  =
  E^{B_b}_\lambda
  \oplus
  \begin{cases}
    W_{\Bbbk},&\lambda=\lambda_{\mathrm{van}}(b),\\
    0,&\lambda\neq\lambda_{\mathrm{van}}(b).
  \end{cases}
\end{equation}
where $E^{B_b}_\lambda$ is the generalized eigenspace of the $Y$-block.

\begin{lemma}\label{lem:heart-invariants-from-A}
Let $b$ be a small point and let $m_a^Y(\lambda)$ and $m_g^Y(\lambda)$ be
the algebraic and geometric multiplicities of $\lambda$ as an eigenvalue of
$B_b$, both taken to be zero if $\lambda\notin\operatorname{spec}(B_b)$.
Then
\begin{defenum}
\item
\[
  \nu_{b,\lambda}
  =
  \begin{cases}
  1,&\lambda=\lambda_{\mathrm{van}}(b),\\
  0,&\text{otherwise}
  \end{cases}
\]
\item $\nu'_{b,\lambda}=0$ for every $\lambda$
\item
\[
  \gamma_{b,\lambda}
  =m_a^Y(\lambda)-m_g^Y(\lambda).
\]
If \textup{(AC)} holds, these multiplicities are precisely those of the
flag-ambient matrix $A_b$.
\end{defenum}
\end{lemma}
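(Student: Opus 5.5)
The plan is to compute the three invariants blockwise. Combine the eigenspace splitting \eqref{eq:eigsplit} with the Hodge-theoretic description of the two summands $\rH_Y^*(X)_{\Bbbk}$ and $W_{\Bbbk}$.

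First, identify Gu\'er\'e's subspaces $H^{(2)}(X)$ and $H^{(1)}(X)$ of Definition~\ref{def:atomintegers}. These are the parts of the cohomology carrying Hochschild weights with $|p-q|=2$, respectively $|p-q|=1$. By Lemma~\ref{lem:vananb} and Lemma~\ref{lem:automatic}, every class outside degree four is of type $(p,p)$, and $\rH^{\mathrm{odd}}(X)=0$. By (H2) and Lemma~\ref{lem:vananb}(a), $\rH^4_Y(X)=j^*\rH^4(Y)$ is of type $(2,2)$. The only non-$(p,p)$ part therefore lies in $W$, and it is $\rH^{3,1}\oplus\rH^{1,3}$, which is two-dimensional since $h^{3,1}(X)=1$. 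Consequently $H^{(1)}(X)=0$, which gives (ii) at once. Moreover $H^{(2)}(X)$ is contained in $W_{\C}$, with the $\mu$-weight $+2$ piece equal to $\rH^{3,1}$, a single line. With Gu\'er\'e's conventions this line is what $\nu$ counts, and $\nu$ is the rank of its intersection with the eigenspace. I need to check that convention carefully; it is the point most likely to hide a factor of two.

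For (i): by Theorem~\ref{theo:scalar_on_van}, $W_{\Bbbk}$ lies entirely in the single generalized eigenspace for $\lambda_{\mathrm{van}}(b)$, and it meets no other eigenspace. Hence the intersection with $H^{(2)}\otimes S^*_{\C}$ is the line $\rH^{3,1}$ when $\lambda=\lambda_{\mathrm{van}}(b)$, and zero otherwise. This is the step I expect to be the main obstacle. One has to match exactly how Gu\'er\'e's $H^{(2)}$ is graded: either only one of $\rH^{3,1},\rH^{1,3}$ is counted, or the rank is taken over the graded ring $S^*_{\C}$ so that conjugate pieces are identified. I would also need to verify, via Lemma~\ref{lem:guere-dictionary}(c), that the conjugation by $D_u$ does not mix degrees. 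I would settle this by following Definition~25 of \cite{JeremyCubic} verbatim, and by checking that the cubic fourfold case gives $\nu=1$ on the zero eigenvalue.

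For (iii): $\gamma_{b,\lambda}$ is the rank of $\boldsymbol\kappa_b-\lambda$ restricted to $E_{b,\lambda}$. By the block form of Theorem~\ref{theo:scalar_on_van}, this restriction is $(B_b-\lambda)|_{E^{B_b}_\lambda}\oplus(\lambda_{\mathrm{van}}-\lambda)\mathrm{id}$ on the $W$-part when that part is present. The second summand vanishes exactly when $W$ is present, since that happens only for $\lambda=\lambda_{\mathrm{van}}$. The rank of a nilpotent operator on a generalized eigenspace equals the dimension minus the kernel dimension, which is $m_a^Y(\lambda)-m_g^Y(\lambda)$. Lemma~\ref{lem:guere-dictionary}(c) transports all of this to Gu\'er\'e's evaluation map, because scalar extension, conjugation and rescaling by $u^2$ preserve Jordan type. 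Finally, under (AC), Theorem~\ref{theo:scalar_on_van} gives $B_b=A_b$, so the multiplicities are those of the flag-ambient matrix.
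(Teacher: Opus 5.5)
Your argument is the paper's. You use the block form from Theorem~\ref{theo:scalar_on_van}, note that all non-$(p,p)$ cohomology sits in $W_{\C}$, get (ii) from vanishing odd cohomology, and get (iii) from the rank of a nilpotent block, with the vanishing summand killed by $\boldsymbol\kappa_b-\lambda$. The only gap is the convention you left open in (i), and the paper's own conventions already settle it.

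The Hochschild grading is the \emph{signed} one. By Definition~\ref{def:atominvariants} and Remark~\ref{rmk:mu}, $\mu(u)$ acts on $\rH^{p,q}$ by $u^{p-q}$, so
\[
H^{(2)}(X)=\bigoplus_{p-q=2}\rH^{p,q}(X)=\rH^{2,0}\oplus\rH^{3,1}\oplus\rH^{4,2}.
\]
This equals $\rH^{3,1}$, a line inside $W_{\C}$, because $h^{2,0}=h^{4,2}=0$. The paper uses exactly this step.

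Your $|p-q|$ reading would give $\rH^{3,1}\oplus\rH^{1,3}$ and hence $\nu_{b,\lambda_{\mathrm{van}}}=2$. Your second proposed rescue does not work either. Taking rank over $S^*_{\C}=\C[u^{\pm1}]$ identifies nothing: $(\rH^{3,1}\oplus\rH^{1,3})\otimes S^*_{\C}$ has rank $2$.

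So drop the absolute-value reading and commit to the signed grading. With that choice, (i) follows exactly as you wrote it, and the cubic-fourfold sanity check is unnecessary. Your worry about $D_u$ is also moot: $D_u$ is diagonal in a homogeneous basis and merely rescales $H^{(2)}\subset\rH^4$ by the unit $u^4$. Part (ii) is unaffected by the convention, since signed Hochschild degree one lies entirely in odd degree.
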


\begin{proof}
For a fourfold, Hochschild degree two consists of Hodge types
$(2,0)$, $(3,1)$, and $(4,2)$.  Lemma \ref{lem:automatic} gives
$h^{2,0}(X)=0$, and Serre duality gives $h^{4,2}(X)=0$.  Thus
$\rH^{(2)}(X)=\rH^{3,1}(X)$, of dimension one.  Since
$\rH^4(Y)$ is of type $(2,2)$, this line lies in $W_\C$.  The eigenspace
splitting above gives (a).  Lemma \ref{lem:automatic} and Poincar\'e duality
give $\rH^{\mathrm{odd}}(X)=0$, hence (b).  The vanishing summand is killed
by $\boldsymbol\kappa_b-\lambda$ whenever it is present, so the rank is
exactly the Jordan defect of the fixed block, proving (c).
\end{proof}

\begin{lemma}\label{lem:invariants-from-A}
Assume in addition that $X$ is Hodge general.  Then, with the notation of
Lemma \ref{lem:heart-invariants-from-A},
\[
  \rho_{b,\lambda}=m_a^Y(\lambda).
\]
Under \textup{(AC)}, this is the algebraic multiplicity of $\lambda$ in
$A_b$.
\end{lemma}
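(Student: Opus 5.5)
We need to show $\rho_{b,\lambda}=m_a^Y(\lambda)$, where $\rho_{b,\lambda}$ is the rank of $E_{b,\lambda}\cap(\rH^*(X)^{\mathrm{Hdg}}\otimes\Bbbk)$. The plan is to combine the eigenspace splitting \eqref{eq:eigsplit} with Lemma~\ref{lem:hdg-is-amb}. The first step is to identify the Hodge classes: since $X$ is Hodge general, Lemma~\ref{lem:hdg-is-amb} gives $\rH^*(X)^{\mathrm{Hdg}}=\rH_Y^*(X)$. Hence, after scalar extension,
\[
\rH^*(X)^{\mathrm{Hdg}}\otimes_{\Q}\Bbbk=\rH_Y^*(X)_{\Bbbk},
\]
which is exactly the first summand in the decomposition $\rH^*(X,\Bbbk)=\rH_Y^*(X)_{\Bbbk}\oplus W_{\Bbbk}$ of Theorem~\ref{theo:scalar_on_van}.

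Next, I intersect with the generalized eigenspace. By Theorem~\ref{theo:scalar_on_van}, $\boldsymbol\kappa_b$ is block diagonal with respect to this decomposition. Its generalized eigenspace is therefore $E_{b,\lambda}=E^{B_b}_\lambda\oplus(W_{\Bbbk}$ or $0)$, as in \eqref{eq:eigsplit}, and in particular $E^{B_b}_\lambda\subseteq\rH_Y^*(X)_{\Bbbk}$. Intersecting a direct sum $U_1\oplus U_2$ with the first factor $V_1$, where $U_i\subseteq V_i$ and $V=V_1\oplus V_2$, yields $U_1$. Thus
\[
E_{b,\lambda}\cap\rH_Y^*(X)_{\Bbbk}=E^{B_b}_\lambda,
\]
and its dimension is the algebraic multiplicity $m_a^Y(\lambda)$ of $\lambda$ for $B_b$. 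This also gives $0$ when $\lambda\notin\operatorname{spec}(B_b)$, which is consistent with the convention.

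It remains to match this with Guéré's rank over $S_K^*$. This goes through the dictionary of Lemma~\ref{lem:guere-dictionary}(c). Conjugation by the diagonal matrix $D_u$ preserves each graded piece, and hence preserves the graded subspace $\rH_Y^*(X)$, so the rank of the intersection over $S_K^*$ equals the $\Bbbk$-dimension computed above. Finally, under (AC) the block $B_b$ equals $A_b$ by Theorem~\ref{theo:scalar_on_van}, so the multiplicity is that of $A_b$.

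The only delicate point is this base-change compatibility. I need to check that the Hodge classes are defined over $\Q$ and stable under the $D_u$-conjugation and under the embedding $\jmath_K$. This holds because $\rH_Y^*(X)$ is a graded $\Q$-subspace.
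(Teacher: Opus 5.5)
Your proposal is correct and follows the same route as the paper. Hodge generality, via Lemma~\ref{lem:hdg-is-amb}, puts all Hodge classes in the $Y$-part, and the block splitting \eqref{eq:eigsplit} then identifies the Hodge part of $E_{b,\lambda}$ with $E^{B_b}_\lambda$, whose dimension is $m_a^Y(\lambda)$. Your added remarks on intersecting a direct sum with its first factor, and on compatibility with Gu\'er\'e's $S_K^*$-ranks via Lemma~\ref{lem:guere-dictionary}, only spell out what the paper's two-line proof leaves implicit.
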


\begin{proof}
By Lemma \ref{lem:hdg-is-amb}, all rational Hodge classes lie in the
$Y$-part and the vanishing summand contains none.  Hence the Hodge-class
part of the generalized eigenspace is exactly $E^{B_b}_\lambda$.
\end{proof}

\begin{remark}\label{rmk:lambda-not-fixed-spectrum}
Only the fixed-block Jordan defect at $\lambda_{\mathrm{van}}$ is used in
Theorem \ref{theo:rational_Fano_K3}.  If
$\lambda_{\mathrm{van}}(b)\notin\operatorname{spec}(B_b)$, then
$m_a^Y=m_g^Y=0$ there and consequently $\gamma=0$ automatically.
\end{remark}

\begin{definition}\label{def:Aalg1}
Since $X$ is Fano, only finitely many curve classes can occur in each entry
of small quantum multiplication by $c_1(TX)$.  We denote by
\[
  B^{\mathrm{alg}}(1)
\]
the matrix of the $Y$-block after the algebraic augmentation
$q^\beta\mapsto1$, and by
\[
  A^{\mathrm{alg}}(1)
\]
the corresponding matrix on the flag-ambient block computed via Theorem \ref{thm:localizationCI}.  Under \textup{(AC)}, the two matrices coincide.
\end{definition}

\begin{lemma}\label{lem:scaling}
Let $0<|s|<1$ and let $b_s$ be the small point
\[
  q^\beta(b_s)=s^{\langle c_1(TX),\beta\rangle},
  \qquad
  y_i(b_s)=0.
\]
Then $b_s\in B_X^{\mathsf{Hod}}(\Bbbk)$.  In any homogeneous basis of the
$Y$-part, with basis element of degree $2p_i$ and
$D_s=\mathrm{diag}(s^{p_i})$, one has
\[
B_{b_s}=D_s^{-1}\big(sB^{\mathrm{alg}}(1)\big)D_s.
\]
The analogous identity holds for the flag-ambient block $A$.  Therefore,
algebraic multiplicities, geometric multiplicities, and Jordan defects are
independent of $s$.
\end{lemma}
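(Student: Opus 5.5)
The proof has three parts. First, $b_s$ lies in the Hodge locus. Second, a homogeneity argument gives the conjugation identity. Third, the invariance of the spectral data follows.

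For the first part we imitate the proof of Lemma \ref{lem:guere-dictionary}(a). Since $X$ is Fano, $c_1(TX)$ lies in the ample cone $\mathcal U_X$, and $\log|s|<0$. Hence
\[
\mathrm{pr}(b_s)=\log|s|\,c_1(TX)\in-\mathcal U_X .
\]
All non-Novikov coordinates vanish, so Lemma \ref{lem:kpoints-of-B} shows that $b_s$ is a $\Bbbk$-point of $B_X$. Definition \ref{def:BZHod} then places it in $B_X^{\mathsf{Hod}}$.

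For the second part, fix a homogeneous basis $\phi_i\in\rH^{2p_i}(X)$ of the $Y$-part. This block is preserved by Theorem \ref{theo:scalar_on_van}, and $\boldsymbol\kappa_{b_s}=c_1(TX)\star_{b_s}$ since $b_s$ is small. By Proposition \ref{prop:finitebetas}, the $(i,j)$ entry of the operator is a finite sum $\sum_\beta c^{\beta}_{ij}q^\beta$ with $c^\beta_{ij}\in\Q$. The grading property of \S\ref{sec:QHgeneral} says that $q^\beta$ occurs in the entry sending $\phi_j$ to $\phi_i$ only if
\[
2p_i=2p_j+2-2\langle c_1(TX),\beta\rangle,
\qquad\text{i.e.}\qquad
\langle c_1(TX),\beta\rangle=p_j-p_i+1 .
\]
Evaluating at $b_s$ therefore multiplies every monomial in that entry by the same scalar $s^{p_j-p_i+1}$. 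This gives
\[
(B_{b_s})_{ij}=s^{p_j-p_i+1}\,B^{\mathrm{alg}}(1)_{ij}=\big(D_s^{-1}\,sB^{\mathrm{alg}}(1)\,D_s\big)_{ij}.
\]
This is the claimed identity; we only need to track the index convention (rows versus columns) so that the powers of $s$ land on the correct side. Exactly the same computation applies to the flag-ambient block $A$. The only change is that the grading must be read through $\iota_*\beta'$, using $\int_{\beta'}c_1(TX)=\int_{\iota_*\beta'}\widehat c_1(TX)$, so that the exponent is still $\langle c_1(TX),\beta\rangle$.

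The third part is immediate. Conjugation by the invertible matrix $D_s$ and multiplication by the nonzero scalar $s$ together send the Jordan form of $B^{\mathrm{alg}}(1)$ to that of $B_{b_s}$, with eigenvalues scaled $\lambda\mapsto s\lambda$. This correspondence preserves algebraic and geometric multiplicities, hence Jordan defects. The only delicate point in the whole argument is the sign convention in the grading exponent, which we check against the degree formula $\deg q^\beta=2\int_\beta c_1$.
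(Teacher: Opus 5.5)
Your proposal is correct and follows essentially the same route as the paper's proof. You establish membership in $B_X^{\mathsf{Hod}}$ via $\mathrm{pr}(b_s)=\log|s|\,c_1(TX)\in-\mathcal U_X$, derive the homogeneity constraint $\langle c_1(TX),\beta\rangle=p_j-p_i+1$ on the $(i,j)$ entry, and conclude by conjugation with $D_s$. Your explicit check that $(D_s^{-1}sMD_s)_{ij}=s^{p_j-p_i+1}M_{ij}$, and your remark that eigenvalues scale by $s$, just spell out what the paper leaves implicit.
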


\begin{proof}
Because $c_1(TX)$ is ample,
$\mathrm{pr}(b_s)=\log|s|\,c_1(TX)\in-\mathcal U_X$.  If a monomial
$q^\beta$ occurs in the $(i,j)$ entry, homogeneity gives
\[
  \langle c_1(TX),\beta\rangle=p_j-p_i+1,
\]
so that entry specializes to $a_{ij}s^{p_j-p_i+1}$.  Conjugation by $D_s$
gives the displayed formula.
\end{proof}

\medskip

\subsubsection{Two spectral consequences}

The first consequence uses Hodge generality through $\rho$.

\begin{theorem}\label{thm:irrat-from-A}
Assume Standing Assumption \ref{stand:setup} with $n=4$.  Suppose that $X$
is Fano, $h^{3,1}(X)=1$, and Hodge general.  If every eigenvalue of
$B^{\mathrm{alg}}(1)$ has algebraic multiplicity at most two, then $X$ is
irrational.  In particular, under \textup{(AC)} it is enough to check this
condition on the computable matrix $A^{\mathrm{alg}}(1)$.
\end{theorem}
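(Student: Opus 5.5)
We argue by contradiction, via Gu\'er\'e's criterion: if $X$ were rational, Theorem~\ref{thm:imported}(a) would give $\clubsuit_{K_0}$, hence $\clubsuit_{\Q}$ by Theorem~\ref{thm:imported}(b). We therefore only need to show that $X$ fails $\clubsuit_\Q$. By Lemma~\ref{lem:failure}, it is enough to exhibit infinitely many normalized $\Q$-evaluation maps, nonzero on $Q'$, each having an eigenvalue with $\nu\neq0$ and $\rho\le2$. We take the family $\mathrm{ev}_N$, $N\ge1$, of Lemma~\ref{lem:guere-dictionary}. These are pairwise distinct, since they assign different values to $Q^\beta$ for any $\beta$ with $\langle c_1(TX),\beta\rangle\neq0$. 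By part (c) of that lemma, the integers $\rho,\nu,\nu',\gamma$ for $\mathrm{ev}_N$ coincide with those computed from $\boldsymbol\kappa_{b_N}$ at the small point $b_N$, under the correspondence $\alpha=u^2\lambda$.

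We now fix $N$ and work at $b=b_N$. Theorem~\ref{theo:scalar_on_van} splits $\boldsymbol\kappa_b$ as $B_b\oplus\lambda_{\mathrm{van}}(b)\,\mathrm{id}_W$. Consider the eigenvalue $\lambda=\lambda_{\mathrm{van}}(b)$.
\begin{itemize}
\item By Lemma~\ref{lem:heart-invariants-from-A}(a), $\nu_{b,\lambda}=1\neq0$. Here $W\neq0$, because $h^{3,1}(X)=1$ and the $(3,1)$-line lies in $W_\C$, using (H2).
\item By Lemma~\ref{lem:invariants-from-A}, which uses Hodge generality, $\rho_{b,\lambda}=m_a^Y(\lambda)$, the algebraic multiplicity of $\lambda$ in $B_b$. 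This is $0$ if $\lambda\notin\operatorname{spec}(B_b)$.
\end{itemize}
It remains to show $m_a^Y(\lambda)\le2$. By Lemma~\ref{lem:scaling} with $s=t^N$ (note $|t^N|<1$, and $b_N$ is exactly $b_s$), $B_b$ is conjugate to $t^N B^{\mathrm{alg}}(1)$. Its eigenvalues are $t^N$ times those of $B^{\mathrm{alg}}(1)$, with the same algebraic multiplicities. By hypothesis each of these is at most two, so $\rho_{b,\lambda}\le2<3$ and $\clubsuit$ fails at $\mathrm{ev}_N$ for every $N$. Hence $X$ fails $\clubsuit_\Q$ and cannot be rational.

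For the final sentence of the statement: under (AC), Definition~\ref{def:Aalg1} and Theorem~\ref{theo:scalar_on_van} identify $B^{\mathrm{alg}}(1)$ with $A^{\mathrm{alg}}(1)$, so the multiplicity condition may be checked on the localization matrix.

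The main obstacle is the bookkeeping in the dictionary step. One must check that the $\mathrm{ev}_N$ are genuinely distinct normalized maps, nonzero on $Q'$, so that they escape Gu\'er\'e's finite exceptional set. One must also check that $\rho$ computed over $S_\Q^*$ matches $\rho_{b,\lambda}$ computed in $\rH^*(X)^{\mathrm{Hdg}}\otimes\Bbbk$. Both points are supplied by Lemma~\ref{lem:guere-dictionary}, so the argument reduces to assembling the cited lemmas.
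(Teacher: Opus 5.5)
Your proposal is correct and follows the paper's own proof essentially step for step. Both arguments use the family $\mathrm{ev}_N$ of Lemma~\ref{lem:guere-dictionary}, obtain $\nu=1$ and $\rho=m_a^Y(\lambda_{\mathrm{van}})\le 2$ from Lemmas~\ref{lem:heart-invariants-from-A}, \ref{lem:invariants-from-A} and \ref{lem:scaling}, conclude that $X$ fails $\clubsuit_\Q$, and reach a contradiction with Theorem~\ref{thm:imported}(a),(b). Your explicit checks ($W\neq0$, distinctness of the $\mathrm{ev}_N$, and $b_N=b_s$ with $s=t^N$) only make explicit details the paper leaves implicit.
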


\begin{proof}
For every $N\ge1$, use the $\Q$-evaluation map $\mathrm{ev}_N$ of Lemma
\ref{lem:guere-dictionary}.  Let $\alpha_N$ correspond to the scalar
eigenvalue on $W$.  Lemmas \ref{lem:scaling},
\ref{lem:heart-invariants-from-A}, and \ref{lem:invariants-from-A} give
\[
  \nu_{\mathrm{ev}_N,\alpha_N}=1,
  \qquad
  \rho_{\mathrm{ev}_N,\alpha_N}\le2.
\]
Thus, every $\mathrm{ev}_N$ violates $\clubsuit_\Q$.  The maps are distinct
as $N$ varies, hence $X$ fails $\clubsuit_\Q$.  By Theorem
\ref{thm:imported}\textup{(a),(b)}, every rational fourfold satisfies
$\clubsuit_\Q$.  Therefore, $X$ is irrational.
\end{proof}

The second consequence uses only $\nu,\nu'$, and $\gamma$ and hence does not
require Hodge generality.

\begin{theorem}\label{theo:rational_Fano_K3}
Assume Standing Assumption \ref{stand:setup} with $n=4$.  Suppose that $X$
is Fano and $h^{3,1}(X)=1$.  If every eigenvalue of
$B^{\mathrm{alg}}(1)$ has Jordan defect at most one and $X$ is rational,
then every weak factorization of $X\dashrightarrow\PP^4$ contains a smooth
surface center whose minimal model is a K3 surface.

More precisely, fix such a weak factorization.  After extending the initial
$\Q$-evaluation map first to
\[
  K_0=\Q(e^{2\pi\mathbf i/6},\sqrt2,\sqrt3)
  \supset\Q(\mathbf i)
\]
and then, if necessary, to a finite extension $\widetilde K$, one may choose
compatible evaluation maps as in \cite{JeremyCubic}*{Rem.~42}.  For one of
the surface centers, with K3 minimal model $\Sigma$, a further finite
extension $\widehat K/\widetilde K$ yields an embedding
\[
  \rH^\bullet(\Sigma,\widehat K)_{S_{\widehat K}}(-1)
  \hookrightarrow
  E^X_{\widehat{\mathrm{ev}},\alpha}
\]
lying in the linear span of morphisms of rational Hodge structures.
Under \textup{(AC)}, the Jordan-defect hypothesis can be checked on
$A^{\mathrm{alg}}(1)$.
\end{theorem}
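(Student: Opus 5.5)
The argument runs parallel to Theorem \ref{thm:irrat-from-A}, with $\heartsuit$ in place of $\clubsuit$. For every $N\ge1$ we use the normalized $\Q$-evaluation map $\mathrm{ev}_N$ of Lemma \ref{lem:guere-dictionary}. By part (c) of that lemma, its invariants are those of $\boldsymbol\kappa_{b_N}$. Let $\alpha_N=u^2\lambda_{\mathrm{van}}(b_N)$ be the eigenvalue attached to the vanishing summand $W$ via Theorem \ref{theo:scalar_on_van}. Lemma \ref{lem:heart-invariants-from-A} then gives
\[
\nu_{\mathrm{ev}_N,\alpha_N}=1,\qquad \nu'_{\mathrm{ev}_N,\alpha_N}=0,\qquad \gamma_{\mathrm{ev}_N,\alpha_N}=m_a^Y-m_g^Y .
\]
Lemma \ref{lem:scaling} shows that these Jordan defects are independent of the scaling parameter and equal those of $B^{\mathrm{alg}}(1)$. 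By hypothesis they are therefore at most one; if $\lambda_{\mathrm{van}}\notin\operatorname{spec}B$, Remark \ref{rmk:lambda-not-fixed-spectrum} gives $\gamma=0$. Hence every $\mathrm{ev}_N$ violates the condition in $\heartsuit$. The maps are pairwise distinct, so Lemma \ref{lem:failure} shows that $X$ fails $\heartsuit_\Q$. None of this uses Hodge generality, since $\rho$ never enters.

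Next we pass to the larger field. Extending scalars of $\mathrm{ev}_N$ to $K_0\supset\Q(\mathbf i)$ leaves generalized eigenspace dimensions and Jordan data unchanged. We also need $\nu$ to stay equal to one, because $\rH^{3,1}$ is a line inside $W_\C$. Consequently $X$ fails $\heartsuit_{K_0}$ as well; one could alternatively argue through Theorem \ref{thm:imported}(b) in contrapositive form.

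Now suppose $X$ is rational and fix a weak factorization $X\dashrightarrow\PP^4$. Theorem \ref{thm:imported}(d) allows us, after a finite extension $\widetilde K/K_0$, to choose compatible evaluation maps with disjoint spectra. With these, each of $\rho,\gamma,\nu,\nu'$ is additive along the one-sided blow-up formula. The two ends of the factorization contribute as follows:
\[
(\text{invariants of }X)+\sum(\text{blow-up centers on the }X\text{-side}) \;=\; (\text{invariants of }\PP^4)+\sum(\text{centers on the }\PP^4\text{-side}),
\]
with each center counted with the appropriate multiplicity. For $\PP^4$, points and curves, every atom has $\nu=0$, since these varieties have no Hochschild degree two classes with $|p-q|=2$. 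So the positive $\nu$ coming from $X$ must be matched by some surface center $S$ that has $\nu>0$ at the corresponding eigenvalue.

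It remains to show that this center $S$ fails $\heartsuit_{\widetilde K}$, after which Theorem \ref{thm:imported}(c) identifies its minimal model as a K3 surface. This is the main obstacle. We must bookkeep the integers $\nu,\nu',\gamma$ across both sides, using $\nu'_X=0$ and $\gamma_X\le1$, to extract at least one surface center whose eigenvalue satisfies $\nu>0$, $\nu'=0$ and $\gamma\le1$. The danger is that contributions of other centers could cancel, or could raise $\gamma$ and $\nu'$. The plan is to follow Gu\'er\'e's cubic argument \cite{JeremyCubic}: sort the centers by their eigenvalue, check that $\nu'$ and $\gamma$ are subadditive in the appropriate sense, and use that on the side containing $X$ the total $\gamma$ is at most one while the total $\nu'$ is zero. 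Once the K3 center $\Sigma$ is found, Theorem \ref{thm:imported}(e) supplies the finite extension $\widehat K$ and the stated graded embedding of $\rH^\bullet(\Sigma)(-1)$ into $E^X_{\widehat{\mathrm{ev}},\alpha}$. Finally, under (AC), Theorem \ref{theo:scalar_on_van} and Definition \ref{def:Aalg1} give $B^{\mathrm{alg}}(1)=A^{\mathrm{alg}}(1)$, so the Jordan-defect hypothesis can be checked on the localization matrix.
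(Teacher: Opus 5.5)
Your setup matches the paper: the maps $\mathrm{ev}_N$, the values $\nu=1$, $\nu'=0$, $\gamma\le1$ at $\alpha_N$, their stability under extension to $K_0$, and the final appeal to Theorem~\ref{thm:imported}(c),(e) are all as in the paper. The gap is the step you yourself call ``the main obstacle.'' You leave it open, and the two-sided bookkeeping you write down cannot close it. In a relation of the form $X+\sum(\text{centers on the } X \text{ side})=\PP^4+\sum(\text{centers on the } \PP^4 \text{ side})$, the centers blown up on the $X$ side may carry arbitrary $\nu'$ and $\gamma$ at the relevant eigenvalue. Then $\nu'_X=0$ and $\gamma_X\le1$ give no bound at all on the center that carries $\nu>0$. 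No subadditivity argument recovers such a bound, and separating by eigenvalue must be done \emph{before} writing the equality, not afterwards.

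The missing ingredient is the \emph{one-sided} form of the additivity in Theorem~\ref{thm:imported}(d), i.e.\ Remark~42 of \cite{JeremyCubic}. Choosing the compatible evaluation maps with disjoint spectra leaves $X$ alone on one side: for $\epsilon\in\{\nu,\nu',\gamma\}$,
\[
\epsilon^X_{\alpha}=\epsilon^{\PP^4}_{\alpha}+\sum_{(i,k)\in I}\epsilon^{W_i}_{\alpha},
\]
with every summand a nonnegative integer. The extraction is then immediate and needs no cancellation analysis:
\begin{itemize}
\item Since $\nu^{\PP^4}=0$ and $\nu^X=1$, exactly one center summand has $\nu=1$.
\item Since $\nu'^X=0$, \emph{every} summand has $\nu'=0$.
\item Since $\gamma^X\le1$, every summand has $\gamma\le1$.
\end{itemize}
That single center therefore violates $\heartsuit_{\widetilde K}$. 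It is a surface because points and curves have no Hochschild-degree-two part, and Theorem~\ref{thm:imported}(c),(e) finish as you indicate. Note also that you never need that $X$ itself fails $\heartsuit_{\Q}$ or $\heartsuit_{K_0}$. What the argument uses is only the transfer of the three invariants of $X$ at $\alpha$ to a single center through this one-sided equality.
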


\begin{proof}
Choose $\mathrm{ev}_N$ and let $\alpha_N$ correspond to the scalar action on
$W$.  By Lemmas \ref{lem:heart-invariants-from-A} and \ref{lem:scaling},
\[
  \nu_{\mathrm{ev}_N,\alpha_N}=1,
  \qquad
  \nu'_{\mathrm{ev}_N,\alpha_N}=0,
  \qquad
  \gamma_{\mathrm{ev}_N,\alpha_N}\le1.
\]
These numbers are unchanged after finite extension of the coefficient field.
Fix an arbitrary weak factorization and extend scalars to a finite
$\widetilde K\supseteq K_0$ large enough for the compatible evaluations of
Remark~42.  Since $K_0\supset\Q(\mathbf i)$, \cite{JeremyCubic}*{Proposition~53} applies over
$\widetilde K$.

\cite{JeremyCubic}*{Remark~42} gives, for
$\epsilon\in\{\nu,\nu',\gamma\}$, a one-sided equality
\[
  \epsilon^X_{\widetilde{\mathrm{ev}},\alpha}
  =
  \epsilon^{\PP^4}_{\widetilde{\mathrm{ev}},\alpha}
  +
  \sum_{(i,k)\in I}
  \epsilon^{W_i}_{\widetilde{\mathrm{ev}}_{i,k},\alpha},
\]
with nonnegative summands.  Since $\PP^4$ is of type $(p,p)$, its $\nu$ term
vanishes.  The equality $\nu^X=1$ forces exactly one center contribution to
have $\nu=1$; then $\nu'^X=0$ and $\gamma^X\le1$ force that same center to
have $\nu'=0$ and $\gamma\le1$.  It therefore fails $\heartsuit_{\widetilde
K}$.  Centers in a weak factorization of a fourfold have dimension at most
two, and points and curves have no Hochschild-degree-two contribution, so
the center is a surface.  Theorem \ref{thm:imported}\textup{(c)} gives a K3
minimal model.  Finally, Theorem \ref{thm:imported}\textup{(e)} gives the
asserted embedding after a further finite extension.
\end{proof}

\medskip

\subsubsection{The numerical criterion}

For comparison, we record the criterion of
\cite{Benedetti_Fay_Guere_Manivel_Perrin}, which uses no quantum
computation.

\begin{theorem}[\cite{Benedetti_Fay_Guere_Manivel_Perrin}*{Thm.~4.1}]
\label{thm:BFGMPcriterion}
Let $Y$ be a smooth Fano fivefold in $\PP^M$ and let $X$ be a smooth Fano
hyperplane section satisfying
\[
  b_1(X)=b_3(X)=0,
  \qquad
  h^{3,1}(X)>h^{3,1}(Y),
  \qquad
  b_4(X)_{\mathrm{van}}\ge10+12h^{3,1}(X).
\]
If $X$ is Hodge general, then $X$ is irrational.
\end{theorem}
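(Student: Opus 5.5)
The plan is to argue by contradiction through the blow-up behaviour of coarse Hodge atoms and weak factorization, as in Theorem~\ref{non_rationality_criterion}. Suppose $X$ is rational and fix a weak factorization $X\dashrightarrow\PP^4$. Its centers are points, curves and surfaces, since a fourfold admits no higher-dimensional smooth centers of codimension $\ge2$. The argument then has three steps: locate the atom of $X$ that carries $\rH^4(X)_{\mathrm{van}}$, show that the blow-up formula forces this atom to be assembled from surface centers, and compare the resulting dimension budget with the hypothesis $b_4(X)_{\mathrm{van}}\ge10+12h^{3,1}(X)$.

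\emph{Step 1: the vanishing cohomology sits in one atom.} Work at small points (Lemma~\ref{lem:scaling}), so that $\boldsymbol\kappa_b$ is quantum multiplication by $c_1(TX)$. The hypotheses $b_1=b_3=0$ and $h^{3,1}(X)>h^{3,1}(Y)$ place us in the setting of Theorem~\ref{theo:scalar_on_van}. Monodromy invariance of the quantum product together with irreducibility of $W=\rH^4(X)_{\mathrm{van}}$ (Lemma~\ref{lem:monodromy}) forces $\boldsymbol\kappa_b$ to act on $W$ by a scalar $\lambda_{\mathrm{van}}$. Hence $W_\Bbbk$ lies in the single generalized eigenspace $E_{b,\lambda_{\mathrm{van}}}$. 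By Hodge generality and Lemma~\ref{lem:hodgegeneral-equiv}, $W$ contains no rational Hodge classes. So this eigenspace contains a subrepresentation of $\mathsf{Hod}$, of dimension $b_4(X)_{\mathrm{van}}$, with no invariants, and with Hochschild-degree-$2$ part of dimension $h^{3,1}(X)-h^{3,1}(Y)>0$.

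\emph{Step 2: only surface centers can account for $W$.} Use the one-sided additivity of Gu\'er\'e's invariants along a weak factorization (Theorem~\ref{thm:imported}(d)), after choosing compatible evaluation maps. The non-Hodge, Hochschild-nonzero part of the atom $E^X_{\lambda_{\mathrm{van}}}$ must then be matched, as a $\mathsf{Hod}$-representation, by contributions from the centers. The contributions of $\PP^4$, points and rational curves are pure $(p,p)$. Curves of positive genus contribute only Hochschild weight $\pm1$, which cannot produce the $(3,1)$-line. Moreover, the atom of $X$ is generated under monodromy/Hodge symmetry by its transcendental part. So the whole transcendental lattice of $W$ must embed, in the manner of Theorem~\ref{thm:imported}(e), into $\bigoplus_i T(S_i)(-1)$, where $S_i$ are the surface centers and $T(S_i)\subset\rH^2(S_i)$ are their transcendental parts.

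\emph{Step 3: counting.} For each surface center with $q(S_i)=0$, Noether's formula and the Hodge index theorem bound the non-algebraic part of $\rH^2$: one has $b_2-\rho\le b_2-1=12\chi(\cO)-3-K^2+\dots$, and after passing to the minimal model this gives $\dim T(S_i)\le 10+12p_g(S_i)-\varepsilon$, with $\varepsilon$ accounting for the Picard number of the minimal model. Summing over the centers that actually carry Hochschild weight $2$ gives $\sum_i p_g(S_i)=h^{3,1}(X)-h^{3,1}(Y)\le h^{3,1}(X)$. Since $W$ is irreducible under $\mathsf{Hod}$, its image should in fact land in a single $T(S_i)$, so the additive constant $10$ enters only once. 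This yields $b_4(X)_{\mathrm{van}}\le 10+12h^{3,1}(X)-1$, contradicting the hypothesis, so $X$ is irrational.

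The main obstacle is making this final inequality sharp. The difficulty is not the dimension count itself but controlling $\varepsilon$ and the constant $10$: one needs the embedding to land in one surface, and one needs the right form of Noether's inequality, $h^{1,1}\le10p_g+10-K^2-\rho$, for surfaces with $q=0$, including non-minimal and irregular cases. I would first try to reduce to minimal models and use $K^2\ge0$ together with $\rho\ge1$.
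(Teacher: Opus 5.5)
The paper gives no proof of this statement; it imports it from \cite{Benedetti_Fay_Guere_Manivel_Perrin}, so your argument has to stand on its own. Your outline is the right one: put $W=\rH^4(X)_{\mathrm{van}}$ into one atom, identify that atom with a surface center, and conclude by Noether. But Steps~2--3 never produce what the count needs, namely an upper bound for $\dim W$ by the non-algebraic part of $\rH^2$ of a \emph{single} surface.

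Theorem~\ref{thm:imported}(e) points the wrong way. It embeds $\rH^\bullet(\Sigma)(-1)$ \emph{into} the eigenspace of $X$, and only for K3 minimal models, so it bounds $E^X$ from below. Part (d) is only additivity of the four integers $\rho,\gamma,\nu,\nu'$; it is not a $\mathsf{Hod}$-equivariant map $W\hookrightarrow\bigoplus_iT(S_i)(-1)$.

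Your reduction to one surface via ``$W$ is irreducible under $\mathsf{Hod}$'' is also unavailable. Lemma~\ref{lem:monodromy} gives irreducibility under the monodromy group $\pi$, not under $\mathsf{Hod}$. In fact $W$ may split under $\mathsf{Hod}_{\overline{\Q}}$ (CM phenomena), or even as a rational Hodge structure once $h^{3,1}_{\mathrm{van}}\ge2$. In addition, eigenvalues can collide at small points, so the small-point eigenspace containing $W$ may be a sum of several atoms. If $k$ surfaces contribute, your count only gives $\dim W\le 9k+12h^{3,1}(X)$, which is useless for $k\ge2$: two K3 centers leave room for $42>10+12\cdot 2$.

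The missing idea is that the single-surface reduction comes from the indecomposability of \emph{atoms}, not of $W$.

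\emph{First}, show that $W$ lies in one coarse atom, i.e.\ in one generalized eigenspace at a generic point of $B_X^{\mathsf{Hod}}$. This is where Hodge generality is really used. For Hodge-general $X$, every Hodge class lies in $j^*\rH^*(Y)$, which is monodromy invariant. Hence every point of the Hodge locus is fixed by $\pi$, and the Schur argument of Theorem~\ref{theo:scalar_on_van} runs there, not only at small points.

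\emph{Second}, weak factorization plus the blow-up formula of \cite{KKPY} match the atoms of $X$ one at a time, as isomorphism classes of $\mathsf{Hod}_{\overline{\Q}}$-representations, with atoms of $\PP^4$ or of a single center. This is the mechanism behind Theorem~\ref{non_rationality_criterion}. The atom $E\supset W$ has a nonzero Hochschild-weight-$2$ part and no odd weights. By the surface catalogue of \cite{KKPY}, $E$ is therefore a twist of $\rH^\bullet(S_{\min})$ for one surface center $S$ with $p_g(S)\ge1$ and $q(S)=0$. Hence $\dim W\le\dim E-\rho_E=b_2(S_{\min})-\rho(S_{\min})$. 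Also $p_g(S_{\min})$ equals the Hochschild-weight-$2$ dimension of $E$, which is at most $h^{3,1}(X)$.

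The point you flagged as the main obstacle is then harmless:
\begin{itemize}
\item $b_2-\rho$ is a birational invariant of surfaces, since each blow-up adds one algebraic class;
\item $p_g\ge1$ forces $\kappa\ge0$, so $K^2_{S_{\min}}\ge0$;
\item Noether's formula $b_2=10+12p_g-8q-K^2$ together with $\rho\ge1$ gives $b_2-\rho\le 9+12p_g$.
\end{itemize}
Thus $b_4(X)_{\mathrm{van}}\le 9+12h^{3,1}(X)$, contradicting the hypothesis.
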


\begin{corollary}\label{cor:numerical}
Assume Standing Assumption \ref{stand:setup} with $n=4$, and suppose that
$X$ is Fano, $h^{3,1}(X)=1$, and Hodge general.  Then $\rk\rH^4(X)^{\mathrm{Hdg}}=b_4(Y)$, and $X$ is irrational as soon as
\[
  b_4(Y)\le h^{2,2}(X)-20.
\]
Equivalently,
\[
  \rk\rH^4(X)^{\mathrm{Hdg}}
  \le h^{2,2}(X)-20.
\]
\end{corollary}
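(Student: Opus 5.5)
The plan is to deduce Corollary~\ref{cor:numerical} directly from Theorem~\ref{thm:BFGMPcriterion}, checking each hypothesis under Standing Assumption~\ref{stand:setup} with $n=4$.

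\textbf{Step 1: the rank identity.} By Lemma~\ref{lem:hdg-is-amb}, Hodge generality gives $\rH^4(X)^{\mathrm{Hdg}}=\rH_Y^4(X)=j^*\rH^4(Y,\Q)$. Lemma~\ref{lem:vananb}(a) shows $\rH^4(X)=W\oplus\rH^4_Y(X)$ with $W=\ker j_*$. To see that $j^*$ is injective on $\rH^4(Y)$, use $j_*j^*\alpha=h_Y\cup\alpha$. By hard Lefschetz on the fivefold $Y$, the map $h_Y\cup(-)\colon\rH^4(Y)\to\rH^6(Y)$ is injective, since $4<5$. Hence $\rk\rH^4(X)^{\mathrm{Hdg}}=b_4(Y)$.

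\textbf{Step 2: translating the numerical hypothesis.} From the decomposition in Step 1, $b_4(X)_{\mathrm{van}}=b_4(X)-b_4(Y)$. Lemma~\ref{lem:automatic} gives $h^{4,0}(X)=0$: the $(4,0)$ and $(0,4)$ classes must lie in $W_\C$, and $h^{4,0}$ of a Fano variety vanishes by Kodaira. Therefore $b_4(X)=2h^{3,1}(X)+h^{2,2}(X)=2+h^{2,2}(X)$. The BFGMP condition $b_4(X)_{\mathrm{van}}\ge 10+12=22$ then reads
\[
h^{2,2}(X)+2-b_4(Y)\ge22,
\qquad\text{that is,}\qquad
b_4(Y)\le h^{2,2}(X)-20.
\]
The equivalent formulation in the statement follows from Step 1.

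\textbf{Step 3: remaining hypotheses of Theorem~\ref{thm:BFGMPcriterion}.}
\begin{itemize}
\item The vanishing $b_1(X)=b_3(X)=0$ is Lemma~\ref{lem:automatic}(b).
\item The inequality $h^{3,1}(X)=1>0=h^{3,1}(Y)$ follows from Lemma~\ref{lem:automatic}(c), i.e.\ from (H2).
\item $Y$ is a smooth Fano fivefold by (H3). It embeds in some $\PP^M$ by a very ample multiple of the restriction of $L$, and $X$ is the corresponding hyperplane section. Here one should note that the BFGMP argument only uses that $X\in|L|_Y|$ with $L|_Y$ ample, plus the monodromy and Lefschetz package; I expect this embedding adjustment to be the only delicate point.
\item $X$ is Fano and Hodge general by assumption.
\end{itemize}
Theorem~\ref{thm:BFGMPcriterion} then yields irrationality.
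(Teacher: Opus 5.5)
Your proposal is correct and follows the paper's proof essentially step by step. You check the hypotheses of Theorem~\ref{thm:BFGMPcriterion} via Lemma~\ref{lem:automatic}, identify $\rH^4(X)^{\mathrm{Hdg}}=j^*\rH^4(Y)$ using Hodge generality and (H2), and rewrite $b_4(X)_{\mathrm{van}}=h^{2,2}(X)+2-b_4(Y)\ge 22$ as $b_4(Y)\le h^{2,2}(X)-20$. Two small differences: your use of $j_*j^*=h_Y\cup(-)$ together with hard Lefschetz on $Y$ to get injectivity of $j^*$ in degree four is an equally valid substitute for the paper's appeal to weak Lefschetz. Also, $h^{4,0}(X)=0$ comes from $X$ being Fano (Kodaira vanishing), not from Lemma~\ref{lem:automatic}.

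Your embedding worry is unnecessary, and the ``very ample multiple'' fix would actually break the hyperplane-section property. The bundle $L=\cO_F(1,\dots,1)$ is already very ample on the flag variety $F$, so $L|_Y$ embeds $Y$ in projective space with $X=Y\cap H$ a genuine hyperplane section.
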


\begin{proof}
Hypothesis \textup{(H3)} says that $Y$ is Fano, and Lemma
\ref{lem:automatic} gives $b_1(X)=b_3(X)=0$ and
$h^{3,1}(Y)=0<h^{3,1}(X)=1$.  Hodge generality and \textup{(H2)} give
\[
  \rH^4(X)^{\mathrm{Hdg}}=j^*\rH^4(Y),
\]
and weak Lefschetz makes $j^*$ injective in degree four.  Thus the rank of
the Hodge classes is $b_4(Y)$.

Since $X$ is Fano, $h^{4,0}(X)=0$, so
\[
  b_4(X)=h^{2,2}(X)+2.
\]
Lemma \ref{lem:vananb} gives
\[
  b_4(X)_{\mathrm{van}}
  =b_4(X)-b_4(Y)
  =h^{2,2}(X)+2-b_4(Y).
\]
The inequality of Theorem \ref{thm:BFGMPcriterion} is therefore equivalent
to $b_4(Y)\le h^{2,2}(X)-20$.
\end{proof}

\begin{corollary}\label{cor:numerical-combinatorial}
Under the hypotheses of Corollary \ref{cor:numerical}, assume in addition
that \textup{(AC)} holds.  Then
\[
  b_4(Y)=b_4(F),
\]
and $X$ is irrational as soon as
\[
  b_4(F)\le h^{2,2}(X)-20,
\]
where $b_4(F)$ is the number of codimension-two Schubert classes on $F$.
\end{corollary}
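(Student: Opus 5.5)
The corollary follows from Corollary \ref{cor:numerical} once we show $b_4(Y)=b_4(F)$ and identify $b_4(F)$ combinatorially. I would therefore argue in three short steps.

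\emph{Step 1: show $b_4(Y)=b_4(F)$.} With $n=4$, hypothesis \textup{(AC)} of Definition \ref{def:ambient-complete} states exactly that
\[
\iota_Y^*\colon\rH^4(F,\Q)\xrightarrow{\ \sim\ }\rH^4(Y,\Q)
\]
is an isomorphism. Taking dimensions gives $b_4(Y)=b_4(F)$ immediately. There is no additional input from Hodge generality or from the quantum data here; the statement is simply a rewriting of \textup{(AC)} in middle degree.

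\emph{Step 2: count Schubert classes.} By Proposition \ref{prop:schubertbasis}, the classes $\sigma_w$ with $w\in W_F$ form a homogeneous basis of $\rH^*(F,\Q)$, and $\sigma_w$ lies in degree $2\ell(w)$. Hence
\[
b_4(F)=\#\{w\in W_F\mid \ell(w)=2\}.
\]
These $\sigma_w$ are the classes of Schubert varieties $\Omega_w$ of codimension two (Definition \ref{def:schubert}), which gives the description of $b_4(F)$ in the statement.

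\emph{Step 3: apply the numerical criterion.} The hypotheses are those of Corollary \ref{cor:numerical}: Standing Assumption \ref{stand:setup} with $n=4$, $X$ Fano, $h^{3,1}(X)=1$, and $X$ Hodge general. That corollary shows $X$ is irrational whenever $b_4(Y)\le h^{2,2}(X)-20$. Substituting $b_4(Y)=b_4(F)$ from Step 1 yields the stated inequality in terms of $b_4(F)$.

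There is no real obstacle in this argument. The only care point is confirming that \textup{(AC)} is taken in degree $n=4$, which is the middle degree of $X$ and not the middle degree of $Y$. This is exactly the degree in which Corollary \ref{cor:numerical} uses $b_4(Y)$, so the substitution is consistent.
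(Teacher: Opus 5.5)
Your proposal is correct and follows the paper's own proof: \textup{(AC)} in degree $n=4$ identifies $\rH^4(F,\Q)$ with $\rH^4(Y,\Q)$, and Proposition~\ref{prop:schubertbasis} counts $b_4(F)$ as the number of length-two elements of $W_F$. The claim then follows from Corollary~\ref{cor:numerical}, and your remark that \textup{(AC)} is taken in the middle degree of $X$, not of $Y$, is the right consistency check.
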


\begin{proof}
Condition \textup{(AC)} identifies $\rH^4(F,\Q)$ with $\rH^4(Y,\Q)$.
Proposition \ref{prop:schubertbasis} identifies $b_4(F)$ with the number of
codimension-two Schubert classes.  The claim follows from Corollary
\ref{cor:numerical}.
\end{proof}

%===============================================================================================================================================================================================================================================================================

\bigskip

\section{Examples}
In this section, we illustrate how the software we have developed computes the matrix of small
quantum multiplication by the first Chern class. For each considered variety, we record,
in an input and an output box, the data required to run the computation and
the matrix it returns; we then read off what these say about rationality.

\medskip

\begin{example}[Cubic fourfolds]\label{ex:cubic4fold}

Let
\[
X=\{f=0\}\subset\PP^{5}
\]
be a smooth cubic fourfold, and let $h=c_{1}\big(\cO_{X}(1)\big)$.
For the quantum computation, we use directly the realization
\[
X=\mathscr Z\big(\PP^{5},\cO_{\PP^{5}}(3)\big).
\]

For the birational applications of this section, we also realize $X$ as a
hyperplane section of a smooth cubic fivefold.  More precisely, write
$\PP^{5}=\{x_{6}=0\}\subset\PP^{6}$ and choose a general quadratic form
$g(x_{0},\dots,x_{6})$.  Then
\[
Y
:=
\big\{
f(x_{0},\dots,x_{5})+x_{6}g(x_{0},\dots,x_{6})=0
\big\}
\subset\PP^{6}
\]
is smooth for general $g$, and
\[
X=Y\cap\{x_{6}=0\}.
\]
Indeed, $Y$ is automatically smooth along $X$, since the derivatives of
$f$ with respect to $x_{0},\dots,x_{5}$ do not vanish simultaneously on
the smooth hypersurface $X$, while smoothness away from the base locus
follows from Bertini.

Thus, for the notation of Standing Assumption \ref{stand:setup}, we may
take
\[
F=\PP^{6},
\qquad
\cE=\cO_{F}(3),
\qquad
Y=\mathscr Z(F,\cE),
\qquad
X=\mathscr Z\big(F,\cE\oplus\cO_{F}(1)\big).
\]
The standing assumptions hold with $n=4$.  Moreover, by the Lefschetz
hyperplane theorem,
\[
\rH^{4}(F,\Q)\xrightarrow{\sim}\rH^{4}(Y,\Q),
\]
so condition \textup{(AC)} of Definition \ref{def:ambient-complete} holds.
Consequently,
\[
\rH_{F}^{*}(X)
=
\rH_{Y}^{*}(X)
=
\rH_{\mathrm{amb}}^{*}(X)
=
\operatorname{Span}_{\Q}\{1,h,h^{2},h^{3},h^{4}\}.
\]
In particular, in this example the flag-ambient block computed by
localization is the full monodromy-fixed block appearing in
Theorems \ref{thm:irrat-from-A} and \ref{theo:rational_Fano_K3}.

\medskip

The small quantum multiplication matrix is obtained from the following
input.

\begin{inputbox}[Input]
algebra  A5         keep  1
bundle   O(3)
\end{inputbox}

\begin{outputbox}[Output]
\noindent
Matrix of $c_{1}(TX)\star$ in the reduced Schubert basis
$1,h,h^{2},h^{3},h^{4}$, with quantum variable $y_{1}$:
{\setlength{\arraycolsep}{3.5pt}\small
\[
A(y_{1})
=
\begin{pmatrix}
0 & 0 & 18y_{1} & 0 & 0 \\
3 & 0 & 0 & 45y_{1} & 0 \\
0 & 3 & 0 & 0 & 18y_{1} \\
0 & 0 & 3 & 0 & 0 \\
0 & 0 & 0 & 3 & 0
\end{pmatrix}.
\]
}

\noindent
Grading operator:
\[
\mathrm{diag}\Big(\ell(w)-\frac12\dim X\Big)
=
\mathrm{diag}(-2,-1,0,1,2).
\]

\medskip

\noindent
Characteristic polynomial:
\[
\chi_{A(y_{1})}(\lambda)
=
\lambda^{2}\big(\lambda^{3}-729y_{1}\big).
\]

\medskip

\noindent
Eigenvalues at the algebraic specialization $y_{1}=1$:
\[
\big\{
0,\ 0,\ 9,\ -4.5-7.79423\,\mathbf i,\ -4.5+7.79423\,\mathbf i
\big\}.
\]
\end{outputbox}

This matrix is not a new quantum-cohomology computation: after identifying
$Q=y_{1}$, it is exactly the matrix displayed by Gu\'er\'e in
\cite{JeremyCubic}*{Ex.~16}, where it is attributed to Givental and to the
proof of \cite{KKPY}*{Thm.~6.8}. 

The characteristic polynomial factors as
\[
\lambda^{2}\big(\lambda^{3}-729y_{1}\big),
\]
so the nonzero eigenvalues are
\[
9\,\zeta^{k}y_{1}^{1/3},
\qquad
k=0,1,2,
\]
where $\zeta$ is a primitive cube root of unity.  At $y_{1}=1$ these are
\[
9,
\qquad
-\frac92+\frac{9\sqrt3}{2}\,\mathbf i,
\qquad
-\frac92-\frac{9\sqrt3}{2}\,\mathbf i.
\]
The eigenvalue $0$ has algebraic multiplicity two and geometric
multiplicity one; equivalently, its Jordan defect is one.  All nonzero
eigenvalues are simple.

Gu\'er\'e also observes in \cite{JeremyCubic}*{Ex.~16} that small quantum
multiplication by $c_{1}(TX)$ vanishes on the primitive cohomology.  In the
notation of this paper,
\[
\lambda_{\mathrm{van}}=0.
\]
This is also an immediate consequence of
Lemma \ref{lem:fano-index-vanishing}, since a cubic fourfold has Fano index
three.

\medskip

The Hodge diamond of a smooth cubic fourfold is
\[
\begin{array}{ccccccccc}
&&&&1&&&&\\
&&&0&&0&&&\\
&&0&&1&&0&&\\
&0&&0&&0&&0&\\
0&&1&&21&&1&&0\\
&0&&0&&0&&0&\\
&&0&&1&&0&&\\
&&&0&&0&&&\\
&&&&1&&&&
\end{array}.
\]
Since \textup{(AC)} holds, the part restricted from $Y$ coincides with the
part restricted from $F$.  Thus the Hodge diamond decomposes into the
monodromy-fixed ambient part, in \textcolor{blue}{blue}, and the vanishing
part, in \textcolor{red}{red}:
\begin{equation}
\setlength{\arraycolsep}{3pt}
\begin{array}{ccccccccc}
&&&& \textcolor{blue}{1} &&&&\\
&&&0&&0&&&\\
&&0&&\textcolor{blue}{1}&&0&&\\
&0&&0&&0&&0&\\
0&&\textcolor{red}{1}&&
\mathclap{\textcolor{blue}{1}+\textcolor{red}{20}}
&&\textcolor{red}{1}&&0\\
&0&&0&&0&&0&\\
&&0&&\textcolor{blue}{1}&&0&&\\
&&&0&&0&&&\\
&&&&\textcolor{blue}{1}&&&&
\end{array}
\quad=\quad
\begin{array}{ccccccccc}
&&&& \textcolor{blue}{1} &&&&\\
&&&0&&0&&&\\
&&0&&\textcolor{blue}{1}&&0&&\\
&0&&0&&0&&0&\\
0&&0&&\textcolor{blue}{1}&&0&&0\\
&0&&0&&0&&0&\\
&&0&&\textcolor{blue}{1}&&0&&\\
&&&0&&0&&&\\
&&&&\textcolor{blue}{1}&&&&
\end{array}
\quad+\quad
\begin{array}{ccccccccc}
&&&&0&&&&\\
&&&0&&0&&&\\
&&0&&0&&0&&\\
&0&&0&&0&&0&\\
0&&\textcolor{red}{1}&&\textcolor{red}{20}
&&\textcolor{red}{1}&&0\\
&0&&0&&0&&0&\\
&&0&&0&&0&&\\
&&&0&&0&&&\\
&&&&0&&&&
\end{array}.
\end{equation}

Hence
\[
h^{3,1}(X)=1,
\qquad
h^{2,2}(X)=21,
\qquad
b_{4}(X)=23.
\]
Furthermore,
\[
b_{4}(Y)=1,
\]
and therefore
\[
b_{4}(X)_{\mathrm{van}}
=
b_{4}(X)-b_{4}(Y)
=
23-1
=
22.
\]
Since
\[
j^{*}\rH^{4}(Y,\Q)=\Q h^{2},
\]
we have in this case
\[
\rH^{4}(X)_{\mathrm{van}}
=
\rH^{4}(X)_{\mathrm{prim}},
\]
with Hodge numbers
\[
\big(
h_{\mathrm{van}}^{3,1},
h_{\mathrm{van}}^{2,2},
h_{\mathrm{van}}^{1,3}
\big)
=
(1,20,1).
\]

For a very general cubic fourfold,
Proposition \ref{prop:hodgegeneral} applies because
\[
h^{3,1}(Y)=0<1=h^{3,1}(X).
\]
Thus $X$ is Hodge general and
\[
\rH^{4}(X)^{\mathrm{Hdg}}
=
\Q h^{2},
\qquad
\rk\,\rH^{4}(X)^{\mathrm{Hdg}}=1.
\]

\medskip

The three results of the preceding subsection have the following
consequences.

\medskip

\emph{The numerical irrationality criterion.}
Since \textup{(AC)} holds, Corollary
\ref{cor:numerical-combinatorial} applies.  Here
\[
b_{4}(F)=b_{4}(\PP^{6})=1,
\]
the unique codimension-two Schubert class being $h^{2}$.  Hence
\[
b_{4}(F)\le h^{2,2}(X)-20
\]
becomes
\[
1\le21-20=1.
\]
The inequality therefore holds with equality.  Equivalently,
\[
b_{4}(X)_{\mathrm{van}}
=
22
=
10+12h^{3,1}(X).
\]
Thus every Hodge-general cubic fourfold is irrational.

\medskip

\emph{The spectral irrationality criterion.}
Under \textup{(AC)} one has
\[
B^{\mathrm{alg}}(1)=A^{\mathrm{alg}}(1)=A(1).
\]
Every eigenvalue of this matrix has algebraic multiplicity at most two,
the only repeated eigenvalue being $0$.  Therefore Theorem \ref{thm:irrat-from-A} applies to every Hodge-general cubic fourfold and again gives its irrationality.

In particular, either the numerical or the spectral criterion recovers
the theorem of Katzarkov--Kontsevich--Pantev--Yu:
\begin{theorem}[Katzarkov--Kontsevich--Pantev--Yu, \cite{KKPY}]
A very general cubic fourfold is irrational.
\end{theorem}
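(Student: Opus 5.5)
The statement is the KKPY theorem that a very general cubic fourfold is irrational. I would deduce it from the machinery of this section, taking $F=\PP^6$, $\cE=\cO_F(3)$, $Y$ a smooth cubic fivefold and $X=Y\cap\{x_6=0\}$. The argument has three steps.

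\textbf{Step 1: the setting applies.} First I check that Standing Assumption \ref{stand:setup} holds with $n=4$.
\begin{itemize}
\item (H0) and (H1) follow from the Lefschetz hyperplane theorem, applied twice, since $\dim Y=5\ge3$.
\item (H2) holds because $\rH^4(Y,\Q)=\Q h^2$, again by Lefschetz, since $4<5$.
\item (H3) holds because $Y$ has index $4$.
\end{itemize}
Condition (AC) holds for the same reason. The fourfold $X$ is Fano of index $3$ with $h^{3,1}(X)=1$, as read off from the Hodge diamond recorded above.

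\textbf{Step 2: very general implies Hodge general.} Here I invoke Proposition \ref{prop:hodgegeneral}. Its hypothesis is $h^{3,1}(Y)=0<1=h^{3,1}(X)$, where $h^{3,1}(Y)=0$ by Lemma \ref{lem:automatic}(c).

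One point needs care. The proposition concerns $X$ very general in the family of hyperplane sections of a fixed $Y$. I must ensure that a very general cubic fourfold arises this way with $Y$ chosen as in the construction above. Every smooth cubic fourfold $X=\{f=0\}$ is such a section, with $Y=\{f+x_6g=0\}$. Moreover, the locus of cubic fourfolds carrying extra Hodge classes is a countable union of proper closed subsets of the moduli space. Hence a very general $X$ avoids it, and I can conclude directly that $\rH^4(X)^{\mathrm{Hdg}}=\Q h^2$.

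\textbf{Step 3: apply a criterion.} With Hodge generality in hand I have two routes.
\begin{itemize}
\item \emph{Spectral route.} Apply Theorem \ref{thm:irrat-from-A} to the matrix $A(1)$ computed above. Its characteristic polynomial is $\lambda^2(\lambda^3-729)$, so every eigenvalue has algebraic multiplicity at most two. Since (AC) holds, $B^{\mathrm{alg}}(1)=A^{\mathrm{alg}}(1)$, so checking the computed matrix suffices.
\item \emph{Numerical route.} Apply Corollary \ref{cor:numerical-combinatorial}. Here $b_4(\PP^6)=1=h^{2,2}(X)-20$, so the required inequality holds, with equality.
\end{itemize}
Either route gives irrationality.

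\textbf{Main obstacle.} The only delicate point is Step 2, the passage from ``very general cubic fourfold'' to ``Hodge general'' in the sense of Definition \ref{def:hodgegeneral}. I would handle it with the countable-union argument for Noether--Lefschetz-type loci, using that the vanishing summand has $h^{3,1}=1$ and that the period map of cubic fourfolds is nonconstant. After that, everything reduces to the already computed matrix $A(y_1)$ and its characteristic polynomial.
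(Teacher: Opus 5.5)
Your proposal is correct and follows essentially the same argument as the paper in Example~\ref{ex:cubic4fold}. There, too, $X$ is realized as a hyperplane section of a smooth cubic fivefold in $\PP^6$, Standing Assumption~\ref{stand:setup} and (AC) are checked, Hodge generality of very general members is taken from Proposition~\ref{prop:hodgegeneral}, and irrationality follows from either Corollary~\ref{cor:numerical-combinatorial} ($1\le 21-20$) or Theorem~\ref{thm:irrat-from-A} (characteristic polynomial $\lambda^2(\lambda^3-729)$). Your Noether--Lefschetz remark in Step~2 justifies the passage from ``very general in moduli'' to ``Hodge general'' more carefully than the paper, which simply invokes Proposition~\ref{prop:hodgegeneral}.
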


\medskip

\emph{The structural criterion for rational special members.}
The situation is different for Theorem \ref{theo:rational_Fano_K3}:
Hodge generality is not assumed there.  Since the matrix
$A^{\mathrm{alg}}(1)$ is deformation invariant in the smooth cubic family
and has Jordan defect one at $0$ and zero at every other eigenvalue, the
hypothesis of Theorem \ref{theo:rational_Fano_K3} is satisfied for every
smooth cubic fourfold.

Consequently, if a smooth cubic fourfold $X$ were rational, every weak
factorization
\[
X\dashrightarrow\PP^{4}
\]
would contain a smooth surface center whose minimal model is a K3 surface.
Moreover, after choosing compatible evaluation maps and making the finite
field extensions appearing in Theorem
\ref{theo:rational_Fano_K3}, the cohomology of such a K3 surface embeds
into the corresponding evaluated generalized eigenspace of $X$ in the
linear span of morphisms of rational Hodge structures.

For cubic fourfolds, Gu\'er\'e proves the stronger statement
\cite{JeremyCubic}*{Thm.~56}: if $X$ is rational, then there exists a
projective K3 surface $\Sigma$ and an isomorphism of rational Hodge
structures
\[
\rH^{4}(X,\Q)_{\mathrm{prim}}
\cong
\rH^{2}(\Sigma,\Q)(-1).
\]
Thus the revised structural criterion is compatible with the complete
cubic calculation and recovers the K3-center mechanism uniformly, while
Gu\'er\'e's case-specific argument supplies the stronger Hodge
isomorphism.
\end{example}

\medskip

\begin{example}[$Z_{66}=$ K\"uchle $(c5)$]\label{ex:kuchle-c5}
Let
\[
F:=\mathrm{Gr}(3,7),
\]
with tautological subbundle $\cU$ of rank $3$ and quotient bundle $\cQ$ of
rank $4$.  Let
\[
Y
:=
\mathscr Z\big(F,\Lambda^{2}\cU^{\vee}\oplus\Lambda^{3}\cQ\big)
\]
be a general K\"uchle fivefold of type $(c5)$, and let
\[
X
:=
\mathscr Z\big(F,
\Lambda^{2}\cU^{\vee}\oplus\Lambda^{3}\cQ\oplus\cO_F(1)\big)
\subset Y
\]
be a smooth hyperplane section.

Notice that, since $\operatorname{rk}\cU=3$ and $\operatorname{rk}\cQ=4$,
\[
\Lambda^{2}\cU^{\vee}\cong\cU(1),
\qquad
\Lambda^{3}\cQ\cong\cQ^{\vee}(1),
\]
so this is the standard realization of the K\"uchle varieties of type
$(c5)$ used in \cite{Kuznetsov2016}.

Kuznetsov proves that $Y$ is a smooth Fano fivefold of index two,
\[
-K_Y=2H,
\qquad H:=c_1\big(\cO_F(1)|_Y\big),
\]
and that its Hodge diamond is diagonal, with
\[
h^{0,0}(Y)=h^{1,1}(Y)=h^{4,4}(Y)=h^{5,5}(Y)=1,
\qquad
h^{2,2}(Y)=h^{3,3}(Y)=4
\]
\cite{Kuznetsov2016}.  In particular,
\[
b_4(Y)=4,
\qquad
h^{3,1}(Y)=0.
\]
By adjunction,
\[
-K_X=H|_X,
\]
so $X$ is a Fano fourfold of index one.  Its numerical invariants are
\[
(-K_X)^4=66,
\qquad
h^{3,1}(X)=1,
\qquad
h^{2,2}(X)=24,
\qquad
b_4(X)=26;
\]
see \cite{Kuznetsov2016} and the references therein.

These facts verify Standing Assumption \ref{stand:setup} with $n=4$.
Indeed, the Picard groups are generated by the corresponding hyperplane
classes, the required comparison in degrees $<4$ holds, $\rH^4(Y,\Q)$ is
entirely of type $(2,2)$, and $Y$ is Fano.  Notice, however, that the
additional ambient-completeness condition \textup{(AC)} of
Definition \ref{def:ambient-complete} does \emph{not} hold.  Namely,
\[
b_4(F)=2
\qquad\text{whereas}\qquad
b_4(Y)=4.
\]
Thus
\[
\rH_F^4(X)
=
\iota^*\rH^4(F,\Q)
\subsetneq
j^*\rH^4(Y,\Q)
=
\rH_Y^4(X),
\]
with
\[
\dim\rH_F^4(X)=2,
\qquad
\dim\rH_Y^4(X)=4.
\]
This distinction is essential in interpreting the quantum computation
below.

\medskip

The flag-ambient matrix of small quantum multiplication by $c_1(TX)$ is
computed by the following input.

\begin{inputbox}[Input]
algebra  A6         keep  3
bundle   osum(wedge(2,dual(S(3))), wedge(3,Q(3)), O(1))
         # equivalent: osum(wedge(2,dual(S(3))),
         #                  tensor(dual(Q(3)),O(1)), O(1))
\end{inputbox}

\begin{outputbox}[Output]
\noindent
Matrix of $c_{1}(TX)\star$ in the reduced Schubert basis, with quantum
variable $y_{3}$:
{\setlength{\arraycolsep}{3.5pt}\small
\[
A(y_3)=
\begin{pmatrix}
0 & 72\,y_{3}^{2} & 732\,y_{3}^{3} & 888\,y_{3}^{3} & 2496\,y_{3}^{4} & 18720\,y_{3}^{5}\\[2pt]
1 & 5\,y_{3}      & 92\,y_{3}^{2}  & 112\,y_{3}^{2} & 324\,y_{3}^{3}  & 2496\,y_{3}^{4}\\[2pt]
0 & 1             & 0              & 4\,y_{3}       & 12\,y_{3}^{2}   & 108\,y_{3}^{3}\\[2pt]
0 & 1             & 6\,y_{3}       & 4\,y_{3}       & 24\,y_{3}^{2}   & 180\,y_{3}^{3}\\[2pt]
0 & 0             & 5              & 6              & 5\,y_{3}        & 72\,y_{3}^{2}\\[2pt]
0 & 0             & 0              & 0              & 1               & 0
\end{pmatrix}.
\]
}

\noindent
Grading operator:
\[
\mathrm{diag}\Big(\ell(w)-\frac12\dim X\Big)
=
\mathrm{diag}(-2,-1,0,0,1,2).
\]

\medskip

At the algebraic specialization $y_3=1$, the characteristic polynomial is
\[
\chi_{A^{\mathrm{alg}}(1)}(\lambda)
=
(\lambda+4)^2(\lambda+5)
\big(\lambda^3-27\lambda^2-216\lambda-432\big).
\]
Thus, the eigenvalues are approximately
\[
\big\{
33.7741424,\,
-5,\,
-4,\,
-4,\,
-3.3870712+1.1483027\,\mathbf i,\,
-3.3870712-1.1483027\,\mathbf i
\big\}.
\]
The eigenvalue $-4$ has algebraic multiplicity two and geometric
multiplicity one; all the remaining eigenvalues are simple.
\end{outputbox}

\medskip

It is important to emphasize what this computation represents.  The matrix
$A(y_3)$ acts only on
\[
\rH_F^\bullet(X)=\iota^*\rH^\bullet(F,\Q),
\]
the part of the cohomology restricted from the Grassmannian.  Since
\textup{(AC)} fails, this is strictly smaller in degree four than the full
monodromy-fixed part
\[
\rH_Y^\bullet(X)=j^*\rH^\bullet(Y,\Q).
\]
In particular, there are two additional degree-four $(2,2)$-classes
restricted from $Y$ which are not seen by the displayed $6\times6$ matrix.
Consequently, the repeated eigenvalue $-4$ of $A^{\mathrm{alg}}(1)$ cannot,
from this computation alone, be identified with
$\lambda_{\mathrm{van}}$, nor can the Jordan data of
$B^{\mathrm{alg}}(1)$ be read from the matrix above.

\medskip

The Hodge diamond of $X$ is
\[
\begin{array}{ccccccccc}
&&&& 1 &&&&\\
&&&0&&0&&&\\
&&0&&1&&0&&\\
&0&&0&&0&&0&\\
0&&1&&24&&1&&0\\
&0&&0&&0&&0&\\
&&0&&1&&0&&\\
&&&0&&0&&&\\
&&&&1&&&&
\end{array}.
\]
For the purposes of the Lefschetz-monodromy decomposition, the relevant
splitting is not the decomposition into the part restricted from $F$ and
its orthogonal complement, but rather
\[
\rH^\bullet(X,\Q)
=
\rH_Y^\bullet(X)
\oplus
\rH^4(X)_{\mathrm{van}}.
\]
At the level of Hodge numbers, this takes the form
\[
\setlength{\arraycolsep}{3pt}
\begin{array}{ccccccccc}
&&&& \textcolor{blue}{1} &&&&\\
&&&0&&0&&&\\
&&0&&\textcolor{blue}{1}&&0&&\\
&0&&0&&0&&0&\\
0&&\textcolor{red}{1}&&
\mathclap{\textcolor{blue}{4}+\textcolor{red}{20}}
&&\textcolor{red}{1}&&0\\
&0&&0&&0&&0&\\
&&0&&\textcolor{blue}{1}&&0&&\\
&&&0&&0&&&\\
&&&&\textcolor{blue}{1}&&&&
\end{array}
\quad=\quad
\begin{array}{ccccccccc}
&&&& \textcolor{blue}{1} &&&&\\
&&&0&&0&&&\\
&&0&&\textcolor{blue}{1}&&0&&\\
&0&&0&&0&&0&\\
0&&0&&\textcolor{blue}{4}&&0&&0\\
&0&&0&&0&&0&\\
&&0&&\textcolor{blue}{1}&&0&&\\
&&&0&&0&&&\\
&&&&\textcolor{blue}{1}&&&&
\end{array}
\quad+\quad
\begin{array}{ccccccccc}
&&&&0&&&&\\
&&&0&&0&&&\\
&&0&&0&&0&&\\
&0&&0&&0&&0&\\
0&&\textcolor{red}{1}&&\textcolor{red}{20}&&\textcolor{red}{1}&&0\\
&0&&0&&0&&0&\\
&&0&&0&&0&&\\
&&&0&&0&&&\\
&&&&0&&&&
\end{array}.
\]
Here blue denotes the part restricted from $Y$, while red denotes the
vanishing cohomology.  Therefore
\[
h^{3,1}(X)=1,
\qquad
h^{2,2}(X)=24,
\qquad
b_4(X)=26,
\]
but
\[
\boxed{
b_4(X)_{\mathrm{van}}
=
b_4(X)-b_4(Y)
=
26-4
=
22.
}
\]
In particular, the vanishing Hodge structure has Hodge numbers
\[
\big(
h^{3,1}_{\mathrm{van}},
h^{2,2}_{\mathrm{van}},
h^{1,3}_{\mathrm{van}}
\big)
=
(1,20,1).
\]
The flag-ambient part accounts for only two of the four blue
$(2,2)$-classes in the middle degree:
\[
\dim\rH_F^4(X)=2
<
4=\dim\rH_Y^4(X).
\]

\medskip

For a very general member $X$, Proposition \ref{prop:hodgegeneral} applies
because
\[
h^{3,1}(Y)=0<1=h^{3,1}(X).
\]
Hence $X$ is Hodge general and
\[
\rH^4(X)^{\mathrm{Hdg}}
=
j^*\rH^4(Y,\Q),
\qquad
\rk\,\rH^4(X)^{\mathrm{Hdg}}=4.
\]

\medskip

\emph{The numerical criterion.}
This example is an application of Corollary \ref{cor:numerical}, but
\emph{not} of Corollary \ref{cor:numerical-combinatorial}.  Indeed,
\[
b_4(Y)=4,
\qquad
h^{2,2}(X)-20=24-20=4,
\]
so the numerical inequality holds with equality:
\[
b_4(Y)\le h^{2,2}(X)-20.
\]
Equivalently,
\[
b_4(X)_{\mathrm{van}}
=
22
=
10+12h^{3,1}(X).
\]
Thus Corollary \ref{cor:numerical} recovers the irrationality of a
Hodge-general K\"uchle fourfold of type $(c5)$ established in
\cite{Benedetti_Fay_Guere_Manivel_Perrin}*{Prop.~5.1}:

\begin{theorem}[Benedetti--Fay--Gu\'er\'e--Manivel--Perrin,
\cite{Benedetti_Fay_Guere_Manivel_Perrin}*{Prop.~5.1}]
Every Hodge-general K\"uchle fourfold of type $(c5)$ is irrational.
In particular, a very general K\"uchle fourfold of type $(c5)$ is
irrational.
\end{theorem}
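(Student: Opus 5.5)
The statement is the theorem of Benedetti--Fay--Gu\'er\'e--Manivel--Perrin for K\"uchle $(c5)$. The plan is to deduce it from Corollary~\ref{cor:numerical}. That corollary rests on Theorem~\ref{thm:BFGMPcriterion}, so everything reduces to checking the Standing Assumption~\ref{stand:setup} with $n=4$ and the numerical inequality. The quantum matrix $A(y_3)$ plays no role, since (AC) fails.

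First, I would verify (H0)--(H3) for $F=\Gr(3,7)$, $Y=\mathscr Z(F,\cU(1)\oplus\cQ^\vee(1))$ and $X=Y\cap H$.
\begin{itemize}
\item (H3) and the Hodge diamond of $Y$ (diagonal, $b_4(Y)=4$, $h^{3,1}(Y)=0$) are taken from Kuznetsov \cite{Kuznetsov2016}. They also give (H2).
\item For (H1), I need $\rH^k(F)\cong\rH^k(Y)$ for $k<4$. In degrees $0,1,3$ both sides are $\Q,0,0$ because the diamond of $Y$ is diagonal with $h^{0,0}=h^{1,1}=1$. In degree $2$ both sides are $\Q$, and restriction is nonzero since $H$ is ample.
\item (H0): $\mathrm{Pic}(Y)$ and $\mathrm{Pic}(X)$ are generated by the hyperplane class. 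This follows from Kuznetsov's index computation $-K_Y=2H$ with $H$ primitive, together with Lefschetz for the ample divisor $X\subset Y$ of a fivefold.
\end{itemize}

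Next, $X$ is Fano of index one by adjunction, $-K_X=H|_X$. Kuznetsov's numerics give $h^{3,1}(X)=1$ and $h^{2,2}(X)=24$.

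Then I would assume $X$ is Hodge general, which is the hypothesis of the statement. For the ``in particular'' clause, Proposition~\ref{prop:hodgegeneral} shows that a very general $X$ is Hodge general, since $h^{3,1}(Y)=0<1=h^{3,1}(X)$. Corollary~\ref{cor:numerical} then requires $b_4(Y)\le h^{2,2}(X)-20$, which reads $4\le 24-20=4$ and holds with equality. Equivalently, $b_4(X)_{\mathrm{van}}=26-4=22=10+12h^{3,1}(X)$. So $X$ is irrational.

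The main obstacle is not the inequality but the input data from \cite{Kuznetsov2016}, specifically the Hodge numbers of $X$ and of $Y$. If I had to justify them independently, I would compute $\chi(X)$ and $\chi_y$-genera via Koszul and Borel--Weil--Bott on $\Gr(3,7)$. The delicate point is that the equality case leaves no slack, so even a single error in $h^{2,2}(X)$ or $b_4(Y)$ would break the argument.
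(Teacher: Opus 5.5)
Your proposal is correct and is essentially the paper's own argument in Example~\ref{ex:kuchle-c5}. There, too, the Standing Assumption is verified from Kuznetsov's data, and since \textup{(AC)} fails, only Corollary~\ref{cor:numerical} (not Corollary~\ref{cor:numerical-combinatorial}) is used. Proposition~\ref{prop:hodgegeneral} handles the very general case, and the criterion holds at equality: $b_4(Y)=4=h^{2,2}(X)-20$, equivalently $b_4(X)_{\mathrm{van}}=22=10+12h^{3,1}(X)$.
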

\end{example}

\medskip
    
\begin{example}[Gushel--Mukai fourfolds]\label{ex:GM4fold}

We consider first an ordinary Gushel--Mukai fourfold.  Thus
\[
F:=\mathrm{Gr}(2,5)\subset\PP^{9}
\]
is the Pl\"ucker Grassmannian and
\[
X
=
F\cap H\cap Q
=
\mathscr Z\big(F,\cO_F(1)\oplus\cO_F(2)\big),
\]
where $H$ is a hyperplane and $Q$ a quadric, and the intersection is
smooth.  This is the standard realization of an ordinary
Gushel--Mukai fourfold, which is a Fano fourfold of index two.

For the framework of Standing Assumption \ref{stand:setup}, set
\[
Y:=\mathscr Z\big(F,\cO_F(2)\big).
\]
After replacing the equation of the quadric, if necessary, by another
extension having the same restriction to $H$, we may assume that $Y$ is
smooth and still contains $X$ as the hyperplane section
\[
X=Y\cap H.
\]
Indeed, if $H=\{\ell=0\}$ and $q$ is one quadratic equation defining
$X$ on $H$, then the quadrics
\[
q+\ell m,
\qquad
m\in H^{0}\big(\PP^{9},\cO_{\PP^{9}}(1)\big),
\]
all restrict to the same equation on $H$.  Since $X$ is smooth, the
corresponding quadric section of $F$ is smooth along the base locus
$X$, while a general choice of $m$ is smooth away from $X$ by Bertini.

Adjunction gives
\[
-K_F=5H_F,
\qquad
-K_Y=3H_F|_Y,
\qquad
-K_X=2h,
\]
where
\[
H_F=c_1\big(\cO_F(1)\big),
\qquad
h=H_F|_X.
\]
Thus both $Y$ and $X$ are Fano.  The Lefschetz hyperplane theorem,
applied first to the smooth quadric section $Y\subset F$ and then to
the hyperplane section $X\subset Y$, gives the cohomological and Picard
comparisons required by Standing Assumption \ref{stand:setup}.  In
particular,
\[
\rH^{4}(F,\Q)\xrightarrow{\sim}\rH^{4}(Y,\Q).
\]
Hence condition \textup{(AC)} of Definition
\ref{def:ambient-complete} holds, and
\[
\rH_F^{\bullet}(X)
=
\rH_Y^{\bullet}(X).
\]
In particular, unlike the K\"uchle $(c5)$ example, the flag-ambient
quantum block computed by localization is the entire
monodromy-fixed block entering the spectral criteria.

\medskip

The quantum computation is obtained from the following input.

\begin{inputbox}[Input]
algebra  A4         keep  2
bundle   osum(O(1), O(2))
\end{inputbox}

\begin{outputbox}[Output]
\noindent
Matrix of $c_{1}(TX)\star$ in the reduced Schubert basis, with quantum
variable $y_{2}$:
{\setlength{\arraycolsep}{3.5pt}\small
\[
A(y_{2})
=
\begin{pmatrix}
0&12y_{2}&0&0&96y_{2}^{2}&0\\
2&0&12y_{2}&20y_{2}&0&48y_{2}^{2}\\
0&2&0&0&8y_{2}&0\\
0&2&0&0&16y_{2}&0\\
0&0&2&3&0&6y_{2}\\
0&0&0&0&4&0
\end{pmatrix}.
\]
}

\noindent
Grading operator:
\[
\mathrm{diag}\Big(\ell(w)-\frac12\dim X\Big)
=
\mathrm{diag}(-2,-1,0,0,1,2).
\]

\medskip

\noindent
Characteristic polynomial:
\[
\chi_{A(y_{2})}(\lambda)
=
\lambda^{2}
\big(
\lambda^{4}
-176y_{2}\lambda^{2}
-256y_{2}^{2}
\big).
\]

\medskip

At the algebraic specialization $y_{1}=1$, the four nonzero eigenvalues
are
\[
\pm\sqrt{88+40\sqrt5},
\qquad
\pm\mathbf i\sqrt{40\sqrt5-88},
\]
that is, numerically,
\[
\pm13.3207627,
\qquad
\pm1.2011324\,\mathbf i,
\]
together with the eigenvalue $0$ of algebraic multiplicity two.
\end{outputbox}

The kernel of $A^{\mathrm{alg}}(1)$ is two-dimensional.  Consequently,
at $\lambda=0$ one has
\[
m_a(0)=2,
\qquad
m_g(0)=2,
\qquad
m_a(0)-m_g(0)=0.
\]
Thus, in contrast with the cubic fourfold, the zero eigenvalue is
semisimple on the ambient block.

\medskip

A smooth Gushel--Mukai fourfold has Hodge diamond
\[
\begin{array}{ccccccccc}
&&&&1&&&&\\
&&&0&&0&&&\\
&&0&&1&&0&&\\
&0&&0&&0&&0&\\
0&&1&&22&&1&&0\\
&0&&0&&0&&0&\\
&&0&&1&&0&&\\
&&&0&&0&&&\\
&&&&1&&&&
\end{array}.
\]
Since condition \textup{(AC)} holds, the cohomology restricted from $Y$
coincides with that restricted from $F$.  Thus the Hodge diamond splits
into the monodromy-fixed ambient part, in \textcolor{blue}{blue}, and the
vanishing part, in \textcolor{red}{red}:
\begin{equation}
\setlength{\arraycolsep}{3pt}
\begin{array}{ccccccccc}
&&&& \textcolor{blue}{1} &&&&\\
&&&0&&0&&&\\
&&0&&\textcolor{blue}{1}&&0&&\\
&0&&0&&0&&0&\\
0&&\textcolor{red}{1}&&
\mathclap{\textcolor{blue}{2}+\textcolor{red}{20}}
&&\textcolor{red}{1}&&0\\
&0&&0&&0&&0&\\
&&0&&\textcolor{blue}{1}&&0&&\\
&&&0&&0&&&\\
&&&&\textcolor{blue}{1}&&&&
\end{array}
\quad=\quad
\begin{array}{ccccccccc}
&&&& \textcolor{blue}{1} &&&&\\
&&&0&&0&&&\\
&&0&&\textcolor{blue}{1}&&0&&\\
&0&&0&&0&&0&\\
0&&0&&\textcolor{blue}{2}&&0&&0\\
&0&&0&&0&&0&\\
&&0&&\textcolor{blue}{1}&&0&&\\
&&&0&&0&&&\\
&&&&\textcolor{blue}{1}&&&&
\end{array}
\quad+\quad
\begin{array}{ccccccccc}
&&&&0&&&&\\
&&&0&&0&&&\\
&&0&&0&&0&&\\
&0&&0&&0&&0&\\
0&&\textcolor{red}{1}&&\textcolor{red}{20}
&&\textcolor{red}{1}&&0\\
&0&&0&&0&&0&\\
&&0&&0&&0&&\\
&&&0&&0&&&\\
&&&&0&&&&
\end{array}.
\end{equation}

In particular,
\[
h^{3,1}(X)=1,
\qquad
h^{2,2}(X)=22,
\qquad
b_4(X)=24.
\]
Moreover,
\[
b_4(Y)=b_4(F)=2,
\]
and therefore
\[
\boxed{
b_4(X)_{\mathrm{van}}
=
b_4(X)-b_4(Y)
=
24-2
=
22.
}
\]
The vanishing Hodge structure consequently has Hodge numbers
\[
\big(
h^{3,1}_{\mathrm{van}},
h^{2,2}_{\mathrm{van}},
h^{1,3}_{\mathrm{van}}
\big)
=
(1,20,1).
\]

For a very general Gushel--Mukai fourfold,
Proposition \ref{prop:hodgegeneral} applies because
\[
h^{3,1}(Y)=0<1=h^{3,1}(X).
\]
Hence $X$ is Hodge general and
\[
\rH^{4}(X)^{\mathrm{Hdg}}
=
j^{*}\rH^{4}(Y,\Q)
=
\iota^{*}\rH^{4}(F,\Q),
\]
so that
\[
\rk\,\rH^{4}(X)^{\mathrm{Hdg}}=2.
\]

\medskip

\emph{The numerical irrationality criterion.}
Since condition \textup{(AC)} holds,
Corollary \ref{cor:numerical-combinatorial} applies.  The Grassmannian
$F=\mathrm{Gr}(2,5)$ has exactly two Schubert classes in codimension two,
namely
\[
\sigma_{2},
\qquad
\sigma_{1,1},
\]
and hence
\[
b_4(F)=2.
\]
The numerical inequality becomes
\[
b_4(F)\le h^{2,2}(X)-20,
\]
that is,
\[
2\le22-20=2.
\]
Equivalently,
\[
b_4(X)_{\mathrm{van}}
=
22
=
10+12h^{3,1}(X).
\]
Thus the numerical criterion proves that every Hodge-general member is
irrational.

\medskip

\emph{The spectral irrationality criterion.}
Again by \textup{(AC)},
\[
B^{\mathrm{alg}}(1)
=
A^{\mathrm{alg}}(1)
=
A(1).
\]
Every eigenvalue of this matrix has algebraic multiplicity at most two.  Therefore, Theorem \ref{thm:irrat-from-A} applies to a
Hodge-general Gushel--Mukai fourfold and gives its irrationality.

Thus, either the numerical or the spectral criterion recovers the result
of Benedetti--Manivel--Perrin:
\begin{theorem}[Benedetti--Manivel--Perrin,
\cite{Benedetti_Vladimiro_Manivel}]
A Hodge-general Gushel--Mukai fourfold is irrational.
In particular, a very general Gushel--Mukai fourfold is irrational.
\end{theorem}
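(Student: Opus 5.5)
The statement is a packaging of the preceding machinery, so the proof assembles the ingredients for the ordinary Gushel--Mukai fourfold and invokes Theorem \ref{thm:irrat-from-A}, with Corollary \ref{cor:numerical-combinatorial} as an independent check. First I verify Standing Assumption \ref{stand:setup} with $n=4$ for $F=\Gr(2,5)$, $\cE=\cO_F(2)$, $L=\cO_F(1)$.

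\begin{enumerate}
\item \emph{Smoothness of $Y$.} I use the $q+\ell m$ trick already described: the quadric section is smooth along $X$, and smooth elsewhere for general $m$ by Bertini.
\item \emph{(H0), (H1), (AC).} Since $\cO_F(2)$ is ample and $\dim F-1=5\ge3$, Theorem \ref{thm:ci}(a) together with Sommese's theorem gives (H0). The same Lefschetz argument gives $\rH^k(F)\cong\rH^k(Y)$ for $k<5$, which yields (H1) and also (AC) in degree $4$.
\item \emph{(H2).} This follows because $\rH^4(Y)\cong\rH^4(F)$ is spanned by Schubert classes, which are of type $(2,2)$.
\item \emph{(H3).} By Theorem \ref{thm:ci}(c), $-K_Y=3H_F|_Y$ is ample.
\end{enumerate}

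Next I check the remaining hypotheses of Theorem \ref{thm:irrat-from-A}. The fourfold $X$ is Fano with $-K_X=2h$. We have $h^{3,1}(X)=1$ from the known Hodge diamond of Gushel--Mukai fourfolds, which I simply quote. Hodge generality holds for very general $X$ by Proposition \ref{prop:hodgegeneral}, since $h^{3,1}(Y)=0<1$.

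By (AC), $B^{\mathrm{alg}}(1)=A^{\mathrm{alg}}(1)$, and the latter is the localization output. Its characteristic polynomial is $\lambda^2(\lambda^4-176\lambda^2-256)$. The quartic factor has discriminant in $\lambda^2$ equal to $176^2+4\cdot256>0$ with negative constant term, so it has two nonzero roots of opposite sign in $\lambda^2$. This gives four distinct nonzero eigenvalues. Hence $0$ is the only repeated eigenvalue, with algebraic multiplicity $2$, and Theorem \ref{thm:irrat-from-A} yields irrationality of every Hodge-general $X$.

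As a cross-check I apply Corollary \ref{cor:numerical-combinatorial}. Here $b_4(F)=2$ (the classes $\sigma_2,\sigma_{1,1}$) and $h^{2,2}(X)=22$, so the inequality $2\le 22-20$ holds.

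The main obstacle is not logical but computational. The correctness of the matrix $A(y_2)$ rests on Theorem \ref{thm:localizationCI} and the software. I would independently sanity-check a few entries against the classical quantum cohomology of $\Gr(2,5)$ via quantum Lefschetz, or simply note that the numerical criterion already suffices and does not depend on this computation. A secondary point is the Hodge numbers of $X$, which I take from the literature rather than recompute.
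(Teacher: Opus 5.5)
Your proposal is correct and follows the paper's own argument in Example~\ref{ex:GM4fold}. You verify the standing assumptions and (AC) for $Y=\mathscr Z(\Gr(2,5),\cO(2))$, you read off from $\lambda^{2}(\lambda^{4}-176\lambda^{2}-256)$ that $0$ is the only repeated eigenvalue (with algebraic multiplicity two) so that Theorem~\ref{thm:irrat-from-A} applies, and you use Corollary~\ref{cor:numerical-combinatorial} ($2\le 22-20$) as the computation-free alternative, exactly as the paper does, with the same scope limitation: both your argument and the paper's cover only ordinary Gushel--Mukai fourfolds, since special ones are not zero loci of the form $\mathscr Z(F,\cE\oplus L)$ treated here.
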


\medskip

\emph{The structural criterion for rational special members.}
Theorem \ref{theo:rational_Fano_K3} does not require Hodge generality.
Moreover, the Jordan defects of $A^{\mathrm{alg}}(1)$ are all zero:
the four nonzero eigenvalues are simple and the two-dimensional
zero eigenspace is semisimple.  Hence, the Jordan-defect hypothesis of
Theorem \ref{theo:rational_Fano_K3} is satisfied.

Since a Gushel--Mukai fourfold has Fano index two,
Lemma \ref{lem:fano-index-vanishing} moreover gives
\[
\lambda_{\mathrm{van}}=0
\]
at the small points. Consequently, for every smooth ordinary Gushel--Mukai fourfold in the
family considered above, if $X$ were rational, then every weak
factorization
\[
X\dashrightarrow\PP^{4}
\]
would contain a smooth surface center whose minimal model is a K3
surface.  After choosing compatible evaluation maps as in
Theorem \ref{theo:rational_Fano_K3} and making the prescribed finite
extensions of coefficient fields, the cohomology of such a K3 surface
embeds into the relevant evaluated generalized eigenspace of $X$ in the
linear span of morphisms of rational Hodge structures. Benedetti--Manivel--Perrin obtain a stronger, case-specific conclusion
in \cite{Benedetti_Vladimiro_Manivel}. By deforming the small quantum
product inside the big quantum cohomology, they prove that if a
Gushel--Mukai fourfold is rational, then its rational primitive
cohomology is Hodge isomorphic, up to the appropriate Tate twist, to the
degree-two rational cohomology of a projective K3 surface. 
\end{example}

\medskip

\begin{example}[Quartic fourfolds]\label{ex:quartic4fold}

Let
\[
X=\mathscr Z\big(\PP^{5},\cO_{\PP^{5}}(4)\big)
\subset\PP^{5}
\]
be a smooth quartic fourfold.  By adjunction,
\[
-K_X=2h,
\qquad
h=c_1\big(\cO_X(1)\big),
\]
so $X$ is a Fano fourfold of index two.

If one wishes to place $X$ in the Lefschetz-pencil framework of
Standing Assumption \ref{stand:setup}, one may choose a smooth quartic
fivefold
\[
Y\subset\PP^{6}
\]
having $X$ as a hyperplane section.  Indeed, if
$X=\{f(x_0,\dots,x_5)=0\}$, then for a general cubic polynomial $g$ the
hypersurface
\[
Y=
\big\{
f(x_0,\dots,x_5)+x_6g(x_0,\dots,x_6)=0
\big\}
\subset\PP^{6}
\]
is smooth and satisfies
\[
X=Y\cap\{x_6=0\}.
\]
The Lefschetz hyperplane theorem gives
\[
\rH^{4}(\PP^{6},\Q)\xrightarrow{\sim}\rH^{4}(Y,\Q),
\]
so condition \textup{(AC)} holds.  Thus the flag-ambient and
monodromy-fixed parts coincide in this example.

\medskip

The quantum computation is obtained from the following input.

\begin{inputbox}[Input]
algebra  A5         keep  1
bundle   O(4)
\end{inputbox}

\begin{outputbox}[Output]
\noindent
Matrix of $c_{1}(TX)\star$ in the reduced Schubert basis
$1,h,h^2,h^3,h^4$, with quantum variable $y_{1}$:
{\setlength{\arraycolsep}{3.5pt}\small
\[
A(y_1)=
\begin{pmatrix}
0 & 48y_1 & 0 & 5568y_1^2 & 0 \\
2 & 0 & 208y_1 & 0 & 5568y_1^2 \\
0 & 2 & 0 & 208y_1 & 0 \\
0 & 0 & 2 & 0 & 48y_1 \\
0 & 0 & 0 & 2 & 0
\end{pmatrix}.
\]
}

\noindent
Grading operator:
\[
\mathrm{diag}\Big(\ell(w)-\frac12\dim X\Big)
=
\mathrm{diag}(-2,-1,0,1,2).
\]

\medskip

\noindent
Characteristic polynomial:
\[
\chi_{A(y_1)}(\lambda)
=
\lambda^3\big(\lambda^2-1024y_1\big).
\]

\medskip

\noindent
Eigenvalues at the algebraic specialization $y_1=1$:
\[
\{-32,\ 32,\ 0,\ 0,\ 0\}.
\]
\end{outputbox}

The zero eigenvalue has algebraic multiplicity three and geometric
multiplicity one.  Hence its generalized eigenspace consists of a single
Jordan block of size three and
\[
m_a(0)=3,
\qquad
m_g(0)=1,
\qquad
m_a(0)-m_g(0)=2.
\]
The two nonzero eigenvalues are simple.

\medskip

The Hodge diamond of $X$ is
\[
\begin{array}{ccccccccc}
&&&&1&&&&\\
&&&0&&0&&&\\
&&0&&1&&0&&\\
&0&&0&&0&&0&\\
0&&21&&142&&21&&0\\
&0&&0&&0&&0&\\
&&0&&1&&0&&\\
&&&0&&0&&&\\
&&&&1&&&&
\end{array}.
\]
Since condition \textup{(AC)} holds, the part restricted from $Y$
coincides with the projective ambient part.  Thus the Hodge diamond splits
into the monodromy-fixed ambient part, in \textcolor{blue}{blue}, and the
vanishing part, in \textcolor{red}{red}:
\begin{equation}
\setlength{\arraycolsep}{3pt}
\begin{array}{ccccccccc}
&&&& \textcolor{blue}{1} &&&&\\
&&&0&&0&&&\\
&&0&&\textcolor{blue}{1}&&0&&\\
&0&&0&&0&&0&\\
0&&\textcolor{red}{21}&&
\mathclap{\textcolor{blue}{1}+\textcolor{red}{141}}
&&\textcolor{red}{21}&&0\\
&0&&0&&0&&0&\\
&&0&&\textcolor{blue}{1}&&0&&\\
&&&0&&0&&&\\
&&&&\textcolor{blue}{1}&&&&
\end{array}
\quad=\quad
\begin{array}{ccccccccc}
&&&& \textcolor{blue}{1} &&&&\\
&&&0&&0&&&\\
&&0&&\textcolor{blue}{1}&&0&&\\
&0&&0&&0&&0&\\
0&&0&&\textcolor{blue}{1}&&0&&0\\
&0&&0&&0&&0&\\
&&0&&\textcolor{blue}{1}&&0&&\\
&&&0&&0&&&\\
&&&&\textcolor{blue}{1}&&&&
\end{array}
\quad+\quad
\begin{array}{ccccccccc}
&&&&0&&&&\\
&&&0&&0&&&\\
&&0&&0&&0&&\\
&0&&0&&0&&0&\\
0&&\textcolor{red}{21}&&\textcolor{red}{141}
&&\textcolor{red}{21}&&0\\
&0&&0&&0&&0&\\
&&0&&0&&0&&\\
&&&0&&0&&&\\
&&&&0&&&&
\end{array}.
\end{equation}

Consequently,
\[
h^{3,1}(X)=21,
\qquad
h^{2,2}(X)=142,
\qquad
b_4(X)=184,
\]
and
\[
b_4(X)_{\mathrm{van}}
=
184-b_4(Y)
=
184-1
=
183.
\]
The vanishing Hodge structure has Hodge numbers
\[
\big(
h^{3,1}_{\mathrm{van}},
h^{2,2}_{\mathrm{van}},
h^{1,3}_{\mathrm{van}}
\big)
=
(21,141,21).
\]

For a very general quartic fourfold, Proposition
\ref{prop:hodgegeneral} applies, since
\[
h^{3,1}(Y)=0<21=h^{3,1}(X).
\]
Hence
\[
\rH^4(X)^{\mathrm{Hdg}}
=
j^*\rH^4(Y,\Q)
=
\Q h^2,
\qquad
\rk\,\rH^4(X)^{\mathrm{Hdg}}=1.
\]

Since $X$ has Fano index two, the scalar by which small quantum
multiplication by $c_1(TX)$ acts on the vanishing cohomology is zero.
Indeed, a scalar contribution preserving degree four would have to arise
from a curve class $\beta$ satisfying
\[
c_1(TX)\cdot\beta=1,
\]
which is impossible because $c_1(TX)=2h$.  Therefore
\[
c_1(TX)\star_{\mathrm{small}}
\big|_{\rH^4(X)_{\mathrm{van}}}
=
0.
\]

It follows that at a small point the full generalized zero eigenspace
contains both the $183$-dimensional vanishing cohomology and the
three-dimensional generalized zero eigenspace of the ambient matrix.
For a very general $X$, the former contains no Hodge classes, whereas the
latter consists of Hodge classes.  Hence, in the notation of
Definition \ref{def:atomintegers},
\[
\rho_{b,0}=3,
\qquad
\nu_{b,0}=21,
\qquad
\nu'_{b,0}=0,
\qquad
\gamma_{b,0}=2
\]
at the corresponding small points.

These values explain why the small quantum computation does not yield
either of the atomic obstructions used above: the condition
$\rho\le2$ needed to violate $\clubsuit$ fails, while the condition
$\gamma\le1$ needed to violate $\heartsuit$ fails as well.  Moreover,
the quartic fourfold is not of K3 type, since $h^{3,1}(X)=21$, so the
specialized fourfold criteria of Theorems
\ref{thm:irrat-from-A} and \ref{theo:rational_Fano_K3} are not applicable.

The numerical criterion of
Theorem \ref{thm:BFGMPcriterion} does not apply either, since
\[
b_4(X)_{\mathrm{van}}
=
183
<
10+12h^{3,1}(X)
=
262.
\]

Thus, the present small-point computation does not by itself obstruct rationality.  One should also not call the
generalized zero eigenspace an atom: it is only a small-point eigenspace and may split further at a generic point of the Hodge locus. Independently, Totaro proves in \cite{Totaro2015}*{Thm.~2.1} that a very
general quartic fourfold has non-universally-trivial $\mathsf{CH}_0$ and
is therefore not stably rational.  In particular, it is irrational.

\end{example}

\medskip

\begin{example}[Intersections of a quadric and a cubic in $\PP^{6}$]
\label{ex:23fourfold}

Let
\[
X
=
\mathscr Z\big(
\PP^{6},
\cO_{\PP^{6}}(2)\oplus\cO_{\PP^{6}}(3)
\big)
\]
be a smooth complete intersection of a quadric and a cubic.  Adjunction
gives
\[
-K_X=2h,
\qquad
h=c_1\big(\cO_X(1)\big),
\]
so $X$ is again a Fano fourfold of index two.

As in the preceding example, $X$ may be placed in the Lefschetz-pencil
framework by choosing a smooth complete intersection
\[
Y
=
\mathscr Z\big(
\PP^{7},
\cO_{\PP^{7}}(2)\oplus\cO_{\PP^{7}}(3)
\big)
\]
containing $X$ as a hyperplane section.  Explicitly, if
\[
X=\{q=c=0\}\subset\{x_7=0\}\cong\PP^6,
\]
one may replace the two equations in $\PP^7$ by
\[
q+x_7\ell,
\qquad
c+x_7g,
\]
with $\ell$ linear and $g$ quadratic and general.  Smoothness holds
along $X$ because $X$ is smooth, and away from the base locus for a
general choice by Bertini.

The fivefold $Y$ is Fano, with
\[
-K_Y=3H|_Y.
\]
Moreover, the Lefschetz theorem for smooth complete intersections gives
\[
\rH^4(\PP^7,\Q)\xrightarrow{\sim}\rH^4(Y,\Q).
\]
Thus condition \textup{(AC)} holds and the projective ambient block is
again the full monodromy-fixed block.

\medskip

The quantum computation is obtained from the following input.

\begin{inputbox}[Input]
algebra  A6         keep  1
bundle   osum(O(2), O(3))
\end{inputbox}

\begin{outputbox}[Output]
\noindent
Matrix of $c_{1}(TX)\star$ in the reduced Schubert basis
$1,h,h^2,h^3,h^4$, with quantum variable $y_{1}$:
{\setlength{\arraycolsep}{3.5pt}\small
\[
A(y_1)
=
\begin{pmatrix}
0 & 24y_1 & 0 & 1152y_1^2 & 0 \\
2 & 0 & 84y_1 & 0 & 1152y_1^2 \\
0 & 2 & 0 & 84y_1 & 0 \\
0 & 0 & 2 & 0 & 24y_1 \\
0 & 0 & 0 & 2 & 0
\end{pmatrix}.
\]
}

\noindent
Grading operator:
\[
\mathrm{diag}\Big(\ell(w)-\frac12\dim X\Big)
=
\mathrm{diag}(-2,-1,0,1,2).
\]

\medskip

\noindent
Characteristic polynomial:
\[
\chi_{A(y_1)}(\lambda)
=
\lambda^3\big(\lambda^2-432y_1\big).
\]

\medskip

\noindent
Eigenvalues at the algebraic specialization $y_1=1$:
\[
\{-12\sqrt3,\ 12\sqrt3,\ 0,\ 0,\ 0\}.
\]
\end{outputbox}

Again, the zero eigenvalue has algebraic multiplicity three and geometric
multiplicity one:
\[
m_a(0)=3,
\qquad
m_g(0)=1,
\qquad
m_a(0)-m_g(0)=2.
\]
Thus its ambient generalized eigenspace is a single Jordan block of
size three, while the two nonzero eigenvalues are simple.

\medskip

The Hodge diamond of $X$ is
\[
\begin{array}{ccccccccc}
&&&&1&&&&\\
&&&0&&0&&&\\
&&0&&1&&0&&\\
&0&&0&&0&&0&\\
0&&8&&70&&8&&0\\
&0&&0&&0&&0&\\
&&0&&1&&0&&\\
&&&0&&0&&&\\
&&&&1&&&&
\end{array}.
\]
Since condition \textup{(AC)} holds, it decomposes into the
monodromy-fixed ambient part, in \textcolor{blue}{blue}, and the
vanishing part, in \textcolor{red}{red}:
\begin{equation}
\setlength{\arraycolsep}{3pt}
\begin{array}{ccccccccc}
&&&& \textcolor{blue}{1} &&&&\\
&&&0&&0&&&\\
&&0&&\textcolor{blue}{1}&&0&&\\
&0&&0&&0&&0&\\
0&&\textcolor{red}{8}&&
\mathclap{\textcolor{blue}{1}+\textcolor{red}{69}}
&&\textcolor{red}{8}&&0\\
&0&&0&&0&&0&\\
&&0&&\textcolor{blue}{1}&&0&&\\
&&&0&&0&&&\\
&&&&\textcolor{blue}{1}&&&&
\end{array}
\quad=\quad
\begin{array}{ccccccccc}
&&&& \textcolor{blue}{1} &&&&\\
&&&0&&0&&&\\
&&0&&\textcolor{blue}{1}&&0&&\\
&0&&0&&0&&0&\\
0&&0&&\textcolor{blue}{1}&&0&&0\\
&0&&0&&0&&0&\\
&&0&&\textcolor{blue}{1}&&0&&\\
&&&0&&0&&&\\
&&&&\textcolor{blue}{1}&&&&
\end{array}
\quad+\quad
\begin{array}{ccccccccc}
&&&&0&&&&\\
&&&0&&0&&&\\
&&0&&0&&0&&\\
&0&&0&&0&&0&\\
0&&\textcolor{red}{8}&&\textcolor{red}{69}
&&\textcolor{red}{8}&&0\\
&0&&0&&0&&0&\\
&&0&&0&&0&&\\
&&&0&&0&&&\\
&&&&0&&&&
\end{array}.
\end{equation}

Thus
\[
h^{3,1}(X)=8,
\qquad
h^{2,2}(X)=70,
\qquad
b_4(X)=86.
\]
Since
\[
b_4(Y)=1,
\]
the vanishing cohomology has dimension
\[
b_4(X)_{\mathrm{van}}
=
86-1
=
85,
\]
with Hodge numbers
\[
\big(
h^{3,1}_{\mathrm{van}},
h^{2,2}_{\mathrm{van}},
h^{1,3}_{\mathrm{van}}
\big)
=
(8,69,8).
\]

For a very general member, Proposition \ref{prop:hodgegeneral} applies
because
\[
h^{3,1}(Y)=0<8=h^{3,1}(X).
\]
Hence
\[
\rH^4(X)^{\mathrm{Hdg}}
=
j^*\rH^4(Y,\Q)
=
\Q h^2,
\qquad
\rk\,\rH^4(X)^{\mathrm{Hdg}}=1.
\]

Since $X$ has Fano index two, the same grading argument as above shows
that small quantum multiplication by $c_1(TX)$ acts by zero on the
vanishing cohomology:
\[
c_1(TX)\star_{\mathrm{small}}
\big|_{\rH^4(X)_{\mathrm{van}}}
=
0.
\]
Thus, at the corresponding small points, the full generalized zero
eigenspace contains the $85$-dimensional vanishing cohomology together
with the three-dimensional ambient generalized zero eigenspace.

For a very general member, this gives
\[
\rho_{b,0}=3,
\qquad
\nu_{b,0}=8,
\qquad
\nu'_{b,0}=0,
\qquad
\gamma_{b,0}=2.
\]
Thus the small-point data again produce neither of the atomic
obstructions used above: $\rho_{b,0}=3$ prevents a violation of
$\clubsuit$, while $\gamma_{b,0}=2$ prevents a violation of
$\heartsuit$.

Moreover, this fourfold is not of K3 type because
\[
h^{3,1}(X)=8,
\]
so Theorems \ref{thm:irrat-from-A} and
\ref{theo:rational_Fano_K3}, in their K3-type form, are not applicable.
The numerical criterion of Theorem \ref{thm:BFGMPcriterion} also does not
apply, since
\[
b_4(X)_{\mathrm{van}}
=
85
<
10+12h^{3,1}(X)
=
106.
\]

Independently, Nicaise--Ottem prove in
\cite{Nicaise2022}*{Thm.~7.1} that a very general complete intersection
of a quadric and a cubic in $\PP^6$ is not stably rational, and hence is
irrational.

\end{example}

\medskip

\begin{example}[Intersection of three quadrics in $\PP^{7}$]
\label{ex:three-quadrics}

Let
\[
X
=
\mathscr Z\big(
\PP^{7},
\cO_{\PP^{7}}(2)\oplus
\cO_{\PP^{7}}(2)\oplus
\cO_{\PP^{7}}(2)
\big)
\]
be a smooth complete intersection of three quadrics.  By adjunction,
\[
-K_X=2h,
\qquad
h=c_1\big(\cO_X(1)\big),
\]
so $X$ is a Fano fourfold of index two.

As in the preceding examples, $X$ may be placed in the Lefschetz-pencil
framework by choosing a smooth fivefold
\[
Y
=
\mathscr Z\big(
\PP^{8},
\cO_{\PP^{8}}(2)^{\oplus3}
\big)
\]
containing $X$ as a hyperplane section.  Indeed, after writing
$\PP^7=\{x_8=0\}\subset\PP^8$, the three defining equations of $X$
may be extended to general quadrics in $\PP^8$ with the prescribed
restrictions to $\PP^7$; for a general choice, their common zero locus is
smooth.  Adjunction gives
\[
-K_Y=3H|_Y,
\]
so $Y$ is Fano.

Moreover, the Lefschetz theorem gives
\[
\rH^4(\PP^8,\Q)\xrightarrow{\sim}\rH^4(Y,\Q),
\]
and hence condition \textup{(AC)} holds.  Thus in this example the
projective ambient part coincides with the full monodromy-fixed part.

\medskip

The quantum computation is obtained from the following input.

\begin{inputbox}[Input]
algebra  A7         keep  1
bundle   osum(O(2), O(2), O(2))
\end{inputbox}

\begin{outputbox}[Output]
\noindent
Matrix of $c_{1}(TX)\star$ in the reduced Schubert basis
$1,h,h^2,h^3,h^4$, with quantum variable $y_{1}$:
{\setlength{\arraycolsep}{3.5pt}\small
\[
A(y_1)
=
\begin{pmatrix}
0 & 16y_1 & 0 & 448y_1^2 & 0 \\
2 & 0 & 48y_1 & 0 & 448y_1^2 \\
0 & 2 & 0 & 48y_1 & 0 \\
0 & 0 & 2 & 0 & 16y_1 \\
0 & 0 & 0 & 2 & 0
\end{pmatrix}.
\]
}

\noindent
Grading operator:
\[
\mathrm{diag}\Big(\ell(w)-\frac12\dim X\Big)
=
\mathrm{diag}(-2,-1,0,1,2).
\]

\medskip

\noindent
Characteristic polynomial:
\[
\chi_{A(y_1)}(\lambda)
=
\lambda^3\big(\lambda^2-256y_1\big).
\]

\medskip

\noindent
Eigenvalues at the algebraic specialization $y_1=1$:
\[
\{-16,\ 16,\ 0,\ 0,\ 0\}.
\]
\end{outputbox}

The zero eigenvalue has algebraic multiplicity three and geometric
multiplicity one.  Therefore
\[
m_a(0)=3,
\qquad
m_g(0)=1,
\qquad
m_a(0)-m_g(0)=2,
\]
so its ambient generalized eigenspace is a single Jordan block of size
three.  The eigenvalues $\pm16$ are simple.

\medskip

The Hodge diamond of $X$ is
\[
\begin{array}{ccccccccc}
&&&&1&&&&\\
&&&0&&0&&&\\
&&0&&1&&0&&\\
&0&&0&&0&&0&\\
0&&3&&38&&3&&0\\
&0&&0&&0&&0&\\
&&0&&1&&0&&\\
&&&0&&0&&&\\
&&&&1&&&&
\end{array}.
\]
Since condition \textup{(AC)} holds, the Hodge diamond decomposes into
the monodromy-fixed ambient part, in \textcolor{blue}{blue}, and the
vanishing part, in \textcolor{red}{red}:
\begin{equation}
\setlength{\arraycolsep}{3pt}
\begin{array}{ccccccccc}
&&&& \textcolor{blue}{1} &&&&\\
&&&0&&0&&&\\
&&0&&\textcolor{blue}{1}&&0&&\\
&0&&0&&0&&0&\\
0&&\textcolor{red}{3}&&
\mathclap{\textcolor{blue}{1}+\textcolor{red}{37}}
&&\textcolor{red}{3}&&0\\
&0&&0&&0&&0&\\
&&0&&\textcolor{blue}{1}&&0&&\\
&&&0&&0&&&\\
&&&&\textcolor{blue}{1}&&&&
\end{array}
\quad=\quad
\begin{array}{ccccccccc}
&&&& \textcolor{blue}{1} &&&&\\
&&&0&&0&&&\\
&&0&&\textcolor{blue}{1}&&0&&\\
&0&&0&&0&&0&\\
0&&0&&\textcolor{blue}{1}&&0&&0\\
&0&&0&&0&&0&\\
&&0&&\textcolor{blue}{1}&&0&&\\
&&&0&&0&&&\\
&&&&\textcolor{blue}{1}&&&&
\end{array}
\quad+\quad
\begin{array}{ccccccccc}
&&&&0&&&&\\
&&&0&&0&&&\\
&&0&&0&&0&&\\
&0&&0&&0&&0&\\
0&&\textcolor{red}{3}&&\textcolor{red}{37}
&&\textcolor{red}{3}&&0\\
&0&&0&&0&&0&\\
&&0&&0&&0&&\\
&&&0&&0&&&\\
&&&&0&&&&
\end{array}.
\end{equation}

Thus
\[
h^{3,1}(X)=3,
\qquad
h^{2,2}(X)=38,
\qquad
b_4(X)=44.
\]
Since
\[
b_4(Y)=1,
\]
we obtain
\[
\boxed{
b_4(X)_{\mathrm{van}}
=
b_4(X)-b_4(Y)
=
44-1
=
43.
}
\]
The vanishing Hodge structure has Hodge numbers
\[
\big(
h^{3,1}_{\mathrm{van}},
h^{2,2}_{\mathrm{van}},
h^{1,3}_{\mathrm{van}}
\big)
=
(3,37,3).
\]

For a very general member,
Proposition \ref{prop:hodgegeneral} applies because
\[
h^{3,1}(Y)=0<3=h^{3,1}(X).
\]
Hence
\[
\rH^4(X)^{\mathrm{Hdg}}
=
j^*\rH^4(Y,\Q)
=
\Q h^2,
\qquad
\rk\,\rH^4(X)^{\mathrm{Hdg}}=1.
\]

Since $X$ has Fano index two, the scalar action of small quantum
multiplication by $c_1(TX)$ on the vanishing cohomology is zero:
\[
c_1(TX)\star_{\mathrm{small}}
\big|_{\rH^4(X)_{\mathrm{van}}}
=
0.
\]
Thus the full generalized zero eigenspace at a small point contains the
$43$-dimensional vanishing cohomology together with the
three-dimensional ambient generalized zero eigenspace.

For a very general member, the corresponding small-point invariants are
therefore
\[
\rho_{b,0}=3,
\qquad
\nu_{b,0}=3,
\qquad
\nu'_{b,0}=0,
\qquad
\gamma_{b,0}=2.
\]
Consequently, the small quantum computation yields neither of the coarse
atomic obstructions considered above: the inequality
$\rho_{b,0}\le2$ needed to violate $\clubsuit$ fails, while
$\gamma_{b,0}\le1$ needed to violate $\heartsuit$ also fails.

Moreover,
\[
h^{3,1}(X)=3,
\]
so $X$ is not of K3 type and the specialized criteria of
Theorems \ref{thm:irrat-from-A} and
\ref{theo:rational_Fano_K3} do not apply.  The numerical criterion of
Theorem \ref{thm:BFGMPcriterion} does not apply either, since
\[
b_4(X)_{\mathrm{van}}
=
43
<
10+12h^{3,1}(X)
=
46.
\]

Thus, at the level of the coarse small-point data developed in this
section, the situation is analogous to the quartic and $(2,3)$ examples:
the generalized zero eigenspace is only a small-point candidate and may
split further at a generic point of the Hodge locus.  The displayed
matrix alone does not produce an irrationality obstruction.

\medskip

There is, however, additional structure in this example which is invisible
to the coarse invariants above.  Associated with a very general net of
quadrics is a double cover
\[
\pi: S\longrightarrow\PP^2
\]
branched along the discriminant octic, together with a nontrivial
$2$-torsion Brauer class
\[
\alpha\in\operatorname{Br}(S)[2].
\]
The surface $S$ is a smooth surface of general type.  Donagi--Pantev
study the corresponding twisted category and its integral numerical
$K$-lattice in \cite{Ron_Pantev}.  They prove that
\[
K_{\mathrm{num}}(S,\alpha)
\]
is not isomorphic, with its Euler pairing, to the numerical
$K$-lattice of any smooth projective surface.

In the language of the enhanced theory of Hodge atoms, the twisted
integral Hodge structure associated with $(S,\alpha)$ determines an
enhanced Hodge atom of $X$.  The lattice obstruction of Donagi--Pantev
shows that this enhanced atom cannot occur as the enhanced atom of a
smooth projective surface.  Applying the corresponding enhanced
nonrationality criterion yields a new proof of the irrationality of the
very general intersection of three quadrics in $\PP^7$:

\begin{theorem}[Donagi--Pantev, \cite{Ron_Pantev}]
A very general complete intersection of three quadrics in $\PP^{7}$ is
irrational.
\end{theorem}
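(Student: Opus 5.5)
The plan is to deduce the statement from the enhanced non-rationality criterion of the theory of Hodge atoms, applied to the twisted surface $(S,\alpha)$ attached to the net of quadrics, with the lattice obstruction of Donagi--Pantev \cite{Ron_Pantev} as the decisive input. The coarse invariants of Example~\ref{ex:three-quadrics} cannot do this: they give $\rho_{b,0}=3$ and $\gamma_{b,0}=2$, so we must work with integral (lattice) data. We fix $X$ very general, so that $S\to\PP^2$ is a smooth double cover branched along a smooth octic. We also fix $\alpha\in\operatorname{Br}(S)[2]$, the class of the even Clifford algebra of the net.

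\emph{Step one: isolate a candidate atom.} At a small point the ambient block $A^{\mathrm{alg}}(1)$ has simple eigenvalues $\pm16$, which give two trivial one-dimensional atoms. The remaining piece is the generalized zero eigenspace, which contains the $43$-dimensional vanishing cohomology together with the three-dimensional Jordan block, by Theorem~\ref{theo:scalar_on_van} and Lemma~\ref{lem:fano-index-vanishing}. Passing to a generic point of $\mathcal V$, some of this may split off as further atoms. We expect, however, that the piece carrying $\rH^{3,1}(X)$ remains a single atom $\mathfrak a$ of Hodge-polynomial support $|p-q|\le2$ and dimension $b_4(X)_{\mathrm{van}}+3=46$. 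This dimension should agree with $\operatorname{rk}K_{\mathrm{num}}(S,\alpha)\otimes\Q$, which equals $\chi(S)=\chi(\PP^2)\cdot 2-\chi(\text{octic})=6+42=48$, minus the two trivial pieces; we would check this count as a consistency test.

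\emph{Step two: identify the enhanced atom.} We would use Kuznetsov's semiorthogonal decomposition
\[
\mathsf D^b(X)=\big\langle \mathsf D^b(S,\alpha),\ \cO_X,\ \cO_X(1)\big\rangle,
\]
together with the compatibility of enhanced atoms with Hochschild/Mukai structures, to show that the enhanced atom attached to $\mathfrak a$ carries the integral Hodge structure of $(S,\alpha)$. Concretely, its lattice with Euler pairing should be $K_{\mathrm{num}}(S,\alpha)$, and its Hodge structure should be the twisted Mukai Hodge structure. This is the step we expect to be the main obstacle. It requires matching the A-model eigenspace decomposition with the B-side semiorthogonal decomposition integrally, not just rationally. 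It also requires showing that $\mathfrak a$ does not split further at a generic point of the Hodge locus. For the latter we would first try the $\mathsf{Hod}$-irreducibility of the transcendental part for very general $X$, combined with Theorem~\ref{thm:atomhodgeclass}.

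\emph{Step three: conclude.} Suppose $X$ were rational. The enhanced version of Theorem~\ref{non_rationality_criterion}, via weak factorization and the blow-up formula, would force $\mathfrak a$ to occur among the enhanced atoms of points, curves, or surfaces. The Hochschild support $|p-q|\le2$ together with $h^{3,1}$-type classes rules out points and curves, using Table~\ref{table:atoms-classification}. A surface atom has integral lattice $K_{\mathrm{num}}$ of a smooth projective surface, or of its minimal model, with its Euler pairing. Donagi--Pantev prove that $K_{\mathrm{num}}(S,\alpha)$ is not isometric to any such lattice, which is the required contradiction. We would also need to exclude the possibility that $\mathfrak a$ matches a direct summand of a surface atom. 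For this we would use that surface atoms in the catalogue are indecomposable, so that isomorphism of enhanced atoms forces an isometry of the full lattices.
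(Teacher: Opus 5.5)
Your plan follows the paper's own route. In Example \ref{ex:three-quadrics} the paper also only asserts that the twisted integral Hodge structure of $(S,\alpha)$ determines an enhanced atom of $X$; this is your Step two, which neither you nor the paper actually proves. It then combines the Donagi--Pantev lattice obstruction with the enhanced version of Theorem \ref{non_rationality_criterion}, exactly as in your Step three.

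Three points in your Step one need fixing.

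\emph{The dimension count.} It contains two errors that happen to cancel. A smooth plane octic has genus $21$, so its Euler characteristic is $-40$ and $\chi(S)=6+40=46$, not $48$. The two trivial atoms at $\pm16$ come from the exceptional objects $\cO_X,\cO_X(1)$, which lie \emph{outside} $\mathsf D^b(S,\alpha)$, so nothing should be subtracted from $\chi(S)$. The correct bookkeeping is $\dim\rH^*(X)=48=\chi(S)+2$. It is further confirmed by $p_g(S)=3=h^{3,1}(X)$ and $h^{1,1}(S)=38=h^{2,2}(X)$.

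\emph{The rank of $K_{\mathrm{num}}(S,\alpha)$.} This is the more serious error: $\rk K_{\mathrm{num}}(S,\alpha)$ is not $\chi(S)$. The numerical lattice only sees algebraic classes, so its rank is $\rho(S)+2=3$ for very general $X$. That rank is what must match the three Hodge classes $\rho_{b,0}=3$ of your candidate atom. The dimension $46$ of the atom instead matches the rank of the full twisted Mukai lattice. Run as you wrote it, your consistency check would compare $3$ with $46$ and appear to refute the identification you want.

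\emph{Non-splitting.} Hodge-irreducibility of $W$ together with Theorem \ref{thm:atomhodgeclass} only places $W$ in a single atom carrying at least one Hodge class. It does not prevent some of the three ambient Hodge classes in the generalized zero eigenspace from splitting off at a generic point of $\mathcal V$. You do not actually need non-splitting, though. If any of them splits off, the atom containing $\rH^{3,1}(X)$ has at most two Hodge classes. Every surface atom in Table \ref{table:atoms-classification} with nonzero Hochschild-degree-two part is $\rH^\bullet$ of a projective surface, so it contains $\rH^0$, $\rH^4$ and an ample class. Hence the coarse criterion already gives irrationality in that case. The Donagi--Pantev lattice is needed precisely in the unsplit case, where the atom is the whole $46$-dimensional piece and its algebraic lattice is $K_{\mathrm{num}}(S,\alpha)$.
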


\end{example}

\begin{example}[Fano K3-29, \cite{Bernardara2026}]
    Let
    \[
    X = \mathscr{L}(\mathbb{P}^2\times \mathbb{P}^{4},\mathcal{O}_{\mathbb{P}^{2}\times \mathbb{P}^{4}}(1,1)\oplus \mathcal{O}_{\mathbb{P}^{2}\times \mathbb{P}^{4}}(1,2))
    \]
    a smooth section of $\mathbb{P}^2\times \mathbb{P}^{4}$. By adjunction one has
    \[
    -K_X = h_1 + 2h_2, \qquad h_1 = c_1(\mathcal{O}_{\mathbb{P}^2}(1)) \textrm{ and } h_{2} = c_{1}(\mathcal{O}_{\mathbb{P}^{4}}(1)) 
    \]
    so $X$ is a Fano fourfold with Fano index $1$. As in the examples above, $X$ can be seen as a hyperplane section of a smooth fivefold
    \[
    Y =  \mathscr{L}(\mathbb{P}^2\times \mathbb{P}^{4}, \mathcal{O}_{\mathbb{P}^{2}\times \mathbb{P}^{4}}(1,2))
    \]
    by adjunction the canonical bundle is
    \[
    -K_{Y} = 2h_1 + 3h_2
    \]
    then $Y$ is Fano. From Lefschetz theorem
    \[
    \rH^{4}(\mathbb{P}^{2} \times \mathbb{P}^{4},\Q) \overset{\sim}{\longrightarrow} \rH^{4}(Y,\Q)
    \]
    Thus, condition (AC) holds and the ambient part from $\mathbb{P}^2 \times \mathbb{P}^{4}$ is the full monodromy-fixed block.
    The quantum computation is obtained from the following input
    \begin{inputbox}[Input]
        algebra  A2xA4         keep  1,3
        bundle   osum(O(1,2), O(1,1))
    \end{inputbox}

    \begin{outputbox}[Output]
        \noindent
        Matrix of $c_{1}(TX)\star$ in the reduced Schubert basis, with quantum variable $y_{1},y_3$:
        {\setlength{\arraycolsep}{3.5pt}\small
        \[
        A(y_1,y_3)
        =
        \begin{pmatrix}
            0 & 0 & 4y_3 & 30y_1y_3 & 12y_1y_3 & 30y_1y_3 &
            56y_1^2y_3 & 32y_1^2y_3 & 15y_1^3y_3+60y_1y_3^2 \\[2mm]
            1 & -y_1 & 0 & 4y_3 & 0 & 10y_3 &
            33y_1y_3 & 18y_1y_3 & 12y_1^2y_3 \\[2mm]
            2 & 3y_1 & 0 & 0 & 0 & 4y_3 &
            54y_1y_3 & 24y_1y_3 & 48y_1^2y_3 \\[2mm]
            0 & 2 & 1 & -y_1 & 0 & 0 &
            4y_3 & 0 & 18y_1y_3 \\[2mm]
            0 & 1 & 0 & 0 & -y_1 & 0 &
            10y_3 & 4y_3 & 9y_1y_3 \\[2mm]
            0 & 0 & 2 & 3y_1 & 2y_1 & 0 &
            0 & 0 & 18y_1y_3 \\[2mm]
            0 & 0 & 0 & 2 & 0 & 4 &
            \frac{7}{2}y_1 & 3y_1 & 0 \\[2mm]
            0 & 0 & 0 & 1 & 2 & -\frac{7}{2} &
            -\frac{21}{4}y_1 & -\frac{9}{2}y_1 & 6y_3 \\[2mm]
            0 & 0 & 0 & 0 & 0 & 0 &
            \frac{8}{3} & \frac{4}{3} & 0
        \end{pmatrix}.
        \]
        }

        \noindent
        Grading operator:
        \[
        \mathrm{diag}\Big(\ell(w)-\frac12\dim X\Big)
        =
        \mathrm{diag}(-2,-1,-1,0,0,0,1,1,2).
        \]

        \medskip

        \noindent
        Characteristic polynomial:
    \[
\lambda^9
+ 4\lambda^8 y_1
+ \lambda^7\left(6y_1^2 - 48y_3\right)
+ \lambda^6\left(4y_1^3 - 1616y_1y_3\right)
+ \lambda^5\left(y_1^4 - 10360y_1^2y_3 + 768y_3^2\right)
\]
\[
\qquad
+ \lambda^4\left(-29364y_1^3y_3 - 2048y_1y_3^2\right)
+ \lambda^3\left(-44497y_1^4y_3 - 16000y_1^2y_3^2 - 4096y_3^3\right)
\]
\[
\qquad
+ \lambda^2\left(-37675y_1^5y_3 - 28032y_1^3y_3^2 - 12288y_1y_3^3\right)
\]
\[
\qquad
+ \lambda\left(-16875y_1^6y_3 - 20096y_1^4y_3^2 - 12288y_1^2y_3^3\right)
\]
\[
\qquad
- 3125y_1^7y_3
- 5248y_1^5y_3^2
- 4096y_1^3y_3^3.
\]
        \medskip

        \noindent
        Eigenvalues at the algebraic specialization $y_1=y_3=1$:
        \[
        \{ 13.3572, \, -5.40029 \pm 8.20735\mathrm{i}, \, -0.926637 \pm 2.19525\mathrm{i}, \, -1.70334, \, -1, \, -1, \, -1 \}
        \]
    \end{outputbox}
    The $-1$ eigenvalue has algebraic multiplicity three and geometric
    multiplicity two. Therefore
    \[
    m_a(-1)=3,
    \qquad
    m_g(-1)=2,
    \qquad
    m_a(-1)-m_g(-1)=1,
    \]
    so its ambient generalized eigenspace are one Jordan block of size
    two and a Jordan block of size one.  The other eigenvalues are simple.
    The Hodge diamond of $X$ is
    \[
    \begin{array}{ccccccccc}
    &&&&1&&&&\\
    &&&0&&0&&&\\
    &&0&&2&&0&&\\
    &0&&0&&0&&0&\\
    0&&1&&22&&1&&0\\
    &0&&0&&0&&0&\\
    &&0&&2&&0&&\\
    &&&0&&0&&&\\   
    &&&&1&&&&
    \end{array}.
    \]
    Since condition \textup{(AC)} holds, the Hodge diamond decomposes into
    the monodromy-fixed ambient part, in \textcolor{blue}{blue}, and the
    vanishing part, in \textcolor{red}{red}:
    \begin{equation}
    \setlength{\arraycolsep}{3pt}
    \begin{array}{ccccccccc}
    &&&& \textcolor{blue}{1} &&&&\\
    &&&0&&0&&&\\
    &&0&&\textcolor{blue}{2}&&0&&\\
    &0&&0&&0&&0&\\
    0&&\textcolor{red}{1}&&
    \mathclap{\textcolor{blue}{3}+\textcolor{red}{19}}
    &&\textcolor{red}{1}&&0\\
    &0&&0&&0&&0&\\
    &&0&&\textcolor{blue}{2}&&0&&\\
    &&&0&&0&&&\\
    &&&&\textcolor{blue}{1}&&&&
    \end{array}
    \quad=\quad
    \begin{array}{ccccccccc}
    &&&& \textcolor{blue}{1} &&&&\\
    &&&0&&0&&&\\
    &&0&&\textcolor{blue}{2}&&0&&\\
    &0&&0&&0&&0&\\
    0&&0&&\textcolor{blue}{3}&&0&&0\\
    &0&&0&&0&&0&\\
    &&0&&\textcolor{blue}{2}&&0&&\\
    &&&0&&0&&&\\
    &&&&\textcolor{blue}{1}&&&&
    \end{array}
    \quad+\quad
    \begin{array}{ccccccccc}
    &&&&0&&&&\\
    &&&0&&0&&&\\
    &&0&&0&&0&&\\
    &0&&0&&0&&0&\\
    0&&\textcolor{red}{1}&&\textcolor{red}{19}
    &&\textcolor{red}{1}&&0\\
    &0&&0&&0&&0&\\
    &&0&&0&&0&&\\
    &&&0&&0&&&\\
    &&&&0&&&&
    \end{array}.
    \end{equation}
    It is known that $X$ is rational,~\cite{Bernardara2026}*{K3-29}.
    
    \emph{The structural criterion.} Knowing that the Jordan defect is one for every eigenvalue, from Theorem~\ref{theo:rational_Fano_K3} we have that every weak factorization
    \[
    X\dashrightarrow\PP^{4}
    \]
    contain a smooth surface center whose minimal model is a K3
    surface. This coincides with the realization of $X$ as the blow up along a K3 surface of degree $8$ blown up at one point.
\end{example}

\bibliographystyle{alpha}
\bibliography{min.bib}

%%%%%%%%%%%%%%%%%%%%%%%%%%%%%%%%%%%%%%%%%%%%%%%%%%%%%%%%%%%%%%%%%%%%%%%%%%%%%%%%%%%%%%%%%%%%%%%%%%%%%%%%%%%%%%%%%%%%%%%%%%%%%%%

%%%%%%%%%%%%%%%%%%%%%%%APENDICE%%%%%%%%%%%%%%%%%%%%%%

\appendix

\section{Homogeneous bundles over Grassmannians and flag varieties}
\label{app:bundles}
 
The localization formula of Theorem \ref{thm:localizationCI} takes as input,
for the bundle $\cE$ being cut, only its \emph{weights}: the characters by
which $\mathbb T$ acts on the fibers over the fixed points.  The purpose of
this appendix is to record those weights, together with the first Chern
classes needed to apply Theorem \ref{thm:ci}, for the bundles used in this
paper.  Everything follows from one computation, that of the tautological
bundle, together with the elementary behaviour of weights under linear
algebra operations.
 
\subsection*{Conventions}
 
Let $Y=G/P$ and let $\cE=G\times_{P}V$ be the homogeneous bundle associated
with a $P$-module $V$, with the convention
$(g,v)\sim(gp,f(p)^{-1}v)$ of Section~\ref{sec:picard} and the action
$t\cdot[g,v]=[tg,v]$.  We write $\mathrm{Wt}(\cE)$ for the multiset of
$\mathbb T$-weights of $V$.  By Proposition \ref{prop:linebundleweights}
these are the fiber weights of $\cE$ at the base point, and the fiber
weights at $p_{w}$ are the $w\nu$ with $\nu\in\mathrm{Wt}(\cE)$.  Two
consequences will be used constantly.
 
\begin{itemize}
\item A homogeneous line bundle with fiber weight $\nu$ at the base point is
      $\cO_{Y}(-\nu)$; if $\nu=\sum_{j}c_{j}\omega_{i_{j}}$ then, by
      Proposition \ref{prop:picard},
      $c_{1}=-\sum_{j}c_{j}\sigma_{s_{\alpha_{i_{j}}}}$.
\item In the notation of Proposition \ref{prop:twist}, where the fiber
      weights at $p_{w}$ were written $-w\mu_{i}$, one has
      $\mu_{i}=-\nu_{i}$ with $\nu_{i}\in\mathrm{Wt}(\cE)$. 
\end{itemize}
 
\subsection*{The tautological bundle}
 
Let $\Gr(k,n)=\SL_{n}(\C)/P_{k}$, and keep the notation of
Example \ref{ExRoots}: $e_{1},\dots,e_{n}$ is the canonical basis of
$\C^{n}$ and $\varepsilon_{i}$ is the character
$\varepsilon_{i}(\mathrm{diag}(a_{1},\dots,a_{n}))=a_{i}$, subject to
$\sum_{i}\varepsilon_{i}=0$.  Put $V_{k}=\mathrm{span}\{e_{1},\dots,e_{k}\}$,
whose stabilizer is $P_{k}$, so that $V_{k}$ is a $P_{k}$-module and
\[
  \mathcal U=\SL_{n}(\C)\times_{P_{k}}V_{k}
\]
is the \emph{tautological subbundle}, whose fiber over a point of the
Grassmannian is the corresponding $k$-plane.
 
For $t=\mathrm{diag}(a_{1},\dots,a_{n})\in\mathbb T$ the equivalence
$(\mathrm{\Id},v)\sim(t,f(t)^{-1}v)$ gives $(t,v)\sim(\mathrm{\Id},f(t)v)$,
whence
\[
  t\cdot[\mathrm{\Id},e_{i}]=[t,e_{i}]=[\mathrm{\Id},f(t)e_{i}]
  =\varepsilon_{i}(t)\,[\mathrm{\Id},e_{i}],
\]
so that
\[
  \mathrm{Wt}(\mathcal U)=\{\varepsilon_{1},\dots,\varepsilon_{k}\}.
\]
This is what one expects: the fiber of $\mathcal U$ at the base point
\emph{is} $V_{k}$, on which the torus acts with weights
$\varepsilon_{1},\dots,\varepsilon_{k}$.
 
Consequently $\det\mathcal U$ has fiber weight
$\varepsilon_{1}+\dots+\varepsilon_{k}=\omega_{k}$ at the base point, so by
the dictionary above $\det\mathcal U=\cO_{Y}(-\omega_{k})=\cO(-1)$ and
\[
  c_{1}(\mathcal U)=-\sigma_{s_{\alpha_{k}}},
  \qquad
  c_{1}(\mathcal U^{\vee})=c_{1}(\mathcal Q)=\sigma_{s_{\alpha_{k}}} ,
\]
the last because $\mathcal Q=(\C^{n}\otimes\cO_{Y})/\mathcal U$ has
$c_{1}(\mathcal Q)=-c_{1}(\mathcal U)$.
 
\subsection*{Operations}
 
Let $E$ and $F$ be homogeneous bundles of ranks $r$ and $s$ with weights
$\nu_{1},\dots,\nu_{r}$ and $\eta_{1},\dots,\eta_{s}$, and let
$\varphi\colon Z\to Y$ be equivariant.  Weights and first Chern classes
behave as follows.
 
\[
\begin{array}{c|c|c}
\text{Operation} & \text{Weights} & c_{1}
\\ \hline
E\otimes F & \nu_{i}+\eta_{j} & s\,c_{1}(E)+r\,c_{1}(F)
\\[2mm]
E^{\vee} & -\nu_{i} & -c_{1}(E)
\\[2mm]
\Lambda^{m}E
& \nu_{i_{1}}+\cdots+\nu_{i_{m}},\quad i_{1}<\cdots<i_{m}
& \displaystyle\binom{r-1}{m-1}c_{1}(E)
\\[3mm]
\operatorname{Sym}^{m}E
& \displaystyle a_{1}\nu_{1}+\cdots+a_{r}\nu_{r},
  \quad a_{i}\ge0,\ \textstyle\sum_{i}a_{i}=m
& \displaystyle\binom{r+m-1}{m-1}c_{1}(E)
\\[4mm]
E/S & \operatorname{Wt}(E)\smallsetminus\operatorname{Wt}(S) & c_{1}(E)-c_{1}(S)
\\[2mm]
\varphi^{*}E & \operatorname{Wt}(E)\ \text{at}\ \varphi(z),\ z\in Z^{\mathbb T}
& \varphi^{*}c_{1}(E)
\end{array}
\]
 
Here $S\subset E$ is a homogeneous subbundle, and the difference of weights
is a difference of multisets.  The Chern class entries follow from the
weight entries by summing, since $c_{1}$ of a homogeneous bundle is $c_{1}$
of its determinant.
 
\subsection*{The bundles used in this paper}
 
On $\Gr(k,n)$ we use $\mathcal U^{\vee}$, the universal quotient
$\mathcal Q$, and their exterior and symmetric powers.  All of these are
generated by global sections: $\mathcal U^{\vee}$ and $\mathcal Q$ are, and taking exterior and symmetric
powers of bundles generated by global sections preserves this property.
 
On a partial flag variety $\mathrm{Fl}(k_{1},\dots,k_{\ell},n)$ we use, for
the projections $\pi_{i}\colon\mathrm{Fl}(k_{1},\dots,k_{\ell},n)\to\Gr(k_{i},n)$,
the pullbacks $\mathcal U^{\vee}_{k_{i}}=\pi_{i}^{*}\mathcal U^{\vee}$ and
$\mathcal Q_{k_{i}}=\pi_{i}^{*}\mathcal Q$. All are also generated by global sections.

\begin{table}[H]
\centering
\renewcommand{\arraystretch}{1.5}
\begin{tabular}{c|c|c}
\hline
Bundle & Fiber weights at the base point & $c_{1}$ \\
\hline
$\mathcal{U}$
& $\varepsilon_{1},\dots,\varepsilon_{k}$
& $-\sigma_{s_{\alpha_{k}}}$
\\ \hline
$\mathcal{U}^{\vee}$
& $-\varepsilon_{1},\dots,-\varepsilon_{k}$
& $\sigma_{s_{\alpha_{k}}}$
\\ \hline
$\mathcal{Q}$
& $\varepsilon_{k+1},\dots,\varepsilon_{n}$
& $\sigma_{s_{\alpha_{k}}}$
\\ \hline
$\Lambda^{r}\mathcal{U}^{\vee}$
& $-\big(\varepsilon_{a_{1}}+\cdots+\varepsilon_{a_{r}}\big),
   \quad 1\le a_{1}<\cdots<a_{r}\le k$
& $\displaystyle\binom{k-1}{r-1}\sigma_{s_{\alpha_{k}}}$
\\ \hline
$\operatorname{Sym}^{r}\mathcal{U}^{\vee}$
& $-\big(m_{1}\varepsilon_{1}+\cdots+m_{k}\varepsilon_{k}\big),
   \quad m_{a}\ge0,\ \sum_{a}m_{a}=r$
& $\displaystyle\binom{k+r-1}{r-1}\sigma_{s_{\alpha_{k}}}$
\\ \hline
$\Lambda^{r}\mathcal{Q}$
& $\varepsilon_{j_{1}}+\cdots+\varepsilon_{j_{r}},
   \quad k+1\le j_{1}<\cdots<j_{r}\le n$
& $\displaystyle\binom{n-k-1}{r-1}\sigma_{s_{\alpha_{k}}}$
\\ \hline
$\operatorname{Sym}^{r}\mathcal{Q}$
& $m_{k+1}\varepsilon_{k+1}+\cdots+m_{n}\varepsilon_{n},
   \quad m_{a}\ge0,\ \sum_{a}m_{a}=r$
& $\displaystyle\binom{n-k+r-1}{r-1}\sigma_{s_{\alpha_{k}}}$
\\ \hline
$\mathcal{U}^{\vee}_{k_{i}}$
& $-\varepsilon_{1},\dots,-\varepsilon_{k_{i}}$
& $\sigma_{s_{\alpha_{k_{i}}}}$
\\ \hline
$\mathcal{Q}_{k_{i}}$
& $\varepsilon_{k_{i}+1},\dots,\varepsilon_{n}$
& $\sigma_{s_{\alpha_{k_{i}}}}$

\\ \hline
\end{tabular}
\caption{Weights and first Chern classes.  The fiber weights at $p_{w}$ are
obtained by applying $w$ to those listed. The weights $\mu_{i}$ of
Proposition \ref{prop:twist} are their negatives.}
\end{table}
 
As a check, $\det\mathcal Q$ has weight
$\varepsilon_{k+1}+\cdots+\varepsilon_{n}=-\omega_{k}$, so
$\det\mathcal Q=\cO_{Y}(\omega_{k})=\cO(1)$, in agreement with
$c_{1}(\mathcal Q)=-c_{1}(\mathcal U)$.

\medskip

\section{Two sentences on Bruhat order}
\label{app:bruhat}

Bruhat order enters this paper as follows: the restriction of an equivariant Schubert class to a fixed point vanishes unless the two
indices are comparable.  We collect here the definition and the few properties we use.  Nothing is proved, and the exposition is far from complete. Check \cite{Humphreys1990}*{Ch.~5} for a complementary exposition.

\begin{definition}\label{def:bruhat}
Let $W$ be the Weyl group with simple reflections $\Pi$.  For $u,v\in W$ we write $u\le v$ if some reduced expression $v=s_{v_{1}}\cdots s_{v_{\ell}}$
admits a subword $s_{v_{i_{1}}}\cdots s_{v_{i_{k}}}$, with $i_{1}<\dots<i_{k}$, which is a reduced expression for $u$.  This is the \emph{Bruhat order} on $W$.
\end{definition}

\begin{proposition}\label{prop:subword}
The relation of Definition \ref{def:bruhat} is a partial order, and it does
not depend on the chosen reduced expression for $v$: if $u\le v$ holds for
one reduced expression of $v$, it holds for every one.  Moreover $u\le v$
implies $\ell(u)\le\ell(v)$, with equality only for $u=v$. The minimum is
$\Id$ and the maximum is the longest element $w_{0}$.
\end{proposition}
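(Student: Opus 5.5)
The plan is to reduce everything to an intrinsic characterization of Bruhat order and then read off the remaining assertions from the subword definition. Let $T=\{ws_\alpha w^{-1}\mid w\in W,\ \alpha\in\Pi\}$ be the set of reflections. Say $u\preceq v$ if there is a chain $u=u_0\to u_1\to\dots\to u_m=v$ with $u_{i+1}=u_it_i$, $t_i\in T$, and $\ell(u_{i+1})>\ell(u_i)$. This relation is visibly independent of any choice of reduced expressions. The heart of the proof is to show, for any fixed reduced word $v=s_{v_1}\cdots s_{v_\ell}$, that $u\preceq v$ if and only if $u$ has a reduced expression that is a subword of it. Granting this, independence of the reduced expression is immediate.

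For the direction chain $\Rightarrow$ subword, I would proceed one step of the chain at a time using the strong exchange condition. If $w'=wt$ with $\ell(w')>\ell(w)$ and $w'$ is written as a reduced word, strong exchange says $w=w't$ is obtained by deleting one letter. If that word is not reduced, the deletion condition lets us delete two further letters to reach a reduced subword. Iterating down the chain from $v$ produces a reduced subword expressing $u$.

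For the direction subword $\Rightarrow$ chain, I would induct on $\ell(v)$. Write $v=v's$ with $s=s_{v_\ell}$. If the subword for $u$ avoids the last letter, then $u\preceq v'$ by induction, and $v'\to v$ is a chain step. Otherwise $u=u's$ with $u'$ a reduced subword of $v'$, so $u'\preceq v'$ by induction. I would then apply the \emph{lifting property}: multiplying a chain on the right by $s$ either preserves it or collapses a step, which gives $u's\preceq v's$. I expect this lifting step to be the main obstacle. I would prove it by conjugating each chain reflection, $t\mapsto sts$, and comparing lengths case by case according to whether $\ell(xs)=\ell(x)\pm1$. This is the only genuinely combinatorial point; everything else is bookkeeping.

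The remaining claims follow quickly.
\begin{itemize}
\item \emph{Reflexivity} holds by taking the full word.
\item \emph{Transitivity} holds because chains concatenate.
\item \emph{Length:} a reduced subword for $u$ has $\ell(u)\le\ell(v)$ letters, and equality forces the subword to be the whole word, so $u=v$.
\item \emph{Antisymmetry} follows, since $u\le v\le u$ forces $\ell(u)=\ell(v)$ and hence $u=v$.
\item \emph{Minimum:} $\Id$ is the empty subword of every reduced word.
\item \emph{Maximum:} since $\ell(u^{-1}w_0)=\ell(w_0)-\ell(u)$ (because $w_0$ sends $R^+$ to $R^-$), concatenating reduced words of $u$ and $u^{-1}w_0$ gives a reduced word for $w_0$ with a reduced word of $u$ as a prefix, so $u\le w_0$.
\end{itemize}
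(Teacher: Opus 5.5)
The paper gives no proof of this proposition: the appendix says nothing there is proved and refers to Humphreys, Ch.~5. Your outline is exactly the standard argument from that source. You define Bruhat order by reflection chains, use strong exchange plus deletion for chain $\Rightarrow$ reduced subword, and use induction on $\ell(v)$ with a lifting lemma for the converse. Reflexivity, transitivity, the length statement, antisymmetry and the extremal elements then follow as you say; your $w_0$ argument via $\ell(u^{-1}w_0)=\ell(w_0)-\ell(u)$ is fine.

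The one step that does not close as you describe it is the lifting lemma. Let $x\to xt$ be a chain step and $s$ a simple reflection. Comparing lengths shows that the translated step $xs\to xts$ is again length-increasing in every case but one: $t\neq s$, $\ell(xs)=\ell(x)+1$, $\ell(xt)=\ell(x)+1$, $\ell(xts)=\ell(x)$. This configuration satisfies all length and parity constraints, so lengths alone cannot exclude it, and in it the translated step would go down.

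To rule it out you need strong exchange once more. Take $x=s_1\cdots s_r$ reduced, so $s_1\cdots s_rs$ is a reduced word for $xs$. If $\ell\big(xs\cdot(sts)\big)<\ell(xs)$, then $xts$ equals that word with one letter deleted. Deleting the final $s$ gives $xts=x$, i.e.\ $t=s$. Deleting some $s_i$ gives $xt=s_1\cdots\widehat{s_i}\cdots s_r$, of length less than $\ell(x)$. Both contradict the hypotheses.

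With this in place, the cleanest way to state the lifting step is Humphreys' Proposition~5.9: if $x\preceq y$ and $s$ is simple, then $xs\preceq y$ or $xs\preceq ys$. For a single step $y=xt$ this reads as follows. If $\ell(xs)<\ell(x)$, use $xs\to x\to y$. If $t=s$, then $xs=y$. Otherwise $xs\to ys$ by the argument above. Then induct along the chain.

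Your ``collapse'' case corresponds to leaving the translated chain at $x_is=x_{i+1}$ and following the original chain up to $v'$. Since $v'\to v's$, either alternative gives $u's\preceq v's$, which is what your induction needs.
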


\begin{remark}[Geometric meaning]\label{rmk:bruhatgeom}
With the conventions of Definition \ref{def:schubert}, where
$\Omega_{w}=\overline{B^{-}wP/P}$ has codimension $\ell(w)$, one has
\[
  u\le v \iff \Omega_{u}\supseteq\Omega_{v},
\]
so the Bruhat order is the containment order of Schubert varieties, reversed
relative to the order of their codimensions. In particular, the fixed point
$p_{v}$ lies in $\Omega_{u}$ when $u\le v$.
\end{remark}

\begin{corollary}\label{cor:billeyvanishing}
$\sigma^{u}\big|_{p_{v}}=0$ unless $u\le v$.
\end{corollary}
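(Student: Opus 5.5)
The statement to be proved is Corollary~\ref{cor:billeyvanishing}: $\sigma^{u}|_{p_{v}}=0$ unless $u\le v$. I would derive it directly from Billey's formula, Theorem~\ref{thm:billey}, combined with the subword characterization of Bruhat order in Definition~\ref{def:bruhat} and Proposition~\ref{prop:subword}.

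Fix a reduced expression $v=s_{v_1}\cdots s_{v_\ell}$. By Theorem~\ref{thm:billey}, $\sigma^u|_{p_v}$ is a sum indexed by increasing sequences $i_1<\dots<i_{\ell(u)}$ with $s_{v_{i_1}}\cdots s_{v_{i_{\ell(u)}}}=u$. Each such index set gives a subword of the chosen reduced expression for $v$. This subword has exactly $\ell(u)$ letters and its product is $u$, so it is a reduced expression for $u$, since a word of length $\ell(u)$ representing $u$ has minimal length by the definition of $\ell$.

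Hence, if the sum is nonempty, $u$ admits a reduced expression that is a subword of a reduced expression of $v$, which is exactly $u\le v$ in Definition~\ref{def:bruhat}. Contrapositively, if $u\not\le v$ the index set is empty and the restriction is the empty sum, zero. Proposition~\ref{prop:subword} guarantees that this conclusion does not depend on the reduced expression used in Billey's formula.

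For the parabolic setting, where $u,v\in W_Y$ and $p_v=vP/P$, I would apply Billey's formula to the minimal representative $v\in W_Y$ itself, as the paper already does in Theorem~\ref{thm:localization}. The argument above then goes through verbatim. The only point needing care is that $\sigma^u$ on $G/P$ pulls back to the class with the same index on $G/B$, so the restriction to $p_v$ agrees with the $G/B$ computation.

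I expect this compatibility to be the only potential obstacle. I would handle it by citing the standard pullback $G/B\to G/P$ of Schubert classes for $u\in W_Y$, rather than reproving it.
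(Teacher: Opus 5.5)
Your argument is correct and follows the route the paper intends. The paper gives no separate proof; as the label suggests, it treats the corollary as the ``in particular'' clause of Theorem~\ref{thm:billey}, which follows at once from the subword definition of Bruhat order in Definition~\ref{def:bruhat}, exactly as you argue. Your reduction of the parabolic case to $G/B$, via pulling back the Schubert class for $u\in W_Y$, is left implicit in the paper, and you handle it correctly.
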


\subsection*{The parabolic case}

Our Schubert classes are indexed by $W_{Y}$, the minimal length
representatives of $W/W_{P}$, so we need the induced order there.

\begin{proposition}\label{prop:bruhatparabolic}
Let $\overline{w}\in W_{Y}$ denote the minimal representative of $wW_{P}$.
Then
\begin{defenum}
\item $u\le v$ in $W$ implies $\overline{u}\le\overline{v}$ in $W_Y$. The projection $W\to W_{Y}$ is order preserving;
\item for $u,v\in W_{Y}$, the order induced on $W_{Y}$ from $W$ coincides
      with the Bruhat order of the quotient $W/W_{P}$, namely
      $uW_{P}\le vW_{P}$ if and only if $u'\le v'$ for some, equivalently
      for all minimal representatives.
\end{defenum}
\end{proposition}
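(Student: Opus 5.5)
The statement immediately above is Proposition~\ref{prop:bruhatparabolic}. I would deduce both parts from the subword characterization of Definition~\ref{def:bruhat} together with the factorization of Proposition~\ref{prop:minreps}.

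For (a), let $u\le v$ in $W$. Write $v=\overline v\,v_P$ with $v_P\in W_P$ and $\ell(v)=\ell(\overline v)+\ell(v_P)$. Concatenate a reduced word for $\overline v$ with one for $v_P$; the result is a reduced word for $v$. By Proposition~\ref{prop:subword}, $u$ is a reduced subword of it, say $u=u_1u_2$, where $u_1$ is a subword of the $\overline v$-part and $u_2\in W_P$ is a subword of the $v_P$-part. Then $u_1\le\overline v$ and $uW_P=u_1W_P$.

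It remains to show that $\overline{u_1}\le u_1$, and this is the main obstacle. I would argue by induction on $\ell(u_1)$ using the lifting property of Bruhat order, which is not stated in the paper but is standard \cite{Humphreys1990}. Take $s\in\Pi_P$ with $\ell(u_1s)<\ell(u_1)$. By the exchange condition, $u_1s$ is obtained from a reduced word of $u_1$ by deleting one letter, so $u_1s\le u_1$. Repeating this until a minimal representative is reached gives $\overline{u_1}\le u_1\le\overline v$. Transitivity then yields $\overline u=\overline{u_1}\le\overline v$.

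For (b), the quotient order is defined by declaring $uW_P\le vW_P$ when some representatives satisfy $u'\le v'$. If the minimal representatives are comparable, then in particular some representatives are. Conversely, if $u'\le v'$ for some representatives, part (a) gives $\overline{u'}\le\overline{v'}$, that is, the minimal representatives are comparable. This proves the "some, equivalently all minimal representatives" clause.
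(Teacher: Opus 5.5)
The paper does not prove this proposition: Appendix~\ref{app:bruhat} says explicitly that nothing there is proved and refers to Humphreys, so there is no argument of the authors to compare with. Your proof is correct and is the standard one, combining the subword property (Proposition~\ref{prop:subword}) with the length-additive factorization of Proposition~\ref{prop:minreps}. The paper never defines the quotient order. You take it to mean ``comparable for some representatives'', which is what the ``namely'' clause indicates. With that reading, part (b) follows from (a) exactly as you say.

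The step you flag as the main obstacle follows at once from the device you already used for $v$. By Proposition~\ref{prop:minreps}, $u_1=\overline{u_1}\,x$ with $x\in W_P$ and $\ell(u_1)=\ell(\overline{u_1})+\ell(x)$. Concatenating reduced words for $\overline{u_1}$ and $x$ therefore gives a reduced word for $u_1$ whose prefix is a reduced word for $\overline{u_1}$. So $\overline{u_1}\le u_1$ directly from Definition~\ref{def:bruhat}, with no exchange condition and no induction.

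Your descent induction also works, with two corrections:
\begin{itemize}
\item What it uses is the exchange (deletion) condition, not the lifting property.
\item It tacitly needs that an element of a coset with no right descent in $\Pi_P$ is the minimal representative. This is true and follows from Proposition~\ref{prop:minreps}: if $w'=\overline{w'}\,x$ with $1\neq x\in W_P$, and $s\in\Pi_P$ is a right descent of $x$, then $\ell(w's)=\ell(\overline{w'})+\ell(xs)<\ell(w')$.
\end{itemize}

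Finally, to justify $u_2\in W_P$, choose the reduced word for $v_P$ with all letters in $\Pi_P$. Such a word exists by the deletion condition applied to any expression of $v_P$ in the generators of $W_P$.
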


%===============================================================================================================================================================================================================================================================================

\medskip

\section{On the software}
\label{app:software}

In this appendix, we give precise statements describing what the software computes and explain why the assumptions of Theorem \ref{thm:ci} provide an ideal setting for its application. We then discuss how the software can be used when these assumptions are not satisfied. Finally, we explain how to provide the necessary input data to the code.

\medskip

In order to compute Gromov--Witten invariants for a flag variety $F$,  the software has implemented the localization formula proved in Theorem~\ref{thm:localization}. The inputs for the calculation are the cohomology classes, given in the Schubert cell description presented in Section~\ref{sec:flags}, and an effective class $\beta \in \rH_{2}(F)$, given in terms of the coroots as in Proposition~\ref{prop:h2}.

For a complete intersection $X:= \mathscr{Z}(F,\mathcal{E})$, the formula implemented is the one proved in Theorem~\ref{thm:localizationCI}. All bundles that can be used are presented in Appendix~\ref{app:bundles} with their respective weights.

The inputs required to compute the sum of the Gromov--Witten invariant are the cohomology classes coming from the ambient $\rH_{F}(X)$ and given in the Schubert cell description, and a homology class $\beta \in \rH_2(F)$, in the coroots. 

In Section ~\ref{def:ambient-anticanonical-operator}, we have explained that the software computes the matrix
\[
A(q)\sigma_i
=
\sum_{\beta\in\mathsf{NE}(F)}
\sum_{j,k}
\left(
\sum_{\substack{
\beta'\in\mathsf{NE}(X)\\
\iota_*\beta'=\beta}}
\big\langle
c_1(TX),\sigma_i,\sigma_j
\big\rangle^X_{0,\beta'}
\right)
q^\beta g^{jk}\sigma_k.
\]
where
\[
g_{jk}
=
\sum_{\substack{\beta\in H_2(X,\mathbb Z)\\ \iota_*\beta=0}}
\left\langle \sigma_j,\sigma_k,1\right\rangle_{0,\beta}
\]
and $g^{jk}$ denotes the inverse matrix components to $(g_{jk})$. Notice that the Novikov variables are indexed in the second homology of the flag variety, not of the complete intersection.

Under the assumptions of Theorem \ref{thm:ci}, we have an isomorphism $\rH^2 (F,\mathbb{Q}) \cong \rH^2 (X,\mathbb{Q})$. Consequently, an isomorphism $\rH_2 (F,\mathbb{Z}) \cong \rH_2 (X,\mathbb{Z})$. Thus, Theorem \ref{thm:localizationCI} computes actual Gromov--Witten invariants and not only sums of such. Therefore, the matrix $A(q)$ defined above is the matrix of $c_1(TX) \star_{\mathrm{small}}$.

% First it computes the matrix of the Poincaré pairing. The entries $g_{i,j}$ are obtained using the Gromov--Witten invariant calculations.
%\[
%g_{i,j} = \int_{X} \sigma_i \cup \sigma_{j} = \left\langle \sigma_i , \sigma_j , 1 \right\rangle_{3,0}
%\]
%where $\sigma_i,\sigma_j$ are the Schubert cells coming from the ambient $F$. Then, it computes the matrix $M$ with entries given by
%\[
%M_{i,j} = \sum_{\beta}\left( \int_{\beta} c_1(TX) \right) \left\langle \sigma_i ,\sigma_j  \right\rangle_{2,\beta} \cdot q^{\beta}.
%\]
%Finally, it computes the operator $A(q) = g \cdot M$.

When the map $\iota_*:\rH_2 (X,\mathbb{Z}) \rightarrow \rH_2 (F,\mathbb{Z})$ fails to be an isomorphism, one should keep track of its behavior to understand the differences of the matrices $A(q)$ and $c_1 (X)\star_{\mathrm{small}}$.

\medskip
%Under the assumption of Theorem \ref{thm:ci} these formulas compute exactly the matrix of $c_1 (X) \star_{small}$, if not, one should keep track of the behavior of the map $\iota_* H_2(X,\mathbb{Z}) \rightarrow H_2 (F, \mathbb{Z})$ to be able to make the necessary changes on the software output, to obtain the matrix of $c_1 (X) \star_{small}$.

Lastly, we briefly describe how the software can be used.

\medskip
 
 The inputs required for the computation are the following: the algebra, which specifies the group $G$; the simple roots kept, which determine the parabolic subgroup $P$; and the expression of the bundle, which specifies the vector bundle.

To specify a flag variety $F=G/P$, the code requires two pieces of information. First, one must specify the type of the semisimple Lie algebra $\mathfrak g$ of $G$. This is given by its Dynkin diagram:
\[
A_n,\quad B_n,\quad C_n,\quad D_n,\quad E_6,\quad E_7,\quad E_8,\quad F_4,\quad G_2.
\]
Thus, for example, the input ``A2'' corresponds to the root system of type $A_2$, while ``G2'' corresponds to the root system of type $G_2$. For a semisimple algebra with more than one simple factor, the simple components are separated by ``x''. Thus, ``A2xA2'' represents the product $\SL_3(\mathbb{C}) \times \SL_3 (\mathbb{C})$.

Second, we must specify the parabolic subgroup $P$. Let
\[
\Pi={\alpha_1,\ldots,\alpha_n}
\]
denote the set of simple roots of $G$, with the numbering determined by the chosen Dynkin diagram convention. A standard parabolic subgroup $P$ is determined by a subset
\[
\Pi_P\subseteq \Pi.
\]
The code records the complementary set of simple roots, that is,
\[
\Pi\setminus\Pi_P={\alpha_{i_1},\ldots,\alpha_{i_k}},
\]
by specifying the corresponding indices $(i_1,\ldots,i_k)$ in the input of the kept simple roots. Thus, one enters
\[
i_1,\ldots,i_k
\]
in the ``kept simple roots'' field.

Lastly, we need to specify the vector bundle under consideration. We do so in the ``bundle K'' field. For sums of line bundles a reduced expression can be used. The line bundle
\[
\mathcal{O}(a_1,\ldots,a_k)
\]
is encoded by
\[
a_1,\ldots,a_k.
\]
Different summands are separated by a semicolon ``;''. Thus, the vector bundle
\[
\mathcal{O}(a_1,\ldots,a_k)\oplus
\mathcal{O}(b_1,\ldots,b_k)
\]
is entered as
\[
a_1,\ldots,a_k;b_1,\ldots,b_k.
\]

Alternatively, one can write more general vector bundles on the algebra $A_n$ and powers of it. In addition to line bundles, the software supports the tautological bundle $\mathcal{U}$ and the quotient bundle $\mathcal{Q}$ on Grassmannians as well as their pullbacks to the space of flags $\mathrm{Fl}( m_1,\ldots, m_k )$ or products of Grassmannians (in particular, products of projective spaces). As well, it supports operations with these bundles.\\
The table below summarizes the notation used by the software and illustrates how the corresponding bundle operations should be encoded.

\begin{table}[H]
\centering
\begin{tabular}{c|c}
\hline
\textbf{Bundle} & \textbf{Code notation} \\
\hline
$\mathcal{O}(a_1 ,\ldots, a_k)$
  & \texttt{O$(a_1,\ldots, a_k$)} \\[2mm]

$\mathcal{U}$ with regards to the simple root $\alpha_{node}$
  & \texttt{S(node)} \\[2mm]

$\mathcal{Q}$ with regards to the simple root $\alpha_{node}$
  & \texttt{Q(node)} \\[2mm]

Dual bundle
  & \texttt{dual(E)} \\[2mm]

Direct sum
  & \texttt{osum(E$_1$,E$_2$,\ldots)} \\[2mm]

Tensor product
  & \texttt{tensor(E$_1$,E$_2$)} \\[2mm]

Symmetric power
  & \texttt{sym(p,E)} \\[2mm]

Exterior power
  & \texttt{wedge(p,E)} \\[2mm]

 Quotients
  & \texttt{quot(E,F)} \\
\hline
\end{tabular}
\caption{Notation for homogeneous bundles used in the code.}
\label{tab:bundle-code}
\end{table}
Note that not all constructions will lead to vector bundles generated by global sections.

\bigskip

\bigskip

\end{document}